\newtheorem {theorem}{Theorem}
\newtheorem*{theorem*}{Theorem}
\numberwithin{theorem}{section}
\numberwithin{equation}{section}
\newtheorem {lemma}[theorem]{Lemma}
\newtheorem {proposition}[theorem]{Proposition}
\theoremstyle{definition}
\newtheorem {remark}[theorem]{Remark}
\newtheorem {definition}[theorem]{Definition}
\newtheorem {notation}[theorem]{Notation}
\newcommand{\Ja}{\grg}  
\newcommand{\Jb}{\grd}  
\newcommand{\Jc}{\gre}  
\newcommand{\Jp}{\grg} 
\newcommand{\Jq}{\grd} 
\DeclareMathOperator{\Ad}   {Ad}
\DeclareMathOperator{\End}  {End}
\DeclareMathOperator{\Hom}  {Hom}
\DeclareMathOperator{\Spec} {Spec}
\DeclareMathOperator{\Tr}   {Tr}
\DeclareMathOperator{\SL}   {SL}
\DeclareMathOperator{\PSL}  {PSL}
\DeclareMathOperator{\Mat}  {Mat}
\DeclareMathOperator{\Res}  {Res}
\DeclareMathOperator{\res}  {Res}
\DeclareMathOperator{\Der}  {Drv}
\DeclareMathOperator{\Funct}  {Funct}
\DeclareMathOperator{\Jdeg}  {Jdeg}
\DeclareMathOperator{\Indici}  {\Gamma}
\newcommand{\Id}  {{\mathrm{ Id }}}
\newcommand{\FF} {\mathcal F}
\newcommand{\Op}       {\operatorname{Op}}
\newcommand{\Gr}       {\operatorname{Gr}}
\newcommand{\V}        {\mathbb{V}}
\newcommand{\nop}[1]   {: #1 :}
\newcommand{\Sug}   {S}
\newcommand{\Sstorto}   {T^{(2)}}
\newcommand{\zz}   {c}
\newcommand{\zzt}   {c^{(t)}}
\newcommand{\zzs}   {c^{(s)}}
\newcommand{\calSp}{\mathcal{S} \mathit{p}}
\newcommand{\Rtest}    {{\mathnormal{R}}}
\newcommand{\bRtest}   {{\mathnormal{\overline R}}}
\newcommand{\bI}       {{\mathnormal{\bar I}}}
\newcommand{\bJ}       {{\mathnormal{\bar J}}}
\newcommand{\hJ}       {{\mathnormal{\hat J}}}
\newcommand{\calBg}[1]    {{\calB(\hat{\gog}_{#1})}}
\newcommand{\calBU}[1]    {{\calB(U_{#1})}}
\newcommand{\calBhU}[1]   {{\calB(\hU_{#1})}}
\newcommand{\hU}       {{\hat U}}
\newcommand{\hZ}       {Z}
\newcommand{\bU}       {{\mathnormal{\overline U}}}
\newcommand{\lt}       {\operatorname{lt}}
\newcommand{\Sp}       {\operatorname{Sp}}
 \newcommand{\mC}{\mathbb C}
\newcommand{\mN}{\mathbb N}  
  \newcommand{\mV}{\mathbb V}
\newcommand{\mZ}{\mathbb Z}
\newcommand{\calB}{\mathcal B}  
\newcommand{\calE}{\mathcal E} \newcommand{\calF}{\mathcal F} \newcommand{\calG}{\mathcal G}
 \newcommand{\calI}{\mathcal I}
\newcommand{\gob}{\mathfrak b}  
  \newcommand{\gog}{\mathfrak g}
 \newcommand{\gol}{\mathfrak l} 
  \newcommand{\gos}{\mathfrak s}
\newcommand{\got}{\mathfrak t}  
\newcommand{\goz}{\mathfrak z}
\newcommand{\gra}{\alpha} \newcommand{\grb}{\beta}       \newcommand{\grg}{\gamma}
\newcommand{\grd}{\delta} \newcommand{\gre}{\varepsilon} 
 \newcommand{\grl}{\lambda}     
\newcommand{\grf}{\varphi}
\newcommand{\lra}      {\longrightarrow}
\newcommand{\coinc}    {\equiv}
\newcommand{\isocan}   {\simeq}
\renewcommand{\geq}    {\geqslant}
\renewcommand{\leq}    {\leqslant}
\newcommand{\hgog}   {\hat\gog}
\newcommand{\limind} {\varinjlim}
\newcommand{\limpro} {\varprojlim}
\newcommand{\st}     {:}
\begin{document}

\author{Giorgia Fortuna, Davide Lombardo, Andrea Maffei, Valerio Melani}

\title{Local opers with two singularities: the case of $\gos\gol(2)$}

\newcommand{\Addresses}{{
		\bigskip
		\footnotesize
		
		\textit{E-mail addresses: }\texttt{giorgiafortuna@gmail.com}, \texttt{davide.lombardo@unipi.it}, \texttt{andrea.maffei@unipi.it}, \texttt{valerio.melani@unipi.it}
		\par\nopagebreak
		\textit{Mail address:} 
		\textsc{Dipartimento di Matematica, Universit\`a di Pisa, Largo B. Pontecorvo 5, 56127 Pisa, Italy}
	}}

\date{}

\maketitle

\begin{abstract}
We study local opers with two singularities for the case of the Lie algebra $\mathfrak{sl}(2)$, and discuss their connection with a two-variables extension of the affine Lie algebra. We prove an analogue of the Feigin-Frenkel theorem describing the centre at the critical level, and an analogue of a result by Frenkel and Gaitsgory that characterises the endomorphism rings of Weyl modules in terms of functions on the space of opers.
\end{abstract}


\section*{Introduction}
Let $\gog$ be a complex simple Lie algebra, $G$ be a complex algebraic group with Lie algebra equal to $\gog$, and $G^L$ be the Langlands dual of $G$. Frenkel and Gaitsgory have put forward a relationship between, on one side, the geometry of the ``space'' of $G^L$-local systems
on the formal disc with a possible singularity in the origin, 
and on the other, certain categories of representations of the affine Lie algebra $\hgog$ equipped with an action of the loop group $G(\mC((t)))$ \cite{FG1,FG2,FG3,FG4,FG5,FG6,FG7}. This connection, known as the \textit{local geometric Langlands correspondence}, is still largely conjectural, but some of the results it suggests have been completely proved, for example in the ``spherical" case \cite{FG6,FG7}.
More recently, Dennis Gaitsgory pointed Giorgia Fortuna in the direction of investigating the situation in which the relevant $G^L$-local systems are allowed to have more than one singular point. In this paper we begin to develop this suggestion for the case of the Lie algebra $\mathfrak{sl}(2)$.

Let us fix a coordinate $t$ around the origin of the (formal) disc. We wish to consider functions in the variable $t$, parametrised by a second variable $a$, having poles only at $t=0$ and $t=a$. Formally, we set $A= \mC[[a]]$ and introduce the $A$-algebra
$$
K_2=\mC[[t,a]][\frac 1{t(t-a)}],
$$
whose elements are the functions we are interested in.
The variables $t$ and $a$ are of very different nature here: $t$ is a local coordinate for the geometric object of interest, namely the formal disc, while $a$ should be considered as a parameter (a more general definition of the 
algebraic object we work with is given in Section \ref{sect:FormalNeighbourhoodGraphs}).

The ring $K_2$ can be equipped with a `residue' map, defined as the sum of the residues around $t=0$ and $t=a$ (see Section \ref{sec:residui}). We can use this map to 
define a structure of $A$-Lie algebra on the space
$$
\gog\otimes_\mC K_2 \oplus A\,C_2
$$
in a way that closely mimics the construction of the usual affine Lie algebra (see Sections \ref{ssec:Liealgebras} and \ref{sec:examples}). Note that, for technical reasons, we will consider a polynomial version of this algebra, but this does not affect our conclusions, see Remark \ref{oss:2completamenti}. 
We do not know who first introduced this 2-singularities version of the affine algebra, but its construction was certainly known to Gaitsgory and Raskin (see for example the notes \cite{GaitsgoryKM} and \cite{Raskin}).
We can then proceed exactly as in the case of one singularity: we first construct a suitable completion of the enveloping algebra, and then specialise $C_2$ to $-1/2$ to obtain a certain `critical level' enveloping algebra $\hU_2$. Just as in the case of the usual affine Lie algebra, the center of $\hU_2$ turns out to be nontrivial, and we show that it is generated by certain 2-variables analogues of the classical Sugawara operators.

To introduce these generalised Sugawara elements, let $J^\gra$ be a basis of $\gog$ and let $J_\gra$ be the dual basis with respect to the Killing form of $\gog$. For every integer $k$ we can then define
\begin{align*}
\Sug^{(2)}_k&=
\sum_{n\in \mZ,\gra}
:(J^\gra t^n s^n) (J_{\gra}t^{k-n}s^{k-n-1}):+
:(J^\gra t^n s^{n+1}) (J_{\gra}t^{k-n}s^{k-n}): \\
\Sug^{(2)}_{k+\frac 12}& =
\sum_{n\in \mZ,\gra}:(J^\gra t^n s^n)(J_{\gra}t^{k-n} s^{k-n}):+
:(J^\gra t^n s^{n+1})(J_{\gra}t^{k-n}s^{k+1-n}):
\end{align*}
where $s=t-a$ and the colons denote a suitable (two-variables) normal ordered product, see Definition \ref{def:nop2}.
These operators are our main object of study in Section \ref{sect:Centre}, where we prove that they are topological generators of the center of $\hU_2$ (see Theorem \ref{thm:CentreU2}).
 
In order to describe the geometric side of the correspondence, one should consider $G^L$-connections in the case of $G^L$ being an adjoint group. Hence, in our case, we take $G=\SL(2)$ and $G^L=\PSL(2)$.
We consider $G^L$-connections on the formal disc, parametrised by $a$, with 
possible singularities at $t=0$ and $t=a$. We define
$2$-opers in this context in complete analogy to local opers with one singularity (see Section \ref{ssec:opers}), namely, as particular equivalence classes of $G^L$-connections.
With this definition,  the space of $2$-opers is represented by a smooth ind-scheme $\Op_2^*$ over $\Spec A$. 
We prove the following result, which is 
a two-singularities analogue of the Feigin-Frenkel theorem \cite{FF92} for $\gog=\gos\gol(2)$. 
\begin{theorem*}[Theorem \ref{thm:CentreU2} and Theorem \ref{thm:isomorfismo}] 
	The operators $S^{(2)}_k$ are algebraically independent and topologically generate the centre of 
	the algebra $\hU_2$.  Moreover, there is an isomorphism 
	$$
	 \FF_2:\Funct(\Op_2^*)\xrightarrow{\sim} Z(\hU_2).
	$$
\end{theorem*}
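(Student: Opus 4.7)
The plan is to mirror the strategy of the classical Feigin–Frenkel theorem and to handle the two assertions in order: first, that the operators $S^{(2)}_k$ are algebraically independent topological generators of $Z(\hU_2)$, and second, that $\FF_2$ realises this centre as the algebra of functions on the oper space $\Op_2^*$.

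For the first part, I would equip $\hU_2$ with the natural PBW-type filtration (by degree in $\gog$) and pass to the associated graded, which is a completion of a commutative Poisson algebra built on $\mathrm{Sym}(\gog\otimes_{\mC} K_2)$. Under the symbol map, each critical Sugawara element $S^{(2)}_k$ maps to an explicit quadratic $\gog$-invariant expression in the generators. The key input is a two-variables analogue of the Kac–Kazhdan-type statement: at the critical level the symbol map identifies the associated graded of $Z(\hU_2)$ with the (topological) Poisson centre of the graded algebra. For $\gos\gol(2)$ this Poisson centre consists precisely of the $\gog$-invariant series, and using the double Laurent expansion at $t=0$ and $s=0$ one verifies that the symbols of the $S^{(2)}_k$ are algebraically independent topological generators. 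Lifting back to $\hU_2$ then yields both the algebraic independence of the $S^{(2)}_k$ and the fact that they topologically generate $Z(\hU_2)$.

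For the second part, I would construct $\FF_2$ using the standard canonical form for $\PSL(2)$-opers: every $2$-oper admits a unique representative of the shape $\partial_t + \left(\begin{smallmatrix} 0 & q \\ 1 & 0 \end{smallmatrix}\right)$ with $q\in K_2$, so that $\Funct(\Op_2^*)$ is topologically freely generated over $A$ by the Laurent coefficients of $q$ at $t=0$ and at $t=a$. I would define $\FF_2$ by matching these Laurent coefficients with the corresponding Sugawara elements $S^{(2)}_k$, up to the natural rescaling that already appears in the one-variable case. That this produces a well-defined algebra map follows because the relations among the $S^{(2)}_k$ reduce (via the symbol map) to the classical polynomial relations satisfied by the coefficients of $q$; and it is an isomorphism because it matches two topologically free generating sets of the same cardinality.

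The main obstacle will be the two-variables Kac–Kazhdan argument, i.e.\ the statement that the full Poisson centre of the associated graded is exhausted by the Sugawara symbols. The difficulty is that $K_2$ has two poles, so Laurent-type expansions are doubly indexed and the combinatorics of the normal ordered product, together with the control of leading terms under the filtration, are heavier than in the one-variable setting. I expect that the right way to proceed is to exploit the factorisation structure of $K_2$ at its two singular points, thereby reducing the proof to two copies of the classical Feigin–Frenkel argument, glued together using the $A$-flatness of $\hU_2$ and the explicit form of the residue pairing on $K_2$.
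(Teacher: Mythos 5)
Your primary strategy — passing to the associated graded, identifying $\Gr Z(\hU_2)$ with a topological Poisson centre via a two-variable Kac–Kazhdan argument, then lifting back — is \emph{not} what the paper does, and you are right to flag it as the main obstacle: the paper never computes the Poisson centre of $\Gr(\hU_2)$ in two variables at all. Instead, the paper treats the one-variable Feigin–Frenkel theorem (and the one-variable Kac–Kazhdan input, Lemma \ref{lemma:FreFei1}) as a black box and reduces to it by checking the two distinguished special fibres: at $a=0$ the specialisation map $\Sp\colon\hU_2\to\hU_1$ sends $\Sug_k^{(2)}$ to the one-variable Sugawara operator $\Sug_{2k+1}^{(1)}$ (Lemma \ref{lemma:IsomorphismAlongTheDiagonal}); at $a\neq 0$ the expansion map $E\colon\hU_2[a^{-1}]\to\hU_{t,s}$ (Lemma \ref{lem:hE2}) lands densely in a completion of $\hU_t\otimes_Q\hU_s$, and an explicit triangular change of basis (Lemma \ref{lemma:TriangularChangeOfBasis}) relates the expansions of $\Sug_k^{(2)}$ to the generators $\Sug_n^{(t)},\Sug_n^{(s)}$. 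The two computations are then glued via an elementary flatness criterion (Lemma \ref{lem:isoMN}): if an $A$-module map is an isomorphism after inverting $a$ and injective after setting $a=0$, and source and target are $A$-flat, then it is an isomorphism. Your final paragraph does anticipate this factorisation route, but presents it as a means of establishing the two-variable Kac–Kazhdan statement; the paper dodges that statement entirely, which is both simpler and apparently necessary at the present state of the art.

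Two further pieces of your plan are thinner than they need to be. First, you never address centrality of the $\Sug_k^{(2)}$: this is not automatic and is proven by a fairly long normal-ordering computation (Proposition \ref{prop:LkAreCentralPartOne}, using the Casimir identities of Section \ref{sez:Casimiro} and telescoping of the error sums). Second, for the isomorphism $\FF_2$, ``matching Laurent coefficients of $q$ with the corresponding Sugawara elements'' glosses over the real difficulty: the precise formulas $\FF_2(\grb_n)=2\Sug^{(2)}_{-1-n}$ and $\FF_2(\gra_n)=2\Sug^{(2)}_{-n-1/2}$ are not a priori obvious, and the paper determines them by exploiting $\Der_2$-equivariance — Lemma \ref{lemma:DerivativesOfL} on one side, Lemma \ref{lemma:DerivativesOfAlphaBeta} on the other, pinned down through Lemmas \ref{lemma:f}, \ref{lemma:bbba} and \ref{lemma:tildeugualenontilde}. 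This $\Der_2$-equivariance argument replaces, and is considerably more concrete than, the symbol-matching you propose; without it (or a substitute), you would have a candidate map but no way to verify that it simultaneously intertwines with the specialisation and expansion isomorphisms, which is what makes $\FF_2$ the well-defined comparison between $\Funct(\Op_2^*)$ and $Z_2$.
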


One of the main ingredients in the proof of this theorem, as well as of other results in the paper, 
is the fact that 
the algebra $\hU_2$ and the space of opers satisfy the so-called factorisation property \cite{BD:chiralalgebras}. 
Indeed, both these objects are defined over $A$, 
and we can consider their restrictions to $a=0$ and to $a\neq 0$. In the first case, we get back the corresponding object in the case of one singularity.
In the second, we obtain the product of two independent copies of the one-singularity object, one ``centred around zero" and the other ``centred around $a$". For the space of $2$-opers this takes the following concrete form (see Proposition \ref{prop:FactorisationProperties}):
\begin{equation}\label{eq:fattint}
\Op_2^*|_{a=0}\simeq \Op^*_1 \quad \text{ and } \quad \Op_2^*|_{a\neq 0} \simeq \Op_t^*\times_{\Spec Q}
\Op^*_s,
\end{equation}
where $\Op_1^*$ is the usual space of opers, $\Op_t^*=\Spec Q\times_{\Spec \mC}\Op_1^*$ 
is the space of opers over the field $Q=\mC((a))$, and $\Op_s^*$ is another copy of the same space, 
whose elements are however described in terms of the coordinate 
$s=t-a$. The analogous statement for the algebra $\hU_2$ is the content of 
Lemmas \ref{lem:hS} and \ref{lem:hE2}, whose precise formulation is a little more sophisticated since it involves taking suitable completions.

\medskip

In their study of the spherical case \cite{FG6}, 
Frenkel and Gaitsgory describe the endomorphism ring of what they call a Weyl module
of $\hgog$. Let $V^\grl$ be the irreducible finite-dimensional representation of highest weight $\grl$ and consider 
it as a representation of the positive part  $\hgog^+=\gog\otimes_\mC\mC[[t]]\oplus \mC\,C$ of the affine Lie algebra by letting $C$ act by $-1/2$ and $\gog\otimes_\mC t\mC[[t]]$ by zero. The induced module of $\hgog$, denoted by $\mV_1^\grl$, is called the Weyl module of weight $\lambda$. The Weyl modules have been recognised as the fundamental objects in the category of \textit{spherical modules}, that is, those continuous representations of $\hU_1$ on which the action of 
$\gog\otimes_\mC\mC[[t]]$ integrates to an action of $G(\mC[[t]])$. An important step in understanding the
category of spherical modules is the determination of the endomorphism rings of the Weyl modules. These rings have been shown \cite{FG6} to admit a very nice description in terms of the geometry of $\Op_1^*$. We now briefly recall the precise statement.

The unramified opers, that is, those that are trivial as $G^L$-connections, are parametrised by an ind-subscheme $\Op_1^{\operatorname{ur}}$ of $\Op_1^*$. The reduced subscheme of $\Op_1^{\operatorname{ur}}$ can be shown to decompose as the disjoint union of certain schemes $\Op_1^\grl$, indexed by the dominant integral weights $\lambda$. The opers parametrised by $\Op_1^\lambda$ can be characterised in terms of the behaviour of the corresponding connection around the singularity $t=0$, see Lemma \ref{lem:opur1}. The connection between the Weyl modules $\mathbb{V}_1^\lambda$ and the schemes $\Op_1^\lambda$ is provided by \cite{FG6}, 
where the authors show that, for every $\lambda$, the Feigin-Frenkel isomorphism induces an isomorphism $\Funct(\Op_1^\grl)\simeq \End(\mV_1^\grl)$. 

This result generalises to our setting in the following way. Given two irreducible, finite-dimensional representations $V^\grl$ and $V^\mu$ of 
$\gog$, 
we construct, by analogy to the 1-singularity case, a corresponding
Weyl module $\mV_2^{\grl,\mu}$ by inducing 
the representation
$V^\grl\otimes V^\mu$ (see Definition \ref{def:weyl2} for details). We also define a corresponding space of opers 
$\Op_2^{\grl,\mu}$ as follows. By base change from $\mC$ to $Q$, we obtain subschemes $\Op_t^\grl=\Spec Q\times_{\Spec\mC}\Op_1^\grl$ of $\Op^*_t$ and $\Op_s^\mu=\Spec Q\times_{\Spec\mC}\Op_1^\mu$ of $\Op^*_s$ respectively, where as before we consider the elements of $\Op^*_s$ as functions of the variable $s=t-a$. Thanks to the second isomorphism in Equation \eqref{eq:fattint}, we may then consider the product $\Op_t^\grl\times \Op_s^\mu$ as a subscheme of $\Op^*_2|_{a\neq 0}$: the schematic closure of $\Op_t^\grl\times \Op_s^\mu$ inside $\Op_2^*$ is by definition the scheme $\Op_2^{\grl,\mu}$.
By construction, $\Op_2^{\grl,\mu}$ parametrises connections that behave like elements of $\Op_1^\grl$ around $t=0$, and like elements of $\Op_1^\mu$ around $t=a$ (at least when $a \neq 0$).
In the case of $\gos\gol(2)$, we extend the main result of \cite{FG6} to this context as follows:

\begin{theorem*}[see Theorem \ref{teo:endomorfismi}]
The action of $Z(\hU_2)$ on $\mV_2^{\grl,\mu}$ and the isomorphism $\FF_2$ of the previous theorem induce an
isomorphism 
$$
\calG_2:\Funct(\Op^{\grl,\mu}_2)\xrightarrow{\sim} \End(\mV_2^{\grl,\mu}) .
$$
\end{theorem*}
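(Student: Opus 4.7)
The plan is to build $\calG_2$ using the Feigin--Frenkel isomorphism $\FF_2$ of Theorem~\ref{thm:isomorfismo} together with the tautological action of the centre $Z(\hU_2)$ on the Weyl module $\mV_2^{\grl,\mu}$, and then to prove bijectivity by reducing to the one-variable result of \cite{FG6} via the factorisation property. Concretely, the $Z(\hU_2)$-action gives an $A$-algebra map $Z(\hU_2)\to \End(\mV_2^{\grl,\mu})$, which precomposed with $\FF_2$ yields
\[
\Funct(\Op_2^*)\xrightarrow{\FF_2} Z(\hU_2)\to \End(\mV_2^{\grl,\mu}).
\]

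The first step is to show this composition factors through $\Funct(\Op^{\grl,\mu}_2)$. Since $\Op_2^{\grl,\mu}$ is the schematic closure of $\Op_t^\grl\times_{\Spec Q}\Op_s^\mu$ inside $\Op_2^*$, its defining ideal in $\Funct(\Op_2^*)$ is the $A$-flat kernel of the restriction to $\Funct(\Op_t^\grl\times_{\Spec Q}\Op_s^\mu)$. Granted that $\mV_2^{\grl,\mu}$ is $A$-flat---which should follow from its definition as a module induced from the $A$-free representation $V^\grl\otimes_\mC V^\mu$---it is enough to verify the factorisation after inverting $a$. Over the generic point, the factorisation \eqref{eq:fattint} together with an analogous factorisation of the Weyl module,
\[
\mV_2^{\grl,\mu}|_{a\neq 0}\simeq \mV^\grl_t\,\widehat{\otimes}_Q\, \mV^\mu_s,
\]
where $\mV^\grl_t, \mV^\mu_s$ denote the base-changes to $Q$ of the one-variable Weyl modules in the coordinates $t$ and $s=t-a$, splits the problem as a product of two one-variable problems. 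Each is handled by \cite{FG6}: the action of $\Funct(\Op^*_t)$ on $\mV^\grl_t$ factors through $\Funct(\Op_t^\grl)$, and likewise on the $s$-side. This produces the well-defined $\calG_2$.

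The same sequence of reductions gives injectivity at once: both $\Funct(\Op_2^{\grl,\mu})$ and $\End(\mV_2^{\grl,\mu})$ are $A$-flat (for the latter, as a submodule of the $A$-flat $\End_A(\mV_2^{\grl,\mu})$), and after inverting $a$ the map becomes the tensor product of two one-variable isomorphisms. Surjectivity is the main obstacle: over the discrete valuation ring $A$, an injection of flat modules that is a generic isomorphism need not be globally an isomorphism, so one must control the special fibre at $a=0$. There $\mV_2^{\grl,\mu}|_{a=0}$ degenerates to a one-singularity module, and $\Op_2^{\grl,\mu}|_{a=0}$ is a corresponding subscheme of $\Op_1^*$ which by construction contains all opers arising as ``confluence limits'' of pairs in $\Op_1^\grl\times\Op_1^\mu$. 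My plan would be to identify both sides at $a=0$ by a direct computation using the explicit coordinates on $\Op_2^*$ supplied by the generalised Sugawara operators $\Sug^{(2)}_k$ and $\Sug^{(2)}_{k+\frac{1}{2}}$, matching these with a natural filtration on $\End(\mV_2^{\grl,\mu})$ by pole order around the two singularities, and then lift surjectivity from the special fibre via Nakayama's lemma combined with the generic isomorphism.
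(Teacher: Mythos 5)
Your overall scaffolding is right and matches the paper's: build $\calG_2$ from $\omega_2 \circ \FF_2$, verify the factorisation through $\Funct(\Op_2^{\grl,\mu})$ and prove the generic isomorphism after inverting $a$ via the factorisation property $\eqref{eq:fattint}$ and the one-variable theorem of \cite{FG6}, then control the fibre at $a=0$. But the step where you propose to conclude has a genuine gap.

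You propose to \emph{identify} the two sides at $a=0$ and then ``lift surjectivity from the special fibre via Nakayama's lemma combined with the generic isomorphism.'' Nakayama does not apply here: $\End_{\hgog_2}(\mV_2^{\grl,\mu})$ and $\Funct(\Op_2^{\grl,\mu})$ are not finitely generated over $A=\mC[[a]]$ (they have countably many polynomial generators), so surjectivity modulo $a$ does not automatically propagate. Moreover, achieving a full isomorphism modulo $a$ is asking for more than is needed. The paper's Lemma~\ref{lem:isoMN} shows that for $A$-flat source and target, a generic isomorphism together with \emph{injectivity} of the reduction modulo $a$ already gives a global isomorphism; the proof is a direct descending induction on the $a$-adic valuation, with no finiteness hypothesis. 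With that lemma in hand, the whole problem on the special fibre reduces to showing that
\[
\Funct\Bigl( \coprod_{\nu} \Op_1^\nu \Bigr) \lra \bigoplus_\nu \End_{\hgog_1}\bigl(\mV_1^\nu\bigr)
\]
is injective, which follows component-by-component from the one-variable theorem once you know (i) $\mV_2^{\grl,\mu}/a\mV_2^{\grl,\mu} \cong \operatorname{Ind}(V^\grl\otimes V^\mu) = \bigoplus_\nu \mV_1^\nu$ (Lemma~\ref{lemma:VlambdamuOutsideAndAlongTheDiagonal}), and (ii) $\Op_2^{\grl,\mu}|_{a=0} = \coprod_\nu \Op_1^\nu$ (Theorem~\ref{thm:restrizionediagonale}). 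You wave at (ii) by saying it ``by construction contains all confluence limits'' and propose a direct coordinate computation with the Sugawara generators, but this identification is a nontrivial geometric statement whose proof in the paper occupies Sections~\ref{sec:ipergeo} and~\ref{thm:restrizionediagonale} (hypergeometric solutions producing enough points, plus a dimension and tangent-space argument); it is not something one reads off from the $\Sug^{(2)}_k$ filtration of $\End(\mV_2^{\grl,\mu})$. So the two concrete missing ingredients in your plan are the correct commutative-algebra lemma replacing Nakayama, and an actual proof of Theorem~\ref{thm:restrizionediagonale} rather than an appeal to it.
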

To prove this result we use the factorisation properties of these objects, i.e., we again study what happens by specialising to $a=0$ and localising to $a\neq 0$.
The factorisation properties of $\mV_2^{\grl,\mu}$ are discussed in Lemma \ref{lemma:VlambdamuOutsideAndAlongTheDiagonal}. The restriction $\Op_2^{\grl,\mu}|_{a\neq 0}$ is isomorphic to $\Op_t^{\grl}\times _{\Spec Q}\Op_s^{\mu}$ by construction. 
Finally, the restriction $\Op_2^{\lambda,\mu}|_{a=0}$ is both more interesting and more delicate to understand: in Theorem \ref{thm:restrizionediagonale} we show that 
$$
\Op^{\grl,\mu}_2|_{a=0}\simeq \coprod_{\substack{|\mu-\lambda| \leq \nu \leq \lambda+\mu \\ \nu \equiv \lambda + \mu \bmod{2}}} \Op_1^\nu
$$
where the union on the right hand side is parametrised by the set of weights, now natural numbers, appearing in the decomposition of the $\gog$-module $V^\grl\otimes V^\mu$. While some of the ingredients in the proof of this isomorphism apply to general Lie algebras $\gog$, we make use of the hypergeometric series to construct some specific elements of $\Op^{\grl,\mu}_2|_{a=0}(\mC)$ (see Section \ref{sec:ipergeo}), which restricts some of our arguments to the case $\gog = \mathfrak{sl}_2$. 

\bigskip

In \cite{FG7}, Frenkel and Gaitsgory use their results on the endomorphism rings of the Weyl modules to prove that the category of spherical representations is equivalent to the category 
of sheaves over the space of unramified opers. 
A similar result is expected to hold in our generality, and indeed some of the ingredients of \cite{FG7}, such as the notion of semi-infinite cohomology, generalise well to our context. We plan to further investigate this problem in the future.

It is also expected that these results should hold for arbitrary quasi-simple groups $G$. 
Most of our constructions are given at this level of generality, and many of our results are either easy to extend to this broader context, or are already proved in the correct generality. To go forward in this direction, we believe the most difficult problem would be to extend Theorem \ref{thm:restrizionediagonale}. Indeed, we think one should obtain this result
as a consequence of a generalisation of Theorem \ref{teo:endomorfismi}, while in our approach we used Theorem \ref{thm:restrizionediagonale} as an intermediate step in the proof of Theorem \ref{teo:endomorfismi}.
We have also been informed by D.~Gaitsgory that a completely different approach to these questions is currently being developed by S.~Raskin.

\bigskip

The paper is organised as follows. In Section \ref{sez:algpre} we discuss some basic 
algebraic constructions, including in particular a basis  
$\{u_n, v_n\}$ for the Laurent series in two variables (see equation \eqref{eq:defuv}) which is technically important in the whole paper. 
In Section \ref{sect:Opers} we study the space of 2-opers: we introduce the scheme 
$\Op_2^{\grl,\mu}$ in a somewhat
utilitarian way, construct several explicit elements of $\Op_2^{\grl,\mu}(\mC)$ by means of the classical hypergeometric series, and prove Theorem \ref{thm:restrizionediagonale}. In Section \ref{sec:Liealgebras}
we describe the affine Lie algebra $\hgog_2$ and its completion. We also study the factorisation properties of $\hU_2$ (Lemmas \ref{lem:hS} and \ref{lem:hE2}) and introduce 
some filtrations of this algebra. In Section \ref{sect:Centre} we introduce 
our version of the Sugawara operators, we prove that they are central (Proposition \ref{prop:LkAreCentralPartOne}), and we show that they topologically generate the center of $\hU_2$ (Theorem \ref{thm:CentreU2}). In Section \ref{sez:FF2}
we identify the centre of $\hU_2$ as the ring of functions of the space of $2$-opers, 
and in Section \ref{sez:Weylmodules} we study the Weyl modules $\V_2^{\lambda, \mu}$, proving Theorem \ref{thm:EndoVlambda}.

\section{Algebraic preliminaries}\label{sez:algpre}

In our study of functions on the punctured disc we will need to consider several closely related rings, which we will also need to equip with suitable topologies.
 
In the case of one singularity we consider the ring $\mC[[t]]$ and its quotient field $\mC((t))$. We equip $\mC[[t]]$ and $\mC((t))$ with the topology induced by the ideal $(t)$ (namely, the ideals $(t^n)$ of $\mC[[t]]$ form a fundamental system of neighbourhoods of the origin). 

In order to deal with the case of two singularities, we introduce the ring $A=\mC[[a]]$ and its fraction field $Q=\mC((a))$. 
We think the variable $a$ as being a parameter, and use a separate variable $t$ as a coordinate around the origin of the (formal) disc. We consider functions that 
have poles in $0$ and in $a$, and we also set $s=t-a$. Here a ``function'' is considered to be a formal Laurent series. The relevant rings for the case of two singularities are slightly more complicated, and will be introduced in Section \ref{sect:FormalNeighbourhoodGraphs}. In the next few sections we focus on the the ring of formal Laurent series in two variables.

\subsection{Formal Laurent series}\label{sez:base} 

We consider the rings 
$Q[[t]], Q[[s]]$ endowed with the topologies induced by the ideals $(t)$ and $(s)$, respectively. We consider similar topologies on $Q((t))$ and $Q((s))$.
We denote by $K_2$ the ring of formal Laurent series in two variables, that is, $K_2=\mC[[t,s]][\frac{1}{ts}]$. We equip the rings $\mC[[t,s]]$ and $K_2$ with the topology induced by the ideal $(ts)$, and we consider them as $A$-algebras, where $a=t-s$. 
We now introduce two $A$-bases of $\mC[[t,s]][\frac1{ts}]$ that turn out to be computationally handy:
for every integer $n$ we set
\begin{equation}\label{eq:defuv}
u_n = t^ns^n, \quad v_n=t^ns^{n+1}
\end{equation}
and
\begin{equation}\label{eq:defxy}
x_n=t^ns^n, \quad y_n=t^{n+1}s^n.
\end{equation}
We also consider the subring $R_2$ of $K_2$ given by the $A$-span of the elements $u_n,v_n$ for $n\in \mZ$, or equivalently the $A$-span of the elements $x_n,y_n$. The following proposition is easy to prove.

\begin{proposition}\label{prp:base}
The elements $u_n$, $v_n$ for $n\leq -1$ form an $A$-basis of $K_2/\mC[[t,s]]$. More generally, the elements $u_n, v_n$ for $n\leq m-1$ form an $A$-basis of 
$K_2/u_m\mC[[t,s]]$,
and for $m \geq 1$ the elements $u_n, v_n$ with $0 \leq n\leq m-1$ are an $A$-basis of $\mC[[t,s]]/(u_m)$.
In particular, the ring $R_2$ is dense in $K_2$. 
\end{proposition}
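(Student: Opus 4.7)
The three statements about $A$-bases are closely related, and my plan is to establish the middle one (the description of $\mC[[t,s]]/(u_m)$) first by induction on $m$, and then to deduce the other two together with the density of $R_2$ by passing to a suitable direct limit. What makes everything run is that the identification $a = t - s$ puts a nontrivial $A$-module structure on $\mC[[t,s]]$, in which the monomials $t^i s^j$ are highly dependent.

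The one piece of real computation is the base case $m = 1$: I would verify directly that $\{1, s\}$ is an $A$-basis of $\mC[[t,s]]/(ts)$. Modulo $(ts)$, any element splits uniquely as $c + p(t) + q(s)$ with $c \in \mC$, $p \in t\mC[[t]]$, $q \in s\mC[[s]]$, and the congruences
\[
(t-s)^k \equiv t^k + (-1)^k s^k, \qquad (t-s)^k \, s \equiv (-1)^k s^{k+1} \pmod{ts}
\]
show that $\alpha(a)$ is uniquely determined by $c$ and $p(t)$, after which $\beta(a)$ is uniquely determined by what remains of $q(s)$. I expect this to be the only step where a genuine calculation is unavoidable.

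The inductive step is then formal: given the result for $m$, I would consider the short exact sequence of $A$-modules
\[
0 \to (u_m)/(u_{m+1}) \to \mC[[t,s]]/(u_{m+1}) \to \mC[[t,s]]/(u_m) \to 0.
\]
Multiplication by $u_m$ induces an $A$-isomorphism $\mC[[t,s]]/(ts) \xrightarrow{\sim} (u_m)/(u_{m+1})$ carrying $\{1, s\}$ to $\{u_m, v_m\}$, so by the base case the kernel is free on $\{u_m, v_m\}$. The quotient is free by the inductive hypothesis; since $A$ is a DVR and the quotient is free (hence projective), the sequence splits, and the middle term is free on $\{u_n, v_n : 0 \leq n \leq m\}$.

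Finally, for $K_2/u_m\mC[[t,s]]$ with arbitrary $m \in \mZ$ I would write it as the colimit $\varinjlim_N (ts)^{-N}\mC[[t,s]]/u_m\mC[[t,s]]$; for $N$ sufficiently large, multiplication by $(ts)^N$ identifies the $N$-th stage with $\mC[[t,s]]/(u_{m+N})$, sending the basis $\{u_n, v_n : 0 \leq n \leq m+N-1\}$ to $\{u_k, v_k : -N \leq k \leq m-1\}$. Taking the colimit yields the required $A$-basis, and specialising $m=0$ recovers the first statement. Density of $R_2$ is then immediate: each element of $K_2/(u_m)$ lies in a finite stage of the colimit, so its basis expression has only finitely many nonzero coefficients, and this finite combination lifts to an element of $R_2$ approximating the original element modulo $(ts)^m$.
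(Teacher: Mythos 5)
Your proof is correct, and since the paper dismisses this proposition with ``easy to prove'', there is no official argument to compare against; but it is worth noting that the closely related Lemma in Section~\ref{sect:FormalNeighbourhoodGraphs} (which proves the analogous basis statement for a general $A$-algebra $R$) takes a different and arguably quicker route. There the authors change variables to $t$, $a=t-s$, so that $\mC[[t,s]]/(u_n)\cong A[[t]]/(t(t-a))^n$ becomes a free $A$-module of rank $2n$ with basis $1,t,\dots,t^{2n-1}$, and observe that $u_k,v_k$ map to monic polynomials in $t$ of pairwise distinct degrees $<2n$; being related to the standard basis by a unitriangular change of basis, they form a basis, with generation following by a short induction. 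Your argument instead does a direct computation in $\mC[[t,s]]/(ts)$ and then bootstraps via the filtration by powers of $ts$. Both arguments are sound; the paper's is slightly more economical once the change of variables is set up, while yours stays entirely inside $\mC[[t,s]]$ and makes the $A$-module structure explicit at the start.

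A few small remarks on your write-up, none of which are gaps. In the inductive step, ``the sequence splits, and the middle term is free on $\{u_n,v_n : 0\le n\le m\}$'' is correct but elides one standard step: a splitting gives freeness of the right rank, but to see that the \emph{specific} elements form a basis you should invoke the standard fact that in a short exact sequence with free sub and free quotient, a basis of the sub together with any lifts of a basis of the quotient is a basis of the middle term (or run a five-lemma argument with the parallel sequence of free modules). In the base case it would be slightly cleaner to record both uniqueness and existence explicitly: from $\alpha(a)\cdot 1+\beta(a)\cdot s \equiv c+p(t)+q(s)$ one solves $\alpha_0=c$, $\alpha_k=p_k$ for $k\ge 1$, then $\beta_{k-1}=(-1)^{k-1}\bigl(q_k-(-1)^k\alpha_k\bigr)$; this shows the map $A^2\to\mC[[t,s]]/(ts)$ is bijective. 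Finally, the notation $K_2/(u_m)$ in your last paragraph should be read as $K_2/u_m\mC[[t,s]]$ — the ideal $(u_m)$ of $K_2$ itself would be all of $K_2$ since $u_m$ is a unit there.
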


Our main tool to deal with these rings will be the expansion and specialisation maps that we now define. We have natural inclusions
\[
\mC[[t,s]]\subset Q[[t]] \subset Q((t)),
\]
and similar ones with respect to the variable $s$.
Since $a$ is invertible in $Q((t))$, the inclusion $\mC[[t,s]]\subset Q((t))$ extends to an inclusion $\mC[[t,s]][a^{-1}]\subset Q((t))$. Moreover, both $t$ and $s$ are invertible in $Q((t))$,
thus the inclusion $\mC[[t,s]][a^{-1}]\subset Q((t))$ further extends to an injective map
\[
E_t:K_2[a^{-1}] \lra Q((t)),
\]
which we call the \textit{expansion map} with respect to the variable $t$. The same construction yields an injective ring homomorphism $E_s : K_2[a^{-1}]\lra  Q((s))$, and both $E_t$ and $E_s$ are $Q$-linear. Finally, we define
$$
E:K_2[a^{-1}]\lra Q((t))\times Q((s))
$$
as the product of $E_t$ and $E_s$. The ring $K_2[a^{-1}]$ is an integral domain, so $E$ cannot be an isomorphism; however, it is not far from being one, as the following lemma shows.
\begin{lemma}\label{lem:exp}
The morphism $E$ is continuous and injective, and its image is dense in $Q((t))\times Q((s))$. Moreover, the restriction of $E$ to $R_2[a^{-1}]$ has dense image as well.
\end{lemma}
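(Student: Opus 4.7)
Continuity and injectivity are immediate. Injectivity of $E$ is inherited from that of $E_t$, already noted in the statement. For continuity, a fundamental system of neighbourhoods of $0$ in $K_2[a^{-1}]$ is given by the submodules $u_n\,\mC[[t,s]][a^{-1}]$; since $E_t(u_n)=t^n(t-a)^n$ and $(t-a)^n$ is a unit in $Q[[t]]$ for every $n\geq 0$, these are sent by $E_t$ into $t^n Q[[t]]$, and symmetrically by $E_s$ into $s^n Q[[s]]$, which are precisely the basic neighbourhoods of $0$ in the target.

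The core of the lemma is density, and I would prove the stronger statement that $E(R_2[a^{-1}])$ is already dense; density of $E(K_2[a^{-1}])$ then follows from the inclusion $R_2[a^{-1}]\subseteq K_2[a^{-1}]$. Using $Q$-linearity of $E$ and the evident $t \leftrightarrow s$ symmetry, the problem reduces to the following one-sided approximation statement: for every $g\in Q((t))$ and every $N\geq 0$ there exists $h\in R_2[a^{-1}]$ with $E_t(h)\equiv g \pmod{t^N Q[[t]]}$ and $E_s(h)\in s^N Q[[s]]$. Granting this, the symmetric statement (interchanging $t$ and $s$) produces approximations of pairs of the form $(0, g')$, and adding gives arbitrary $(g_t,g_s)\in Q((t))\times Q((s))$.

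To build $h$, I write $g=t^{-M}q(t)$ with $q\in Q[[t]]$ and $M\geq 0$. Since $(t-a)^N\in Q[[t]]^\times$, I can choose a polynomial $p(t)\in Q[t]$ of degree less than $N+M$ satisfying $(t-a)^N p(t)\equiv q(t) \pmod{t^{N+M} Q[[t]]}$ (simply truncate the power series $q(t)/(t-a)^N\in Q[[t]]$). Setting $h=s^N\,p(t)\,t^{-M}$, a direct computation gives
$$E_t(h)=(t-a)^N p(t)\,t^{-M}\equiv t^{-M}q(t) = g \pmod{t^N Q[[t]]},$$
while $E_s(h)=s^N(s+a)^{-M} p(s+a)\in s^N Q[[s]]$, since $(s+a)^{-M}\in Q[[s]]$.

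The only verification left, and which I expect to be the main (though modest) technical hurdle, is that $h$ actually lies in $R_2[a^{-1}]$. This requires $R_2$ to be a subring of $K_2$ containing $s$, $t$, $t^{-1}$ and $A$. The containments are immediate from $s=v_0$, $t=v_0+au_0$, $t^{-1}=v_{-1}$, and $A=A\cdot u_0$. Closure under multiplication reduces to the identities $u_m u_n=u_{m+n}$, $u_m v_n = v_{m+n}$ and $v_m v_n = u_{m+n+1}-a\,v_{m+n}$ on the generators, the last of which follows from the expansion $s^2=s(t-a)=u_1-a\,v_0$. With these in hand, $h\in Q[t,t^{-1},s]\subseteq R_2[a^{-1}]$, completing the density proof.
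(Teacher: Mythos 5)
Your proof is correct, but the density argument takes a genuinely different route from the paper's. The paper observes that $(t)$ and $(s)$ become comaximal in $\mC[[t,s]][a^{-1}]$ (since $1=(t-s)/a$), invokes the Chinese remainder theorem to identify $\mC[[t,s]][a^{-1}]/((ts)^n)$ with $Q[[t]]/(t^n)\times Q[[s]]/(s^n)$, passes to the inverse limit, and then localizes at $ts$; density of $E(R_2[a^{-1}])$ is then deduced separately by citing Proposition~\ref{prp:base} (density of $R_2$ in $K_2$). You instead bypass the CRT and the appeal to Proposition~\ref{prp:base} entirely: you construct, for any $(g_t,g_s)\in Q((t))\times Q((s))$ and any $N$, an explicit approximant $h\in Q[t,t^{-1},s]\subseteq R_2[a^{-1}]$ by truncating the power series $q(t)/(t-a)^N\in Q[[t]]$, and use $t\leftrightarrow s$ symmetry to handle both factors. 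This has the advantage of proving density of the smaller image $E(R_2[a^{-1}])$ in one stroke (the statement for $K_2[a^{-1}]$ being a formal consequence), and of being completely explicit; the paper's argument is shorter and more conceptual once CRT and $\varprojlim$ machinery is available. Both are valid. The small technical verifications you flag (that $R_2$ is closed under multiplication, and that $t,t^{-1},s\in R_2$) are indeed exactly what is needed, and your identities $u_mu_n=u_{m+n}$, $u_mv_n=v_{m+n}$, $v_mv_n=u_{m+n+1}-a\,v_{m+n}$ are correct.
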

\begin{proof} Notice that $E(t^ns^n)=(t^n \alpha,s^n \beta)$, where $\alpha,\beta$ are invertible elements of $Q[[t]]$ and $Q[[s]]$ respectively. This easily implies that $E$ is continuous. We now prove that it is injective with dense image.
As $1=(t-s)/a$, the ideals $(s)$ and $(t)$ are relatively prime in $\mC[[t,s]][a^{-1}]$, hence we obtain
$$
\frac{\mC[[t,s]][a^{-1}]}{\left( (ts)^n \right)} 
\cong \frac{\mC[[t,s]][a^{-1}]}{(t^n)} \times \frac{\mC[[t,s]][a^{-1}]}{(s^n)}
\cong \frac{Q[[t]]}{(t^n)}\times \frac{Q[[s]]}{(s^n)}\ .
$$
We thus get a dense embedding
\[
\mC[[t,s]][a^{-1}] \hookrightarrow \varprojlim_{n} \frac{\mC[[t,s]][a^{-1}]}{(ts)^n} \cong \varprojlim_n \left( \frac{Q[[t]]}{(t^n)} \times \frac{Q[[s]]}{(s^n)} \right) \cong Q[[t]] \times Q[[s]],
\]
and upon inverting $ts$ on both sides we obtain a dense embedding $K_2[a^{-1}] \to Q((t)) \times Q((s))$. Notice that inverting $ts=t(t-a)$ in $Q[[t]]$ is equivalent to inverting $t$, and similarly for $Q[[s]]$. The first claim follows since this embedding agrees with $E$.
To prove the claim about $R_2[a^{-1}]$ notice that, by Proposition \ref{prp:base}, this ring is dense in $K_2[a^{-1}]$.  
\end{proof}

Throughout the paper we will also need to study the special situation in which $t$ and $s$ coincide, that is, when $a=0$. We denote by $\Sp$ any map that specialises $a$ to zero: for example, there are natural maps
\[
\begin{array}{cccc}\Sp : & A &  \to &  \mathbb{C} \\
& f(a) & \mapsto & f(0)
\end{array}
\]
and 
\[
\begin{array}{cccc}\Sp : & K_2 &  \to & \mC((t)) \\
& f(t,s) & \mapsto & f(t,t);
\end{array}
\]
the latter also restricts to $\Sp : \mC[[t,s]] \to \mC[[t]]$.
The following is easy to check:
\begin{lemma}\label{lem:specializ}
The map $\Sp$ induces isomorphisms between $K_2/(t-s)$ and $\mC((t))$, and between $\mC[[t,s]] /(t-s)$ and $\mC[[t]]$.
\end{lemma}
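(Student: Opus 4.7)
The plan is to reduce both claims to a single change of variables. I would first observe that the $\mC$-algebra morphism $\varphi: \mC[[t,a]] \to \mC[[t,s]]$ sending $a$ to $t-s$ is an isomorphism: since $(t, t-a) = (t, a)$ as ideals of $\mC[t,a]$, the rings $\mC[[t,s]]$ and $\mC[[t,a]]$ are both identified with the completion of $\mC[t,s] = \mC[t, t-a]$ at the maximal ideal $(t,s) = (t, t-a)$. Under $\varphi$ the ideal $(t-s) \subset \mC[[t,s]]$ corresponds to the principal ideal $(a) \subset \mC[[t,a]]$, and the quotient $\mC[[t,a]]/(a)$ is canonically $\mC[[t]]$ with quotient map $f(t,a) \mapsto f(t,0)$. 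Unwinding the identification, this quotient map agrees with $\Sp : f(t,s) \mapsto f(t,t)$, which proves the first isomorphism.

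For the second claim I would use that localisation commutes with quotienting: writing $K_2 = \mC[[t,s]][(ts)^{-1}]$, one has
$$
K_2/(t-s) \;\cong\; \bigl(\mC[[t,s]]/(t-s)\bigr)\bigl[\,\overline{ts}^{\,-1}\bigr]
$$
as $\mC$-algebras, where $\overline{ts}$ denotes the image of $ts$ in the quotient. Via the first isomorphism the image of $ts$ is $t \cdot t = t^2$, so the right-hand side becomes $\mC[[t]][t^{-2}] = \mC[[t]][t^{-1}] = \mC((t))$. Before invoking this, I would also verify that $\Sp$ is actually well defined on $K_2$: since $\Sp(ts) = t^2$ is a unit in $\mC((t))$, the universal property of localisation extends $\Sp : \mC[[t,s]] \to \mC((t))$ uniquely to $K_2 \to \mC((t))$, and the resulting map coincides with the formal substitution $s \mapsto t$ on elements of $K_2$.

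No real obstacle is expected; the argument amounts to formal bookkeeping about the substitution $s = t - a$. The only mildly delicate point is matching the $(t,a)$-adic and $(t,s)$-adic topologies to justify $\mC[[t,a]] \cong \mC[[t,s]]$, but since both rings describe the completion of $\mC[t,s]$ at the same maximal ideal, this is automatic, and in particular $\varphi$ and its inverse are continuous.
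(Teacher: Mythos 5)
Your proof is correct. The paper states this lemma with the remark that it is ``easy to check'' and offers no written argument, so there is no paper proof to compare against; your argument --- identifying $\mC[[t,s]]$ with $\mC[[t,a]]$ via the coordinate change $a \mapsto t-s$ so that $(t-s)$ becomes the principal ideal $(a)$, and then deducing the statement for $K_2$ by the compatibility of localisation with quotients, with $ts \mapsto t^2$ a unit in $\mC((t))$ --- is a clean and complete justification. The only (purely cosmetic) discrepancy is that you treat the $\mC[[t,s]]/(t-s)\cong\mC[[t]]$ claim first and then the $K_2/(t-s)\cong\mC((t))$ claim, whereas the lemma lists them in the opposite order; the mathematics is unaffected.
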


\subsection{Residues}\label{sec:residui}

By analogy to the case of the classical residue $\res:\mC((t))\lra \mC$, that takes $f(t)$ to the coefficient of $t^{-1}$ in $f(t)$, we introduce the following maps:
\begin{align*}
\res_t&:Q((t))\lra Q & &\text{coefficient of } t^{-1}\\  
\res_s&:Q((s))\lra Q & &\text{coefficient of } s^{-1}\\  
\res_{t,s}&:Q((t))\times Q((s)) \lra Q & &\res_{t,s}(f,g)=\res_t(f)+\res_s(g)\\
\res_2&:K_2\lra Q & & \res_{2}(f)=\res_{t,s}(E(f))
\end{align*}
The basic properties of these maps are discussed in the following lemma, whose proof is straightforward.
\begin{lemma}\label{lem:residues}
The following hold:
\begin{enumerate}[a)]
 \item $\res_t,\res_s,\res_{t,s},\res_2$ are continuous maps with respect to the natural topologies on the source and the discrete topology on $Q$;
 \item $\res_t,\res_s,\res_{t,s}$ are $Q$-linear and $\res_2$ is $A$-linear;
 \item the image of $\res_2$ is contained in $A$;
 \item $\Sp(\res_2 f)=\res \Sp(f)$; 
 \item for all $i,j$ we have
\[
\res_2{t^is^j}=\bigg(\binom{i}{-j-1}+(-1)^{i+j+1}\binom{j}{-i-1}\bigg)\:a^{i+j+1}.
 \]
In particular, for all $i\geq 0$ and $j<0$ we have 
$\res_2{t^is^j}=\binom{i}{-j-1}\,a^{i+j+1}$, 
and for all $j\geq 0$ and $i<0$ we have
 $\res_2{t^is^j}= \binom{j}{-i-1}\,(-a)^{i+j+1}$. Furthermore, $\res_2{t^is^j}=0$ if $i+j\leq-2$. \end{enumerate}
\end{lemma}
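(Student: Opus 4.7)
The plan is to reduce every statement to a direct check on monomials $t^is^j$ by proving (e) first, and then to derive the remaining parts as formal consequences of (e), the continuity of the expansion map, and the density of $R_2$ in $K_2$ (Proposition \ref{prp:base}).

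For (e), I would simply compute $\res_2 t^is^j = \res_t E_t(t^is^j) + \res_s E_s(t^is^j)$ via the binomial expansions $E_t(t^is^j) = t^i(t-a)^j$ and $E_s(t^is^j) = (s+a)^is^j$. Using the generalised binomial theorem for negative exponents, together with the convention $\binom{n}{k} = 0$ for $k < 0$, the coefficient of $t^{-1}$ (resp.\ of $s^{-1}$) isolates a single term contributing $(-1)^{i+j+1}\binom{j}{-i-1}a^{i+j+1}$ (resp.\ $\binom{i}{-j-1}a^{i+j+1}$). In the case $j < 0$ one needs the identity $(-1)^j\binom{-j-i-2}{-i-1} = (-1)^{i+j+1}\binom{j}{-i-1}$ in order to rewrite the expansion of $(t-a)^j$ in the uniform shape above; summing the two contributions then produces exactly the asserted formula, and the case-by-case reductions to the special statements in the ``In particular'' clause are immediate. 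The vanishing for $i+j \leq -2$ follows because in that regime both indices $-i-1$ and $-j-1$ must be strictly positive, and an elementary manipulation of generalised binomials shows that the two terms cancel.

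With (e) in hand the remaining parts are formal. For (a): the kernels of $\res_t$ and $\res_s$ contain the open subspaces $Q[[t]]$ and $Q[[s]]$ of the origin, so both are continuous into the discrete $Q$; the sum $\res_{t,s}$ is then continuous, and $\res_2 = \res_{t,s} \circ E$ is continuous by Lemma \ref{lem:exp}. For (b): $Q$-linearity is immediate from the definitions, and $A$-linearity of $\res_2$ follows because $E$ restricts to an $A$-linear map on $K_2$. For (c): by (e) one has $\res_2(t^is^j) \in A$ for every monomial, hence $\res_2(R_2) \subset A$ by $A$-linearity; given any $f \in K_2$ and a sequence $f_n \in R_2$ with $f_n \to f$, continuity into the discrete $Q$ forces $\res_2 f = \res_2 f_n \in A$ for $n$ large. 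For (d): both $\Sp \circ \res_2$ and $\res \circ \Sp$ are continuous (the specialisation map sends $(ts)^n\mC[[t,s]]$ into $t^{2n}\mC[[t]]$), so it suffices to check the identity on monomials, where by (e) the value $\Sp(\res_2 t^is^j)$ vanishes unless $i+j = -1$, in which case exactly one of the two binomial terms is non-zero and equals $1$; this matches $\res\Sp(t^is^j) = \res(t^{i+j}) = \delta_{i+j,-1}$.

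The only substantive work lies in the case analysis for (e), where one has to keep track of the four subcases coming from the signs of $i$ and $j$ and of the signs produced by the two binomial expansions. Everything else is either immediate or a routine combination of continuity and density.
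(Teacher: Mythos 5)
The paper gives no proof for this lemma (it simply asserts it is ``straightforward''), so there is no argument to compare against; the natural direct computation is what you carry out, and it is essentially correct. The core of (e) — expanding $E_t(t^is^j)=t^i(t-a)^j$ and $E_s(t^is^j)=(s+a)^is^j$ with the generalised binomial theorem, reading off the coefficients of $t^{-1}$ and $s^{-1}$, and summing — checks out, and the deduction of (a)--(d) from (e), continuity, $A$-linearity of $E|_{K_2}$, and density of $R_2$ in $K_2$ is sound.

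One slip in the justification for the case $i+j\leq -2$: you assert that in this regime ``both indices $-i-1$ and $-j-1$ must be strictly positive.'' This is false as stated — for instance $(i,j)=(0,-3)$ has $i+j\leq -2$ but $-i-1=-1<0$, and $(i,j)=(-1,-1)$ has $-i-1=-j-1=0$. The correct case split is: if $i\geq 0$ then $-j-1\geq i+1>i$ forces $\binom{i}{-j-1}=0$ and $-i-1<0$ forces $\binom{j}{-i-1}=0$, so both summands vanish individually (and symmetrically if $j\geq 0$); while if $i<0$ and $j<0$ then $-i-1,-j-1\geq 0$ (not necessarily positive) and the two terms cancel via $\binom{-p}{k}=(-1)^k\binom{p+k-1}{k}$ together with $\binom{p+q-2}{q-1}=\binom{p+q-2}{p-1}$. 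This does not affect the conclusion, but the stated justification does not actually cover all the cases with $i+j\leq -2$.
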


Moreover, the bases $u_n, v_n$ and $x_n, y_n$ are dual to each other with respect to $\Res_2$, in the sense that we have
\begin{align*}\label{eq:dualitabase}
\Res_2(u_n\cdot x_{m})&=0&
\Res_2(u_n\cdot y_{-m-1})&=\grd_{n,m}\\
\Res_2(v_n\cdot x_{-m-1})&=\grd_{n,m}&
\Res_2(v_n\cdot y_{m})&=0.
\end{align*}
As a consequence we obtain that, for all $f\in K_2$, we have
\begin{equation}\label{eq:resduale}
f(t,s) = \sum_{n \in  \mZ} \Res_2(fu_n)y_{-n-1} + \Res_2(fv_n)x_{-n-1}.
\end{equation}
A similar formula is obtained by exchanging the role of $\{u_n, v_n\}$ with that of $\{x_n, y_n\}$.

\begin{remark}\label{rem:Res2Derivatives}
For all $n$ we have $u_n'=n(y_{n-1}+v_{n-1})$ and $v_n'= (2n+1)u_n-nav_{n-1}$. Using the formulas in Lemma \ref{lem:residues}, this implies in particular
\begin{align*}
\Res_2(u_m u_n')   & = 2n\delta_{m,-n}, & \Res_2(v_m u_n') &=-an \delta_{m,-n},\\
\Res_2( y_m u_n' ) & = an \delta_{m,-n},& \Res_2(v_m v_n') &=(2n+1)\delta_{m,-1-n} + a^2n\delta_{m,-n}.
\end{align*}
\end{remark}

\subsection{Formal neighbourhood of two graphs in the formal disc}\label{sect:FormalNeighbourhoodGraphs} 

In this section we give an explicit description of the formal neighbourhood of two graphs in the formal disc. We will use this in Section \ref{ssec:opers} to get an algebraic representation of the space of 2-opers.
Let $\mathbb{D} = \Spec \mC[[t]]$ be the formal disc. 
Let $\Rtest$ be an $A$-algebra and define $X=\Spec \Rtest \xrightarrow{\psi} \mathbb{D}$ by $\psi^\sharp(f(t))=f(a)\in \Rtest$ for all $f(t)\in \mC[[t]]$.
Inside the product $X \times \mathbb{D}$ we consider the union $\Gamma$ of the two graphs $\Gamma_\psi, \Gamma_0$ of 
the morphisms $\psi$ and $x \mapsto 0$. The formal neighbourhood $\mathbb{D}_\psi$ of $\Gamma$ in $X \times \mathbb{D}$ is then 
given by the spectrum of the ring
\[
\Rtest[2] := \varprojlim_{n} \frac{\Rtest \otimes_{\mC} \mathbb{C}[[t]]}{(t(t-a))^n} = \varprojlim_{n} \frac{\Rtest[[t]]}{(t(t-a))^n}.
\]
\begin{remark}\label{rmk:SeriesVsPolynomials}
 Notice that replacing $\mC[[t]]$ with $\mC[t]$ in the above construction gives exactly the same ring $\Rtest[2]$.
\end{remark}

We now give an alternative description of this ring.
We define $R/\!/t,s/\!/$ to be the subring of $R[[t,s]]$ given by those elements $f = \sum_{i,j \geq 0} f_{ij} t^i s^j$ with the following property: for every $i$ there are only finitely many $j$ for which $f_{ij} \neq 0$, and symmetrically for every $j$ there are only finitely many $i$ for which $f_{ij} \neq 0$.
Similarly to the notation introduced in Section \ref{sez:base} for the ring $K_2$, we define elements $u_n, v_n$ of $R/\!/t,s/\!/ [\frac{1}{ts}]$ as in formula \eqref{eq:defuv}.

Consider the quotient $T := \displaystyle \frac{R/\!/t,s/\!/}{(t-s-a)}$. For every $n \geq 0$ there is a natural map $T \to \frac{R[[t]]}{(t(t-a))^n}$ sending $t \mapsto t, s \mapsto t-a$, so we get a homomorphism $T \to R[2]$. We now prove that this is an isomorphism. 
We begin with the following lemma:
\begin{lemma}Let $\Rtest$ and $T$ be as in the discussion above.
For every $n \geq 0$, the elements $\{u_k, v_k \bigm\vert 0 \leq k \leq n-1\}$ form an $\Rtest$-basis of the quotient $T/(u_n)$. Their images in $\frac{R[[t]]}{((t-a)t)^n}$ also form of an $\Rtest$-basis of the latter.
\end{lemma}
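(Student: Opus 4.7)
The plan is to establish both assertions in tandem by exploiting the natural $R$-linear map $\phi_n \colon T/(u_n) \to R[[t]]/((t-a)t)^n$ induced by the substitution $s \mapsto t-a$. This map is well-defined because $t-s-a$ maps to $0$ and $u_n = t^n s^n$ maps to $(t(t-a))^n$, and by construction it sends $u_k, v_k$ to their namesakes on the target side.

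I would first dispatch the second claim, about $R[[t]]/((t-a)t)^n$. Since $(t(t-a))^n$ is monic of degree $2n$ in $t$, the quotient is canonically isomorphic to $R[t]/((t-a)t)^n$, which is a free $R$-module of rank $2n$ with standard basis $\{1, t, \ldots, t^{2n-1}\}$. The images of $u_k$ and $v_k$ are the monic polynomials $(t(t-a))^k$ and $t^k(t-a)^{k+1}$ of degrees $2k$ and $2k+1$ in $t$, so once ordered as $u_0, v_0, u_1, v_1, \ldots, u_{n-1}, v_{n-1}$, the change-of-basis matrix from the standard basis is unitriangular, hence invertible.

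For the first claim, linear independence in $T/(u_n)$ is immediate from the second via $\phi_n$: any nontrivial $R$-linear relation would map to a nontrivial relation in the target, contradicting the basis property just proved. It therefore remains to show that $\{u_k, v_k\}_{k<n}$ spans $T/(u_n)$ over $R$. Given $f = \sum f_{ij}\, t^i s^j \in R/\!/t,s/\!/$, the plan is to split $f = f_1 + g$, where $f_1 = \sum_{i,j \geq n} f_{ij}\, t^i s^j$ and $g$ collects the remaining terms. The series $f_1$ factors as $u_n \cdot \tilde f$ with $\tilde f = \sum_{i',j' \geq 0} f_{i'+n,\, j'+n}\, t^{i'} s^{j'}$ still in $R/\!/t,s/\!/$ (by the row-column finiteness of $f$), so $f_1 \in (u_n)$. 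Meanwhile $g$ is a genuine polynomial in $R[t,s]$: its indices satisfy $i<n$ or $j<n$, and for each of the $n$ allowed values of the restricted index the other index runs over a finite set, again by row-column finiteness. Having replaced $f$ by the polynomial $g$, I can now reduce modulo $(t-s-a, u_n)$ inside $R[t,s]$: the isomorphism $R[t,s]/(t-s-a, u_n) \cong R[t]/((t-a)t)^n$ (induced by $s \mapsto t-a$ and using $u_n \equiv (t(t-a))^n$) lets me invoke the second claim to write the image of $g$ uniquely as an $R$-combination of $\{u_k, v_k\}_{k<n}$, and lifting this combination back to $P$ by means of the same symbols $u_k, v_k$ yields the desired expression for $f$ modulo $(t-s-a, u_n)$.

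The main obstacle I anticipate is the decomposition $f = f_1 + g$ itself: one must use the row-column finiteness built into the definition of $R/\!/t,s/\!/$ to verify both that $\tilde f$ lies in $R/\!/t,s/\!/$ (so that $f_1$ genuinely lies in $(u_n)\cdot R/\!/t,s/\!/$) and that $g$ is truly a polynomial (so that ordinary polynomial arithmetic can be used for the subsequent reduction). Once this technical point is settled, the rest is a formal consequence of the basis property of the target of $\phi_n$.
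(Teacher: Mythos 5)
Your proposal is correct and follows essentially the same route as the paper: both rely on the natural map $T/(u_n) \to R[[t]]/((t(t-a))^n)$, prove linear independence by observing that the images of $u_k, v_k$ are monic polynomials of distinct degrees less than $2n$, and then establish generation. Where the paper disposes of the generation step with a terse ``follows immediately by induction on $n$,'' you supply a direct argument via the decomposition $f = f_1 + g$ and the row-column finiteness of $R/\!/t,s/\!/$, which is a welcome elaboration of the same underlying idea.
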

\begin{proof}
 The map $T \to \frac{R[[t]]}{((t-a)t)^n}$ is clearly surjective and factors via $T/(u_n)$. Hence it suffices to show that the images in $T/(u_n)$ of the elements $u_k, v_k$ are $\Rtest$-linearly independent and generate $T/(u_n)$ over $\Rtest$.
To check linear independence it suffices to notice that $u_k, v_k$ map to monic polynomials in $t$ whose degrees are all different and less than $2n$. The fact that they generate follows immediately by induction on $n$.
\end{proof}

The fact that $T \to \Rtest[2]$ is an isomorphism now follows easily: its kernel is trivial (because it is given by the intersection of the ideals $(u_n)$, which is clearly $(0)$), and it is surjective because $T$ is complete with respect to the topology generated by the ideals $(u_n)$ (hence $T$ contains any series of the form $\sum_{k \geq 0} s_ku_k + s'_k v_k$ with $s_k, s'_k \in \Rtest$).

\begin{remark}
 This argument shows in particular that the elements of $T$ can be represented (in a unique way) as series $\sum_{k \geq 0} s_k u_k +s'_k v_k$ with $s_k, s'_k \in \Rtest$. 
\end{remark}

In Section \ref{sect:Opers} we will also consider the ring $R(2) := R[2][\frac{1}{t}, \frac{1}{s}]$ and its quotients by its $R[2]$-submodules $(ts)^n R[2]$. A similar proof shows that, for a fixed integer $n$, the set $\{u_k, v_k \bigm\vert k \in \mathbb{Z}, k \leq n-1\}$ forms an $\Rtest$-basis of this quotient.
Notice moreover that this description (together with Proposition \ref{prp:base}) implies that  in the case $R=A$ we have isomorphisms
\[  A[2] \simeq \mC[[t,s]], \quad A(2) \simeq K_2. \]

Similarly to what we did in Section \ref{sez:base} we now study the two special cases $a=0$ and $a \in \Rtest^\times$. As in that case, we have expansion and specialisation maps
$$
E:R(2)\lra R_a((t))\times R_a((s)) 
\qquad
\Sp:R(2)\lra \frac R{(a)} ((t)).
$$
The following proposition is easy to show by arguments similar to those used in Section \ref{sez:base}.
\begin{proposition}\label{prop:FactorisationPropertiesR} The following hold:
\begin{itemize}
 \item If $a=0$ in $R$, then the specialisation map $\Sp$ induces isomorphisms $R[2]\simeq\Rtest[[t]]$ and $R(2)\simeq R((t))$. 
 \item If $a$ is invertible in $R$, then the expansion map induces isomorphisms $R[2] \simeq R[[t]]\times R[[s]]$ and $R(2)\simeq R((t))\times R((s))$.
\end{itemize}
\end{proposition}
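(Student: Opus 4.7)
Both parts of the proposition are established by the same strategy: use the explicit $R$-module basis $\{u_k, v_k : k \geq 0\}$ of $R[2]$ (established in the preceding discussion via the identification $R[2] \simeq T$) to identify it with the target ring, then extend to $R(2)$ by inverting $t$ and $s$.

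For the case $a = 0$ in $R$, the specialisation map sends $s \mapsto t$, so $u_k = t^k s^k \mapsto t^{2k}$ and $v_k = t^k s^{k+1} \mapsto t^{2k+1}$. Since every element of $R[2]$ has a unique expansion $\sum_{k \geq 0}(r_k u_k + r'_k v_k)$ with $r_k, r'_k \in R$, and the images of $u_k, v_k$ are exactly the standard topological basis $\{t^n\}_{n \geq 0}$ of $R[[t]]$, the map $\Sp : R[2] \to R[[t]]$ is a continuous bijection and hence an isomorphism of topological rings. Since $s = t$ when $a = 0$, inverting $t$ and $s$ in $R[2]$ yields $R(2) \simeq R((t))$.

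For the case $a \in R^\times$, the key ingredient is the Chinese Remainder Theorem: the ideals $(t)$ and $(t-a)$ are coprime in $R[t]$ because their sum contains the unit $a$. At each finite level this gives
\[
\frac{R[t]}{(t(t-a))^n} \xrightarrow{\sim} \frac{R[t]}{(t^n)} \times \frac{R[t]}{(t-a)^n} \simeq \frac{R[[t]]}{(t^n)} \times \frac{R[[s]]}{(s^n)},
\]
the final isomorphism being the substitution $s = t-a$. These isomorphisms are compatible with the natural projections (the transition maps of the two inverse systems), so passing to the limit yields $R[2] \simeq R[[t]] \times R[[s]]$, and tracing through the construction identifies this with the expansion map $E$.

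For the statement about $R(2)$, observe that under the isomorphism just constructed the elements $t, s \in R[2]$ correspond to $(t, s+a)$ and $(t-a, s)$ respectively. Since $a \in R^\times$, the element $s+a$ is a unit in $R[[s]]$ and $t-a$ is a unit in $R[[t]]$. Inverting $t$ and $s$ in the product ring $R[[t]] \times R[[s]]$ therefore amounts to inverting only $t$ in the first factor and only $s$ in the second, yielding $R((t)) \times R((s))$. I do not anticipate any serious obstacle: the two points requiring some care---the compatibility of the finite-level CRT isomorphisms with the transition maps of the inverse system, and the verification that localisation commutes appropriately with the product decomposition---are both routine given the basis description.
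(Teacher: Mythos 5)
Your proof is correct, and it matches the approach the paper indicates (it states only that the proposition follows "by arguments similar to those used in Section \ref{sez:base}", whose Lemma \ref{lem:exp} is precisely the coprimality/CRT argument you spell out and whose Lemma \ref{lem:specializ} is the specialisation case). The only spots where you should be slightly more explicit, though neither is a gap, are that $\Sp$ carries the ideal $(u_n)$ onto $(t^{2n})$ (so it is open, not just a continuous bijection), and that localisation of a finite product of rings distributes over the factors.
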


The derivative with respect to $t$ naturally induces canonical $R$-linear derivations on the rings $R[2]$ and $R(2)$. We will denote both these derivations by $f \mapsto \partial f$ or by $f \mapsto \dot f$. Similar derivations can also be defined on the rings $R_a((t)), R/(a), R_a((s))$. Notice that the expansion and specialisation maps commute with these derivations.

\subsection{Checking isomorphisms generically and along the diagonal}

Our strategy to prove many of the results in this paper is to study the situation `outside of the diagonal' (i.e.~when $a$ is invertible) and along it ($a=0$). The next lemma allows us to obtain global information from these two special cases:

\begin{lemma}\label{lem:isoMN}
Let $M, N$ be two $A$-modules and $\grf:M\lra N$ be a morphism of $A$-modules. Then
\begin{enumerate}[a)]
 \item if $M$ is flat and $\grf_a:M[a^{-1}]\lra N[a^{-1}]$ is injective, then $\grf$ is injective.
 \item if $N$ is flat, $\grf_a:M[a^{-1}]\lra N[a^{-1}]$ is surjective, and $\overline{\grf} :M/aM\lra N/aN$ is injective, then $\grf$ is surjective.
\end{enumerate}
In particular, if $M$ and $N$ are flat, $\grf_a:M[a^{-1}]\lra N[a^{-1}]$ is an isomorphism, and $\overline{\grf}:M/aM\lra N/aN$ is injective, then $\grf$ is an isomorphism.
\end{lemma}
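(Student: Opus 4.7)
The plan is to prove (a) and (b) separately and then deduce the concluding statement by combining them. The key structural fact is that $A = \mC[[a]]$ is a DVR, so an $A$-module is flat if and only if it is $a$-torsion free; in particular, any flat $A$-module $M$ embeds into its localisation $M[a^{-1}]$.

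For (a), let $x \in \ker\varphi$. Its image in $M[a^{-1}]$ lies in $\ker\varphi_a = 0$, so it vanishes; since $M \hookrightarrow M[a^{-1}]$ by flatness, we conclude $x = 0$.

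For (b), set $C = \operatorname{coker}\varphi$. Surjectivity of $\varphi_a$ translates to $C[a^{-1}] = 0$, so every element of $C$ is annihilated by some power of $a$. I will show that in fact the $a$-torsion submodule $C[a]$ already vanishes, which forces $C = 0$ (if $c \neq 0$ and $n \geq 1$ is minimal with $a^n c = 0$, then $a^{n-1} c$ is a nonzero element of $C[a]$). To control $C[a]$, apply $- \otimes_A A/(a)$ to the short exact sequence $0 \to \operatorname{Im}\varphi \to N \to C \to 0$. The resulting Tor long exact sequence contains
\[
\operatorname{Tor}_1^A(N, A/(a)) \longrightarrow \operatorname{Tor}_1^A(C, A/(a)) \longrightarrow \operatorname{Im}\varphi / a\operatorname{Im}\varphi \longrightarrow N/aN.
\]
The leftmost term vanishes because $N$ is flat, while the standard identification $\operatorname{Tor}_1^A(C, A/(a)) = C[a]$ exhibits $C[a]$ as the kernel of $\operatorname{Im}\varphi/a\operatorname{Im}\varphi \to N/aN$. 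Now $\bar\varphi$ factors as $\bigl( \operatorname{Im}\varphi/a\operatorname{Im}\varphi \to N/aN \bigr) \circ \bigl( M/aM \twoheadrightarrow \operatorname{Im}\varphi/a\operatorname{Im}\varphi \bigr)$; the injectivity of $\bar\varphi$ forces the second (surjective) factor to be injective as well, hence an isomorphism, and then also makes the first factor injective. Therefore $C[a] = 0$, and $C = 0$.

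The `in particular' assertion is then an immediate combination of (a) and (b). The step I expect to require the most care is the Tor-based identification of $C[a]$ together with the diagram-chase that extracts the injectivity of $\operatorname{Im}\varphi/a\operatorname{Im}\varphi \to N/aN$ from the injectivity of $\bar\varphi$; the rest is formal manipulation that only uses the fact that $A$ is a DVR.
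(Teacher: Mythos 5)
Your proof is correct. For part (a) you and the paper use the same one-line argument. For part (b), the paper works directly with elements of $N$: given $y \in N$ with $a^n y = \grf(x)$, it reduces modulo $a$, invokes injectivity of $\bar\grf$ to factor $x = a x'$, cancels $a$ using flatness of $N$, and closes by induction on $n$. You instead phrase everything on the cokernel $C$: you note $C$ is $a$-power torsion because $C[a^{-1}]=0$, reduce the problem to showing the $a$-torsion submodule $C[a]$ vanishes, and extract $C[a] \cong \ker\bigl(\operatorname{Im}\grf/a\operatorname{Im}\grf \to N/aN\bigr)$ from the $\operatorname{Tor}$ long exact sequence (using flatness of $N$ to kill $\operatorname{Tor}_1^A(N,A/(a))$), finishing with a short diagram chase from the injectivity of $\bar\grf$. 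The two arguments are proving the same thing — that the map $\operatorname{Im}\grf/a\operatorname{Im}\grf \to N/aN$ is injective, and that this plus $a$-power torsion forces $C=0$ — but yours packages the element-level induction into the standard identification $\operatorname{Tor}_1^A(C,A/(a)) = C[a]$, while the paper's is entirely elementary and self-contained. The homological framing is slicker if the reader already knows that identification; the paper's version has the advantage of not invoking any machinery beyond flatness $\Rightarrow$ torsion-free. One tiny remark: in your diagram chase you call $M/aM \twoheadrightarrow \operatorname{Im}\grf/a\operatorname{Im}\grf$ the ``second factor'' although it is the one applied first; this is only a matter of wording and the logic (injective composite with surjective inner map forces both factors injective) is sound.
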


\begin{proof}
\begin{enumerate}[a)]
\item As $M$ is flat, we can consider it as a submodule of $M[a^{-1}]$. The injectivity of
$\grf:M\lra N$ then follows immediately from the assumption on $\grf_a$.

\item By flatness we can again consider $N$ as a submodule of $N[a^{-1}]$. Let $y\in N$. 
As $\grf_a$ is surjective, there exists $n\geq 0$ and $x\in M$ such that
$y=\grf(x/a^n)$, or equivalently $a^n y =\grf(x)$. We prove that $y$ is in the image of $\grf$ by induction on $n$. 
For $n=0$ there is nothing to prove.
If $n>0$ then $\overline{\grf}(\overline{x})=0$, where $\overline{x}$ is the image of $x$ in $N/aN$. Since $\overline{\grf}$ is injective, we have $\overline{x}=0$, or equivalently $x=ax'$ for some $x' \in N$. As $N$ is flat we deduce $a^{n-1}y=\grf(x')$, and we conclude by induction.
\end{enumerate}
\end{proof}

\section{Opers}\label{sect:Opers}

We start by recalling the definition of an oper, due to Beilinson and Drinfeld \cite{BD:quantization},  
in the more usual case of one singularity. We follow \cite[Part I]{FG2}.
Let $G$ be the simple, simply connected complex algebraic group with Lie algebra $\gog$, so that $G^L$ is a group of adjoint type (with Lie algebra $\mathfrak{g}^L$).
The opers we are interested in are particular equivalence classes of $G^L$-connections on the formal 
punctured disc. We fix a local coordinate $t$ for the formal disc and let $\Rtest$ be a $\mC$-algebra. 
We consider $\Rtest$-families of connections on the punctured disc of the form 
$$ \nabla = \frac d {dt} + M, $$
where $M \in  \gog^L \otimes \Rtest((t)) $. There is a natural action of the loop group of $G^L$ on the set of connections: 
identifying a connection with the corresponding matrix $M$, for $H\in G^L(\Rtest((t)))$ we set
\begin{equation}\label{eq:HMazione}
 H \cdot M := \Ad_H (M) - \dot{H}\,H^{-1}. 
\end{equation}
To define the class of opers we fix a maximal torus $T$ of $G^L$ 
and a Borel subgroup $B$ containing $T$, and let $\got\subset \gob \subset \gog^L$ be the corresponding Lie algebras. We also fix root vectors 
$e_\gra\in \gog^L$. 
The set of connections for which $M$ is of the form
\begin{equation}\label{eq:Oper}
p + C, \quad \text{ with } C \in \mathfrak{b} \otimes \Rtest((t))   \text{ and } p\in \sum_{\gra\text{ simple}} e_{-\gra} \, R((t))^*,
\end{equation}
is stable under the action of the loop group $B(\Rtest((t)))$.

\begin{definition}\label{def:Oper} 
An \textbf{oper} over a $\mC$-algebra $\Rtest$ is a $B(\Rtest((t)))$-equivalence class of connections with matrix $M$ of the form \eqref{eq:Oper}. We denote by $\Op_1^*$ the functor from the category of $\mC$-algebras to the category of sets 
which assigns to a $\mC$-algebra $\Rtest$ the set of opers over $\Rtest$.
\end{definition}

When $\mathfrak{g}=\mathfrak{g}^L=\mathfrak{sl}_2$, so that $G^L=\PSL_2$, the space of opers has a simple description. 
Namely, up to $B(\Rtest((t)))$-equivalence, every oper can be represented as
\begin{equation}\label{eq:RepresentingOpers}
\frac{d}{dt} + \begin{pmatrix}
0 & f \\
1 & 0
\end{pmatrix}
\end{equation}
for a certain (unique) $f \in \Rtest((t))$. We may therefore think of $\PSL_2$-opers as elements of $\Rtest((t))$. 
In particular, the functor $\Op_1^*$ is representable by the ind-scheme
\[
\Op_1^* := \varinjlim_{N \to \infty} \operatorname{Spec} \mathbb{C}[\zz_i : i \geq -N],
\]
where $\zz_i$ corresponds to the coefficient of $t^i$. 
We denote the scheme $\operatorname{Spec} \mathbb{C}[\zz_i : i \geq -N]$ by $(\Op_1^*)_{\geq -N}$. We will also need the 
following variants, where it will be important to keep track of the name of the variable: as functors,
\[
\begin{array}{cccc}
 \Op^*_t : & Q-\text{alg} & \to & \mathbf{Set} \\
 & \Rtest & \mapsto & \Rtest((t))
 \end{array}, \quad \begin{array}{cccc}
 \Op^*_s : & Q-\text{alg} & \to & \mathbf{Set} \\
 & \Rtest & \mapsto & \Rtest((s)).
 \end{array}
\]
These functors are clearly representable by the ind-schemes
\[
\Op_t^* := \Op_1^* \times_{\Spec \mC} \Spec Q \quad \text{ and }\quad\Op_s^* := \Op_1^* \times_{\Spec \mC} \Spec Q.
\]
The coordinates on $\Op_t^*$, respectively $\Op_s^*$, will be denoted by $\zzt_i$, respectively $\zzs_i$.
We will also use the notations $(\Op_t^*)_{\geq -N}$, $(\Op_s^*)_{\geq -N}$ with the obvious meaning.

\subsection{The functor $\Op^*_2$}\label{ssec:opers}
We now consider a generalisation of the construction in the previous paragraph, based on the idea of building a factorisation space of opers $\Op^*_n$ (see for example the PhD thesis of the first author of this paper \cite[Section 3.5]{GiorgiaPhD}).
In the special case $G^L=\PSL_2$ and $n=2$ we are interested in, we give 
an explicit description of this space and of the associated (ind-)scheme.

As in Section \ref{sect:FormalNeighbourhoodGraphs}, fix an affine test scheme $X=\Spec \Rtest$ and a map 
$X \xrightarrow{\psi} \mathbb{D}$; equivalently, $\Rtest$ is a $\mC[[a]]$-algebra.
We consider implicitly that we have a second map $X \to \mathbb{D}$, 
that we take to be the trivial map carrying all of $X$ to $0 \in \mathbb{D}$. 
We now consider meromorphic connections on $\mathbb{D}$, parametrised by $X$, allowing for singularities in $0$ (considered as the graph of the constant map) and along the graph of $\psi$. 
To give an algebraic description of these connections, we write them in the form $\frac{d}{dt}+M$, where $M\in \Mat_{2\times 2}(\Rtest(2))$ and $\Rtest(2)$ is as in Section \ref{sect:FormalNeighbourhoodGraphs}. 
The action of the group $\PSL(2,R(2))$ on the set of connections written in this form is still given 
	by formula \eqref{eq:HMazione}, where $\dot H$ denotes the canonical derivation on the ring $R(2)$ introduced at the end of Section \ref{sect:FormalNeighbourhoodGraphs}.
\begin{definition}\label{def:2Oper} A $2$-oper is a $B(\Rtest(2))$-conjugacy class of connections $\frac{d}{dt}+M$ where $M$ is of the form
\[
p + C, \quad \text{ with } C \in \mathfrak{b} \otimes \Rtest(2) \text{ and } p\in \sum_{\alpha \text{ simple}} e_{-\gra} \, R(2)^* .
\]
\end{definition}
As in the case of one singularity, one shows without difficulty that every 2-oper has a unique representative of the form \eqref{eq:RepresentingOpers}, where $f$ is now an element of $\Rtest(2)$. 

\begin{remark}
 In the fundamental case $X=\mathbb{D} \xrightarrow{\operatorname{id}} \mathbb{D}$, the graph of $\psi$ is a subscheme of 
 $\mathbb{D} \times \mathbb{D}$, where the two discs have two very different roles: the latter is the space on which the 
 connection is defined (and in particular the derivation $\frac{d}{dt}$ in the formula above is with respect to the 
 variable of this disc), while the former is a space of parameters. To avoid confusion, we identify the two discs with 
 $\Spec \mathbb{C}[[a]]$ and $\Spec \mathbb{C}[[t]]$ respectively. Notice that this is consistent with the notation 
 employed in Section \ref{sect:FormalNeighbourhoodGraphs}, where $a$ denotes the pullback of the coordinate on $\mathbb{D}$ to the scheme $X$. 
 For this reason, we consider $X$ as a scheme over $\Spec \mathbb{C}[[a]]$.
\end{remark}
Based on the above discussion, we define the functor
\[
\Op_2^*: \, A-\text{alg}\lra \mathbf{Set} \quad \text{ by }\quad  \Op^*_2(\Rtest)=\Rtest(2).
\]
By what we have explained in Section \ref{sect:FormalNeighbourhoodGraphs}, this functor is represented by an ind-scheme: indeed, it is the limit of the functors
$$
\left(\Op_2^*\right)_{\geq -N}(\Rtest)= \frac{1}{(ts)^N} \Rtest[2], 
$$
each of which is represented by
$$
\Spec A[a_i, b_i:i\geq -N],
$$
where $a_i$ corresponds to the coefficient of $u_i$ and $b_i$ to the coefficient of $v_i$ (see Section \ref{sect:FormalNeighbourhoodGraphs}).
Notice that we have $\Op^*_2(A)\simeq \mC[[t,s]][\frac{1}{ts}]$ and $\Op^*_1(\mC)=\Op^*_2(\mC)\simeq \mC((t))$, 
where $\mC$ is given the structure of an $A$-algebra by sending $a$ to $0$.
Finally, we remark that the geometric analogue of Proposition \ref{prop:FactorisationPropertiesR} 
immediately yields the following:
\begin{proposition}\label{prop:FactorisationProperties}
The following hold:
\begin{enumerate}
 \item  Let $\Op^*_2|_{a=0}$ be the subfunctor of $\Op^*_2$ given by $\Op^*_2 \times_{\Spec A} \Spec \mC$.
The map 
\[
 \Op^*_2|_{a=0}(\Rtest) \ni f \mapsto \Sp(f) \in \Op^*_1(\Rtest)
\]
is an isomorphism between $\Op^*_2|_{a=0}$ and $\Op^*_1$. For all integers $N$, this isomorphism restricts to 
an isomorphism between the two schemes $(\Op^*_2)_{\geq-N}|_{a=0}$ and $(\Op^*_1)_{\geq-2N}$.

\item Let $\Op^*_2|_{a \neq 0}$ be the subfunctor of $\Op^*_2$ given by $\Op^*_2 \times_{\Spec A} \Spec Q$.
The map 
\[
 \Op^*_2|_{a \neq 0}(\Rtest) \ni f \mapsto (E_t(f), E_s(f)) \in  \Op^*_t(\Rtest) \times \Op^*_s (\Rtest)\ 
\]
is an isomorphism between 
$\Op^*_2|_{a \neq 0}$ and $\Op^*_t \times_{\Spec Q} \Op^*_s$.
For all integers $N$, this isomorphism restricts to 
an isomorphism between the two schemes $(\Op^*_2)_{\geq-N}|_{a\neq 0}$ and $(\Op^*_t)_{\geq-N} \times_{\Spec Q} (\Op^*_s)_{\geq-N}$.
\end{enumerate}
\end{proposition}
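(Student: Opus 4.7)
The plan is to derive the proposition almost entirely from Proposition \ref{prop:FactorisationPropertiesR} together with the definitions of the functors involved. Since $\Op_2^*(R) = R(2)$, $\Op_1^*(R) = R((t))$, $\Op_t^*(R) = R((t))$ and $\Op_s^*(R) = R((s))$, the claimed isomorphisms at the level of $R$-points follow by merely rewriting Proposition \ref{prop:FactorisationPropertiesR} in the language of these functors. Naturality in $R$ is automatic, since $\Sp$, $E_t$ and $E_s$ are defined functorially on $A$-algebras.

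For part (1), when $a = 0$ in $R$, Proposition \ref{prop:FactorisationPropertiesR} gives $\Sp : R(2) \xrightarrow{\sim} R((t))$. This is exactly the map $\Op^*_2|_{a=0}(R) \to \Op^*_1(R)$ described in the statement. For part (2), when $a$ is invertible in $R$, the same proposition yields $(E_t, E_s) : R(2) \xrightarrow{\sim} R((t)) \times R((s))$, which is the map displayed in the statement.

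The remaining point is to track the pole orders, i.e., to check how the truncations correspond. For this I would observe that under $\Sp$ one has $\Sp(ts) = t^2$, so the subspace $\frac{1}{(ts)^N} R[2]$ of $R(2)$ is mapped isomorphically onto $\frac{1}{t^{2N}} R[[t]]$, giving the claimed identification $(\Op_2^*)_{\geq -N}|_{a=0} \simeq (\Op_1^*)_{\geq -2N}$. Similarly, under $E_t$ we have $E_t(ts) = t(t-a)$, and when $a$ is a unit in $R$ the factor $t-a = -a(1-t/a)$ is a unit in $R[[t]]$; hence $\frac{1}{(ts)^N}$ gets sent to $\frac{1}{t^N}$ times a unit in $R[[t]]$. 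The symmetric statement holds for $E_s$. Consequently $(E_t, E_s)$ carries $\frac{1}{(ts)^N} R[2]$ isomorphically onto $\frac{1}{t^N} R[[t]] \times \frac{1}{s^N} R[[s]]$, which yields the desired identification $(\Op_2^*)_{\geq -N}|_{a \neq 0} \simeq (\Op_t^*)_{\geq -N} \times_{\Spec Q} (\Op_s^*)_{\geq -N}$.

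There is no real obstacle in this proof: the substantive work has already been done in Proposition \ref{prop:FactorisationPropertiesR}, and the only step that requires a moment's attention is the bookkeeping of pole orders, which is handled by the two computations $\Sp(ts) = t^2$ and $E_t(t-a), E_s(t-a) \in R[[t]]^\times, R[[s]]^\times$ when $a \in R^\times$.
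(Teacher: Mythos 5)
Your proof is correct and takes exactly the approach the paper intends: the paper dismisses the proposition as an "immediate" geometric consequence of Proposition \ref{prop:FactorisationPropertiesR}, and you simply spell out the small pole-order bookkeeping (via $\Sp(ts)=t^2$ and the fact that $t-a$, $s+a$ are units when $a$ is invertible) that makes the truncated versions match up.
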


We denote by $\Sp:\Op^*_2|_{a=0}\lra \Op_1^*$ and $E:\Op^*_2|_{a\neq 0}\lra \Op_t^*\times_{\Spec Q} \Op^*_s$ the two isomorphisms described in the previous proposition.
We denote also by $\Sp$, respectively $E$, the restriction of these isomorphisms to 
isomorphisms between $(\Op^*_2)_{\geq-N}|_{a=0}$ and 
$(\Op^*_1)_{\geq-2N}$, respectively 
between $(\Op^*_2)_{\geq-N}|_{a=0}$ $(\Op^*_t)_{\geq-N} \times_{\Spec Q} (\Op^*_s)_{\geq-N}$.
We will need some information on these isomorphisms that we will obtain using the coordinates $a_i, b_i, \zz_i, \zzt_i, \zzt_s$ introduced above. 
In the formulas below we write $a_i,b_i$ also for the restrictions of these functions to the closed subscheme $a=0$ or to its open complement. Denoting by $E^\sharp$ and $\Sp^\sharp$ the pullback maps on 
coordinate rings, we have 
\begin{equation}\label{eq:azbz}
 \Sp^{\sharp}(\zz_{2i})=a_i\ ,\qquad \Sp^{\sharp}(\zz_{2i+1})=b_i
\end{equation}
and
\begin{equation}\label{eq:xyuv}
\begin{array}{c}
\displaystyle E_t^{\sharp}(\zzt_n)= \sum_{i=-\infty}^n  \binom{i}{n-i}  (-a)^{2i-n}a_i  + \sum_{i=-\infty}^n  \binom{i+1}{n-i}  (-a)^{2 i-n+1} b_i \ ,\\
\displaystyle E_s^{\sharp}(\zzs_n)= \sum_{i=-\infty}^n  \binom{i}{n-i}   a^{2 i-n}a_i   + \sum _{i=-\infty}^{n-1} \binom{i}{n-i-1} a^{2 i-n+1} b_i \ .
\end{array}
\end{equation}
Setting $a_i=b_i=0$ for $i<-N$, the same formulas also describe
an isomorphism between the coordinate rings of 
 $(\Op^*_t)_{\geq-N} \times_{\Spec Q} (\Op^*_s)_{\geq-N}$ and
 $(\Op^*_2)_{\geq-N}|_{a\neq 0}$. 
Finally, these isomorphisms are homogeneous if the variables are graded as follows.

\begin{definition}\label{def:Degrees}
We assign degree $i+2$ to the variables $\zzt_i$, $\zzs_i$ and $\zz_i$, degree 
$-1$ to $a$, degree $2i+2$ to $a_i$, and degree $2i+3$ to $b_i$. With this choice, the above equations are homogeneous.
\end{definition}

\subsection{Functions and derivations on opers}\label{sez:funzionioper}

In this section we discuss some technical aspects of the definition of the space of regular functions on an ind-scheme, with a focus on our cases of interest $\Op_1^*$ and $\Op_2^*$. We also spell out the topological structure of these spaces of functions, and describe how the vector fields on the punctured disc act on them.

Let $S$ be a ring and $X=\limind X_n$ be the inductive limit of the affine $S$-schemes $X_n$. We denote by $\Funct_S(X)=\Funct(X)$ the $S$-algebra of regular functions on $X$, that is, $\Hom_S(X,\Spec S[t])$. This is the projective limit of the coordinate rings  of the schemes $X_n$. 
If $S\lra S'$ is a ring homomorphism, it is not true in general that $\Funct(\Spec S'\times_{\Spec S}X )=S'\otimes_S\Funct(S)$ (this may fail even when $S \to S'$ is a localisation). A similar problem arises for the product of two ind-schemes. In our situation 
this will not cause any real issues, but it forces us to include some 
topological clarifications. The construction of $\Funct(X)$ as a projective limit endows it with a natural topology, where we consider all the coordinate rings of the schemes $X_n$ to be equipped with the discrete topology. In order to discuss products we will also need the tensor product topology. Recall that the tensor product $X \otimes Y$ of two topological vector spaces $X, Y$ is naturally equipped with the topology for which the neighbourhoods of zero are of the form $U \otimes Y + X \otimes V$, where $U$ and $V$ are neighbourhoods of zero in $X$ and $Y$ respectively.

\smallskip

In our setting, the relevant spaces of regular functions have very simple descriptions: for example, one has
\begin{align*}
\Funct(\Op_2^*) = & \{f\in A[[\dots,b_{-1},a_0,b_0,a_1,\dots]]\st \text{for all $N \in \mathbb{Z}$, the series $f(\dots,0,a_N,b_N,\dots)$}\\
& \quad \text{obtained by
	specialising $a_i$ and $b_i$ to $0$ for $i<N$ is a polynomial} \},
\end{align*}
and similarly for the other spaces of opers. In particular $\Funct(\Op_2^*)$ is torsion-free, hence flat over $\mC[[a]]$.

Let now $Y=\limind Y_n$ and let $\grf:Y\lra X$ be a morphism of ind-schemes, obtained as a limit of morphisms of schemes $\grf_n:Y_n\lra X_n$. The map $\grf$ induces a natural continuous map $\grf^\sharp:\Funct(X)\lra\Funct(Y)$, obtained by precomposition. 
For example, the isomorphism of ind-schemes $\Sp : \Op_2^*|_{a=0} \to \Op_1^*$ of Proposition \ref{prop:FactorisationProperties} induces by pullback an isomorphism  
	$\Sp^\sharp:\Funct(\Op_1^*)\lra \Funct(\Op^*_2|_{a=0})$. The map
	$\calSp:\Funct(\Op_2^*)\lra \Funct(\Op^*_1)$ is defined as the composition of the restriction map from $\Funct(\Op_2^*)$ to $\Funct(\Op_2^*|_{a=0})$ with $(\Sp^\sharp)^{-1}$.
We similarly define an expansion map $\calE$ as the composition of the inclusion of $\Funct(\Op_2^*)[a^{-1}]$ in 
$\Funct(\Op_2^*|_{a\neq 0})$ with the isomorphism $(E^{-1})^{\sharp}:\Funct(\Op_2^*|_{a\neq 0}) \lra \Funct(\Op_t^*
\times_{\Spec Q} \Op_s^*)$ induced by the second part of Proposition \ref{prop:FactorisationProperties}.

It is easy to check that the specialisation map $\calSp$ is surjective, that it induces an isomorphism between $\Funct(\Op_2^*)/a\Funct(\Op_2^*)$ and $\Funct(\Op_1^*)$, and that the natural topology on $\Funct(\Op_1^*)$ agrees with quotient topology.

The expansion map $\calE:\Funct(\Op_2^*)[a^{-1}]\lra \Funct(\Op_t^*
\times_{\Spec Q} \Op_s^*)$ is injective and has dense image. 
There are thus two natural topologies on $\Funct(\Op_2^*)[a^{-1}]$: the topology induced by $\Funct(\Op_t^* \times_{\Spec Q} \Op_s^*)$ through the map $\calE$, and the topology for which the neighbourhoods of zero are given by $U_n[a^{-1}]$, where $U_n$ is a neighbourhood of zero in $\Funct(\Op_2^*)$. One may check that these two topologies coincide. 
Finally, the multiplication map 
$\Funct(\Op_t^*)\otimes_Q\Funct(\Op_s^*)\lra 
\Funct( \Op_t^*\times_{\Spec Q}\Op_s^* )$ is injective and has dense image. The topology induced on $\Funct(\Op_t^*)\otimes_Q\Funct(\Op_s^*)$ by this embedding can be shown to agree with the tensor product topology. 

\subsubsection{Action of the vector fields of the punctured disc}\label{ssez:campivettorifunzioni}
The action of the vector fields of the formal disc on $\Funct(\Op_1^*)$ extends
to the case of two singularities without changes.

\begin{definition}\label{dfn:Derivazioni}
We denote by $\Der_1$ the Lie algebra  $\mC((t))\partial$ of continuous vector fields on the punctured disc with the usual bracket $[u\partial,v\partial]=(uv'-v'u)\partial$. 
The action of the group of automorphisms of the formal disc determines an action of the algebra $t\mC[[t]]\partial$ on $\Funct(\Op_1^*)$. This action extends to all of $\Der_1$, and is given explicitly by the formula (see for example formula (3.5-11) in \cite{Frenkel_Langlands_loop_group})
\begin{equation}\label{eq:azioneDer2}
\big(u\,\partial \cdot F\big) (f) = -dF_f [2\dot u \, f + u \, \dot f-\frac 12\dddot u],
\end{equation}
where $f \in \Op_1^*(\mC)$, $F \in \Funct(\Op_1^*)$, and  $u \in \mC((t))$.
We similarly denote by $\Der_2$ the Lie algebra $K_2\partial$, where $\partial$ is the only continuous $A$-linear derivation
of $K_2$ such that $\partial t=\partial s =1$, introduced at the end of Section \ref{sect:FormalNeighbourhoodGraphs}. We define an action of $\Der_2$ on $\Funct(\Op^*_2)$ by the same formula.
\end{definition}

These definitions have obvious analogues in the case of $\Op^*_t$, $\Op^*_s$, and their product. 

\bigskip

We have an expansion map $E:\Der_2\lra \Der_t\times \Der_s$  
given by $E(u \partial)=(E_t(u) \partial, E_s(u) \partial)$ and a specialisation map $\Sp:\Der_2\lra \Der_1$ given 
by $\Sp(u \partial)=\Sp(u) \partial$.
These are homomorphisms of Lie algebras since $E$ and $\Sp$ commute with the derivations. Moreover, for all $F$ in 
$\Funct(\Op_2^*)$ and $u\partial \in \Der_2$ we have
\[
\calE(u\partial\cdot F)=E(u\partial)\cdot \calE(F)
\qquad
\calSp(u\partial\cdot F)=\Sp(u\partial)\cdot \calSp(F).
\]

We will use some concrete information about this action. Recall from Section \ref{sect:FormalNeighbourhoodGraphs}
that $\{u_i,y_i\}$ (defined as in Equation \eqref{eq:defxy}) is a topological $R$-basis of $R(2)$, for any $A$-algebra $R$. We denote by $\gra_i$, $\grb_i$ the coordinates with respect to this basis, so that
$\alpha_i = a_i-ab_i$ and $\beta_i=b_i$. The following lemma is then easy to obtain by a direct computation.

\begin{lemma}\label{lemma:DerivativesOfAlphaBeta}
The action of $\Der_2$ on the variables $\alpha_i, \beta_i$ is given by the following formulas, where $m$ is any integer:
\begin{enumerate}
\item $\begin{aligned}[t]
u_{m}\partial \alpha_i 
& = -(2i+2m+1)\beta_{i-m} + a(m+i+1)\alpha_{i-m+1} \\
& \quad -m(m-1)(2m-1)a\delta_{i,m-2}-\frac12 m(m-1)(m-2)a^3\delta_{i,m-3}.\\
\end{aligned}$

\item 
$\begin{aligned}[t]u_{m}\partial \beta_i
& =  -2(m+i+1)\alpha_{i-m+1}-(m+i+1)\,a\,\beta_{i-m+1} \\
& \quad +2m(m-1)(2m-1)\delta_{i,m-2}+m(m-1)(m-2)a^2\delta_{i,m-3}.\\
\end{aligned}
$

\item $\begin{aligned}[t]
v_{m} \partial \alpha_i & = -2(m + i + 1) \alpha_{i-m}  + (m + i  + 1) a \beta_{i-m}  -(m + i + 1)a^2 \alpha_{i-m+1}\\
& \quad + \frac{1}{2}m(m-1)(m-2) a^4 \delta_{i, m-3} + 
\frac{3}{2} m(m-1)(2m-1) a^2 \delta_{i,m-2} \\
& \quad + m(2m-1)(2m+1) \delta_{i,m-1}\\
\end{aligned}$

\item $\begin{aligned}[t]
v_{m} \partial \beta_i 
& = -(2m+2i+3)\beta_{i-m}+(m+i+1)\,a\, \alpha_{i-m+1}   \\
&\quad -m(m-1)(2m-1)a\delta_{i,m-2}-\frac12 m(m-1)(m-2)a^3\delta_{i,m-3}.
\end{aligned}$
\end{enumerate}
\end{lemma}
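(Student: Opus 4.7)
The plan is to compute each of the four formulas as a direct application of the action rule from Definition \ref{dfn:Derivazioni}, namely
\[
(u\partial \cdot F)(f) \;=\; -dF_f\bigl[\,2\dot u\,f + u\,\dot f - \tfrac{1}{2}\dddot u\,\bigr],
\]
with $u \in \{u_m, v_m\}$ and $F \in \{\alpha_i, \beta_i\}$. Since $\alpha_i$ and $\beta_i$ are, by construction, the linear functionals that extract the $u_i$- and $y_i$-coefficient from the expansion $f = \sum_k(\alpha_k u_k + \beta_k y_k)$, their differentials at any $f$ coincide with themselves: $d(\alpha_i)_f[g]$ and $d(\beta_i)_f[g]$ are simply the coefficients of $u_i$ and $y_i$ in $g$. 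Each of the four identities thus reduces to expanding $2\dot u\,f + u\,\dot f - \tfrac12\dddot u$ in the basis $\{u_k, y_k\}$ of $R(2)$ and reading off one coefficient.

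I would begin by recording the action of $\partial$ on the basis. From $\dot t = \dot s = 1$ and the change of basis $v_n = y_n - a u_n$ one obtains directly
\[
\dot u_n = n\bigl(2 y_{n-1} - a u_{n-1}\bigr), \qquad \dot y_n = (2n+1) u_n + na\,y_{n-1},
\]
together with $\dot v_n = (2n+1) u_n + n a^2 u_{n-1} - na\,y_{n-1}$. Iterating these formulas twice and systematically using the two reduction rules $t\cdot y_{n-1} = u_n + a y_{n-1}$ and $s\cdot u_n = y_n - a u_n$, one obtains closed-form expressions for $\dddot u_m$ and $\dddot v_m$ in the basis $\{u_k, y_k\}$. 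Since $2\dot u_m\,f + u_m\,\dot f$ is linear in the coordinates $\alpha_k, \beta_k$, the constant $\delta$-terms in the four formulas of the lemma come entirely from $-\tfrac12\dddot u_m$ (for (1) and (2)) and $-\tfrac12\dddot v_m$ (for (3) and (4)).

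Next, writing $f = \sum_k(\alpha_k u_k + \beta_k y_k)$ and applying the same two reduction rules, every atomic product of the form $u_m u_k$, $u_m y_k$, $v_m u_k$, $v_m y_k$ (together with their $\dot u_m$- and $\dot v_m$-analogues) becomes a short $A$-linear combination of basis elements. Collecting the coefficients of $u_i$ and $y_i$ after one index shift gives the linear-in-$(\alpha,\beta)$ parts of the four formulas.

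The only genuine difficulty is the bookkeeping: the index shifts, the signs, and the powers of $a$ must all be tracked carefully. As a sanity check I would specialise to $a = 0$, where $v_n = y_n$, the two basis families coalesce, and each of the four identities should collapse to the classical one-variable formula for the action of $\Der_1$ on $\Op_1^*$ recalled in Definition \ref{dfn:Derivazioni}; I would also verify the homogeneity of each formula with respect to the grading of Definition \ref{def:Degrees}, which rules out a broad class of possible bookkeeping errors.
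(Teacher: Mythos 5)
Your proposal is correct and is precisely the ``direct computation'' the paper alludes to: the paper gives no explicit proof, saying only that the lemma ``is then easy to obtain by a direct computation'' from Definition~\ref{dfn:Derivazioni}. You correctly identify that $\alpha_i,\beta_i$ are linear coordinates in the basis $\{u_k,y_k\}$ (so $d\alpha_i$, $d\beta_i$ are coefficient-extraction maps), and the formulas $\dot u_n = n(2y_{n-1}-au_{n-1})$, $\dot y_n = (2n+1)u_n + na\,y_{n-1}$, the product rules $y_jy_k = u_{j+k+1}+ay_{j+k}$, $u_ju_k=u_{j+k}$, $u_jy_k=y_{j+k}$, etc., together with the observation that the $\delta$-terms arise solely from $-\tfrac12\dddot u_m$ (resp.\ $-\tfrac12\dddot v_m$), reduce each identity to bookkeeping; the $a=0$ and homogeneity sanity checks you propose are exactly the right safeguards.
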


\subsection{Unramified opers and the scheme $\Op_1^\lambda$}
Following Frenkel and Gaitsgory \cite[Section 6.12]{FG1}
we recall, focusing on the special case of $G^L=\PSL_2$, the definition and some elementary properties of the scheme of unramified opers 
and of the schemes $\Op_1^\lambda$. 
Let $\Op^{\operatorname{ur}}_1$ be the subfunctor of $\Op_1$ which corresponds to unramified opers: for every $\mC$-algebra
$\Rtest$ the set $\Op^{\operatorname{ur}}_1(\Rtest)$ consists of those $f\in \Rtest((t))$ for which there exists $H \in \PSL_2(\Rtest((t)))$ such that 
\begin{equation}\label{eqn:urop1} 
\begin{pmatrix} 0 & f \\ 1 & 0 \end{pmatrix}= -\dot{H}H^{-1}.
\end{equation}
One can prove \cite[Sect.~6.12]{FG1} that $\Op_1^{\operatorname{ur}}$ is an ind-subscheme of $\Op_1^*$ which is not reduced. In this paper we are interested
in its reduced version, that we denote by $\Op_1^{\operatorname{int}, \operatorname{reg}}$. This is a countable union of subschemes
of $(\Op_1^*)_{\geq -2}$ which can be described functorially as follows: if $\Rtest$ is a $\mC$ algebra such that $\Spec \Rtest$ is connected, then
\begin{align*}
\Op_1^{\text{int,reg}}(\Rtest)=\{f=\sum_{i\geq -2} \zz_i\, t^i \in \Op_1^*(R) \st\; &\zz_{-2}\in \mC \text{ and there exists } H(t) \in \PSL_2(\Rtest((t))) \\
&\text{ such that equation \ref{eqn:urop1}
is satisfied}\}.
\end{align*}
If $f$ is an unramified oper as in the definition above, the coefficient $\zz_{-2}$ turns out to be of the form $A_\lambda=\tfrac {\lambda^2}4 +\tfrac\lambda 2$ for 
some $\lambda\in \mN$. For such a $\lambda$ we define
$$
\Op_1^{\lambda}(\Rtest)=\{f=\sum_{i\geq -2} \zz_i\, t^i \in \Op^{\text{int,reg}}_1(\Rtest) \st \zz_{-2}=A_\lambda\}\ .
$$
Notice that this scheme is denoted by $\Op^{\lambda,\operatorname{reg}}$ in \cite{FG6}.
With this definition we have $\Op^{\text{int,reg}}_1=\bigsqcup_{\lambda\in \mN} \Op_1^{\lambda}$. Notice that $\Op_1^\lambda$ is a smooth subscheme of 
the scheme $(\Op_1^*)_{\geq -2}$. The following lemma is a particular case of \cite[Lemma 1]{FG6}, formulated 
in terms of coordinates; its proof for $\PSL_2$ is elementary. 

\begin{lemma}\label{lem:opur1}
Let $\lambda \in \mN$ and let $f=\sum_{i\geq -2}\zz_it^i$ be an element of $R((t))$.
\begin{enumerate}[\indent a)]
\item If $\zz_{-2}=A_\lambda$, then $f\in \Op_1^\lambda(\Rtest)$ if and only if there exists a solution of the equation $\ddot y=f y$ of the form $t^{-\tfrac {\lambda} 2}\grf(t)$, with 
$\grf(t) \in \Rtest[[t]]$ and $\grf(0)=1$.
\item If $\zz_{-2}=A_\lambda$, then $f\in \Op_1^\lambda(\Rtest)$ if and only if there exists $\psi\in \Rtest[[t]]$ such that 
$$f=\frac{A_\lambda}{t^2}-\frac{\lambda}t \psi+\psi^2+\dot \psi \ .$$
\item If $\zz_{-2}\in \mC$, then $f\in \Op_1^{\operatorname{int}, \operatorname{reg}}(\Rtest)$ if and only if there exists 
$H \in \SL_2(R((t^{1/2})))$ such that equation \eqref{eqn:urop1} is satisfied;
\item If $\Rtest=\mC$ and $f$ defines a meromorphic function in a neighbourhood
of $0$, then $f\in \Op_1^{\operatorname{int}, \operatorname{reg}}(\mC)$ if and only if the monodromy of the equation $\ddot y=f y$ around $0$ is equal to $\pm \operatorname{Id}$.
\end{enumerate}
\end{lemma}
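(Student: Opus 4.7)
The key dictionary I would use throughout is: a matrix $H \in \SL_2(S)$ (for $S$ a ring equipped with the derivation $\partial = d/dt$) satisfies $-\dot H H^{-1} = \begin{pmatrix} 0 & f \\ 1 & 0 \end{pmatrix}$ if and only if its columns are of the form $\begin{pmatrix} -\dot y_i \\ y_i \end{pmatrix}$ for two solutions $y_1, y_2 \in S$ of the scalar equation $\ddot y = fy$ with Wronskian $W(y_1, y_2) = y_1 \dot y_2 - \dot y_1 y_2 = 1$. I would begin by proving part (c), which is the $\SL_2$ versus $\PSL_2$ bookkeeping. In one direction, a lift of $H \in \PSL_2(R((t)))$ to $\GL_2(R((t)))$ can be rescaled by a square root of its determinant (available in $R((t^{1/2}))$ by general facts on $R((t))^{\times}$) to yield an $\SL_2$-valued matrix. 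Conversely, any $H \in \SL_2(R((t^{1/2})))$ satisfying the gauge equation defines a class in $\PSL_2(R((t)))$, since the Galois action of $\{\pm 1\}$ on $R((t^{1/2}))/R((t))$ sends $H$ to $\pm H$, and these agree modulo the centre of $\SL_2$.

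For part (a), the $(\Leftarrow)$ direction is constructive. Starting from $y_1 = t^{-\lambda/2}\varphi(t)$ with $\varphi(0) = 1$, I would build a second linearly independent solution by variation of parameters, $y_2 := y_1 \int dt / y_1^2$. Since $1/y_1^2 = t^{\lambda} \varphi^{-2}$ lies in $R[[t]]$ and (because $\lambda \geq 0$) has no $t^{-1}$ term, its antiderivative is logarithm-free and lies in $t^{\lambda+1}R[[t]]$; hence $y_2 \in t^{(\lambda+2)/2}R[[t]]$, with leading coefficient $1/(\lambda+1)$. A direct computation yields $W(y_1, y_2) = 1$, so the matrix $H$ built from these solutions lies in $\SL_2(R((t^{1/2})))$, and by part (c) this produces the required gauge equivalence in $\PSL_2(R((t)))$. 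For the $(\Rightarrow)$ direction, part (c) yields two linearly independent solutions $y_1, y_2 \in R((t^{1/2}))$ of $\ddot y = fy$. The indicial equation at $t=0$ reads $\alpha(\alpha-1) = A_\lambda$, whose roots are $-\lambda/2$ and $(\lambda+2)/2$. Analyzing the leading-order behaviour, I would show that, after a suitable $\SL_2(R)$-change of basis, one of the solutions takes the form $t^{-\lambda/2}\varphi(t)$ with $\varphi(0) \in R^{\times}$, and normalise by $\varphi(0)$.

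Part (b) follows from (a) via the Riccati substitution $\phi = \dot y / y$, which transforms $\ddot y = fy$ into the first-order equation $\dot\phi + \phi^2 = f$. Setting $\phi = -\lambda/(2t) + \psi$ with $\psi \in R[[t]]$ yields, by direct expansion, the identity $f = A_\lambda/t^2 - \lambda\psi/t + \psi^2 + \dot\psi$. A solution $y = t^{-\lambda/2}\varphi$ with $\varphi \in R[[t]]$ and $\varphi(0) = 1$ corresponds to $\psi = \dot\varphi/\varphi \in R[[t]]$; conversely, given $\psi \in R[[t]]$, one recovers $\varphi = \exp(\int \psi\,dt) \in R[[t]]$ with $\varphi(0) = 1$. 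For part (d), with $R = \mathbb{C}$ and $f$ meromorphic near $0$, the existence of $H \in \SL_2(\mathbb{C}((t^{1/2})))$ means precisely that a fundamental solution matrix of $\ddot y = fy$ is single-valued on the double cover of the punctured disc; this is equivalent to the monodromy in $\SL_2(\mathbb{C})$ being $\pm I$, hence trivial in $\PSL_2(\mathbb{C})$.

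The main obstacle, I expect, is the indicial-analysis argument in the $(\Rightarrow)$ direction of (a): one must ensure that the two solutions provided by $H$ actually decompose along the two exponents $-\lambda/2$ and $(\lambda+2)/2$, and that one can solve for the recursion beyond the resonance at level $\lambda+1$; this resonance condition is precisely what distinguishes $\Op_1^\lambda$ inside the space of opers with $\zz_{-2} = A_\lambda$. The $\PSL_2$ versus $\SL_2$ bookkeeping in part (c) is the other technical subtlety, but it reduces to a formal exercise once the double-cover picture is in place.
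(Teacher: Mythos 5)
Your dictionary between $\SL_2$-valued fundamental matrices satisfying the gauge equation and pairs of Wronskian-$1$ solutions of $\ddot y = fy$ is the right framework, and for what it is worth the paper gives no proof of this lemma at all---it cites \cite[Lemma 1]{FG6} and declares the $\PSL_2$ case elementary---so there is nothing in the text to compare your route against. Parts (b) and (d) are handled correctly given (a) and (c), and the $(\Leftarrow)$ direction of (a) via $y_2 = y_1\int dt/y_1^2$ is clean.

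There are, however, two real soft spots. In (c), the step ``rescale by a square root of the determinant, available in $R((t^{1/2}))$'' is false as stated: for connected $R$, a unit of $R((t))$ has the form $r_0 t^{n_0}\bigl(1+O(t)\bigr)$ with $r_0\in R^\times$, and $r_0$ need not be a square. You must first right-multiply the $\GL_2(R((t)))$-lift by a constant $G\in\GL_2(R)$---this leaves the gauge equation invariant and scales the determinant by $\det G$, which can be any element of $R^\times$---before extracting the square root. (The Galois-descent step back to $\PSL_2(R((t)))$ also quietly uses that the constant $(H')^{-1}\sigma(H')\in\SL_2(R)$ equals $\pm I$; this follows from its squaring to the identity and having determinant $1$, but deserves a line.) The larger gap, which you flag yourself, is the $(\Rightarrow)$ direction of (a): you are handed $y_1,y_2\in R((t^{1/2}))$ with $W(y_1,y_2)=1$ and must extract a solution of the precise form $t^{-\lambda/2}\varphi$ with $\varphi\in R[[t]]$, $\varphi(0)=1$. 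That requires showing the indicial recursion forbids all leading $t^{1/2}$-valuations except $-\lambda$ and $\lambda+2$, that (since these two have the same parity) both $y_i$ in fact lie in $t^{-\lambda/2}R((t))$, and that the Wronskian condition forces at least one $y_i$ to have a unit leading coefficient at valuation $-\lambda$, after which an $\SL_2(R)$-change of basis and a rescaling finish the job. That is precisely where the content of the lemma sits, and as written the proposal names the obstacle rather than resolving it.
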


We can now obtain an equation for $\Op^\lambda_1$ as a subscheme of $(\Op^*_1)_{\geq 2}$.
Writing $f=\sum _{i\geq -2}\zz_i\,t^i$ and $\psi=\sum_{k \geq 0} \psi_k t^k$ for a generic element of $\Rtest[[t]]$, by part b) of Lemma \ref{lem:opur1} we see that $f$ belongs to $\Op_1^{\lambda}(\Rtest)$ if and only if $\zz_{-2}=A_\lambda$ and
the following system in the variables $\psi_0,\dots,\psi_{\lambda-1}$ admits a solution in $\Rtest$:
\begin{equation}\label{eqn:Plambda}
\begin{cases}
\begin{aligned}
\zz_{-1} & = -\grl \psi_0  \\
\zz_{0} & = \psi_0^2-(\grl-1) \psi_1  \\
\zz_{1} & = 2\psi_0\psi_1-(\grl-2) \psi_2  \\
&\vdots\\
\zz_{k}& = \sum_{i+j=k} \psi_i\psi_j - (\grl-1-k) \psi_{k+1}\\
&\vdots\\
\zz_{\grl-2}& = \sum_{i+j=\grl-2} \psi_i\psi_j -  \psi_{\grl-1}\\
\zz_{\grl-1}& = \sum_{i+j=\grl-1} \psi_i\psi_j \\
\end{aligned}
\end{cases}
\end{equation}
Expressing the $\psi_i$ with $i<\lambda$ recursively from the first $\lambda$ equations we can rewrite the last equation in the form
$$
\zz_{\lambda-1}=P_{\lambda}(\zz_{-1},\dots,\zz_{\lambda-2}) \ .
$$
It is easy to see that, if we assign degree $i+2$ to the variable $\zz_i$ as in Definition \ref{def:Degrees}, the polynomial $P_\lambda$ is homogeneous of degree $\lambda+1$ indeed, all the equations in \eqref{eqn:Plambda} become homogeneous if we give weight $i+1$ to the variable $\psi_i$.
Hence $\Op_1^{\lambda}\subset (\Op_1^* )_{\geq -2}$ is defined by the equations
\begin{equation}\label{eq:EquationsForOpLambda}
\begin{cases}
\zz_{-2} = A_\lambda \\ 
\zz_{\lambda-1} = P_\lambda(\zz_{-1}, \ldots, \zz_{\lambda-2})
\end{cases}
\end{equation}
We now make a remark on the structure of these equations. 
\begin{lemma}\label{lemma:monomiPlambda}
Each monomial of total degree $\lambda+1$ in the variables $\zz_{-1},\dots,\zz_{\lambda-2}$  appears in $P_\lambda$ with nonzero coefficient.
\end{lemma}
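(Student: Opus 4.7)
The plan is to derive an explicit closed-form expression for $P_\lambda$ and then argue by a sign analysis on its coefficients. Instead of working with the $\psi$-recursion directly (whose coefficients come with mixed signs), I will use the formulation in part a) of Lemma \ref{lem:opur1}: write $y = t^{-\lambda/2}\varphi$ with $\varphi = \sum_{n\geq 0}\varphi_n t^n$ and $\varphi_0=1$. Then $\ddot y = fy$ becomes the recursion
\[
\alpha_m\,\varphi_m \;=\; \sum_{n=0}^{m-1}\zz_{m-n-2}\,\varphi_n, \qquad \alpha_m := m(m-1-\lambda).
\]
For $1\leq m\leq \lambda$ one has $\alpha_m<0$, hence $\varphi_m$ is uniquely determined as a polynomial in $\zz_{-1},\ldots,\zz_{m-2}$; for $m=\lambda+1$ the leading coefficient vanishes, producing the compatibility condition
\[
\zz_{\lambda-1} \;=\; -\sum_{k=1}^{\lambda}\zz_{\lambda-1-k}\,\varphi_k.
\]
Since this identity and $\zz_{\lambda-1}=P_\lambda$ both cut out $\Op_1^\lambda$ inside $(\Op_1^*)_{\geq -2}$ and are both monic in $\zz_{\lambda-1}$, they coincide as polynomial equations.

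Iterating the recursion gives
\[
\varphi_k \;=\; \sum_{(k_1,\ldots,k_r)} \frac{\zz_{k_1}\cdots\zz_{k_r}}{\prod_{j=1}^r\alpha_{M_j}}, \qquad M_j := \sum_{i=j}^r(k_i+2),
\]
summed over finite sequences with $k_i\geq -1$ and $\sum(k_i+2)=k$. Substituting back yields
\[
P_\lambda \;=\; -\sum_{(k_0,k_1,\ldots,k_r)} \frac{\zz_{k_0}\zz_{k_1}\cdots\zz_{k_r}}{\prod_{j=1}^r\alpha_{M_j}},
\]
over tuples with $r\geq 1$, $k_i\geq -1$ and $\sum_{i=0}^r(k_i+2)=\lambda+1$. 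For a given monomial $\zz_{-1}^{n_{-1}}\cdots\zz_{\lambda-2}^{n_{\lambda-2}}$ of weighted degree $\lambda+1$, set $N=\sum_c n_c$ (necessarily $N\geq 2$, since each $\zz_c$ with $c\leq \lambda-2$ has weight at most $\lambda$). Its coefficient in $P_\lambda$ is then the sum, taken over all orderings of the associated multiset, of $-1/\prod_{j=1}^{N-1}\alpha_{M_j}$.

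The last step is a one-line sign check: every partial sum $M_j$ lies in $\{1,2,\ldots,\lambda\}$, because $M_{N-1}=k_{N-1}+2\geq 1$ and $M_1=\lambda-1-k_0\leq \lambda$ (using $k_0\geq -1$). Consequently $\alpha_{M_j}=M_j(M_j-1-\lambda)<0$ for every $j$, so each term in the sum over orderings has the same sign $(-1)^{N}$. Being a nonempty sum of nonzero quantities of constant sign, the coefficient is nonzero, as required. The only non-mechanical point is the change of variable from $\psi$ to $\varphi$ that produces a uniform sign pattern; once this is in place I do not anticipate any serious obstacle.
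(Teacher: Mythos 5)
Your proof is correct and takes a genuinely different route from the paper's. The paper argues directly from the quadratic (Riccati) system \eqref{eqn:Plambda}: after the sign flip $\gamma_i=-\zz_i$ every coefficient in the recursion becomes strictly positive, and a short induction shows that each $\psi_k$, and hence $P_\lambda$, contains every monomial of the correct degree with positive coefficient. You instead pass to the \emph{linear} recursion coming from the $\varphi$-formulation in Lemma~\ref{lem:opur1}~a), identify its obstruction at $m=\lambda+1$ with $\zz_{\lambda-1}=P_\lambda$ (both are monic degree-one expressions in $\zz_{\lambda-1}$ cutting out the same irreducible hypersurface), and then unroll the recursion into an explicit closed formula with denominators $\alpha_{M_j}=M_j(M_j-1-\lambda)$. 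Since the partial sums $M_j$ strictly decrease in $j$ (as $M_j-M_{j+1}=k_j+2\geq 1$), the two endpoint bounds $M_{N-1}\geq 1$ and $M_1\leq\lambda$ really do pin all of them inside $\{1,\dots,\lambda\}$ --- this monotonicity is the one small step you use implicitly and should state. Granted that, every $\alpha_{M_j}$ is negative, so each ordering contributes a term of the fixed sign $(-1)^N$, and the coefficient cannot vanish. Both routes ultimately exploit the same positivity phenomenon, but yours buys more: an explicit closed formula for $P_\lambda$ and the exact sign $(-1)^N$ of every coefficient, where $N$ is the number of variable factors in the monomial, information the paper's qualitative argument does not record.
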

\begin{proof}
If we set $\gamma_i =-\zz_i$ we see that the system \eqref{eqn:Plambda} is equivalent to 
\begin{align*}
\psi_0 & = * \, \gamma_{-1}  \\
\psi_1 & = * \, \psi_0^2     + *\,  \gamma_0  \\
\psi_2 & = * \, \psi_0\psi_1 + *\,  \gamma_1  \\
&\vdots\\
\psi_{k+1}& = *\, \sum_{i+j=k} \psi_i\psi_j + *\,  \gamma_k \\
&\vdots\\
\psi_{\grl-1}& = *\, \sum_{i+j=\grl-2} \psi_i\psi_j + *\,  \gamma_{\grl-2} \\
0& = *\, \sum_{i+j=\grl} \psi_i\psi_j +   \gamma_{\grl-1} \\
\end{align*}
where each $*$ denotes a strictly positive coefficient. The statement follows by an easy induction.
\end{proof}

The equation describing $\Op^\grl_1$ shows in particular that this scheme is smooth. 
We now describe its tangent space without using these equations. 

\begin{lemma}\label{lem:tanspnr}
Let $\Rtest$ be a $\mC$-algebra
and let $R[\varepsilon]$ be the ring of dual numbers over $R$. 
Let $f$ be an element of $\Op^{\lambda}_1(\Rtest)$ 
and let $H \in \SL_2(R((t^{1/2})))$ be such that  $\dot H = -F H$, where $F=\left(\begin{smallmatrix} 0 & f \\ 1 & 0 \end{smallmatrix}\right)$.
Given $g \in R((t))$, the element $f+\gre g$ belongs to $\Op^\grl_1(\Rtest[\gre])$ if and only if  
$g\in t^{-1}R[[t]]$ and 
$$
\Res \big(H^{-1}\cdot \begin{pmatrix} 0 & g \\ 0 & 0 \end{pmatrix}\cdot H\big)=0\ ,
$$
where the residue of a matrix is defined as the matrix of the residues of its entries.
\end{lemma}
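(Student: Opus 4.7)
The plan is to linearise the characterisation of $\Op_1^\grl$ from Lemma \ref{lem:opur1}(c) at the point $f$. First, I would observe that, because $\Op_1^\grl$ is a subscheme of $(\Op_1^*)_{\geq -2}$, the requirement that $f + \gre g$ lie in $\Op_1^\grl(R[\gre])$ already forces $g \in t^{-2}R[[t]]$ together with the vanishing of its coefficient at $t^{-2}$ (since the $t^{-2}$-coefficient of $f$ is the constant $A_\grl$, which admits no $\gre$-contribution). So the condition $g \in t^{-1}R[[t]]$ drops out directly from the ambient scheme structure.

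Next, I would apply part (c) of Lemma \ref{lem:opur1} to rewrite the condition $f + \gre g \in \Op_1^\grl(R[\gre])$ as the existence of $\tilde H \in \SL_2(R[\gre]((t^{1/2})))$ with $\dot{\tilde H}\, \tilde H^{-1} = -\tilde F$, where $\tilde F = F + \gre G$ and $G = \begin{pmatrix} 0 & g \\ 0 & 0 \end{pmatrix}$. The natural ansatz $\tilde H = H(\Id + \gre L)$, with $L \in \mat_{2 \times 2}(R((t^{1/2})))$, turns the determinant condition into $\Tr(L) = 0$, and a short calculation modulo $\gre^2$ yields
\[
\dot{\tilde H}\,\tilde H^{-1} \;=\; -F + \gre\, H\dot L H^{-1}.
\]
Comparing with $-\tilde F$, the problem reduces to finding $L \in \mat_{2\times 2}(R((t^{1/2})))$ with $\Tr(L) = 0$ and $\dot L = -H^{-1} G H$.

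Finally, I would analyse the solvability of this equation. Since $\Tr(G)=0$, the right-hand side is already traceless, so any antiderivative has constant trace in $R$, and can be made traceless by subtracting a scalar multiple of $\Id$ (which is in the kernel of $\partial$). Thus it suffices to find an entrywise antiderivative of $H^{-1}GH$ in $R((t^{1/2}))$. A Laurent series in $t^{1/2}$ admits an antiderivative in $R((t^{1/2}))$ if and only if the coefficient of $t^{-1}$ vanishes, and applied entrywise this is precisely the residue condition appearing in the statement. The main point requiring care, more than a real obstacle, is verifying that the half-integer powers of $t$ appearing in $H$ do not introduce further obstructions beyond the single failure at the exponent $t^{-1}$; since integration of $t^{n/2}$ succeeds for all $n \neq -2$, this is straightforward.
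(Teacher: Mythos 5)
Your proposal is correct and follows essentially the same route as the paper's proof: both invoke Lemma \ref{lem:opur1}(c) to reduce to solving $\dot L = -H^{-1}GH$ with $\Tr(L)=0$ (your $L$ is the paper's $X$, via $W = U + \gre V = H(\Id + \gre L)$), and both observe that integrability is obstructed only by the residue. Your explicit remark that $\Tr(H^{-1}GH)=0$ lets one adjust any antiderivative by a scalar multiple of $\Id$ is a welcome bit of detail the paper leaves implicit.
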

\begin{proof}
The necessity of the condition $g\in t^{-1}R[[t]]$ is clear from the inclusion $\Op_1^{\grl}(R)\subset A_\grl t^{-2}+t^{-1}R[[t]]$. For the second condition 
we consider the system 
$$
\dot W=-\begin{pmatrix} 0 & f+\gre g \\ 1 & 0 \end{pmatrix} W \ .
$$
We know that this system has a solution $W\in \SL_2(\Rtest[\gre]((t^{1/2})))$ if and only if $f+\gre g $ is in $\Op^\grl_1(\Rtest[\gre])$.
Writing $W=U+\gre V$, the system is equivalent to 
\begin{equation}\label{eq:SystemEpsilon}
\dot U = -F U \quad \text{ and }\quad  \dot V= -FV-\begin{pmatrix} 0 & g \\ 0 & 0 \end{pmatrix}U \ ,
\end{equation}
while the condition $\det(W)=1$ translates to $\det U=1$ and $\Tr(U^{-1}V)=0$.
By changing coordinates we can assume $U=H$. Set $V=HX$. The second equation in \eqref{eq:SystemEpsilon} is then equivalent to 
$$
\dot X=-H^{-1}\begin{pmatrix} 0 & g \\ 0 & 0 \end{pmatrix}H,
$$
while $\Tr(U^{-1}V)=0$ becomes $\Tr(X)=0$. The solubility of these equations is equivalent to the condition in the statement, since a formal power series can be formally integrated if and only if its residue vanishes.
\end{proof}

\subsection{The scheme $\Op_2^{\lambda, \mu}$}\label{sec:Oplm}

Let $\lambda, \mu$ be natural numbers. We now define a subscheme $\Op_2^{\lambda, \mu}$ for the case of two singular points, which is the analogue of the subscheme $\Op_1^\lambda$ of the previous section. By Proposition \ref{prop:FactorisationProperties}, 
we have an isomorphism
\[
 \Op_2^*|_{a \neq 0} \cong \Op_t^* \times_{\Spec Q} \Op_s^*,
\]
and the right hand side contains the subscheme $\Op_t^\lambda \times_{\Spec Q} \Op_s^\mu$,
where 
\[
\Op_t^\lambda=\Spec Q \times _{\Spec \mC} \Op_1^\lambda\subset \Op^*_t
\]
and $\Op_s^\mu$ is defined similarly. 
We define $\Op_2^{\lambda, \mu}$ as the schematic closure of the image of $\Op_t^\lambda \times_{\Spec Q} \Op_s^\mu$ in $\Op_2^*$. 
Notice that, since $\Op_1^\lambda, \Op_1^\mu$ are contained in $(\Op_1^*)_{\geq -2}$, 
we also have that $\Op_2^{\lambda, \mu}$ is a closed subscheme of $(\Op_2^*)_{\geq -2}$.
By definition, if $\Rtest$ is a $Q$-algebra, then
$$
\Op_2^{\lambda,\mu}|_{a\neq 0}(\Rtest)=\{f\in (ts)^{-2} \Rtest[2]\st E_t(f) \in \Op^{\lambda}_t(\Rtest) \text{ and } E_s(f) \in \Op^{\mu}_s(\Rtest)\}\ .
$$
This implies that if $\Rtest$ is an $A$-algebra and $a$ is not a zero divisor in $\Rtest$ then 
\begin{equation}\label{eq:DescriptionOp2LambdaMu}
\Op_2^{\lambda,\mu}(\Rtest)=\{f\in (ts)^{-2} \Rtest[2]\st E_t(f) \in \Op^{\lambda}_t(\Rtest_a) \text{ and } E_s(f) \in \Op^{\mu}_s(\Rtest_a)\}\ .
\end{equation}
By definition, the subscheme 
$\Op_t^\lambda \times_{\Spec Q} \Op_s^\mu$ of $(\Op_t^*)_{\geq -2} \times_{\Spec Q} (\Op_s^*)_{\geq -2}$ 
is smooth and irreducible, and is given by the equations
\begin{align}
   \label{eq:lmxy1} \zzt_{-2} &= A_\lambda, &  \zzs_{-2} &= A_\mu, \\
   \label{eq:lmxy2} \zzt_{\lambda-1} &= P_\lambda(\zzt_{-1}, \ldots, \zzt_{\lambda-2}), &  \zzs_{\mu-1} &= P_\mu(\zzs_{-1}, \ldots, \zzs_{\mu-2}),
\end{align}
where $(\Op_t^*)_{\geq -2} = \Spec Q[\zzt_i, i \geq -2]$ and $(\Op_s^*)_{\geq -2} = \Spec Q[\zzs_i, i \geq -2]$, and $\zzt_i$ (resp.\ $\zzs_i$)
is the coefficient of $t^i$ (resp.\ $s^i$). These equations are written with respect to the natural coordinates on $\Op^*_t$ and $\Op^*_s$. However, the coordinates we have chosen for $\Op_2^*$ are the coefficients $a_i,b_i$ with respect to the basis $u_i,v_i$. 
To obtain the equations defining $\Op_2^{\lambda,\mu}|_{a\neq 0}$ we thus need to use the change of coordinates afforded by \eqref{eq:xyuv}.
In particular, equations \eqref{eq:lmxy1} translate into 
\begin{equation}\label{eq:lmab1}
 a_{-2} = a^2 A_\mu\quad \text{ and } \quad b_{-2} = a(A_\mu- A_\lambda) \ .  
\end{equation}
Let now 
\[
\tilde{P}_\lambda = E_t^{\sharp} \left( \zzt_{\lambda-1}-P_{\lambda} (\zzt_{-1}, \dots, \zzt_{\lambda-2}) \right), \quad \tilde{P}_\mu = E_s^{\sharp} \left( \zzs_{\mu-1}-P_{\mu}(\zzs_{-1}, \dots, \zzs_{\mu-2}) \right) \ .
\]
These are polynomials in the variables $a_i,b_i$, and by equation \eqref{eq:xyuv} we see 
that they only involve the variables $a_{-2},b_{-2},\dots,a_{\nu-1},b_{\nu-1}$, where $\nu=\max\{\lambda,\mu\}$. Furthermore, these polynomials have coefficients in $\mC[a^{\pm1}]$, and are homogeneous of degrees respectively  $\lambda+1$ and $\mu+1$.

Recall that the coordinate ring of $(\Op_2^*)_{\geq-2}|_{a\neq 0}$ is $Q[a_i,b_i:i\geq -2]$. The ideal $I^{\lambda,\mu}_a\subset Q[a_i,b_i:i\geq -2]$ generated by $\tilde P_\lambda$, $\tilde P_\mu$, and $a_{-2}-a^2A_\mu$, $b_{-2}-a(A_\mu-A_\lambda)$
(corresponding to the equations \eqref{eq:lmab1}) defines $\Op^{\lambda,\mu}_2|_{a\neq 0}$ inside $(\Op^*_2)_{\geq -2}|_{a \neq 0}$. The ideal
$I^{\lambda,\mu}_a$ is prime (since $\Op_t^\lambda$ and $\Op_s^\mu$ are irreducible and smooth), and 
its intersection $I^{\lambda,\mu}$ with $A[a_i,b_i:i\geq -2]$
is the ideal defining $\Op_2^{\lambda,\mu}$ in $(\Op^*_2)_{\geq -2}$.

The scheme $(\Op^*_2)_{\geq -2}$ is defined not only over $\mC[[a]]$, but also over $\mathbb{C}[a]$.
We claim that $\Op_2^{\lambda,\mu}$ is also defined over $\mC[a]$. As noticed above, the generators of $I_a^{\lambda,\mu}$ have coefficients
in $\mC[a^{\pm 1}]$. The claim then follows from the following lemma, where we take $S$ to be the coordinate ring of $(\Op_2^*)_{\geq -2}$ (seen as a scheme over $\mathbb{C}[a]$), and $J$ to be the ideal defining 
$\Op_2^{\lambda,\mu}$ as a subscheme of $(\Op_2^*)_{\geq -2}|_{a\neq 0}$ (itself considered as a scheme over $\mC[a^{\pm1}]$). 

\begin{lemma}\label{lem:definitosuCa}
Let $S$ be a flat $\mC[a]$-algebra and let $\hat S = S\otimes _{\mC[a]}\mC[[a]]$.
By flatness we have $S\subset \hat S \subset \hat S_a$ and $S\subset S_a \subset \hat S_a$.
Let $J$ be an ideal of $S_a$ and let $\hat J$ be its extension to $\hat S_a$. Let 
$\hat I=\hat J\cap \hat S $ and $I=S\cap \hat J$.
Then $\hat I=I\otimes_{\mC[a]}\mC[[a]]$.
\end{lemma}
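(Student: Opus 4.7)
The plan is to establish the claim by working with the exact sequence induced by the definition of $I$, and to reduce to an injectivity statement that will follow from $a$-torsion-freeness. Since $\mC[[a]]$ is flat over $\mC[a]$, tensoring $0 \to I \to S \to S/I \to 0$ gives an exact sequence
\[
 0 \to I \otimes_{\mC[a]} \mC[[a]] \to \hat S \to (S/I) \otimes_{\mC[a]} \mC[[a]] \to 0.
\]
Because $I \subseteq \hat J$ and $\hat J$ is an ideal of $\hat S_a$, the image of $I \otimes_{\mC[a]} \mC[[a]]$ in $\hat S$ is contained in $\hat I = \hat J \cap \hat S$, so the natural map $\hat S \to \hat S_a/\hat J$ factors through the projection onto $(S/I) \otimes_{\mC[a]} \mC[[a]]$. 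The equality $\hat I = I \otimes_{\mC[a]} \mC[[a]]$ will therefore follow as soon as one shows that the induced map $(S/I) \otimes_{\mC[a]} \mC[[a]] \to \hat S_a/\hat J$ is injective.

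To analyse this map, I would next identify $\hat S_a/\hat J$ with the $a$-localization of $(S/I) \otimes_{\mC[a]} \mC[[a]]$. Tensoring the same short exact sequence with $\mC((a))$, which is flat over $\mC[a]$, yields
\[
 0 \to I \cdot \hat S_a \to \hat S_a \to (S/I) \otimes_{\mC[a]} \mC((a)) \to 0.
\]
The key identification to make is $I \cdot \hat S_a = \hat J$: one inclusion is clear since $I \subseteq \hat J$, while for the other it suffices to observe that every $j \in J \subseteq S_a$ can be written as $j = s/a^n$ with $s = a^n j \in S \cap \hat J = I$, so $\hat J = J \cdot \hat S_a \subseteq I \cdot \hat S_a$. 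This yields an isomorphism $(S/I) \otimes_{\mC[a]} \mC((a)) \cong \hat S_a/\hat J$, and the map in question factors as the natural map $(S/I) \otimes \mC[[a]] \to (S/I) \otimes \mC((a))$ followed by this isomorphism.

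Finally, the module $S/I$ is $a$-torsion-free: if $ax \in I$ for some $x \in S$, then $x = a^{-1}(ax) \in \hat J$ (as $\hat J$ is an ideal of $\hat S_a$), and also $x \in S$, whence $x \in I$. Flatness of $\mC[[a]]$ over $\mC[a]$ transports this property to $(S/I) \otimes_{\mC[a]} \mC[[a]]$, and an $a$-torsion-free $\mC[[a]]$-module embeds into its $a$-localization. This supplies the required injectivity and completes the proof. The only delicate point is the identification $\hat J = I \cdot \hat S_a$, which relies essentially on the fact that $J$ lives inside $S_a$ rather than $\hat S_a$, so that the denominators $a^n$ can be absorbed into $\hat S_a$ and every generator of $\hat J$ can be traced back to an element of $I$.
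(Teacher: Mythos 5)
Your proof is correct and takes a more structural route than the paper's, though both arguments ultimately rest on the same two facts: the identification $\hat J = I\hat S_a$ and the absence of $a$-torsion in $\hat S/I\hat S \cong (S/I)\otimes_{\mC[a]}\mC[[a]]$. The paper establishes these by hand, decomposing a general $\ell \in \hat S$ as $g + ah$ with $g \in S$, descending divisibility by $a$ from $\hat S$ to $S$, and then iterating to strip the power $a^m$ separating $\hat I$ from $I\hat S$. You instead obtain the $a$-torsion-freeness of $(S/I)\otimes_{\mC[a]}\mC[[a]]$ from that of $S/I$ by flatness of $\mC[[a]]$ over $\mC[a]$, you derive the $a$-torsion-freeness of $S/I$ directly from $\hat J$ being an ideal of $\hat S_a$, and you replace the iteration by the observation that a torsion-free $\mC[[a]]$-module injects into its $a$-localisation, all organised through the exact sequences obtained by base-changing $0 \to I \to S \to S/I \to 0$ along $\mC[a] \to \mC[[a]]$ and $\mC[a] \to \mC((a))$. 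This buys a shorter and more conceptual argument, and it also sidesteps the paper's opening assertion that $I = J \cap S$, which is in fact false in general: for $S = \mC[a]$ and $J = (a-1)\mC[a,a^{-1}]$ one has $\hat J = \mC((a))$, hence $I = S \cap \hat J = \mC[a]$, whereas $J \cap S = (a-1)\mC[a]$. The paper only uses the valid consequence, namely that $S/I$ has no $a$-torsion, which is exactly what you prove directly from $\hat J$ being an ideal of $\hat S_a$.
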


\begin{proof}
We notice first that $I=J\cap S$, hence if $f\in S$ and $af\in I$ then $f\in I$. 
Let $H=I\otimes_{\mC[a]}\mC[[a]]$, that is the extension of $I$ in $\hat S$. 
We have $H\subset \hat I$, and we only need to prove the other inclusion. 

We first claim that if $f$ is in $\hat S$ and $af$ is in $H$, then $f\in H$. Indeed, let $af=\sum f_i\ell_i$ with $f_1,\dots,f_n\in I$ and $\ell_i\in\mC[[a]]\otimes_{\mC[a]}S$.
Write $\ell_i=g_i+ah_i$ with $g_i\in S$ and $h_i\in \hat S$. Hence
$$
af=g+ah
$$
where $g=\sum g_i f_i\in I$ and $h=\sum h_i f_i\in H$. 
Hence $g \in I \subset S$ is divisible by $a$ (in $\hat S$, hence also in $S$), and by the remark at the beginning of this proof we have $g'=g/a\in I$. Hence 
$f=g'+h$ is in $H$ as claimed.

We can now prove the inclusion $\hat{I} \subset H$. Let $f\in \hat I\subset \hat J$: by definition, there exists $m\geq 0$ such that $a^mf\in H$, and by what just noticed we get $f\in H$.
\end{proof}

\subsection{The hypergeometric case}\label{sec:ipergeo}
In this section we study a particular class of elements of $\Op_2^{\lambda,\mu}(A)$. We consider 
$f$ of the form
$f= b t^{-2} + c s^{-2} + d t^{-1}s^{-1}$ with $b,c,d\in \mC$. From equation \eqref{eq:lmab1}
we immediately get that, if $f$ is an element of $\Op_2^{\lambda,\mu}(A)$ of this form, then 
\begin{equation}\label{eq:HypergeometricElements}
f=a^{2}A_\mu u_{-2}+a(A_\mu-A_\grl)v_{-2}+ B\,u_{-1}=\frac{A_\grl}{t^2}+\frac{A_\mu}{s^2}+\frac{B-A_\grl-A_\mu}{ts}
\end{equation}
for some $B \in \mathbb{C}$.
If we set $y=t^{-\grl/2}s^{-\mu/2}\grf$, then the equation $\ddot y=f y$ is equivalent to 
\begin{equation}\label{eq:ipergeo}
t (t-a) \ddot \grf - ((\grl+\mu)t-a\grl)\dot\grf +(A_{\grl+\mu}-B) \grf=0,
\end{equation}
which is the hypergeometric equation with singularities in $0,a,\infty$. 
Assume that $\grl\leq \mu$ and that $B=A_\nu$ with $\nu=\grl+\mu-2j$ for some $j=0,1,\dots,\grl$: we 
will use equation \eqref{eq:DescriptionOp2LambdaMu} and Lemma \ref{lem:opur1} a) to show that for these values of $B$ the element $f$ introduced above belongs to $\Op_2^{\grl,\mu}(A)$. In fact, these are the only values of $B$ for which $f$ is an element of $\Op_2^{\grl,\mu}(A)$:
this can be checked directly, but will also become clear from the results of the next sections. To rewrite the equation in a more traditional form, we make the change of variables $t=aw$ and set 
$$
\gra = \frac{\nu-\mu-\grl}2=-j \ , \qquad \grb =\frac{-\grl-\mu-\nu-2}2=j-\grl-\mu-1  \ , \qquad\grg=-\grl\ .
$$
Equation \eqref{eq:ipergeo} then becomes
\begin{equation}\label{eq:ipergeo2}
w (w-1) \ddot \psi + \big((\gra+\grb+1)w-\grg)\dot\psi + \gra\grb \psi=0 \ ,
\end{equation}
with Riemann scheme given by 
$$
\begin{array}{ccccc}
 0 & & 1 & & \infty \\
\hline
 0 & & 0 & & \gra \\
 1+\grl & & 1+\mu  & & \grb \\
\end{array}\ .
$$
A solution of the hypergeometric equation with exponent $0$ near $0$ can be written explicitly in terms of the hypergeometric series \cite[§2.1.1]{HigherTranscendentalFunctions}. Usually 
one assumes that $\grg$ is not in $\mathbb{Z}_{\leq 0}$, but the hypergeometric series also makes sense if $\grg$ is in $\mathbb{Z}_{\leq 0}$, provided that $\gra$ is an integer with $\grg \leq \gra \leq 0$, which is exactly our situation. In particular, the polynomial 
$$
F(\gra,\grb,\grg,w)=\sum_{n=0}^{-\gra}\frac{(\gra)_n(\grb)_n}{(\grg)_n}\, \frac{w^n}{n!}\ ,
$$
where $(x)_n=x(x+1)\cdots(x+n-1)$, is a solution of \eqref{eq:ipergeo2} of exponent $0$ near $0$.
Set
\[
\grf(t)= (-a)^{\mu/2}
\sum_{n=0}^{j}\frac{(\gra)_n(\grb)_n}{(\grg)_n}\, \frac{t^n \, a^{-n}}{n!}
\]
and $y=t^{-\grl/2}s^{-\mu/2}\grf(t)$.
Notice that $s^{-\mu/2}\grf(t)$ tends to $1$ as $t \to 0$, so $y \sim t^{-\grl/2}$ as $t \to 0$. In addition, $y$ is a solution of $\ddot y=f y$ in the ring $Q[[t]]$, so by Lemma \ref{lem:opur1} a) we have that 
$E_t(f)\in \Op_t^{\grl}(Q)$.
A similar argument shows that $E_s(f)\in \Op_s^{\mu}(Q)$, hence $f\in \Op_2^{\grl,\mu}(A)$.
We summarise the discussion of the previous paragraphs in the following lemma:
\begin{lemma}\label{lemma:HypergeometricElements}
Let $\lambda \leq \mu$ be natural numbers and let $\nu=\lambda+\mu-2j$ for some $j=0,\ldots,\lambda$. Then the set $\Op_2^{\lambda,\mu}(A)$ is nonempty, since it contains in particular the element $f$ given in Equation \eqref{eq:HypergeometricElements} (with $B=A_\nu$). This element satisfies $\Sp(f)=A_{\nu} t^{-2}$.
\end{lemma}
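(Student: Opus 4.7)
The plan is to use the characterisation in Equation \eqref{eq:DescriptionOp2LambdaMu}: since $a$ is not a zero-divisor in $A = \mC[[a]]$, the element $f$ belongs to $\Op_2^{\lambda,\mu}(A)$ if and only if $E_t(f) \in \Op_t^\lambda(Q)$ and $E_s(f) \in \Op_s^\mu(Q)$. By Lemma \ref{lem:opur1}(a), each of these two conditions is in turn equivalent to the existence of a solution of the scalar ODE $\ddot y = (\text{expansion of }f)\,y$ in the appropriate ring $Q((t))$ or $Q((s))$, of the form $t^{-\lambda/2}\varphi(t)$ with $\varphi \in Q[[t]]$, $\varphi(0)=1$ (respectively $s^{-\mu/2}\psi(s)$ with $\psi \in Q[[s]]$, $\psi(0)=1$).

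The core of the argument is then to construct a single global solution that works at both singularities simultaneously. For $f$ of the hypergeometric form \eqref{eq:HypergeometricElements} with $B = A_\nu$, the substitution $y = t^{-\lambda/2}s^{-\mu/2}\varphi(t)$ reduces $\ddot y = f y$ to the Euler-type hypergeometric equation \eqref{eq:ipergeo}; rescaling via $t = aw$ yields the classical form \eqref{eq:ipergeo2} with $\alpha = -j$, $\beta = j-\lambda-\mu-1$, $\gamma = -\lambda$. The constraint $0 \leq j \leq \lambda$ guarantees $\gamma \leq \alpha \leq 0$, so although $\gamma$ is a nonpositive integer (and hence the general theory of the hypergeometric series does not immediately apply), the series truncates at $n=j$ before reaching any vanishing denominator $(\gamma)_n$. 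One therefore obtains a genuine polynomial solution $F(\alpha,\beta,\gamma,w)$, which, suitably normalised, provides the desired $\varphi$ with $\varphi(0)=1$. Expanding this polynomial solution near the singularity $t=a$ (equivalently $s=0$) shows, by the symmetry of the Riemann scheme under $0 \leftrightarrow 1$, $\lambda \leftrightarrow \mu$, that the same $y$ also has exponent $-\mu/2$ at $s=0$; equivalently, one can repeat the argument with $t$ and $s$ swapped to produce a $\psi \in Q[[s]]$. This simultaneously verifies both conditions $E_t(f) \in \Op_t^\lambda(Q)$ and $E_s(f) \in \Op_s^\mu(Q)$.

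The specialisation claim is then immediate from a direct inspection of \eqref{eq:HypergeometricElements}: setting $a = 0$ identifies $s$ with $t$, so
\[
\Sp(f) \;=\; \frac{A_\lambda}{t^2} + \frac{A_\mu}{t^2} + \frac{A_\nu - A_\lambda - A_\mu}{t^2} \;=\; \frac{A_\nu}{t^2}.
\]

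The main subtle point is the well-definedness of the hypergeometric polynomial solution in the degenerate range $\gamma \in \mZ_{\leq 0}$ excluded by the standard references. Once one observes that the truncation condition $\alpha = -j$ with $j \leq \lambda$ prevents the denominators $(\gamma)_n$ from ever vanishing in the relevant range $0 \leq n \leq j$, the polynomial $F(\alpha,\beta,\gamma,w)$ satisfies the ODE by a straightforward verification, and the rest of the argument is routine substitution and bookkeeping.
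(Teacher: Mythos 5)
Your argument is correct and follows essentially the same route as the paper: transform $\ddot y=fy$ to the hypergeometric equation, observe that $\gamma\le\alpha\le 0$ means the series truncates before any vanishing $(\gamma)_n$, and invoke Lemma~\ref{lem:opur1}(a) at each singularity together with a direct evaluation of $\Sp(f)$. One small caution about a secondary claim: that the \emph{same} $y=t^{-\lambda/2}s^{-\mu/2}\varphi(t)$ already has exponent $-\mu/2$ at $s=0$ is true, but it requires the extra fact $\varphi(a)\neq 0$, i.e.\ $F(-j,\beta,\gamma,1)=(\gamma-\beta)_j/(\gamma)_j\neq 0$ (a Chu--Vandermonde identity), which you neither state nor need, since your fallback of rerunning the construction with $t,s$ and $\lambda,\mu$ swapped --- which is what the paper does --- already closes the argument.
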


\subsection{The restriction to the diagonal of $\Op_2^{\lambda, \mu}$}
We now study the scheme $\Op_2^{\lambda, \mu}|_{a=0}$. Recall from Proposition \ref{prop:FactorisationProperties}
that we have an isomorphism $\Sp$ between $\Op^*_2|_{a=0}$ and $\Op^*_1$. In this section we identify $\Op_2^{\lambda, \mu}|_{a=0}$ with a subscheme of $\Op^*_1$ using this isomorphism, and prove the following result.

\begin{theorem}\label{thm:restrizionediagonale} Let $\lambda, \mu$ be natural numbers. Then we have an equality of schemes
 \[
  \Op_2^{\lambda, \mu}|_{a=0} = \coprod_{\substack{|\mu-\lambda| \leq \nu \leq \lambda+\mu \\ \nu \equiv \lambda + \mu \bmod{2}}} \Op_1^\nu\ .
 \]
\end{theorem}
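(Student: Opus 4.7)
My plan is to prove a two-sided scheme-theoretic containment, using the hypergeometric family of Lemma~\ref{lemma:HypergeometricElements}, the flatness of $\Op_2^{\lambda,\mu}$ over $\Spec A$ (which is automatic from the schematic closure construction, since the coordinate ring is torsion-free, hence flat, over the DVR $A$), and an explicit analysis of the defining ideal.

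For the inclusion $\bigsqcup_\nu \Op_1^\nu \subseteq \Op_2^{\lambda,\mu}|_{a=0}$, Lemma~\ref{lemma:HypergeometricElements} already produces, for each admissible $\nu$, a distinguished $A$-point $f_\nu \in \Op_2^{\lambda,\mu}(A)$ with $\Sp(f_\nu) = A_\nu t^{-2} \in \Op_1^\nu(\mathbb{C})$. To promote this single point to the full scheme-theoretic inclusion of the irreducible scheme $\Op_1^\nu$, I would prove a lifting lemma: any $R$-point $g \in \Op_1^\nu(R)$ admits a lift $\tilde g \in \Op_2^{\lambda,\mu}(R[[a]])$ with $\Sp(\tilde g) = g$. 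The lift would be constructed order-by-order in $a$, using the parametrisation of $\Op_1^\nu$ from Lemma~\ref{lem:opur1} b) (writing $g$ in the form $A_\nu t^{-2} - \nu t^{-1}\psi + \psi^2 + \dot\psi$) as the order-zero ansatz and solving the resulting obstructions inductively.

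For the reverse inclusion $\Op_2^{\lambda,\mu}|_{a=0} \subseteq \bigsqcup_\nu \Op_1^\nu$, I would analyse the contracted ideal $I^{\lambda,\mu} = I_a^{\lambda,\mu} \cap A[a_i, b_i]$, where $I_a^{\lambda,\mu} \subset Q[a_i, b_i]$ is generated by $a_{-2} - A_\mu a^2$, $b_{-2} - (A_\mu - A_\lambda)a$, $\tilde P_\lambda$ and $\tilde P_\mu$. The aim is to exhibit, inside $I^{\lambda,\mu}$, elements whose reductions modulo $a$ yield the defining equations of $\bigsqcup_\nu \Op_1^\nu$: in particular, one expects an element whose mod-$a$ reduction is (up to a unit) the polynomial
\[
\prod_{\substack{|\mu-\lambda| \leq \nu \leq \lambda+\mu \\ \nu \equiv \lambda + \mu \bmod{2}}} (\zz_{-2} - A_\nu),
\]
together with further elements producing, on the locus $\zz_{-2} = A_\nu$, the relation $\zz_{\nu-1} = P_\nu(\zz_{-1}, \ldots, \zz_{\nu-2})$ characterising $\Op_1^\nu$. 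Concretely this entails clearing denominators in $\tilde P_\lambda, \tilde P_\mu$ by suitable powers of $a$ and extracting the leading-order behaviour via the expansion formulas~\eqref{eq:xyuv}. Once the underlying set-theoretic equality is established, flatness over $A$ together with a tangent space comparison via Lemma~\ref{lem:tanspnr} would upgrade it to an equality of schemes.

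I expect the main obstacle to lie in the reverse inclusion, namely in the explicit extraction of the correct mod-$a$ equations from the contracted ideal $I^{\lambda,\mu}$. This is where the hypothesis $\gog = \mathfrak{sl}_2$ becomes essential: the explicit form of the polynomials $P_\nu$ and the combinatorial non-vanishing statement of Lemma~\ref{lemma:monomiPlambda} are needed to show that the resulting polynomial constraint on $\zz_{-2}$ has degree exactly $\lambda + 1$ with roots precisely the Clebsch--Gordan weights $\{A_\nu\}$ appearing in $V^\lambda \otimes V^\mu$.
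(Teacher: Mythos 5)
Your proposal correctly identifies several of the paper's key ingredients — the hypergeometric elements of Lemma~\ref{lemma:HypergeometricElements}, the polynomial constraint on $\zz_{-2}$ whose roots are the Clebsch--Gordan weights, flatness of $\Op_2^{\lambda,\mu}$ over $A$, and a final tangent-space comparison via Lemma~\ref{lem:tanspnr}. However, the two central planks of your argument are, as written, gaps rather than proofs, and they diverge substantially from how the paper actually reasons.

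First, the ``lifting lemma'' you invoke for the inclusion $\bigsqcup_\nu \Op_1^\nu \subseteq \Op_2^{\lambda,\mu}|_{a=0}$ is essentially a formal-smoothness statement for $\Op_2^{\lambda,\mu}$ along its special fibre, and you give no mechanism for why the order-by-order obstructions should vanish. This is not a minor omission: $\Op_2^{\lambda,\mu}$ is \emph{defined} as a schematic closure, and controlling its special fibre is precisely the difficulty of the theorem. Asserting that every $R$-point of $\Op_1^\nu$ lifts to $\Op_2^{\lambda,\mu}(R[[a]])$ presupposes structural information about the special fibre that is only available after the theorem is proved. The paper never attempts such a lift; instead it gets one $\mC$-point per component from the hypergeometric family and then lets dimension theory do the rest.

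Second, your plan for the reverse inclusion — to extract from the contracted ideal $I^{\lambda,\mu}$ not just the polynomial $f_\lambda(\zz_{-2})$ (which is genuinely there, Definition~\ref{def:flambda}) but also, for each root $A_\nu$, an element reducing mod $a$ to $\zz_{\nu-1} - P_\nu(\zz_{-1},\ldots,\zz_{\nu-2})$ — is exactly the computation the paper goes out of its way to avoid. There is no indication these relations can be read off from $\tilde P_\lambda,\tilde P_\mu$ by clearing denominators: after rescaling by $a$, the generators mix the two branches and the dependence on $\zz_{-2}$ in a way that does not separate into the individual $\Op_1^\nu$-constraints. The paper replaces this with the analytic monodromy argument of Lemma~\ref{lem:Cpunti1}: for $a\neq 0$ the connection has monodromy $\pm\Id$ around both singularities, and by continuity the confluent connection at $a=0$ also has monodromy $\pm\Id$, placing the $\mC$-points inside $\Op_1^{\operatorname{int},\operatorname{reg}}$. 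Combined with the root computation for $f_\lambda$ and the dimension equality from the flatness lemma \cite[Lemma 00QK]{stacks-project}, this yields the set-theoretic equality without ever exhibiting the equations $\zz_{\nu-1}=P_\nu$ inside $I^{\lambda,\mu}$. The tangent-space step (Lemma~\ref{lem:lmCepspunti}, which uses the same monodromy trick at the level of dual numbers together with Lemma~\ref{lem:tanspnr}) then promotes this to an equality of schemes. I'd suggest replacing both of your speculative steps with this monodromy-plus-dimension strategy.
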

To show this theorem we will first prove that the set of $\mC$-points of the two schemes are 
equal, and then prove that the scheme on the left hand side is smooth and reduced. From this we will deduce that the scheme structures coincide as well. Without loss of generality, we will assume $\lambda \leq\mu$. 

Before delving into the details of the proof we recall that the equations which define $
\Op^{\lambda,\mu}_2$ in $(\Op^*_{2})_{\geq -2}$ involve only the variables $a_{-2},b_{-2},
\dots, b_{\mu-1}$, hence, by \eqref{eq:azbz}, the equations which define 
$\Op_2^{\lambda, \mu}|_{a=0}$ in $(\Op_1)_{\geq -4}$ involve only the variables 
$\zz_{-4},\dots, \zz_{2\mu-1}$. Moreover, \eqref{eq:lmab1} shows that
$\zz_{-3}=\zz_{-4}=0$, so we may consider $\Op_2^{\lambda,\mu}|_{a=0}$ as a subscheme of $(\Op_1^*)_{\geq -2}$. 

\subsubsection{The equation for $\zz_{-2}$}
In this section we describe a particular equation in the ideal defining the scheme $\Op_2^{\grl,\mu}|_{a=0}$. This equation will be a polynomial in the variable $\zz_{-2}$.
Recall that from Equation \eqref{eq:lmab1} in $\Op_2^{\grl,\mu}$ we have
$a_{-2} = a^2 A_\mu$ and $b_{-2} = a(A_\mu- A_\lambda)$: we will consider these values as fixed, and regard
the other equations as polynomials in the variables $a_i,b_i$ for $i\geq -1$. 

\begin{definition}\label{def:flambda}
By equation \eqref{eq:xyuv}, for every $n \geq -2$ the polynomial $E_t^{\sharp}(a^{n+2}\zzt_n)$ in the variables $a_i, b_i$ 
has coefficients in $A$ (recall that we consider $a_{-2}=a^2A_\mu$ and $b_{-2}=a(A_\mu-A_\lambda)$ as fixed). By the homogeneity of $P_\lambda$ we get 
\[
Q_\grl :=a^{\grl+1}\tilde P_{\grl}\in A[a_i, b_i:i\geq -1] \qquad \text{ and } \qquad f_\lambda:=\calSp(Q_\lambda)\in \mC[\zz_i:i\geq -2]\ .
\]
In particular, $Q_\grl$ is an element of $I^{\grl,\mu}$, and $f_\grl$ is an element in the ideal defining the scheme $\Op_2^{\grl,\mu}|_{a=0}$.
\end{definition}

\begin{lemma}\label{lemma:PropertiesOfFlambda}Let $\grl,\mu$ be arbitrary natural numbers. Then
the only variable involved in the polynomial $f_\lambda$ is $\zz_{-2}$.  
The degree of $f_\lambda$ is $\lambda+1$, and the coefficient of $\zz_{-2}^{\lambda+1}$ does not depend on $\mu$.
\end{lemma}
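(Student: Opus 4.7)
The whole strategy rests on exploiting the bihomogeneity of $\tilde P_\lambda$ with respect to the bigrading of Definition \ref{def:Degrees}: since $P_\lambda$ is homogeneous of degree $\lambda+1$ and $E_t^\sharp$ preserves the bigrading, $\tilde P_\lambda \in \mC[a^{\pm 1}][a_i,b_i:i \geq -1]$ is bihomogeneous of bidegree $\lambda+1$. The substitutions $a_{-2}\mapsto a^2 A_\mu$ and $b_{-2}\mapsto a(A_\mu-A_\lambda)$ preserve this, as the replacement values themselves have the correct bidegrees $-2$ and $-1$. Hence every monomial of $\tilde P_\lambda$ has the form $\gamma_m \, a^{D_m-\lambda-1} m$, where $m = \prod_{i\geq -1} a_i^{e_i}b_i^{f_i}$ is a monomial of bidegree $D_m$ and $\gamma_m \in \mC$, so the coefficient of $m$ in $Q_\lambda = a^{\lambda+1}\tilde P_\lambda$ is $\gamma_m a^{D_m}$.

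Specialising to $a=0$, only the monomials $m$ with $D_m=0$ contribute to $f_\lambda$. Since $\deg a_i = 2i+2 \geq 0$ (with equality only when $i=-1$) and $\deg b_i = 2i+3 > 0$ for $i \geq -1$, the only such monomials are powers of $a_{-1}$. Using \eqref{eq:azbz} to identify $a_{-1}|_{a=0}$ with $\zz_{-2}$, this proves $f_\lambda \in \mC[\zz_{-2}]$, which is the first assertion.

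For the degree, note that each $E_t^\sharp(\zzt_j)$ is linear in the $a_i, b_i$, so applying $E_t^\sharp$ to a monomial $\zzt_{j_1}\cdots\zzt_{j_r}$ of $P_\lambda$ produces a polynomial of $a_{-1}$-degree at most $r \leq \lambda+1$; thus $\deg_{\zz_{-2}} f_\lambda \leq \lambda+1$. To pin down the leading coefficient, observe that a contribution to $a_{-1}^{\lambda+1}$ requires $r=\lambda+1$ factors each yielding an $a_{-1}$, and the homogeneity relation $\sum(j_k+2) = \lambda+1$ together with $j_k \geq -1$ then forces $j_k=-1$ for every $k$. So the only relevant monomial of $P_\lambda$ is $\zzt_{-1}^{\lambda+1}$, whose coefficient $c_\lambda$ is nonzero by Lemma \ref{lemma:monomiPlambda}. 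A short computation using that the coefficient of $a_{-1}$ in $E_t^\sharp(\zzt_{-1})$ equals $-a^{-1}$ (which is manifestly $\mu$-independent) shows that the coefficient of $\zz_{-2}^{\lambda+1}$ in $f_\lambda$ is $(-1)^\lambda c_\lambda$, nonzero and depending only on $\lambda$. The trivial case $\lambda=0$, where $P_0=0$ and $\tilde P_0 = E_t^\sharp(\zzt_{-1})$, is handled by direct inspection. The only delicate point in this plan is the reliance on Lemma \ref{lemma:monomiPlambda} to guarantee $c_\lambda \neq 0$; this is exactly what prevents the leading term from accidentally vanishing, and is the step I would expect to be the main obstacle had the earlier lemma not already been available.
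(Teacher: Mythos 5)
Your proof is correct and follows essentially the same route as the paper's: both arguments rest on the homogeneity of $P_\lambda$ with respect to the grading of Definition \ref{def:Degrees}, its compatibility with $E_t^\sharp$ and $\calSp$, and on Lemma \ref{lemma:monomiPlambda} to rule out vanishing of the leading coefficient. The one genuine stylistic difference is in the first part: the paper computes $\calSp(E_t^\sharp(a^{n+2}\zzt_n))$ explicitly from \eqref{eq:xyuv} and reads off that only $\zz_{-2}$ survives, while you argue the same thing abstractly from the fact that $Q_\lambda$ is homogeneous of degree $\lambda+1$ and all variables $a_i, b_i$ with $i \geq -1$ have nonnegative degree, with $\deg a_{-1} = 0$ being the unique case of equality. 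Your argument is a little cleaner and also makes transparent why $Q_\lambda$ has coefficients in $A$ rather than $\mC[a^{\pm 1}]$, a point the paper asserts with only the word ``homogeneity.'' A further small advantage: you explicitly flag the case $\lambda = 0$, where $P_0 = 0$ and the leading coefficient of $f_0$ comes from the $E_t^\sharp(\zzt_{\lambda-1})$ term rather than from $-P_\lambda$; the paper's formula ``$(-1)^{\lambda+1}$ times the coefficient of $\zzt_{-1}^{\lambda+1}$ in $-P_\lambda$'' silently fails there (giving $0$ rather than $-1$), so your care is warranted, even if the edge case is trivial. The degree-counting argument showing that $j_k = -1$ for all $k$ is correct and matches the paper's implicit reasoning, and your final formula $(-1)^\lambda c_\lambda$ for the leading coefficient agrees with the paper's, taking into account the sign of $-P_\lambda$.
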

\begin{proof}
We want to compute the part of the polynomial $Q_\grl$ which has degree zero in the variable $a$.
By equations \eqref{eq:xyuv}, for every $n \geq -1$ we have
\[
\calSp( E_t^{\sharp}(a^{n+2} \zzt_n) ) = {-2 \choose n+2}  (-1)^{n} A_\mu + {-1 \choose n+2} (-1)^{n+1} (A_\mu-A_\lambda) + {-1 \choose n+1} (-1)^{n} \zz_{-2},
\]
and the first statement follows since $\zz_{-2}$ is the only variable involved in the above equality. By homogeneity, it is clear that the degree of $f_\lambda$ is at most $\lambda+1$. 
To show that the degree is precisely $\grl+1$ we need to prove that the coefficient of $a_{-1}^{\lambda+1}$ is nonzero in the polynomial 
$Q_\grl$. Notice that this monomial appears only in the expansion of $(\zzt_{-1})^{\lambda+1}$, 
whose coefficient in $P_\lambda$ is nonzero by Lemma \ref{lemma:monomiPlambda}. By the formula above (with $n=-1$),
the coefficient of $\zz_{-2}^{\lambda+1}$ in $f_\lambda$ is $(-1)^{\lambda+1}$, 
multiplied by the coefficient of $(\zzt_{-1})^{\lambda+1}$ in $-P_\lambda$ (which is clearly independent of $\mu$). 
\end{proof}

From now on, since $f_\lambda$ depends only on $\zz_{-2}$, we will denote it by $f_\lambda(\zz_{-2})$.

\begin{lemma}\label{lemma:RootsOffLambda1}
Assume $\lambda \leq \mu$. 
The $\lambda+1$ roots of $f_\lambda(\zz_{-2})$ are $\zz_{-2} = A_{\mu+\lambda-2i}$ for $i=0,\ldots,\lambda$, and these numbers are all distinct.
\end{lemma}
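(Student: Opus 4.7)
The idea is that we already know $f_\lambda(\zz_{-2})$ is a polynomial of degree exactly $\lambda+1$ by Lemma \ref{lemma:PropertiesOfFlambda}, so it suffices to exhibit $\lambda+1$ distinct roots. These will be supplied by the hypergeometric construction, which gives us, for each $j=0,1,\dots,\lambda$, a specific element of $\Op^{\lambda,\mu}_2(A)$ whose specialisation at $a=0$ equals $A_{\lambda+\mu-2j}\,t^{-2}$.

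More precisely, the plan is as follows. First, since $f_\lambda$ lies in the ideal defining $\Op_2^{\lambda,\mu}|_{a=0}$, it must vanish at every $\mathbb{C}$-point of this scheme. The hypergeometric elements constructed in Lemma \ref{lemma:HypergeometricElements} produce, for each $j=0,\dots,\lambda$ (using $\lambda\leq\mu$), an element $f_j\in\Op_2^{\lambda,\mu}(A)$ with $\Sp(f_j)=A_{\lambda+\mu-2j}\,t^{-2}$. This gives a $\mathbb{C}$-point of $\Op_2^{\lambda,\mu}|_{a=0}$ at which $\zz_{-2}=A_{\lambda+\mu-2j}$ and all other $\zz_i$ vanish. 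Since $f_\lambda$ is a polynomial only in $\zz_{-2}$, we conclude
\[
f_\lambda\bigl(A_{\lambda+\mu-2j}\bigr)=0 \quad\text{for every } j=0,1,\dots,\lambda.
\]

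Next, I would show the $\lambda+1$ values $A_{\lambda+\mu-2j}$ are pairwise distinct. Recall $A_\nu=\nu^2/4+\nu/2=\nu(\nu+2)/4$, so $A_\nu=A_{\nu'}$ forces either $\nu=\nu'$ or $\nu+\nu'=-2$. Under the assumption $\lambda\leq\mu$, the values $\nu=\lambda+\mu-2j$ with $j\in\{0,\dots,\lambda\}$ range over the integers between $\mu-\lambda\geq 0$ and $\lambda+\mu$, so any two of them sum to a non-negative integer, in particular never to $-2$. Hence the values $A_{\lambda+\mu-2j}$ are indeed distinct.

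Finally, since $f_\lambda(\zz_{-2})$ has degree $\lambda+1$ and we have exhibited $\lambda+1$ distinct roots, these are all of its roots, proving the claim. The main (very mild) issue is just checking distinctness via the observation on $A_\nu$; all the substantive work has already been done in the preceding lemmas.
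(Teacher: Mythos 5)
Your proof is correct and follows essentially the same route as the paper's: use the hypergeometric elements of Lemma \ref{lemma:HypergeometricElements} to produce $\lambda+1$ roots of $f_\lambda$, check distinctness via the identity $A_\nu=A_{\nu'}\iff \nu=\nu'$ or $\nu+\nu'=-2$, and conclude by the degree count from Lemma \ref{lemma:PropertiesOfFlambda}. Your write-up is slightly more explicit about the $\mathbb{C}$-points and the distinctness computation, but this is the same argument.
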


\begin{proof}
For each $i=0,\ldots,\lambda$, Lemma \ref{lemma:HypergeometricElements} provides an element of $\Op_2^{\grl,\mu}(A)$ that is sent to $A_{\mu+\lambda-2i} t^{-2}$ under the specialisation map
$A\lra\mC$. As $f_\lambda$ must vanish on each of these specialisations, this shows that $A_{\mu+\lambda-2i}$ is a root of $f_\lambda$ for each $i=0,\ldots,\lambda$.
Moreover, these $\grl+1$ values are all distinct, since $A_\nu = A_{\nu'}$ if and only if $\nu + \nu' = -2$. It follows that this is a complete list of the roots of $f_\grl$.
\end{proof}

\subsubsection{The $\mC$-points and $\mC[\gre]$-points of $\Op_2^{\lambda, \mu}|_{a=0}$}
We now turn to the determination of the complex points and of the tangent spaces of $\Op_2^{\lambda, \mu}|_{a=0}$. By definition, the set  $\Op_2^{\lambda, \mu}|_{a=0}(\mC)$ is equal to 
$\Op_2^{\lambda, \mu}(\mC)$, where $\mC$ is considered as a $\mC[[a]]$-algebra with the trivial action of $a$.
It will be useful to recall that $\Op_2^{\lambda,\mu}$ is defined over $\mC[a]$, and we may also
consider $\mC$ as a $\mC[a]$-algebra where $a$ acts trivially. The projection 
$\mC[a]\to \mC$ which sends $a$ to $0$  induces a map 
\[
\Op_2^{\lambda, \mu}(\mC[a])\lra \Op_2^{\lambda, \mu}(\mC)\subset (\Op^*_1)_{\geq -2}(\mC),
\]
which is simply the restriction of $\Sp$ to $\Op_2^{\lambda, \mu}(\mC[a])$. This map has a right inverse, given by the natural injection of $\mC$ in $\mC[a]$, hence in particular it is surjective. 

\begin{lemma}\label{lem:Cpunti1} Let $\lambda\leq \mu$ be natural numbers. Then
 \[
  \Op_2^{\lambda, \mu}|_{a=0} (\mC) \subset \Op_1^{\operatorname{int},\operatorname{reg}}(\mC)\ .
 \]
\end{lemma}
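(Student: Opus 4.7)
The plan is to produce polynomial consequences of the ideal defining $\Op_2^{\lambda, \mu}$ that force every $\mathbb{C}$-point of $\Op_2^{\lambda, \mu}|_{a=0}$ to satisfy the two equations \eqref{eq:EquationsForOpLambda} characterising some $\Op_1^\nu$. Writing $f = \sum_{i \geq -2} \zz_i t^i$, the first equation $\zz_{-2} = A_\nu$ comes for free: since $f_\lambda(\zz_{-2}) = \calSp(Q_\lambda)$ lies in the ideal defining $\Op_2^{\lambda,\mu}|_{a=0}$ (Definition \ref{def:flambda}), evaluating at $f$ and applying Lemma \ref{lemma:RootsOffLambda1} forces $\zz_{-2} = A_\nu$ for some $\nu \in \{|\lambda-\mu|, |\lambda-\mu|+2, \ldots, \lambda+\mu\}$.

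The second, more delicate, condition is the Frobenius integrality relation $\zz_{\nu-1} = P_\nu(\zz_{-1}, \ldots, \zz_{\nu-2})$. My plan is to construct, for each valid $\nu$, a polynomial $h_\nu$ in the ideal of $\Op_2^{\lambda, \mu}|_{a=0}$ of the form
\[
h_\nu \;=\; \Bigl(\prod_{\nu' \neq \nu} (\zz_{-2} - A_{\nu'})\Bigr) \cdot \bigl(\zz_{\nu-1} - P_\nu(\zz_{-1}, \ldots, \zz_{\nu-2})\bigr),
\]
up to a nonzero scalar, where the product ranges over the other valid $\nu'$. Since the leading factor is nonzero on the locus $\{\zz_{-2} = A_\nu\}$, the Frobenius relation is then forced on the corresponding component. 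Such $h_\nu$ should arise as specialisations $\calSp$ of elements of $I^{\lambda, \mu}$ of the form $a^k \tilde P_\lambda \cdot g + a^j \tilde P_\mu \cdot g'$, with $g, g' \in A[a_i, b_i]$ chosen so the whole expression lies in $A[a_i, b_i]$; the explicit formulas \eqref{eq:xyuv} for $E_t^{\sharp}$ and $E_s^{\sharp}$, combined with the recursion \eqref{eqn:Plambda} defining $P_\nu$, should dictate which monomials in the $\zz_i$ remain after setting $a = 0$.

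As a sanity check and a source of concrete guidance, the hypergeometric family of Lemma \ref{lemma:HypergeometricElements} furnishes, for each valid $\nu$, an $A$-point of $\Op_2^{\lambda, \mu}$ whose $a = 0$ specialisation is $A_\nu t^{-2} \in \Op_1^\nu(\mathbb{C})$, so each candidate stratum is nonempty and the Frobenius identity can be tested on these explicit points. The main obstacle is the explicit construction of the polynomials $h_\nu$: this is a combinatorial problem about how monomials in $\tilde P_\lambda$ and $\tilde P_\mu$ organise themselves with respect to the $a$-valuation. I expect the right combinations to be of Lagrange-interpolation type, indexed by the roots of $f_\lambda$ so as to isolate the component corresponding to each individual $\nu$.
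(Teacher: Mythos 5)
Your strategy splits into two parts, and only the first is actually a proof. The condition $\zz_{-2} = A_\nu$ for some admissible $\nu$ does indeed follow from $f_\lambda(\zz_{-2}) \in I^{\lambda,\mu}|_{a=0}$ together with Lemma \ref{lemma:RootsOffLambda1}; this part is sound and uses the same ingredients the paper sets up in Definition \ref{def:flambda}. But for the second (and harder) half — forcing the Frobenius relation $\zz_{\nu-1} = P_\nu(\zz_{-1},\dots,\zz_{\nu-2})$ — you propose to exhibit polynomials $h_\nu$ of Lagrange-interpolation type in the specialised ideal and then explicitly say that constructing them is the main obstacle, without carrying it out. That is a genuine gap, not a proof. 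The existence of such relations in $\calSp(I^{\lambda,\mu})$ is essentially equivalent to the conclusion of Theorem \ref{thm:restrizionediagonale}, so you cannot take it for granted; and the proposed route (organising $\tilde P_\lambda$, $\tilde P_\mu$ by $a$-valuation and combining them to cancel the negative powers of $a$) is exactly the combinatorial computation that seems intractable, which is why the paper avoids it.

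The paper takes an entirely different tack: it never tries to write the Frobenius relation as an algebraic consequence of $I^{\lambda,\mu}$. Instead, it reduces to $\mC[a]$-points, truncates $f$ to a Laurent polynomial, views $f$ as a genuine meromorphic function of $(t,a)$ with poles only at $t=0$ and $t=a$, and uses that for each fixed $a\neq 0$ the monodromy around $0$ and around $a$ is $\pm\Id$ (by Lemma \ref{lem:opur1}). Continuity of monodromy in the parameter $a$ then shows the monodromy around a fixed circle encircling both singularities remains $\pm\Id$ as $a\to 0$, and Lemma \ref{lem:opur1} d) converts this back into membership in $\Op_1^{\operatorname{int},\operatorname{reg}}(\mC)$. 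This transcendental argument proves exactly the set-theoretic inclusion the lemma asks for, while your algebraic plan is aimed at a strictly stronger scheme-theoretic statement (which the paper eventually does establish, but only after this lemma has been proved by other means). If you want to salvage your approach, you would need to actually produce the polynomials $h_\nu$; otherwise, the monodromy-continuity argument is the way through.
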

\begin{proof}
By surjectivity of the map $\Op_2^{\lambda, \mu}(\mC[a])\lra \Op_2^{\lambda, \mu}(\mC)=\Op_2^{\lambda, \mu}|_{a=0}(\mC)$, it is enough to prove that if $f\in \Op_2^{\lambda, \mu} (\mC[a])$  then $\Sp(f)\in \Op_1^{\operatorname{int}, \operatorname{reg}}(\mC)$.
Recall that the equations defining $\Op_2^{\lambda,\mu}$ involve only the variables $a_{-2},\dots, b_{\mu-1}$, while the equations defining $\Op_2^{\lambda,\mu}|_{a=0}$ involve only the variables $\zz_{-2},\dots, \zz_{2\mu-1}$.  

Let $f=\sum_{i \geq -2} (a_iu_i+b_iv_i)$ be an element of $\Op_2^{\grl,\mu}(\mC[a])$ and let $f_{\operatorname{tr}}$ be the truncation $\sum_{-2\leq i \leq \mu-1 } (a_iu_i+b_iv_i)$. Then $f_{\operatorname{tr}}$ is still an element of $\Op_2^{\grl,\mu}(\mC[a])$ (because the coefficients $a_i, b_i$ with $i \geq \mu$ do not intervene in the equations defining $\Op_2^{\grl,\mu}$), and similarly $\Sp(f)$ is in $\Op_2^{\grl,\mu}|_{a=0}(\mC)$ if and only 
$\Sp(f_{\operatorname{tr}})$ is.
Hence we can assume that $f$ is a finite $\mC[a]$-linear combination of 
the elements $u_{-2},\dots,v_{\mu-1}$, hence in particular $f\in \tfrac1{t^2s^2}\mC[t^{},s^{}]$. 

We consider $f$ as a meromorphic function in the variables $t$ and $a=t-s$. For every fixed value of $a \in \mathbb{C}$, the function $f$ has poles only in 
$t= 0$ and $t=a$, and these poles are of order at most two (this also holds for $a=0$ by equations
\eqref{eq:lmab1}). Moreover, by Lemma \ref{lem:opur1}, we see that for $a\neq 0$ the monodromy around $0$ of the system 
$\dot v= \left(\begin{smallmatrix} 0 & f \\ 1 & 0 \end{smallmatrix}\right) v$ is 
equal to $\pm\Id$, and the same holds for the
monodromy around $a$. In particular, if we fix $a \in \mathbb{C}$ with $|a|<1/2$ and consider the monodromy
around the circle $|t|=1$, this is also equal to $\pm \Id$. Since the monodromy varies continuously, also the monodromy for $a=0$ around this circle is $\pm \operatorname{Id}$. Finally, since also for $a=0$ the function $f$ has a pole of order at most two, we deduce that $\Sp(f)\in \Op_1^{\operatorname{int}, \operatorname{reg}}(\mC)$.
\end{proof}

We now make some comments on the proof of the previous lemma. Assume $f$ is a Laurent polynomial in $t,s$ as in the lemma
and set $F=\left(\begin{smallmatrix} 0 & f \\ 1 & 0 \end{smallmatrix}\right)$. Then the discussion above proves that any local solution of $\dot v+ Fv=0$ extends to a meromorphic function on the plane, with possible poles along $t= 0$ and $t=a$.
In particular, there exists a $\PSL_2$-valued holomorphic function $H(t,a)$, defined for $t\neq 0,a$, such that for each fixed value of $a$ the function $H(\cdot,a)$ satisfies $\dot H \, H^{-1}=-F$. We use this remark to prove the following lemma.
\begin{lemma}\label{lem:lmCepspunti}
Let $\lambda\leq \mu$ be natural numbers and let $\mC[\varepsilon]$ be the ring of dual numbers. We have $$\Op_2^{\lambda,\mu}|_{a=0}(\mC[\gre])\subset \Op_1^{\operatorname{int}, \operatorname{reg}}(\mC[\gre])\ .$$
\end{lemma}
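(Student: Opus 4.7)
The plan is to adapt the proof of Lemma \ref{lem:Cpunti1} to the $\mC[\gre]$ setting, combined with the tangent-space description of Lemma \ref{lem:tanspnr}. Write $h = h_0 + \gre h_1 \in \Op_2^{\lambda,\mu}|_{a=0}(\mC[\gre])$. By Lemma \ref{lem:Cpunti1} we have $h_0 \in \Op_1^\nu(\mC)$ for some $\nu \in \mN$, and since $\Op_1^{\operatorname{int},\operatorname{reg}}(\mC[\gre]) = \bigsqcup_\nu \Op_1^\nu(\mC[\gre])$ by connectedness of $\Spec \mC[\gre]$, it will suffice to establish the stronger statement $h \in \Op_1^\nu(\mC[\gre])$. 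By Lemma \ref{lem:tanspnr}, this amounts to checking (a) $h_1 \in t^{-1}\mC[[t]]$ and (b) $\Res\bigl(H_0^{-1}\begin{pmatrix} 0 & h_1 \\ 0 & 0 \end{pmatrix} H_0\bigr) = 0$ for some fundamental solution $H_0 \in \SL_2(\mC((t^{1/2})))$ attached to $h_0$.

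Condition (a) is immediate: the coefficient $\zz_{-2}$ of $t^{-2}$ in $h$ must satisfy $f_\lambda(\zz_{-2}) = 0$ by Definition \ref{def:flambda}, and writing $\zz_{-2} = c + \gre d$, the expansion $f_\lambda(c+\gre d) = f_\lambda(c) + \gre d\, f_\lambda'(c)$ together with the simplicity of the roots of $f_\lambda$ (Lemma \ref{lemma:RootsOffLambda1}) forces $d = 0$. For condition (b) I would import the family/continuity strategy from Lemma \ref{lem:Cpunti1}. First I would lift $h$ to $H \in \Op_2^{\lambda,\mu}(\mC[a, \gre])$ using the surjection induced by the section $\mC[\gre] \hookrightarrow \mC[a, \gre]$, and then truncate so that $H \in \tfrac{1}{t^2 s^2}\mC[a, \gre][t, s]$; in this way $H$ becomes a $\mC[\gre]$-valued rational function of $t$ and $a$ with poles only along $t = 0$ and $t = a$. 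For each fixed $a_0 \in \mC^*$, the factorisation $\Op_2^{\lambda,\mu}|_{a\neq 0} \simeq \Op_t^\lambda \times_{\Spec Q} \Op_s^\mu$ together with Lemma \ref{lem:opur1}c) applied over $\mC[\gre]$ implies that the monodromy of the $\mC[\gre]$-linear ODE $\dot v + \begin{pmatrix} 0 & H \\ 1 & 0 \end{pmatrix} v = 0$ around each of the singularities $t = 0$ and $t = a_0$ equals $\pm\Id$ in $\SL_2(\mC[\gre])$; hence the same holds around a loop $|t| = 1$, provided $|a_0| < 1/2$.

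This monodromy depends holomorphically on $a_0$ and takes values in the discrete subset $\{\pm\Id\} \subset \SL_2(\mC[\gre])$, so it is constant on the connected punctured disc $\{0 < |a_0| < 1/2\}$. By continuity its $a_0 = 0$ specialisation, whose only remaining singularity is at $t = 0$, has $\pm \Id$ monodromy around $t = 0$. The step I expect to be the most delicate is concluding from this that $h \in \Op_1^{\operatorname{int},\operatorname{reg}}(\mC[\gre])$. The cleanest route is to invoke Lemma \ref{lem:opur1}c) directly over $\mC[\gre]$: a $\pm \Id$ monodromy at a regular singular point with a pole of order at most two produces a fundamental solution in $\SL_2(\mC[\gre]((t^{1/2})))$ via a Frobenius expansion, the absence of logarithmic terms being guaranteed precisely by the $\pm \Id$ monodromy. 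Extending this classical ($\mC$-linear) analysis to $\mC[\gre]$ coefficients is largely routine, but it requires some care in the resonant case where the exponents $\tfrac{\nu+2}{2}$ and $-\tfrac{\nu}{2}$ differ by the positive integer $\nu + 1$; this is where the $\pm \Id$ hypothesis becomes crucial for solving the recursion over $\mC[\gre]$.
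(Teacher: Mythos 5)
Your argument for condition (a) — using $f_\lambda(\zz_{-2}) = 0$ and the simplicity of the roots of $f_\lambda$ from Lemma~\ref{lemma:RootsOffLambda1} to force the $\gre$-part of $\zz_{-2}$ to vanish — is exactly what the paper does, and it is correct.

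For condition (b), however, you diverge from the paper, and your route has a genuine gap that you yourself flag but do not close. The paper, having reduced via Lemma~\ref{lem:tanspnr} to a residue condition, shows that for a lift to $\mC[a][\gre]$ (truncated so everything is a Laurent polynomial) the relevant residues vanish at $t=0$ and $t=a$ for every $a \neq 0$ — this follows from $E_t(h) \in \Op_t^{\lambda}(Q[\gre])$, $E_s(h) \in \Op_s^{\mu}(Q[\gre])$ and Lemma~\ref{lem:tanspnr} applied over $Q[\gre]$ — and then writes the sum of these two residues as a Cauchy integral around a fixed circle $|t|=1$. That integral is zero for all small $a \neq 0$, hence by continuity also at $a=0$, and one is done by another application of Lemma~\ref{lem:tanspnr}. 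The crucial feature here is that Lemma~\ref{lem:tanspnr} is stated for an arbitrary $\mC$-algebra $R$, so it applies to $\mC[\gre]$ directly, and what one transports continuously in $a$ is simply an $\SL_2(\mC[\gre])$-valued residue.

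You instead invoke the monodromy picture: $\pm\Id$ monodromy of the $\mC[\gre]$-linear ODE at $t=0$ and $t=a_0$ for $a_0 \neq 0$, hence $\pm\Id$ around $|t|=1$, hence (by discreteness and continuity) $\pm\Id$ at $a_0 = 0$. Up to that point the argument is sound, but to conclude $h \in \Op_1^{\operatorname{int},\operatorname{reg}}(\mC[\gre])$ you then need the implication "$\pm\Id$ monodromy over $\mC[\gre]$ at a regular singular point with a double pole $\Rightarrow$ a fundamental solution exists in $\SL_2(\mC[\gre]((t^{1/2})))$". This is a $\mC[\gre]$-version of Lemma~\ref{lem:opur1}d), which the paper only states (and only needs) for $R=\mC$. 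You call filling this in "largely routine but requires some care in the resonant case"; that resonant case (exponents differing by the integer $\nu+1$) is precisely where the claim is not routine — one must show that the $\gre$-component of the Frobenius recursion never produces a logarithmic term, and that the resulting $\gre$-part converges. This is a real gap; it can be closed (and is essentially equivalent in content to Lemma~\ref{lem:tanspnr}), but the paper's residue-based route is designed exactly to sidestep it, since Lemma~\ref{lem:tanspnr} already handles an arbitrary base ring and reduces the whole question to the vanishing of a single residue.
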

\begin{proof}
We will use Lemma \ref{lem:tanspnr}. 
Let $f_0+\gre g_0\in \Op_2^{\lambda,\mu}(\mC[\gre])\subset \Op_1^*(\mC[\gre])=\mC[\gre]((t))$.
First we show that $g_0\in t^{-1}\mC[[t]]$. Indeed from the inclusion $\Op_2^{\grl,\mu}|_{a=0}\subset  (\Op_1^*)_{\geq -2}$ it follows
that $g_0\in t^{-2}\mC[[t]]$. By Definition \ref{def:flambda} the polynomial $f_\lambda(\zz_2)$ is in the ideal defining $\Op_2^{\lambda,\mu}|_{a=0}$, so we know that $f_\lambda(\zz_{-2}(f_0+\varepsilon g_0))=0$. By Lemma \ref{lemma:RootsOffLambda1}, the polynomial $f_\lambda(\zz_{-2})$ has complex coefficients and no repeated roots. This implies that $\zz_{-2}(f_0+\varepsilon g_0)=\zz_{-2}(f_0)+\varepsilon \zz_{-2}(g_0)$ lies in $\mathbb{C}$, hence that $\zz_{-2}(g_0)=0$. Since $\zz_{-2}(g_0)$ is precisely the coefficient of $t^{-2}$ in $g_0$, we have $g_0\in t^{-1}\mC[[t]]$ as claimed.

We now check the second condition of Lemma \ref{lem:tanspnr}.
We will argue similarly to the proof of Lemma \ref{lem:Cpunti1}: as in the proof of that lemma, to show the desired inclusion it is enough to prove that
the image of the specialisation map
$$
\Sp:\Op_2^{\lambda,\mu}(\mC[a][\gre])\lra \Op_1^*(\mC[\gre])
$$
is contained in $\Op_1^{\operatorname{int},\operatorname{reg}}(\mC[\gre])$,
and again we may assume that all coordinates $a_i,b_i$ are zero for $i>\mu$. 
Let $h\in \Op_2^{\lambda,\mu}(\mC[a][\gre])$, hence $h=f+\gre g$ with $f\in\Op_2^{\lambda,\mu}(\mC[a])$, and let $F=\left(
\begin{smallmatrix}
0 & f \\ 1 & 0
\end{smallmatrix}\right)$ 
and $H$ be as in the discussion that precedes the lemma (in particular, we have $\dot{H}H^{-1}=-F$). Since $a$ is not a zero divisor in $\mC[a][\gre] \subset \mC[[a]][\gre]$, by Equation \ref{eq:DescriptionOp2LambdaMu} we have that 
$E_t(h)$ is in $\Op_t^{\lambda}(Q[\gre])$ and $E_s(h)$ is in $\Op_s^{\mu}(Q[\gre])$. By Lemma \ref{lem:tanspnr}, this implies
\[
\Res_t \bigg(E_t\big(H^{-1}\begin{pmatrix} 0 & g \\ 0 & 0 \end{pmatrix}H\big)\bigg)=0
\quad\text{ and }\quad
\Res_s \bigg(E_s\big(H^{-1}\begin{pmatrix} 0 & g \\ 0 & 0 \end{pmatrix}H\big)\bigg)=0\ .
\]
Since all our functions are meromorphic, we may rewrite these conditions as 
$$
\Res_0 \big(H^{-1}\begin{pmatrix} 0 & g \\ 0 & 0 \end{pmatrix}H\big)=\Res_a \big(H^{-1}\begin{pmatrix} 0 & g \\ 0 & 0 \end{pmatrix}H\big)=0
$$
for all $a\neq 0$. Since $0$ and $a$ are the only singularities of this function, for sufficiently small values of $a \in \mC$ the sum of the above residues is computed by the Cauchy integral 
around a fixed circle centred at $0$. By continuity, this integral is zero also for $a=0$. By Lemma \ref{lem:Cpunti1}, $\Sp(f)$ belongs to $\Op_1^{\operatorname{int}, \operatorname{reg}}(\mC) = \bigsqcup_{\nu \in \mN} \Op_1^{\nu}(\mC)$, hence it belongs to $\Op_1^{\nu}(\mC)$ for some $\nu$. By Lemma \ref{lem:tanspnr}, the conditions checked above imply $\Sp(f+\gre g)\in \Op_1^{\nu}(\mC[\gre]) \subset \Op_1^{\operatorname{int},\operatorname{reg}}(\mC[\gre])$, which is what we wanted to show.
\end{proof}

\subsubsection{Proof of Theorem \ref{thm:restrizionediagonale}}
Combining Lemmas \ref{lemma:RootsOffLambda1} and \ref{lem:Cpunti1} we obtain
\begin{equation}\label{eq:InclusionPreEquality}
  \Op_2^{\lambda, \mu}|_{a=0} (\mC) \subset 
  \coprod_{\substack{|\mu-\lambda| \leq \nu \leq \lambda+\mu \\ \nu \equiv \lambda + \mu \bmod{2}}} \Op_1^\nu(\mC) \ .
\end{equation}
To prove that equality holds in this inclusion we check that the ``dimension'' of the two varieties are the same. To do this, we recall the following lemma in commutative algebra (see \cite[\href{https://stacks.math.columbia.edu/tag/00QK}{Lemma 00QK}]{stacks-project}):
if 
$S$ is an integral $\mC[[a]]$-algebra of finite type, and $S/aS$ is not the zero ring, then 
$$
\dim S/aS = \dim S_a,
$$
and moreover every irreducible component of $S/aS$ has the same dimension.
We cannot apply this lemma directly, because the scheme $\Op_2^{\grl,\mu}$ is not of finite type over $\mC[[a]]$. However, as already pointed out several times, 
the equations defining this variety involve only a finite number of variables, so we will be able to easily reduce to the finitely-generated situation. 

Assume $\grl\leq\mu$ and let $\nu=\grl+\mu-2j$ with $j=0,\dots,\grl$ so that, as already noticed, 
$\Op_1^\nu$ and $\Op_2^{\lambda,\mu}|_{a=0}$ intersect nontrivially by the discussion in Section \ref{sec:ipergeo}.

Choose $N$ larger than $\grl+\mu$. 
Let $X$ be the intersection of $\Op_2^{\lambda,\mu}$ with the subscheme of $(\Op^*_2)_{\geq -2}=\Spec(A[a_i,b_i:i\geq -2])$ given by 
$a_i=b_i=0$ for $i\geq N$. 
Similarly define $Y$ and $Z^\nu$ as the subschemes of $\Op_2^{\lambda,\mu}|_{a=0}$ and $\Op_1^{\nu}$ defined by $\zz_i=0$ for all $i\geq 2N$. Notice in particular that
$Y= X\times_{\Spec A} \Spec \mC$. We already noticed that 
\[
\Op_2^{\grl,\mu} = X\times_{\Spec A} \Spec A[a_i,b_i\st i\geq N]
\]
and similarly 
\begin{equation}\label{eq:YandZnu}
\Op_2^{\lambda,\mu}|_{a=0} = Y\times_{\Spec \mC} \Spec\mC[\zz_i\st i\geq 2N] \; \text{ and } \, 
\Op_1^{\nu} = Z^{\nu}\times_{\Spec \mC} \Spec \mC[\zz_i\st i\geq 2N]\ .
\end{equation}
To show that equality holds in \eqref{eq:InclusionPreEquality} it suffices to prove that for every $\nu$ the (irreducible) scheme $Z^\nu$ is a connected component of $Y$.
Notice that $Y$ and $Z^\nu$ are schemes of finite type over $A$. 
Let $Y'$ be a connected component of $Y$ which intersects $Z^\nu$. 
By Lemma \ref{lem:Cpunti1} and Equation \eqref{eq:YandZnu} we know that 
$Y'(\mC)\subset Z^\nu(\mC)$. Now notice that 
the dimension of $Z^\nu$ is $2N=2(N+2)-4$ (four equations in $2(N+2)$ variables), 
and similarly also
$X\times_{\Spec A} \Spec Q$ has dimension $2N$. By \cite[\href{https://stacks.math.columbia.edu/tag/00QK}{Lemma 00QK}]{stacks-project}, recalled above, we have that $Y'$ (which is a union of irreducible components of $X \times_{\Spec A} \Spec \mC$) also has dimension $2N$. As $Z^\nu$ is irreducible we deduce $Y'(\mC)=Z^\nu(\mC)$, hence in particular the reduced subscheme of $Y'$ agrees with the (smooth) scheme $Z^\nu$.
By Lemma \ref{lem:lmCepspunti} we have that the tangent space to $Y$ at every point $y \in Y(\mC)$ is contained in the tangent space to $Z^\nu$ at $y$: as $Z^\nu$ is smooth, we deduce $Y'=Z^\nu$, proving our claim. \qed

\section{Lie algebras}\label{sec:Liealgebras}

In this section we introduce some variants in our context of the classical affine algebra and of
its enveloping algebra. We give our definitions starting from a general simple $\mathbb{C}$-Lie algebra $\mathfrak{g}$, even though we will mostly be interested in the case 
$\mathfrak{g}=\mathfrak{sl}_2$.

\subsection{Lie algebras and enveloping algebras}\label{ssec:Liealgebras}

Let $S$ be an integral commutative $\mC$-algebra, and let $\Rtest$ be a commutative $S$-algebra equipped with an $S$-linear derivation $f\mapsto \partial f=f'$ and 
a residue map $\Res_R:\Rtest\lra S$, i.e., an $S$-linear map such that $\Res_R(f')=0$ for all $f\in \Rtest$. The affine Lie algebra corresponding to $\mathfrak{g}$ and $\Rtest$ is a central extension of $\Rtest \otimes_\mC \gog$ by a $1$-dimensional factor $S\cdot C_\Rtest$,
\[
\hat \gog_\Rtest   =   \gog \otimes_\mC \Rtest \oplus S\cdot C_\Rtest,
\]
with non-trivial bracket defined by
\[
[x \, f , y \,g ]=[x,y] \, fg+\res_R (f'\,g)\, \kappa(x,y) \, C_\Rtest,
\]
where $x,y \in \gog$, $f,g \in \Rtest$, and $\kappa(x,y)$ is the Killing form of $\gog$. By definition, $\hat{\gog}_R$ is a	 Lie algebra over $S$.
The enveloping algebra of $\hgog_R$ will be taken over the ring $S$ and will be denoted by
$U_R$. 
The derivative $\partial$ gives an action of the $S$-Lie algebra 
$\Der_R:=R\partial$ on $\hgog_R$ by $S$-derivations as follows
$$
(f\partial)\cdot (xg)=xfg'\qquad (f\partial)\cdot hC_R=0 \quad \text{ for }  f,g\in R,\, h\in S, x\in\gog.
$$
This induces a corresponding action by derivations on the enveloping algebra $U_R$.

We will be interested in particular cases of this construction for which the ring has some further properties that it is perhaps convenient to introduce in a uniform way. 
We assume that $\Rtest$ is a free $S$-algebra,  with a countable $S$-basis $\calB_R=\{r_\Jp\}_{\Jp \in \Gamma}$ indexed by $\Indici=\tfrac 1{k_R} \mZ$ (in our applications we will have $k_R=1$ or $2$). 
For $\Jp \in \Gamma$, let $I_R(\Jp)$ be the $S$-span of the set $\{r_i:i\geq \Jp\}$ and assume that the following conditions hold (notice the asymmetry between $\mZ$ and $\Gamma$ in condition I2):
\begin{itemize}
 \item[I\,1:] $I_R(0)$ is an $S$-subalgebra of $\Rtest$   
 \item[I\,2:] $I_R(n)\cdot I_R(\Jp)\subset I_R(n+\Jp)$ for all $n\in \mZ$ and $\Jp\in\Indici$
	 \item[I\,3:] $\Res_R I_R(0)=0$ and $\partial I_R(0) \subset I_R(0)$, and for all $\Jp \in \Gamma$, $\partial I_R(\gamma)\subset I_R(\gamma-1)$.
\end{itemize}
Assumptions I1 and I3 imply that the $S$-Lie submodule of $\hgog_R$ given by 
\begin{equation}\label{eq:hgp}
\hat{\gog}_R^+ = \gog  \otimes_\mC I_\Rtest(0) \oplus S\,C_\Rtest 
\end{equation}
is a Lie sub-algebra of $\hat{\gog}_R$ called the \emph{positive part} of $\hat{\gog}_\Rtest$.

\medskip

We denote by $\bRtest$ the completion of $\Rtest$ with respect to the topology defined by the $S$-submodules $I_R(\Jp)$,  and let $\bI_R(\Jp)$ be the completion of $I_R(\Jp)$. Every element of $\bRtest$ can 
be written as a series $\sum_{\Jp \geq N, \Jp \in \Gamma}  s_\Jp r_\Jp$ for some $N \in \Gamma$.

Condition I3 also implies that $\Res_\Rtest$ is continuous with respect to the topology on $\Rtest$ defined by the submodules $I_\Rtest(\Jp)$ and the discrete topology on $S$, and that $\partial$ is continuous with respect to the topology defined by the submodules $I_\Rtest(\Jp)$. We also denote by $\Res_\Rtest$ and $\partial$ their unique continuous extensions to $\bRtest$.

The topology on $\Rtest$ induces a topology on $U_\Rtest$ as follows. For $n\geq 0$ let $J_R(n)$ be the left $U_\Rtest$-ideal generated by $\gog\otimes_\mC I_\Rtest(n)$. We consider the following completion of the enveloping algebra $U_R$:
$$
\bU_\Rtest=\limpro \frac{U_\Rtest}{J_R(n)},
$$
where the limit is taken in the category of $U_\Rtest$-modules, 
and we denote by $\bJ_R(n)$ the closure of $J_R(n)$ in $\bU_R$. We now recall that $\bU_\Rtest$ has a natural structure of associative algebra.

\begin{lemma}\label{lem:defhatU}
For every $x\in U_\Rtest$ and every $n \in \mZ$ there exists $N \in \mZ$ such that $y\cdot x\in J_\Rtest(n)$ for all $y\in J_\Rtest(N)$.
\end{lemma}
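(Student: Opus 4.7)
The plan is to prove the lemma by induction on the length of $x$ written as a product of generators of $U_R$, thereby reducing to the case where $x$ is a single element $z\in\hgog_R$. Since $U_R$ is generated by $\hgog_R$, and since $J_R(n)$ is an $S$-submodule, every $x\in U_R$ is an $S$-linear combination of finite products $z_1\cdots z_k$ with $z_i\in\hgog_R$. If the result holds for products of length $k-1$, and we want it for $z_1 \cdots z_k$, we first apply the induction hypothesis to $z_2\cdots z_k$ and $n$ to obtain an integer $M$ such that $J_R(M)\cdot(z_2\cdots z_k)\subseteq J_R(n)$; then apply the base case to $z_1$ and $M$ to find $N$ with $J_R(N)\cdot z_1\subseteq J_R(M)$. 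This same $N$ works for the whole product.

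It therefore remains to handle the base case, namely a single $z\in\hgog_R$. If $z$ lies in the central part $S\cdot C_R$, it is central, so $J_R(N)\cdot z\subseteq J_R(N)$, and choosing $N=n$ suffices. The nontrivial case is $z=wg$ with $w\in\gog$ and $g\in R$. Here I will bound $y\cdot z$ for $y$ a typical generator $u(vh)$ of $J_R(N)$, with $u\in U_R$, $v\in\gog$, and $h\in I_R(N)$, by commuting $vh$ past $wg$ using the affine Lie bracket:
\[
u(vh)(wg)\;=\;u(wg)(vh)\;+\;u\,[v,w]\,(hg)\;+\;u\,\Res_R(h'g)\,\kappa(v,w)\,C_R.
\]
The first summand is already in $J_R(N)$ because the rightmost factor $vh$ lies in $\gog\otimes I_R(N)$, so it is in $J_R(n)$ provided $N\geq n$.

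The main point, and the only place where a nontrivial estimate is needed, is to control the remaining two terms. For this I will use the three hypotheses I1--I3 on the basis $\{r_\gamma\}$ of $R$. Decomposing $g$ as a finite sum $\sum_{\gamma\in F}s_\gamma r_\gamma$ and setting $\gamma_0=\min F$, property I2 yields $h\cdot g\in I_R(N+\gamma_0)$, so the commutator term $u[v,w](hg)$ lands in $J_R(n)$ as soon as $N+\gamma_0\geq n$. For the residue term, property I3 gives $h'\in I_R(N-1)$, hence again by I2 we obtain $h'g\in I_R(N-1+\gamma_0)$; choosing $N\geq 1-\gamma_0$ we achieve $h'g\in I_R(0)$, so by the second part of I3 we have $\Res_R(h'g)=0$ and this summand vanishes altogether. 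Taking $N\geq\max(n,n-\gamma_0,1-\gamma_0)$ therefore settles the single-element case and completes the induction.

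The only potential subtlety is the dependence of $\gamma_0$ on the chosen $z=wg$, but since at each inductive step we fix a particular $z$, the corresponding $\gamma_0$ is fixed as well, and the required $N$ is a well-defined integer. No compactness-like argument or uniformity is needed: the statement quantifies $N$ after $x$ and $n$, so the bounds propagate through the finitely many factors of $x$ without difficulty. I do not expect any serious obstacle beyond keeping track of the indices in I2 and I3.
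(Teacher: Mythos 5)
Your proof is correct and uses essentially the same idea as the paper's: reduce to monomials, commute the ideal generator past a factor $wg$ using the affine bracket $[vh,wg]=[v,w]hg+\Res_R(h'g)\kappa(v,w)C_R$, and control the resulting terms via properties I1--I3. The only real difference is organizational: the paper commutes $y$ past all of $x_1\cdots x_m$ at once via the expansion $yx = xy + \sum_i x_1\cdots x_{i-1}[y,x_i]x_{i+1}\cdots x_m$, while you strip one factor at a time through a standalone base case $J_R(N)\cdot z_1 \subseteq J_R(M)$ followed by the inductive hypothesis $J_R(M)\cdot(z_2\cdots z_k)\subseteq J_R(n)$; the two are equivalent reorganizations of the same estimate. (One cosmetic caveat: since $\gamma_0\in\frac1{k_R}\mZ$ need not be an integer, $N$ should be taken to be, e.g., $\lceil\max(n,n-\gamma_0,1-\gamma_0)\rceil$.)
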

\begin{proof}
The statement is clearly linear, hence it suffices to check it for a set of $S$-generators 
of $U_\Rtest$. We can therefore assume that $x=C_\Rtest^j x_1\cdots x_m$, where 
$x_i= a_i f_i$ with $a_i\in \gog$ and $f_i\in \Rtest$. As $C_\Rtest$ is central and 
$J_\Rtest(n)$ is a left ideal, we can further reduce to the case $j=0$.
Similarly, we can also assume that $y$ is of the form $y=b g$, with $g\in I_\Rtest(N)$ and 
$b\in \gog$. We need to prove that for $N$ large enough we then have $y \cdot x\in J_\Rtest(n)$.

We prove the claim by induction on $m$. If $m=0$ the claim is trivial. 
Let $m>0$ and let $d$ be such that $f_i\in  I_\Rtest(d)$ for 
for $i=1,\ldots,m$. By the induction hypothesis, there exists $M\geq n$ such that for all $z= c h$ with 
$h\in I_\Rtest(M)$ and $c\in \gog$ we have 
$$
z\cdot x_{i}\cdots x_m\in J_\Rtest(n)
$$
for all $i=2,\cdots,m$. Choose $N = \max\{0, n, M+|d|, |d|\}+k_R$ . Then for $i=1,\ldots,n$ we have
$$
z_i:=[y, x_i]= [bg, a_if_i] = [b,a_i]gf_i + \res (g'f_i)  \kappa(b,a_i) C_\Rtest= c_i h_i 
$$
with $h_i\in I_\Rtest(M)$ and $c_i\in\gog$ (notice that $gf_i' \in I_\Rtest(N+|d|-k_R) \subseteq I(0)$, so it has trivial residue). Hence 
$$
y\cdot x= x\cdot y + z_1\cdot x_2  \cdots x_m + x_1\cdot z_2 \cdot x_3 \cdots x_m +\cdots +x_1\cdots x_{m-1}\cdot z_m
$$
belongs to $J_\Rtest(n)$ since this is true for every summand. 
To see that this last statement holds, recall that $J_\Rtest(n)$ is a left ideal and that we have:
\begin{itemize}
\item  $y \in J_\Rtest(n)$, because $y \in J_\Rtest(N)$ and $N \geq n$;
\item  $z_ix_{i+1}\cdots x_m \in J_\Rtest(n)$, because $z_i=c_ih_i$ with $h_i \in I_\Rtest(M)$, and by definition $M$ is such that $h_i \in I_\Rtest(M) \Rightarrow (c_ih_i)x_{i+1}\cdots x_m \in J_\Rtest(n)$;
\item $x_1\cdots x_{m-1} z_m \in J_\Rtest(n)$, because $z_m$ itself is in $J_R(M)$, hence in $J_\Rtest(n)$.
\end{itemize}
\end{proof}

This allows us to endow $\bU_\Rtest$ with the structure of an associative algebra as follows. Let $p_n, q_n$ be two sequences of elements of $U_\Rtest$ that converge to elements $p, q \in \bU_\Rtest$. For every $n$, denote by $\overline{p_n}, \overline{q_n}$ the classes of $p_n, q_n$ in $U_\Rtest/J_\Rtest(n)$. By the previous lemma, there exists $N$ such that for all $h,k\geq N$
$$
p_N\cdot q_N \coinc p_h \cdot q_k \mod J_\Rtest(n);
$$
furthermore, the class of $p_h \cdot q_k \bmod J_\Rtest(n)$ is independent of the choice of the elements $p_h$, $q_k$ lifting $\overline{p_h}$, $\overline{ q_k}$, and $p_Nq_N$ is a convergent sequence in $\bU_\Rtest$. We may therefore define the product of $p$ and $q$ as the limit of the sequence $p_N q_N$.

Our main goal is to study the center of the algebra $\hU_R$ at the \emph{critical level}, that is, the algebra
\[ \hU_R : = \frac{\bU_R}{(C_R+\frac 12)}.\]
We denote by $\hJ_R(n)$ the image of $\bJ_R(n)$ in $\hU_R(n)$, and by $\hZ_R$ the center of $\hU_R$. 

\begin{remark}\label{oss:2completamenti}
In some of our constructions it would be more natural, or more usual, to consider the Lie algebra 
$\hgog_\bRtest$ and then construct $\bU_\bRtest$ as the completion of $U_\bRtest$ with respect to the left ideals generated by $\gog \otimes_\mC \bI_R(n)$. However, our real objects of interest are the algebra $\bU_R$ and its quotient $\hU_R$, and we have isomorphisms $\bU_\bRtest \cong \bU_\Rtest$ and $\hU_\bRtest \cong \hU_\Rtest$.
We prefer to work with the Lie algebra $\hgog_\Rtest$ because we can then use the classical form of the Poincar\'e-Birkhoff-Witt theorem, which applies to Lie algebras that are free as modules over their base ring.
\end{remark}

\subsection{Filtrations on the completed enveloping algebra}
\label{ssec:PBW}
In this section we define two different filtrations on the algebra $\hU_R$, one coming from the usual Poincaré-Birkhoff-Witt grading, the other induced by a suitable total ordering of the basis elements of $\hgog_R$.
Let $\{J^\gra\}_{\alpha}$
be a $\mC$-basis of $\gog$. We denote by $\calBg{R}=\{J^\gra r_\Jp\}_{\alpha, \Jp}\cup\{C_\Rtest\}$ the induced $S$-basis of the affine Lie algebra $\hat{\gog}_R$.
Suppose now that the basis $J^\alpha$ of $\gog$ is totally ordered. We order the elements of $\calBg{R}$ as follows: we let $C_\Rtest<J^\gra r_\Jp$ for all $\Jp$ and $\gra$, and
$$ J^\gra r_\Jp<J^\grb r_\Jq $$
if $\Jp<\Jq$ or if $\Jp=\Jq$ and $\gra<\grb$. Given elements $x_1,\ldots,x_m \in \calBg{R}$, we say that $x_1\cdots x_m$ is an \emph{ordered monomial} if $x_i\leq x_{i+1}$ for $i=1,\ldots,m-1$. We denote by $\calBU R$ the set of ordered monomials. 
As $\hgog_R$ is free over $S$, the Poincaré-Birkhoff-Witt theorem implies that the set $\calBU R$ is an $S$-basis of $U_R$. Notice that $U_R$ is a free algebra over $S[C_R]$, with basis given by the set $\calBhU R$ of ordered monomials $x_1\cdots x_m$ where each $x_i$ is different from $C_R$. We denote by $x_1\cdots x_n$ and $\calBhU R$ also their images in $\hU_R$.
We now introduce some filtrations on $U_R$ and $\hU_R$. 

For an ordered monomial $C_R^c  x_1\cdots x_n$ in $\calBU{R}$ (or in $\calBhU R$), where $x_j=J^{\gra_j} r_{\Ja_j}$, we define
\[
\deg(C_R^c x_1\cdots x_n)=n \qquad\text{ and } \qquad \Jdeg(C_R^c x_1\cdots x_n)=(\Ja_1,\ldots,\Ja_n) \in \Gamma^n.
\]
We define a filtration of $U_R$ by setting $U^{\leq n}_R$ to be the $S[C_R]$-span of 
the ordered monomials $x\in \calBhU R$ with $\deg(x)\leq n$. We call this filtration the \textit{PBW filtration},
although it is slightly different from the standard one (note that we assign degree $0$ to the central element $C_R$).  Similarly we define $\hU_R^{\leq n}$ as the closure of the $S$-span of the elements $x\in \calBhU R$ with $\deg(x)\leq n$, or equivalently, as the closure of the image of $U^{\leq n}_R$ in $\hU_R$.

In order to define the second filtration and discuss some of its basic properties we need to introduce some notation for sequences of elements of $\Gamma$. We begin by ordering the non-decreasing sequences of elements of $\Gamma$ with respect to the reverse lexicographic order. 
More precisely,
if $\Ja=\Ja_m\leq \dots \leq \Ja_1$ 
and $\Jb=\Jb_n\leq \dots \leq \Jb_1$ are two non-decreasing sequences in $\Gamma$, we say that
$\Ja> \Jb$ if there exists $0\leq h\leq m,n$ such that the following conditions both hold:
\begin{itemize}
 \item $
\Ja_1=\Jb_1, \; \Ja_2=\Jb_2,\dots, \Ja_{h}=\Jb_{h}$;
\item either $h=n<m$,  or $h< \min\{n,m\}$ and $\Ja_{h+1} > \Jb_{h+1}$.
\end{itemize}

We also define the degree of a non-decreasing sequence $\Ja=\Ja_m\leq \dots \leq \Ja_1$ as $\deg(\Ja)=m$, and the concatenation  $\Ja\cdot \Jb$ of $\Ja$ and $\Jb$ as the sequence obtained from $\Ja_m,\ldots,\Ja_1, \Jb_n,\ldots,\Jb_1$ by reordering the elements in non-decreasing order. 
We record the following trivial remark about properties of this ordering:
\begin{lemma}\label{lem:ordineGamma}
Let $\Ja,\Jb,\Jc$ be three non-decreasing sequences of elements of $\Gamma$. We have $\deg(\Ja \cdot \Jb)=\deg(\Ja)+\deg(\Jb)$, and, if $\Jb \geq \gamma$, then $\Ja \cdot \Jb \geq \Ja \cdot \Jc \geq \Ja$. Moreover, given a non-decreasing sequence $\Ja$ of degree $m$, any non-empty subset of $\{ \Jb = \Jb_m \leq \ldots \leq \Jb_1 \st \Jb \leq \Ja \}$ has a unique maximal element.
\end{lemma}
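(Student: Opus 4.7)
The first identity $\deg(\Ja \cdot \Jb) = \deg(\Ja) + \deg(\Jb)$ is immediate from the definition of concatenation, which merges two finite sequences and preserves the total number of entries. For the inequality $\Ja \cdot \Jc \geq \Ja$, I would compare them position by position from the top: the sorted merge $\Ja \cdot \Jc$ contains every entry of $\Ja$ plus the entries of $\Jc$, so its $k$-th largest entry is always at least the $k$-th largest entry of $\Ja$. Either the two sequences agree on all $\deg(\Ja)$ top positions, in which case $\Ja$ is a prefix of $\Ja \cdot \Jc$ and the first clause of the order gives $\Ja \cdot \Jc > \Ja$ when $\deg(\Jc) > 0$; or else they first differ at some position $h+1 \leq \deg(\Ja)$, where the above observation forces the strict inequality $(\Ja\cdot\Jc)_{h+1} > \Ja_{h+1}$.

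For the monotonicity $\Ja \cdot \Jb \geq \Ja \cdot \Jc$ when $\Jb \geq \Jc$, I would argue by induction on $\deg(\Ja) + \deg(\Jb) + \deg(\Jc)$, by comparing the top element $a = \Ja_1$ with $\Jb_1$ and $\Jc_1$. When $a \geq \Jb_1 \geq \Jc_1$, both merges begin with $a$ and the problem reduces to comparing $\Ja^- \cdot \Jb$ with $\Ja^- \cdot \Jc$ where $\Ja^-$ is $\Ja$ with the top removed; when $\Jb_1 > a$ but $\Jc_1 \leq a$, or when $\Jb_1 > \Jc_1 > a$, the top entries of the two merges already settle the inequality; and when $\Jb_1 = \Jc_1 > a$, both merges begin with this common value and one reduces to $\Ja \cdot \Jb^-$ versus $\Ja \cdot \Jc^-$ after verifying that $\Jb \geq \Jc$ with equal tops forces $\Jb^- \geq \Jc^-$ (handling separately the two clauses in the definition of $\geq$). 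The degenerate cases where $\Ja$ or $\Jc$ is empty are handled directly or reduce to part (2a).

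For the final claim, I would show that the ambient set $\{\Jb : \deg(\Jb) = m, \Jb \leq \Ja\}$ contains no infinite strictly ascending chain; since the order is total, this implies that every non-empty subset has a (unique) maximum. Given a hypothetical chain $\Jb^{(1)} < \Jb^{(2)} < \cdots$, the first entries $\Jb^{(i)}_1$ form a weakly increasing sequence in $\Gamma = \tfrac{1}{k}\mZ$ bounded above by $\Ja_1$, hence eventually constant at some $c_1$; then for large $i$, the second entries $\Jb^{(i)}_2$ are weakly increasing and bounded by $c_1$, so also eventually constant; iterating this $m$ times, all entries stabilize, contradicting the strict ascending property. I expect the main obstacle to be the bookkeeping in the inductive step of the monotonicity statement, and in particular the verification that $\Jb \geq \Jc$ together with $\Jb_1 = \Jc_1$ implies $\Jb^- \geq \Jc^-$, which requires a careful inspection of both sub-conditions of the reverse-lexicographic order (the prefix case $h = n < m$ and the strict-inequality case $\Ja_{h+1} > \Jb_{h+1}$).
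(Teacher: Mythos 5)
Your proof is correct. The paper states this lemma without proof, recording it as a ``trivial remark'', so there is no argument in the text to compare against; your direct verification --- the merge preserves degree; the $k$-th largest entry of $\Ja \cdot \Jc$ dominates that of $\Ja$, giving $\Ja\cdot\Jc \geq \Ja$; stripping equal leading entries for the inductive monotonicity step, with the cases on $\Ja_1$ versus $\Jb_1, \Jc_1$; and the discreteness of $\Gamma = \tfrac{1}{k}\mZ$ together with the bound $\Jb_1 \leq \Ja_1$ and the non-decreasing constraint $\Jb_{i+1} \leq \Jb_i$ to rule out infinite ascending chains --- is the natural one, and correctly reads ``$\Jb \geq \gamma$'' in the statement as the intended ``$\Jb \geq \Jc$''.
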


We now construct a second filtration by generalising the definition of $J_R(n)$.
\begin{definition}\label{def:JRJR}
Given a non-decreasing sequence $\Ja=\Ja_m\leq \dots \leq \Ja_1$, we set 
\begin{equation}\label{eq:J}
\begin{aligned}
J_R^{\leq m}(\Ja) & = \langle x\in \calBU R\st \deg(x)\leq m \text{ and } \Jdeg(x)\geq \Ja \rangle_S +U^{\leq m-1}_R\\
J_R^{\leq m}[\Ja] & = \langle x\in \calBU R\st \deg(x)\leq m \text{ and } \Jdeg(x)  >  \Ja \rangle_S +U^{\leq m-1}_R,
\end{aligned}
\end{equation}
and we define $\hJ_R^{\leq m}(\Ja)$ (respectively $\hJ_R^{\leq m}[\Ja]$) as the closure of the image of $J_R^{\leq m}(\Ja)$ (respectively $J_R^{\leq m}[\Ja]$) in $\hU_R$.
To denote these spaces we will use also the notation $J_R^{\leq m}(r_{\Ja_m},\dots,r_{\Ja_1})$, 
$J_R^{\leq m}[r_{\Ja_m},\dots,r_{\Ja_1}]$, and similarly for their completions. 
\end{definition}

Let $\calB(\hJ_R^{\leq m}(\Ja))$ be the set of ordered monomials $x\in \calBhU R$ such that  $\Jdeg(x)\geq \Ja$ and $\deg(x)\leq m$, and define $\calB(\hJ_R^{\leq m}[\Ja])$ similarly.
These sets are bases 
respectively of $J_R^{\leq m}(\Ja)$ and  $J_R^{\leq m}[\Ja]$ as $S[C_R]$-modules, and topological bases respectively of $\hJ_R^{\leq m}(\Ja)$ and $\hJ_R^{\leq m}[\Ja]$ as $S$-modules. More explicitly, 
any element of $\hU_R$ can be written uniquely as a series 
\begin{equation}\label{eq:serie}
\sum_{b\in \calBhU{R}}s_b \,b,
\end{equation}
such that each $s_b$ is in $S$, and for every $n\in\mN$ the set $\{b : \Jdeg(b)<n, s_b\neq 0\}$ is finite. Similarly, any element in $\hJ^{\leq m}(\Ja)_R$ (resp.\ $\hJ^{\leq m}[\Ja]_R$) can be written uniquely as a series as above, with $b\in\calB(\hJ_R^{\leq m}(\Ja))$ (resp.\ $\calB(\hJ_R^{\leq m}(\Ja))$). We now use this description of the elements of $\hU_R$ as series to deduce some properties of this algebra.

\begin{lemma}\label{lem:piatto}
The following hold:
\begin{enumerate}[\indent a)]
\item If $S$ is an integral domain then $\hU_R$ is a torsion-free $S$-module. In particular, if $S$ is a principal ideal domain, then $\hU_R$ is a flat $S$-module;
\item The submodule $\hJ_R(n)$ is a direct summand of $\hU_R$ as an $S$-module. 
In particular, for every $f\in S$ we have $\hJ_R(n)\cap f\hU_R(n)=f\hJ_R(n)$. Similar statements hold for 
the submodules $\hJ_R^{\leq m}(\Ja)$ and $\hJ_R^{\leq m}[\Ja]$;
\item If $S$ is an integral domain and $f\in S$ is nonzero, then $\hJ_R(n)[f^{-1}]\cap \hU_R=\hJ_R(n)$. Similar statements hold for the submodules $\hJ_R^{\leq m}(\Ja)$ and $\hJ_R^{\leq m}[\Ja]$;
\item The classes of ordered monomials $x\in \calBhU{R}$ with $\Jdeg(x)=a$ form an $S$-basis of the quotient
$\hJ_R^{\leq m}(\Ja)/\hJ_R^{\leq m}[\Ja]$. 
\end{enumerate}
\end{lemma}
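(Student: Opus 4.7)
The whole proof pivots on the unique series representation of elements of $\hU_R$ spelled out just before the statement: every $u \in \hU_R$ can be written uniquely as $u = \sum_{b \in \calBhU{R}} s_b\, b$ with $s_b \in S$, and with the finiteness condition that $\{b \st \Jdeg(b) < n,\, s_b \neq 0\}$ is finite for every $n$. This finiteness condition is inherited by restricting the series to any subset of $\calBhU R$, so for any $\calB' \subset \calBhU R$ we obtain an internal direct sum decomposition $\hU_R = M(\calB') \oplus M(\calBhU R \senza \calB')$, where $M(\mathcal{C})$ denotes the $S$-submodule of series supported in $\mathcal{C}$. Each of the submodules $\hJ_R(n)$, $\hJ_R^{\leq m}(\Ja)$, $\hJ_R^{\leq m}[\Ja]$ is of this form for the appropriate choice of $\calB' \subset \calBhU R$.

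Part~(a) is then immediate: if $f \in S$ is nonzero and $f \cdot \sum s_b b = 0$, the uniqueness of the series expansion forces $fs_b = 0$ for every $b$; when $S$ is a domain this gives $s_b = 0$ and hence torsion-freeness, and the flatness statement over a PID follows from the classical equivalence between torsion-free and flat for modules over a PID.

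Part~(b) follows from the direct sum decomposition. Writing $u \in \hU_R$ uniquely as $j + m$ with $j$ in $\hJ_R(n)$ (respectively $\hJ_R^{\leq m}(\Ja)$ or $\hJ_R^{\leq m}[\Ja]$) and $m$ in the complementary summand, the assumption $fu \in \hJ_R(n)$ forces $fm = 0$ by uniqueness of the decomposition of $fu$, whence $fu = fj \in f\hJ_R(n)$. For part~(c), torsion-freeness from~(a) implies that $\hU_R$ embeds into its localisation $\hU_R[f^{-1}]$, so that $\hJ_R(n)[f^{-1}]\cap \hU_R$ is exactly the set of $u \in \hU_R$ with $f^ku \in \hJ_R(n)$ for some $k \geq 0$; decomposing $u = j + m$ as before gives $f^k m = 0$, and since the complementary summand is itself $S$-torsion-free as a submodule of $\hU_R$, one concludes $m = 0$ and $u \in \hJ_R(n)$. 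The analogous arguments apply to the other filtration ideals.

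Finally, for part~(d), the set difference $\calB(\hJ_R^{\leq m}(\Ja)) \senza \calB(\hJ_R^{\leq m}[\Ja])$ is precisely the set of $x \in \calBhU R$ with $\Jdeg(x) = \Ja$, which is finite: the sequence $\Ja$ is fixed and only the finitely many root labels $J^\gra$ attached to each entry can vary (subject to the ordering constraint). The decomposition principle above then identifies the classes of these finitely many monomials with an $S$-basis of the quotient, with no convergence issue because the set is finite. The main point to get right—more organisational than a genuine obstacle—is to confirm that the complementary ``subset submodule'' $M(\calBhU R \senza \calB')$ is indeed a closed $S$-submodule whose direct sum with the ideal under consideration recovers all of $\hU_R$; once this is in place, everything else reduces to applying the uniqueness of the series expansion.
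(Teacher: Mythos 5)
Your proposal is correct and follows essentially the same route as the paper: the whole lemma is reduced to the unique topological series representation of elements of $\hU_R$ and the observation that each of $\hJ_R(n)$, $\hJ_R^{\leq m}(\Ja)$, $\hJ_R^{\leq m}[\Ja]$ is the $S$-module of series supported on a subset of the monomial basis $\calBhU R$, so that the complementary support set gives an $S$-module complement. Your write-up simply fills in the details that the paper leaves as ``clear from the description of elements as series'' (parts (a), (d)) and ``a complement is given by the series with $s_b=0$ for $b\in\calB(\cdot)$'' (part (b)), and notes, correctly, that (c) then follows from (a) and (b) by torsion-freeness of the complement.
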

\begin{proof}
Parts a) and d) are clear from the description of the elements of $\hU_R$ as series.
To prove b), notice that a complement of $\hJ_R^{\leq m}(\Ja)$ as an $S$-module is given by the elements as in equation
\eqref{eq:serie} such that $s_b=0$ for all $b\in \calB(\hJ_R^{\leq m}(\Ja))$. A complement in the other two cases can be described in a similar way. 
Part c) follows from b).
\end{proof}

For an element $y\in U_R$ we define the \emph{J-degree} and the \emph{leading term} of $y$ as follows. 
Choose $m$ minimal such that $y\in U^{\leq m}_R$. Then 
$$y=\sum_{\substack{x\in \calBhU{R} \\ \deg(x)=m}} f_x(C_R) \, x +u,$$
with $u\in U^{\leq m-1}_R$ (or $u=0$ if $m=0$) and $f_x(C_R) \in S[C_R]$.
Let $\Ja\in \Gamma^m$ be the minimum of the finite set $\{\Jdeg(x) \st \deg(x)=m, \, f_x \neq 0\}$.
We set 
$$
\deg(y)=m, \qquad \Jdeg(y)=\Ja, \qquad \lt(y)=\sum_{x\in \calBhU{R} \, \st \, \Jdeg(x)=\Ja}f_x(C_R)\, x\in \frac{U^{\leq m}_R}{U^{\leq m-1}_R}. 
$$
The same definition also makes sense for $y \in \hat{U}_\Rtest$, provided that $y\in \hat{U}_\Rtest^{\leq m}$ 
for some $m$, with the only difference that the coefficients $f_x$ are in the ring $S$ (notice that, thanks to the description of elements of $\hU_R$ as series, we know that the set $\{\Jdeg (x) : f_x\neq 0\}$ has a minimum). We consider the leading term of $y\in U_R$ (resp.~of $y\in \bigcup_m \hU^{\leq m}_R$) as an element of the commutative graded ring
$$
\Gr(U_R)=\bigoplus \frac{U^{\leq m}_R}{U^{\leq m-1}_R} \qquad \text{(resp. } 
\Gr(\hU_R)=\frac{\Gr(U_R)}{(C_R+\frac 12)}\text{ )}
$$
Continuing with the notation above we also remark that, for every $y\in \hU_R^{\leq m}$, the images of $y$ and $\lt(y)$ in $\hJ_R^{\leq m}(\gamma)/\hJ_R^{\leq m}[\gamma]$ coincide.

The following lemma contains some elementary observations about the interaction between the product in 
$\hU_R$ and the submodules $\hJ_R^{\leq m}(\Ja)$. It will be used repeatedly in Section \ref{sect:Centre}.

\begin{lemma}\label{lem:filtrazioneJmn}
Assume $S$ is an integral domain. 
\begin{enumerate}[\indent a)]
\item if $x,y\in \bigcup_m \hU_R^{\leq m}$ then $\Jdeg(x\cdot y)=\Jdeg(x)\cdot \Jdeg(y)$ and $\lt(x\cdot y)=\lt(x)\cdot \lt(y)$. In particular, if $x,y\neq 0$ then $\Jdeg(x\cdot y) \geq \Jdeg(x)$, with equality only if $y \in S$;
\item $\hJ_\Rtest^{\leq m}(\Ja)\cdot \hJ_\Rtest^{\leq n}(\Jb)\subset \hJ_R^{\leq m+n}(\Ja\cdot \Jb)$ and
      $\hJ_\Rtest^{\leq m}(\Ja)\cdot \hJ_\Rtest^{\leq n}[\Jb]\subset \hJ_R^{\leq m+n}[\Ja\cdot \Jb]$;
\item let $\Ja_2 \leq \Ja_1$ be elements in $\Gamma$. If $y \in \hJ_R^{\leq 1}[\Ja_1]$, then for every $x \in \hU^{\leq 1}_R$ we have $x\cdot y \in \hJ_R^{\leq 2}[\Ja_2,\Ja_1]$.
\end{enumerate}
\end{lemma}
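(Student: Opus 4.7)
All three parts hinge on the classical PBW fact that the associated graded ring $\Gr(U_R)$ is a commutative $S[C_R]$-algebra freely generated by the images of the basis elements of $\hgog_R$; the topological bookkeeping is handled by Lemma \ref{lem:piatto}, which gives every element of $\hU_R$ a unique convergent expansion in the ordered monomial basis, and a corresponding expansion for elements of $\hJ_R^{\leq m}(\Ja)$ and $\hJ_R^{\leq m}[\Ja]$.

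For (a), write $x = \lt(x) + x'$ with $x' \in \hU^{\leq m-1}_R$ (where $m = \deg(x)$) and $y = \lt(y) + y'$ with $y' \in \hU^{\leq n-1}_R$ (where $n = \deg(y)$). Multiplicativity of the PBW filtration places the three cross-terms $\lt(x)y'$, $x'\lt(y)$ and $x'y'$ in $\hU^{\leq m+n-1}_R$, so only $\lt(x)\lt(y) = \sum_{a,b} f_a g_b\, x_a y_b$ matters, where $x_a, y_b$ are ordered monomials of maximal PBW degree with $\Jdeg(x_a)=\Jdeg(x)$ and $\Jdeg(y_b)=\Jdeg(y)$. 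Modulo $\hU^{\leq m+n-1}_R$ each $x_ay_b$ equals the unique ordered monomial obtained by reordering its generators, whose J-degree is by construction the concatenation $\Jdeg(x)\cdot\Jdeg(y)$. Hence $\Jdeg(xy)=\Jdeg(x)\cdot\Jdeg(y)$ and $\lt(xy)=\lt(x)\lt(y)$; the final inequality, with equality iff $\Jdeg(y)$ is empty (equivalently $\deg(y)=0$, i.e.\ $y \in S$ after collapsing $C_R \mapsto -\tfrac{1}{2}$), follows immediately from Lemma \ref{lem:ordineGamma}.

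For (b), decompose $x = x_1 + x'$ and $y = y_1 + y'$, where $x_1$ (resp.\ $y_1$) lies in the closure of the $S$-span of ordered monomials of degree exactly $m$ (resp.\ $n$) and J-degree $\geq \Ja$ (resp.\ $\geq \Jb$), while $x' \in \hU^{\leq m-1}_R$ and $y' \in \hU^{\leq n-1}_R$. The three mixed products all sit in $\hU^{\leq m+n-1}_R \subset \hJ^{\leq m+n}_R(\Ja\cdot\Jb)$, whereas (a) applied to $x_1 y_1$ gives $\Jdeg(x_1y_1)\geq \Ja\cdot\Jb$ by monotonicity of concatenation (Lemma \ref{lem:ordineGamma}), so $x_1 y_1 \in \hJ^{\leq m+n}_R(\Ja\cdot\Jb)$. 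The bracketed version is identical, upgrading $\geq$ to $>$ throughout. Finally, part (c) is a direct computation: write $x = x_0 + x_1$, $y = y_0 + y_1$ with $x_0, y_0 \in S$ and $x_1, y_1$ in the closure of the span of degree-$1$ monomials, the decomposition of $y$ having the additional property that every generator $J^\alpha r_\gamma$ appearing in $y_1$ satisfies $\gamma > \Ja_1$. All but the $x_1y_1$-term land in $\hU^{\leq 1}_R \subset \hJ^{\leq 2}_R[\Ja_2,\Ja_1]$; for a generator pair in $x_1y_1$, the product $(J^\beta r_\delta)(J^\alpha r_\gamma)$ reduces modulo $\hU^{\leq 1}_R$ to the reordered monomial of J-degree $(\min(\delta,\gamma), \max(\delta,\gamma))$, and since $\max(\delta,\gamma) \geq \gamma > \Ja_1$ this sequence exceeds $(\Ja_2,\Ja_1)$ in the reverse-lex order irrespective of $\min(\delta,\gamma)$. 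Passing to the closure concludes. The only real subtlety is handling completions, but this is entirely routine once Lemma \ref{lem:piatto} is in hand.
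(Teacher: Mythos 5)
Your overall strategy---pass to the associated graded ring $\Gr(\hU_R)$ under the PBW filtration, track leading terms, and invoke the ordering of Lemma \ref{lem:ordineGamma}---matches the paper's, and your treatments of (b) and (c) are fine (those are inclusion statements, so cancellation among terms is harmless). Part (a), however, contains two genuine gaps.

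First, the decomposition $x = \lt(x) + x'$ with $x' \in \hU_R^{\leq m-1}$ is incorrect: $\lt(x)$ keeps only the degree-$m$ ordered monomials of $x$ whose J-degree equals $\Jdeg(x)$, so $x'$ still contains degree-$m$ monomials, namely those of strictly larger J-degree. Consequently the cross-terms $\lt(x)y'$, $x'\lt(y)$, $x'y'$ are \emph{not} confined to $\hU_R^{\leq m+n-1}$: they carry degree-$(m+n)$ pieces, and these must be discarded not because the PBW degree drops but because their J-degree strictly exceeds $\Jdeg(x)\cdot\Jdeg(y)$, which is exactly what the monotonicity in Lemma \ref{lem:ordineGamma} delivers. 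Second, and more seriously, you never check that $\lt(x)\lt(y)\neq 0$. Without that you have only shown that every degree-$(m+n)$ ordered monomial occurring in $\lt(x)\lt(y)$ has J-degree $\Jdeg(x)\cdot\Jdeg(y)$, which yields $\Jdeg(xy)\geq\Jdeg(x)\cdot\Jdeg(y)$ rather than equality: distinct pairs $(a,b)$ can produce the same ordered monomial after reordering, so cancellation is possible in principle. This is exactly where the standing hypothesis on $S$ enters: $\Gr(\hU_R)$ is a polynomial ring over the integral domain $S$, hence itself an integral domain, and so the product of two nonzero leading terms is nonzero. Your argument never uses the integrality of $S$, which should have been a warning sign.
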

\begin{proof}
The ring $\Gr(\hU_R)$ is a commutative polynomial algebra with coefficients in the integral domain $S$ and generators the variables $J^\gra r_\Ja$. Part a) is then trivially true when $x,y$ are monomials; it also holds for general $x, y$ by Lemma \ref{lem:ordineGamma}. 
Parts b) and c) follow immediately from part a).
\end{proof}

\subsection{Examples: $\hat{\mathfrak{g}}_1$, $\hat{\mathfrak{g}}_2$, $\hat{\mathfrak{g}}_s$, $\hat{\mathfrak{g}}_t$, $\hat{\mathfrak{g}}_{t,s}$}\label{sec:examples}
We now specialise the previous construction to the concrete cases we are interested in. 

\begin{itemize}
\item $\hU_1$. We choose $S=\mC$, $\Rtest=\mC[t^{\pm 1}]$ with the usual derivation, and $\Res_\Rtest=\Res$ to be the coefficient of $t^{-1}$. The topology is given by choosing $\calB_1=\{t^n\}$, so that $\bRtest=\mC((t))$. 
We denote by $\hat{\gog}_1$ the corresponding affine Lie algebra. Similarly, $U_1$ and $\hat{U}_1$ will stand for the corresponding universal enveloping algebra and its completion with respect to the submodules $J_1(n)$  at the critical level.
\item $\hU_t$. This algebra is constructed in a similar way to $\hU_1$, with the field of complex numbers replaced by $Q$. In this case we use the notations $\hat{\gog}_t, U_t, \hat{U}_t, C_t$ with their obvious meaning.

\item $\hU_s$. The construction is identical to $\hU_t$, but we denote the variable by $s$ instead of $t$.
\item $\hU_{t,s}$. We choose $S=Q$, $\Rtest=\Rtest_{t,s} = Q[t^{\pm 1}]\times Q[s^{\pm 1}]$
with the derivation given by $\partial(f(t),g(s))=(f'(t),g'(s))$, and let $\Res_R=\Res_t+\Res_s$ be the sum of the one-variable residue maps. The basis $\calB=\{r_n\}$ is indexed by the half-integers, so that 
$k_R=2$ and 
\[
r_n=s^n\quad \text{ for $n \in \mZ$ }\quad \text{and} \quad r_n=t^{\lfloor n \rfloor} \text{ for $n \notin \mZ$.}
\]
 Notice that elements of $R$ should more precisely be represented as pairs: for example, by $s^n$ we mean the pair $(0,s^n)$, and by $t^m$ we mean $(t^m,0)$. In particular, $s^0=(0,1)$ and $t^0=(1,0)$ are two distinct elements of $R$.
The completed ring in this case is $\bRtest=Q((t))\times Q((s))$. 
We notice that we have an isomorphism 
$\displaystyle \hgog_{t,s}\simeq\frac{\hgog_t\oplus\hgog_s}{Q(C_t-C_s)}$,
and that for all $n \in \frac 12 \mN$ we have
$$
J_{t,s}(n) = \frac{U_t\otimes_Q J_s(\lfloor n+\frac 12 \rfloor) + J_t(\lfloor n \rfloor)\otimes_Q U_s}{(C_t\otimes 1 -1\otimes C_s)}.
$$
In particular, for all $n\in \tfrac 12 \mN$ we have an isomorphism
\begin{equation}\label{eq:UtsAndUtUs}
\frac{\hU_{t,s}} {\hJ_{t,s}(n)} \simeq \frac{\hU_{t}} {\hJ_{t}(\lfloor n+\frac 12 \rfloor)} \otimes_Q \frac{\hU_s} {\hJ_s(\lfloor n \rfloor) }.
\end{equation}
This implies that we have a natural injective homomorphism of $Q$-algebras with dense image
$$
\hat{U}_t\otimes _Q \hat U_s \lra \hat U_{t,s}\ .
$$
We will implicitly use this map to identify elements in $\hat{U}_t\otimes _Q \hat U_s$ with their images in  $\hat U_{t,s}$.
\item $\hU_2$. We choose $S=A$, and we take $\Rtest$ to be the subalgebra $R_2$ of $K_2$ spanned (over $A$) by the elements $u_n$,
$v_n$ (see Section \ref{sez:base}). The derivation $\partial$ is the unique $A$-linear derivation such that $\partial t=\partial s=1$, and 
$\Res=\Res_2$. As $S$-basis of $R$ we take the set $\calB=\{u_n,v_n\}$, indexed by the half-integers as follows:
$$
r_n=u_n\quad \text{ for $n \in \mZ$ } \quad \text{and} \quad r_n=v_{\lfloor n \rfloor} \text{ for $n \notin \mZ$.}
$$
The completed ring in this case is $\bRtest=K_2$. 
\end{itemize}

Each of these choices determines a collection of objects that will be denoted by the corresponding
subscript. For example, for the first example in the list we have the ideals $I_1(n)$
and their completions $\bI_1(n)$, the central element $C_1$, the enveloping algebra $U_1$ with filtration $J_1(n)$, the quotient at the critical level $\hU_1$, the bases $\calBg{1}$ and $\calBhU 1$, and so on. We use similar notations for each of the other choices above. 

\subsection{Specialisation and expansion}\label{sec:SpecialisationAndExpansion}
Let $(S,R)$ and $(S',R')$ be two pairs as in the beginning of Section \ref{ssec:Liealgebras}.
Let $\phi:S\lra S'$ be a ring homomorphism and $\phi:\Rtest\lra \Rtest'$ be a morphism of $S$-algebras, where $R'$ is considered as an $S$-algebra via $\phi$. If $\phi$ commutes with the derivation and residue maps of $R, R'$.
In this situation, we get an induced $S$-linear map of Lie algebras between 
$\hgog_R$ and $\hgog_{R'}$, and also a morphism at the level of universal enveloping algebras. All these morphisms will be denoted by $\phi$. 
If, moreover, there exists $\ell>0$ such that $\phi(I(n))\subset I'({\ell n})$  for all $n$, then $\phi$ also induces a map between the associated completions.

An example of this construction is the specialisation map $\Sp$ discussed in Section
\ref{sez:base}. In particular, from the results of that section we immediately deduce that the specialisation map induces an isomorphism
$$
\hgog_2/a\hgog_2\isocan \hgog_1.
$$
We prove a similar result for the the enveloping algebra.

\begin{lemma}\label{lem:hS}For all $n\in \mZ$ we have
\begin{enumerate}[\indent a)]
 \item $\Sp(u_n)=t^{2n}$ and $\Sp(v_n)=t^{2n+1}$;
 \item for all $x,y\in \calBg{2}$ with $x<y$ we have $\Sp(x)<\Sp(y)$. In particular, the image of an ordered monomial is an ordered monomial, and $\Sp(\calBhU 2)= \calBhU 1$;
 \item $\Sp(\hJ_2(n))=\hJ_1(2n)$ and $\Sp^{-1}(\hJ_1(2n))=a \hU_2 +\hJ_2(n)$. 
\end{enumerate}
In particular, $\Sp:\hU_2\lra \hU_1$ is well-defined and induces an isomorphism $\hU_2/a\hU_2 \simeq \hU_1$. The natural topology on $\hU_1$ coincides with the quotient topology induced by $\Sp$.
\end{lemma}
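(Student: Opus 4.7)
Part (a) is immediate from $u_n = t^n s^n$, $v_n = t^n s^{n+1}$, and the fact that $\Sp$ sends $s$ to $t$. For part (b), the plan is to translate to indices: under the identification $r_\gamma = u_\gamma$ for $\gamma \in \mZ$ and $r_\gamma = v_{\gamma - 1/2}$ for $\gamma \in \mZ + \tfrac12$, part (a) says that $\Sp(r_\gamma) = t^{2\gamma}$, i.e.~$\Sp$ doubles indices. Since the ordering on $\calBg{2}$ is lexicographic in $(\gamma, \alpha)$ with $\gamma$ ranging over $\tfrac12\mZ$, and analogously on $\calBg{1}$ with $\gamma \in \mZ$, the fact that $\gamma \mapsto 2\gamma$ is an order-preserving bijection $\tfrac12\mZ \to \mZ$ gives that $\Sp$ restricts to an order-preserving bijection $\calBg{2} \to \calBg{1}$; the statement for ordered monomials then follows.

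The main work is in part (c). First I note that $\Sp(I_2(n)) \subseteq I_1(2n)$, which by the general construction recalled in Section~\ref{sec:SpecialisationAndExpansion} guarantees that $\Sp$ extends to a continuous $A$-linear ring homomorphism $\hat U_2 \to \hat U_1$. The key tool for the rest is the uniform series representation of elements in $\hat U_R$ discussed before Lemma~\ref{lem:piatto}: every $x \in \hat U_2$ is uniquely $x = \sum_{b \in \calBhU 2} s_b(x)\, b$ with $s_b(x) \in A$, subject to the condition that for each $N$ only finitely many $b$ with $\Jdeg(b) < N$ have $s_b(x) \neq 0$; moreover, $\hat J_2(n)$ is precisely the closed submodule consisting of those series supported on $\{b : \Jdeg(b) \geq n\}$, and analogously for $\hat U_1$. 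Using (b) to transport this description, one sees that $\Sp(x) = \sum_b \Sp(s_b(x))\,\Sp(b)$ defines a valid series in $\hat U_1$ (the finiteness condition transfers by index doubling), and that $\Sp(x) \in \hat J_1(2n)$ if and only if $s_b(x) \in aA$ for every $b$ with $\Jdeg(b) < n$.

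The first equality in (c) is immediate: every $y \in \hat J_1(2n)$ lifts, via the bijection on bases, to a series in $\hat U_2$ supported on $\{b : \Jdeg(b) \geq n\}$. For the second equality, I split $x \in \Sp^{-1}(\hat J_1(2n))$ as $x = x_{\geq n} + x_{<n}$, where $x_{\geq n} \in \hat J_2(n)$ gathers the summands with $\Jdeg(b) \geq n$; the key observation is that the remaining piece $x_{<n}$ is, by the finiteness condition applied at level $n$, a \emph{finite} sum, all of whose coefficients lie in $aA$, so $x_{<n} \in aU_2 \subseteq a\hat U_2$. Finally, for the last assertions: surjectivity of $\Sp:\hat U_2 \to \hat U_1$ follows by lifting each $y\in \hat U_1$ via the bijection $\calBhU 1 \to \calBhU 2$ (the finiteness condition transfers), and $\ker \Sp = a\hat U_2$ is read off the series representation (all coefficients in $aA$, then divide through). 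The quotient topology on $\hat U_2/a\hat U_2 \cong \hat U_1$ admits the images $\Sp(\hat J_2(n)) = \hat J_1(2n)$ as basic neighborhoods of zero; since $\{2n\}$ is cofinal in $\mZ_{\geq 0}$, this matches the natural topology on $\hat U_1$. The main subtlety throughout is keeping track of which sums are genuinely finite (hence safely divisible by $a$) versus merely convergent series, and the series representation combined with the finiteness condition is precisely the device that controls this.
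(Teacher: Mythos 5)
Your proof is correct and takes essentially the same approach as the paper, which simply asserts that parts (a), (b) are trivial and that (c) and the final assertions follow from them together with the series description of elements of $\hU_2$ and $\hU_1$; you are filling in exactly the computations that the paper leaves implicit (the index-doubling bijection $\calBhU 2 \to \calBhU 1$, the term-by-term action of $\Sp$ on series, and the finiteness condition that makes the ``divide by $a$'' step legitimate).
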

\begin{proof}
Parts a) and b) are trivial. Part c) and the last statement follow from a), b) and the description of elements of $\hU_2$, $\hU_1$ as series.
\end{proof}

We now consider the case of the expansion map $E:K_2\lra K_t\times K_s$. The map $E$ commutes with the residue and derivation maps. However, since
$E(R_2)$ is not contained in $R_{t,s}$, we cannot directly apply the observation at the beginning of this paragraph
to obtain a map from $U_2$ to $U_{t,s}$ as we did for the specialisation map. On the other hand, we can use Remark 
\ref{oss:2completamenti} to directly construct  a map $E:\hU_2\lra \hU_{t,s}$. Indeed, one can easily check that $E(I_2(n))$ is contained in $\bI_{t,s}(n)$; since $\bRtest_{t,s}$ and $\hU_{t,s}$ are complete, this implies that $E$ induces maps
$$
E:\hgog_2\lra \hgog_{\bRtest_{t,s}}\quad\text{ and }\quad E:\hU_2\lra \hU_{t,s}.
$$
By Lemma \ref{lem:exp} the map at the level of Lie algebras is injective and has dense image. We now study the induced map between the enveloping algebras at the critical level.

\begin{lemma}\label{lem:hE1}For all $n\in \mZ$ we have
 
\begin{enumerate} [\indent a)]
\item $E(u_n)\coinc  (-a)^n t^n+a^n s^n        \bmod \bI_{t,s}(n+1)$, hence in particular for all $e\in \gog$ we have
      $\Jdeg(E(e\,u_n))=\Jdeg(e\,u_n)=n$ and $\lt(E(e\,u_n))= e\, a^n s^n$;
\item $E(v_n)\coinc  (-a)^{n+1}t^n+a^ns^{n+1} \bmod \bI_{t,s}(n+3/2)$, hence in particular for all $e\in \gog$ we have
      $\Jdeg(E(e\,v_n))=\Jdeg(e\,v_n)=n+\tfrac 12$ and $\lt(E(e\,v_n))= e\, (-a)^{n+1}t^n$;
\item $E(y_n)\coinc a^{n+1}s^n \bmod \bI_{t,s}(n+1)$;
\item If $x\in \hU_2^{\leq m}$ for some $m$, then $\deg (E(x))=\deg (x)$ and $\Jdeg (E(x))=\Jdeg (x)$.
\end{enumerate}
\end{lemma}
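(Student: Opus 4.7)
Parts (a), (b), (c) are explicit computations in $\bRtest_{t,s}$, while (d) is the substantive consequence. The plan is to prove (a), (b) by direct Laurent expansion, derive (c) from them, and obtain (d) by combining them with the multiplicativity of leading terms (Lemma~\ref{lem:filtrazioneJmn}(a)).

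For (a), I would expand $E_t$ and $E_s$ on $u_n$ separately. Writing $E_t(u_n) = t^n(t-a)^n$: for $n \geq 0$ this is a polynomial whose lowest-order term in $t$ is $(-a)^n t^n$, and for $n < 0$ the factor $(t-a)^n = (-a)^n(1-t/a)^n$ expands as an element of $Q[[t]]$ with constant term $(-a)^n$, giving again $(-a)^n t^n$ as lowest-order contribution. The rest lies in $t^{n+1}Q[[t]]$, which translates to $\bI_{t,s}(n+1)$ in the $R_{t,s}$-basis (recall that $t^k$ has index $k+1/2$). A symmetric computation for $E_s(u_n) = s^n(s+a)^n$ yields $a^n s^n$ as leading contribution modulo $\bI_{t,s}(n+1)$. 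The ``in particular'' statement follows since $a^n s^n$ has index $n$, lower than the index $n+1/2$ of $(-a)^n t^n$. Part (b) is analogous with $v_n = t^n s^{n+1}$: the $t^n$ term of $E_t(v_n)$ is $(-a)^{n+1}t^n$ (index $n+1/2$), while the $s^{n+1}$ term of $E_s(v_n)$ is $a^n s^{n+1}$ (index $n+1$). Part (c) follows from (a), (b) and the elementary identity $y_n = au_n + v_n$ (a consequence of $t = s + a$): the two contributions $a\cdot(-a)^n$ and $(-a)^{n+1}$ to the $t^n$ coefficient cancel, and $a^n s^{n+1}$ already lies in $\bI_{t,s}(n+1)$, leaving only $a^{n+1} s^n$.

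For (d), the algebra homomorphism $E$ preserves the PBW filtration, so $\deg(E(x)) \leq \deg(x)$; the content is the equality for both $\deg$ and $\Jdeg$. The ``in particular'' statements of (a), (b) say that for every $e \in \gog$, the element $E(e\cdot u_n)$ has Jdeg exactly $n$ with leading term $a^n \cdot e\,s^n$, and $E(e\cdot v_n)$ has Jdeg exactly $n+1/2$ with leading term $(-a)^{n+1} \cdot e\,t^n$; in both cases the scalar in $A$ is nonzero. For an ordered monomial $\mu \in \calBhU 2$, Lemma~\ref{lem:filtrazioneJmn}(a) applied iteratively gives $\deg(E(\mu)) = \deg(\mu)$, $\Jdeg(E(\mu)) = \Jdeg(\mu)$, and $\lt(E(\mu)) = \lambda_\mu\, \mu'$ in $\Gr(\hU_{t,s})$, where $\lambda_\mu \in A$ is nonzero and $\mu' \in \calBhU{t,s}$ is the ordered monomial obtained from $\mu$ by replacing each $u_n$-factor with $s^n$ and each $v_n$-factor with $t^n$.

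The main point, which I expect to be the key step, is that the assignment $\mu \mapsto \mu'$ is injective on ordered monomials: the multiset of pairs $(\alpha_\ell, n_\ell)$ can be recovered from either $\mu$ or $\mu'$ (since $s^n$ and $t^n$ are distinct basis elements of $R_{t,s}$ with distinct indices, each uniquely determining~$n$). Consequently, for any $x = \sum c_\mu \mu \in \hU_2^{\leq m}$ (a convergent series with $c_\mu \in A$), the leading terms $c_\mu \lambda_\mu\,\mu'$ realising the minimum Jdeg at maximum PBW degree involve only finitely many distinct monomials $\mu'$ in $\Gr(\hU_{t,s})$, with nonzero $A$-coefficients (since $A$ is an integral domain, $c_\mu\lambda_\mu \neq 0$). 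Hence no cancellation can occur, giving $\Jdeg(E(x)) = \Jdeg(x)$ and $\deg(E(x)) = \deg(x)$.
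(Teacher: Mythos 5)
Your proof is correct and follows the same line as the paper's: parts (a)--(c) are the straightforward Laurent expansions, and (d) is obtained by combining (a), (b) with the multiplicativity of leading terms (Lemma~\ref{lem:filtrazioneJmn}~a)) to compute $\lt(E(\mu))$ for an ordered monomial $\mu$, and then arguing that no cancellation occurs when passing to series in $\hU_2^{\leq m}$. You make explicit what the paper leaves implicit in its final sentence — namely that $u_n\mapsto s^n$, $v_n\mapsto t^n$ is the index-preserving bijection $r^{(2)}_\gamma\mapsto r^{(t,s)}_\gamma$, so distinct ordered monomials have distinct images in $\Gr(\hU_{t,s})$, and their coefficients $c_\mu\lambda_\mu$ are nonzero because $A$ is an integral domain.
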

\begin{proof}
The first three statements follow from a straightforward computation. We prove d). 
Denote by $r^{(2)}_n$, respectively $r_n^{(t,s)}$, the bases of $R_2$, respectively $R_{t,s}$, 
introduced in the previous section.
Combining parts a) and b) with Lemma \ref{lem:filtrazioneJmn} a) we obtain
\begin{equation}\label{eq:Emonomi}
\lt\big(E(J^{\gra_1}r^{(2)}_{\Jp_1}\cdots J^{\gra_m}r^{(2)}_{\Jp_m})\big)=
\pm a^{N} J^{\gra_1}r^{(t,s)}_{\Jp_1}\cdots J^{\gra_m}r^{(t,s)}_{\Jp_m}
\end{equation}
for an appropriate choice of sign and of exponent $N$. 
Writing elements of 
$\hU_2^{\leq m}$ as series, this formula implies the statement.

\end{proof}

Notice that the map $E$ extends to a map from $\hU_2[a^{-1}]$ to $\hU_{t,s}$ that we still denote by $E$. By Lemma \ref{lem:piatto} c) we see that the subspace topology induced on $\hU_2$ by the inclusion $\hU_2 \subset \hU_2[a^{-1}] $ coincides with the natural topology of $\hU_2$.

\begin{lemma}\label{lem:hE2}The map $E:\hU_2[a^{-1}]\lra \hU_{t,s}$ is injective with dense image. More precisely:
 
\begin{enumerate} [\indent a)]
\item for all $n\in \Gamma$ we have $E^{-1}(\hJ_{t,s}(n))=\hJ_2(n)[a^{-1}]$ ;
\item for every non-decreasing sequence $\Ja$ of elements of $\Gamma$ of degree $m$, the map $E$ induces an isomorphism between $\hJ^{\leq m}_2(\Ja)[a^{-1}]/\hJ^{\leq m}_2[\Ja][a^{-1}]$ and $\hJ^{\leq m}_{t,s}(\Ja)/\hJ^{\leq m}_{t,s}[\Ja]$;
\item for every element $n\in \Gamma$ the map $E$ induces an isomorphism between $\hU_2[a^{-1}]/\hJ_2(n)[a^{-1}]$ and $\hU_{t,s}/\hJ_{t,s}(n)$;
\item $\hU_2[a^{-1}]$ has the topology induced from $\hU_{t,s}$ through the map $E$.
\end{enumerate}
\end{lemma}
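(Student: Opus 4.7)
The plan is to prove part (b) first, then derive parts (a) and (c) via leading-term arguments, and finally obtain part (d) and the initial injectivity and density assertions.

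The essential input is Lemma \ref{lem:hE1}, and in particular Equation \eqref{eq:Emonomi}: for any ordered monomial $b = J^{\alpha_1} r^{(2)}_{\Jp_1} \cdots J^{\alpha_m} r^{(2)}_{\Jp_m} \in \calBhU{2}$, the image $E(b)$ lies in $\hU_{t,s}^{\leq m}$, has $\Jdeg(E(b)) = \Jdeg(b)$, and has leading term $\pm a^N b'$, where $b' \in \calBhU{t,s}$ is obtained by substituting $r^{(t,s)}_{\Jp_i}$ for $r^{(2)}_{\Jp_i}$. Since the sets of ordered monomials in $\calBhU{2}$ and $\calBhU{t,s}$ of any prescribed $\Jdeg$ and $\deg \leq m$ are in natural bijection under $b \mapsto b'$, this shows that, modulo lower-order terms, $E$ acts as the natural identification of bases up to multiplication by nonzero scalars of $Q$.

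Part (b) is then immediate: by Lemma \ref{lem:piatto} (d), both sides of the claimed isomorphism are free modules whose bases correspond under $b \mapsto b'$, and $E$ sends each $b$ to $\pm a^N b'$ modulo $\hJ_{t,s}^{\leq m}[\Ja]$; hence after inverting $a$ the induced map identifies the two bases up to $Q$-units. For part (a), continuity and the inclusion $E(I_2(n)) \subset \bI_{t,s}(n)$ yield $E(\hJ_2(n)[a^{-1}]) \subset \hJ_{t,s}(n)$. Conversely, if $E(x) \in \hJ_{t,s}(n)$, we may assume $x \in \hU_2$ by clearing $a$-denominators, and by Lemma \ref{lem:piatto} (b) we decompose $x = y + z$ with $z \in \hJ_2(n)$ and $y$ a finite $A$-combination of basis monomials of $\Jdeg < n$. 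Suppose $y \neq 0$: let $m$ be the maximal $\deg$ of a term of $y$, and let $\Ja < n$ be the minimum $\Jdeg$ among its degree-$m$ terms. Then $y \in \hJ_2^{\leq m}(\Ja)$ and has nonzero image in $(\hJ_2^{\leq m}(\Ja)/\hJ_2^{\leq m}[\Ja])[a^{-1}]$, so by (b) the image of $E(y)$ in $\hJ_{t,s}^{\leq m}(\Ja)/\hJ_{t,s}^{\leq m}[\Ja]$ is nonzero; yet $E(y) \in \hJ_{t,s}(n) \cap \hU_{t,s}^{\leq m} \subset \hJ_{t,s}^{\leq m}[\Ja]$, because $n > \Ja$. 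The contradiction forces $y = 0$.

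For part (c), injectivity of the induced map follows from (a). For surjectivity, any element of $\hU_{t,s}/\hJ_{t,s}(n)$ is a finite $Q$-combination of basis monomials $b'$ with $\Jdeg(b') < n$, all of bounded $\deg$, hence lies in a finite-dimensional subspace spanned by such monomials with $\deg \leq m$ for some $m$. Applied to the corresponding finite-dimensional subspaces of source and target, $\bar E$ becomes block upper triangular in an ordering refining the $\Jdeg$-stratification, with diagonal blocks that are isomorphisms after inverting $a$ by (b); hence it is invertible, yielding surjectivity. Part (d) follows from (a): the natural topology on $\hU_2[a^{-1}]$ is generated by the filtration $\hJ_2(n)[a^{-1}]$, which by (a) equals $E^{-1}(\hJ_{t,s}(n))$, i.e., the $E$-pullback of the defining filtration on $\hU_{t,s}$. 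Global injectivity of $E$ follows from $\bigcap_n \hJ_2(n)[a^{-1}] = 0$ (a nonzero element has a basis term of minimal $\Jdeg$), and density from (c). The main technical subtlety is the careful choice of parameters $(m, \Ja)$ in parts (a) and (c) that reduces the arguments to finite-dimensional problems tractable by part (b).
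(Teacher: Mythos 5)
Your proofs of parts (a), (b), and (d), as well as the concluding injectivity and density remarks, are correct and close in spirit to the paper's own argument (the paper proves (a) a bit more directly from Lemma~\ref{lem:hE1}~d) rather than routing through (b), but that is a harmless variation). The genuine gap is in part (c).

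The difficulty is in the assertion that the problem is finite-dimensional. You say an element of $\hU_{t,s}/\hJ_{t,s}(n)$ ``lies in a finite-dimensional subspace spanned by such monomials with $\deg\leq m$'': the element itself does lie in a finite-dimensional space, but the span of \emph{all} monomials $b'$ with $\deg(b')\leq m$ and $\Jdeg(b')_1<n$ is infinite-dimensional (the lower components of $\Jdeg$ are unbounded below), and $\bar E$ does not preserve the span of just the monomials occurring in a given element, because the error terms in $E(b)$ introduce new monomials of higher $\Jdeg$ and of all lower degrees $<\deg(b)$. So there is no obvious finite-dimensional subspace of source and target between which $\bar E$ restricts to a map, and the reduction to linear algebra does not go through as stated. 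Moreover, for an infinite filtration a ``block upper triangular'' map with invertible diagonal blocks need not be invertible without a termination or completeness hypothesis; the triangularity here is against an order that is not a well-order in the usual sense. The paper closes exactly this gap by a Noetherian-type induction: it first reduces to showing surjectivity at each fixed PBW level $m$, and then, assuming failure, uses Lemma~\ref{lem:ordineGamma} (every non-empty set of degree-$m$ non-decreasing sequences bounded above has a \emph{maximum}) to choose a maximal $\Jdeg$-stratum $\Ja\leq n$ containing an element not in the image; by part (b), that stratum is hit modulo $\hJ_{t,s}^{\leq m}[\Ja]$, and the error is then handled by the maximality of $\Ja$ (for the $\deg=m$ part) and the induction on $m$ (for the $\deg<m$ part), producing a contradiction. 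To repair your argument you would need either this maximality/well-ordering input, or a sharper bound showing that the $\Jdeg$'s occurring throughout the elimination process stay within a genuinely finite box (which requires tracking component-wise, not merely reverse-lexicographic, bounds on the error terms of $E$).
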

\begin{proof}
Parts a) and b) of the previous lemma give the inclusion $E(\hJ_2(n)[a^{-1}])\subset \hJ_{t,s}(n)$.
Let now $x\in \hU_2[a^{-1}]$ and assume that $E(x)\in \hJ_{t,s}(n)$. 
Write $x=y+j$, where $j\in \hJ_2(n)[a^{-1}]$ and $y$ is a finite $Q$-linear combination of elements $z$ in 
$\calBhU 2$ with $\Jdeg(z)<n$. If $y\neq 0$ then $\Jdeg(y)<n$, hence we also have $\Jdeg(E(y))<n$ by Lemma \ref{lem:hE1} d), and therefore $E(y) \not \in \hJ_{t,s}(n)$. Since $E(x)$ and $E(j)$ are in $\hJ_{t,s}(n)$ we get a contradiction. 
This proves a) and implies that $E$ is injective. It also shows that $\hU_2[a^{-1}]$ has the induced topology, that is, part d).

To prove b), notice that the monomials $x\in \calB(\hU_2)$ 
such that $\deg (x)=m$ and $\Jdeg(x)=\grg$ form a $Q$-basis of $\hJ_2^{\leq m}(\grg)[a^{-1}]/\hJ^{\leq m}_2[\Ja][a^{-1}]$, and an analogous statement holds for the quotient $\hJ^{\leq m}_{t,s}(\grg)/\hJ^{\leq m}_{t,s}[\Ja]$.
By equation \eqref{eq:Emonomi}, if
$$
x=J^{\gra_1}r^{(2)}_{\grg_1}\cdots J^{\gra_m}r^{(2)}_{\grg_m}\in\calB(\hU_2)
$$
is such a monomial, then
$$
E(x)\equiv \pm a^N J^{\gra_1}r^{(t,s)}_{\grg_1}\cdots J^{\gra_m}r^{(t,s)}_{\grg_m} \, \bmod J_{t,s}^{\leq m}[\grg],
$$
proving that $E$ induces an isomorphism $\hJ^{\leq m}_2(\Ja)[a^{-1}]/\hJ^{\leq m}_2[\Ja][a^{-1}] \xrightarrow{\sim} \hJ^{\leq m}_{t,s}(\Ja)/\hJ^{\leq m}_{t,s}[\Ja]$ since it sends a basis of the former to a basis of the latter.

We now prove c), which implies in particular that the image of $E$ is dense. From a) we have a natural injective map $\hU_2[a^{-1}]/\hJ_2(n)[a^{-1}] \to \hU_{t,s}/\hJ_{t,s}(n)$, so it suffices to show that this map is surjective. Notice that any element of $\hU_{t,s}/\hJ_{t,s}(n)$ is the image of an element in $\hU_2^{\leq m}$ for some $m$, so, arguing by induction on $m$, it is enough to prove that $E$ induces a surjection between  $\hU_2^{\leq m}[a^{-1}]/\hJ^{\leq m}_2(n)[a^{-1}]$ 
and $\hU_{t,s}^{\leq m}/\hJ^{\leq m}_{t,s}(n)$
for all $m$. Assume by contradiction that this does not hold. By Lemma \ref{lem:ordineGamma} there exists a unique maximal non-decreasing sequence $\Ja$ of elements of $\Gamma$
such that:
\begin{enumerate}
 \item $\deg (\Ja) =m$;
 \item $\Ja$ is less than or equal to $n$ with respect to the reverse lexicographic order;
 \item there exists $y\in \hU_{t,s}^{\leq m}$ with $\Jdeg(y)=\Ja$ such that $\bar y\in \hU_{t,s}^{\leq m}/\hJ^{\leq m}_{t,s}(n)$ is not in the image of the map $\bar E: \hU_2^{\leq m}[a^{-1}]/\hJ^{\leq m}_2(n)[a^{-1}] \lra \hU_{t,s}^{\leq m}/\hJ^{\leq m}_{t,s}(n)$ induced by $E$.
\end{enumerate}
However, by b) there exists $x\in \hJ_2^{\leq m}(\Jp)\subset \hU_2^{\leq m}[a^{-1}]$ such that $E(x)\in y+\hJ_{t,s}^{\leq m}[\Jp]$, which contradicts the maximality of $\Ja$. 
\end{proof}

\section{The centre of the enveloping algebra $\hat{U}_2$}\label{sect:Centre}

Our purpose in this section is to describe the centre of the enveloping algebra $\hat{U}_2$. We denote by $Z_2$ the center of $\hU_2$, and by $Z_2^{\leq n}$ its intersection with $\hU_2^{\leq n}$.
We use a similar notation for the  various enveloping algebras constructed in Section \ref{sec:examples}
We assume from now on that $\gog = \gos\gol_2$, even though some of our constructions make sense for a general semisimple Lie algebra $\gog$. We fix a basis $J^\gra$ of $\gog$ and we denote by $J_\gra$ the dual basis with respect to the Killing form.

\subsection{The centre of $\hat{U}_1$}
We recall some results on the 1-variable case of our construction. Most of the following material is taken from \cite{Frenkel_Langlands_loop_group}, to which we refer for further details. The original reference is \cite{FF92}. 

\begin{definition}[Normal ordered product] For $m_1, m_2 \in \mathbb{Z}$ we define
\[
\nop{ (J^{\alpha} t^{m_1}) (J^{\beta} t^{m_2}) } = \begin{cases}
(J^{\alpha} t^{m_1}) (J^{\beta} t^{m_2}) \text{ if } m_1 \leq m_2 \\
(J^{\beta} t^{m_2}) (J^{\alpha} t^{m_1}) \text{ if } m_1 > m_2
\end{cases}
\]
\end{definition}

\begin{definition}[1-variable Sugawara operators]\label{def:1Sugawara}
For $k \in \mathbb{Z}$ we set
\[
\Sug_k^{(1)} := \sum_{n \in \mathbb{Z}, \alpha} \nop{ J^{\alpha} t^n \; J_{\alpha} t^{k-n-1}  }
\]
which is a well-defined element of $\hU_1$. 
The same formula may be used to define an element $S_k^{(t)}$ of $\hU_t$, and -- replacing $t$ by $s$ -- also an element $S_k^{(s)}$ of $\hU_s$.
\end{definition}

\begin{theorem}[{see \cite[Proposition 4.3.4]{Frenkel_Langlands_loop_group}}]\label{thm:CenterU1}
The operators $\Sug_k^{(1)}$ are algebraically independent, and the center $Z_1$ of $\hat{U}_1$ is the completed polynomial algebra $\widehat{\mathbb{C}[\Sug^{(1)}_k]}_{k \in \mathbb{Z}}$, where the completion is taken with respect to the topology generated by the ideals $(\Sug^{(1)}_k \mid k \geq n)$.
\end{theorem}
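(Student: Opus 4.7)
The plan has three parts: centrality of the $\Sug_k^{(1)}$, their algebraic independence, and topological generation of $Z_1$.

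For centrality I would verify $[J^\beta t^m, \Sug_k^{(1)}] = 0$ directly. Expanding the commutator against each normal-ordered summand $\nop{J^\alpha t^n \, J_\alpha t^{k-n-1}}$, the result splits into two contributions: an \emph{adjoint} part coming from brackets $[J^\beta, J^\alpha]$, and a \emph{central} part proportional to $C_1$, produced by the Kac--Moody $2$-cocycle when one of the factors in a normal-ordered monomial is commuted past the other. Using invariance of the Killing form and the usual Casimir identity for $\gos\gol_2$, both contributions collapse to a multiple of $J^\beta t^{m+k-1}$, and the two multiples cancel precisely when $C_1 = -1/2$, i.e.\ at the critical level. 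This is the standard direct calculation and also motivates the definition of the normal-ordered product.

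Algebraic independence is obtained through the PBW filtration from Section \ref{ssec:PBW}: each $\Sug_k^{(1)}$ lies in $\hU_1^{\leq 2}$ with leading symbol $\sum_{n,\alpha}(J^\alpha t^n)(J_\alpha t^{k-n-1})$ in the polynomial ring $\Gr(\hU_1)$. With respect to the $J$-degree (now indexed by $\mZ$ rather than $\Indici$), the minimal exponent $n$ appearing in $\lt(\Sug_k^{(1)})$ is $k-1$ below the maximal one, so symbols corresponding to distinct $k$ involve strictly different extremal generators $J^\alpha t^n$. A monomial-counting argument in the polynomial algebra $\Gr(\hU_1)$ then shows these symbols are algebraically independent, which lifts at once to algebraic independence of the $\Sug_k^{(1)}$ in $\hU_1$.

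The heart of the proof is topological generation. My approach would be to pass to the associated graded and show that $\Gr(Z_1)$ is contained in the Poisson centre of $\Gr(\hU_1)$ for the Poisson bracket induced by the Kac--Moody commutator, and then that this Poisson centre coincides with the polynomial subalgebra generated by the symbols $\lt(\Sug_k^{(1)})$. Given a central element $z \in \hU_1^{\leq n}$, one could then write $\lt(z)$ as a polynomial in the $\lt(\Sug_k^{(1)})$ and subtract the corresponding polynomial expression in the $\Sug_k^{(1)}$ themselves; by Lemma \ref{lem:filtrazioneJmn} this strictly decreases either the PBW-degree or the $J$-degree, and iteration yields a series converging in the topology generated by the ideals $(\Sug_k^{(1)} \st k \geq N)$.

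The main obstacle is the second half of this last step: identifying the Poisson centre of $\Gr(\hU_1)$ with the polynomial algebra on the Sugawara symbols. This is not an elementary statement, and a clean route requires either the vertex-algebraic machinery of Feigin--Frenkel or a degeneration argument obtained by passing from generic level to the critical level (where a whole family of quadratic central elements survives). Every other step is a relatively direct computation; this structural input is where the substance of the theorem lies, and it is the content of the reference \cite[Proposition 4.3.4]{Frenkel_Langlands_loop_group}.
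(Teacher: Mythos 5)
This theorem is not proved in the paper but cited (\cite[Prop.~4.3.4]{Frenkel_Langlands_loop_group}); the paper records the substance of the cited argument as Lemma~\ref{lemma:FreFei1}. Your outline is broadly the same: centrality by a direct cocycle computation, algebraic independence from the PBW leading symbols, and topological generation by iterated subtraction. The first two parts are fine (for the independence step, Lemma~\ref{lemma:ExpansionOneVariableSugawara} gives the precise leading monomials $2\sum_\alpha J_\alpha t^{j-1}J^\alpha t^j$ for $k=2j$ and $\sum_\alpha J_\alpha t^{j}J^\alpha t^j$ for $k=2j+1$, and the distinct extremal index pairs give a triangularity argument).

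The third part, however, is phrased in a way that hides where the real difficulty sits. You propose to identify the Poisson centre of $\Gr(\hU_1)$ with the polynomial algebra on the Sugawara symbols, but this is not the right object. In the uncompleted graded ring the Sugawara symbols are not even elements (they are infinite sums), and the Poisson centre of the naive $\operatorname{Sym}(\hgog\otimes\mC[t^{\pm 1}])$ is just constants. One must work with the quotients $\hU_1/\hJ_1(N)$, and those ideals are left ideals stable only under $\hgog_1^+$, not under all of $\hgog_1$. This is why Lemma~\ref{lemma:FreFei1} speaks of $\bigl(\hU_1/\hJ_1(N)\bigr)^{\hgog_1^+}$ and of $\bigl(\Gr_{PBW}\,\hU_1/\hJ_1(N)\bigr)^{\hgog_1^+}$ rather than of a Poisson centre: a central element of $\hU_1$ projects to a $\hgog_1^+$-invariant of the quotient, and the content of the cited proof is that these invariants form a polynomial algebra on $\Sug^{(1)}_k$ for $k<2N$ and, conversely, that each of them lifts to a central element. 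Your iterative subtraction then has to be run one quotient $\hU_1/\hJ_1(N)$ at a time: modulo $\hJ_1(N)$ only the finitely many $\Sug^{(1)}_k$ with $k<2N$ survive, so each step is a finite-variable polynomial computation that terminates, whereas subtracting a leading-symbol match in $\hU_1$ itself would produce infinite sums whose convergence is precisely what needs to be established. Keeping track of this quotient-by-quotient structure is not just bookkeeping; it is what makes the $\hgog_1^+$-invariance (rather than full Poisson-centrality) the correct hypothesis. With these corrections your outline coincides with the route taken in the reference.
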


The following result is the main technical ingredient in the proof of Theorem \ref{thm:CenterU1}, and is the content of the proof of \cite[Proposition 4.3.4]{Frenkel_Langlands_loop_group}. 
Notice that the submodules $\hJ_1(n)$ are not invariant under the natural action of $\hat{\gog}_1$ on $\hU_1$. On the other hand, it is straightforward to see that $\hJ_1(n)$ is stable under the action of $\hat{\gog}_1^+$ for each $n \geq 0$. Therefore, the quotient $\hU_1/\hJ_1(n)$ inherits the structure of a $\hat{\gog}_1^+$-module.

\begin{lemma}\label{lemma:FreFei1}
The ring
$$
\frac{Z_1}{\hJ_1(N)\cap Z_1} 
$$
is a polynomial algebra over $\mC$ generated by $\Sug^{(1)}_k$ for $k<2N$. 
The natural inclusion induces isomorphisms
$$
\frac{Z_1}{\hJ_1(N)\cap Z_1} \simeq \left( \frac{\hU_1}{\hJ_1(N)} \right)^{\hgog_1^+}
$$
and 
$$
\Gr_{PBW} \left(\frac{Z_1}{\hJ_1(N)\cap Z_1} \right) \simeq \left( \Gr_{PBW}\frac{\hU_1}{\hJ_1(N)} \right)^{\hgog_1^+},
$$
where the associated graded rings are taken with respect to the PBW filtration of Section \ref{ssec:PBW}.
\end{lemma}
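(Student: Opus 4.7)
The plan is to establish all three claims together by combining a direct computation, algebraic independence via the $\Jdeg$-filtration, and a classical invariant-theory input, followed by a filtered-to-graded lifting. I begin with the preliminary observation that $\hJ_1(N)$ is stable under the adjoint action of $\hgog_1^+$ (since $\hgog_1^+\cdot I_1(N)\subseteq I_1(N)$), so $\hU_1/\hJ_1(N)$ is naturally a $\hgog_1^+$-module, and the projection $Z_1\to \hU_1/\hJ_1(N)$ factors through $(\hU_1/\hJ_1(N))^{\hgog_1^+}$ with kernel $Z_1\cap\hJ_1(N)$. Next I verify by direct computation (the standard Sugawara centrality argument, which crucially uses $C_1=-\tfrac{1}{2}$) that each $\Sug^{(1)}_k$ is central in $\hU_1$, and I check that $\Sug^{(1)}_k\in\hJ_1(N)$ for $k\geq 2N$: in each summand $\nop{J^\alpha t^n\,J_\alpha t^{k-n-1}}$ the two exponents sum to $k-1\geq 2N-1$, so at least one is $\geq N$; the normal ordering places the smaller exponent on the left, hence the factor of exponent $\geq N$ ends up on the right, and the whole product lies in the closed left ideal $\hJ_1(N)$. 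This already shows that the subalgebra of $Z_1/(Z_1\cap\hJ_1(N))$ generated by Sugawara classes is generated by the finitely many $[\Sug^{(1)}_k]$ with $k<2N$.

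To obtain algebraic independence of these classes and identify the full invariant ring, I pass to the PBW-associated graded. The PBW filtration is $\hgog_1^+$-stable, and $\Gr(\hU_1/\hJ_1(N))$ is canonically the (completed) symmetric algebra on $\gog\otimes\mC((t))/\gog\otimes t^N\mC[[t]]$, a polynomial ring in the variables $J^\alpha t^n$ with $n<N$, on which $\hgog_1^+$ acts by the classical Poisson bracket (with the critical value of $C_1$ entering the cocycle part). The symbol $\sigma_k:=\sigma(\Sug^{(1)}_k)=\sum_{n,\alpha}(J^\alpha t^n)(J_\alpha t^{k-n-1})$ is a nonzero degree-$2$ element for each $k<2N$ — the pair $(n,k-n-1)=(N-1,k-N)$ gives a surviving term — and its $\Jdeg$-leading monomial (in the order of Section \ref{ssec:PBW}) depends injectively on $k$, yielding algebraic independence of the $\sigma_k$ and hence of the $[\Sug^{(1)}_k]$ themselves.

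The main obstacle, and the crux of the argument, is the classical invariant-theory identification
\[
\big(\Gr(\hU_1/\hJ_1(N))\big)^{\hgog_1^+}\;=\;\mC[\sigma_k\st k<2N].
\]
For $\gog=\mathfrak{sl}_2$ this amounts to explicitly describing the $\gog\otimes\mC[[t]]$-Poisson-invariants on the truncated loop algebra and matching them with the Sugawara symbols — this is the classical limit of the Feigin-Frenkel isomorphism, elementary but nontrivial. Granting it, combining with the previous paragraph yields the graded equality $\Gr(Z_1/(Z_1\cap\hJ_1(N)))=\big(\Gr(\hU_1/\hJ_1(N))\big)^{\hgog_1^+}$, which is precisely the ``in particular'' statement. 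A standard filtered-graded lifting argument then promotes this to the ungraded isomorphism $Z_1/(Z_1\cap\hJ_1(N))\xrightarrow{\sim}(\hU_1/\hJ_1(N))^{\hgog_1^+}$, and all the claims of the lemma follow at once.
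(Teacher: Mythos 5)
The paper gives no proof of this lemma: it explicitly defers to Frenkel's book (Proposition 4.3.4 of \cite{Frenkel_Langlands_loop_group}), saying the lemma \emph{is} the content of that proof. Your reconstruction follows the standard argument in that reference faithfully: reduce to the associated PBW-graded, identify the classical (Poisson) invariants with the Sugawara symbols, and lift by a filtered-to-graded argument. The preliminary observations (stability of $\hJ_1(N)$ under $\hgog_1^+$, the check that $\Sug_k^{(1)} \in \hJ_1(N)$ for $k \geq 2N$ because the normal ordering puts the large exponent on the right) are correct, and the final lifting argument works because the PBW filtration on $\hU_1/\hJ_1(N) \cong U_1 / (J_1(N), C_1 + \tfrac12)$ is exhaustive and separated. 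You also correctly identify the crux as the classical invariant-theory computation $\big(\Gr(\hU_1/\hJ_1(N))\big)^{\hgog_1^+} = \mC[\sigma_k : k<2N]$ and defer it, exactly as the paper does.

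Two small imprecisions worth flagging. First, $\Gr(\hU_1/\hJ_1(N))$ is the \emph{ordinary} polynomial algebra on $J^\alpha t^n$, $n<N$, not a completed one: since $\hU_1/\hJ_1(N) \cong U_1/(J_1(N), C_1+\tfrac12)$ has an honest PBW basis, no completion survives in the quotient. Second, your algebraic-independence argument via ``$\Jdeg$-leading monomial depends injectively on $k$'' is too quick as stated: distinct $\Jdeg$-sequences of the generators do not by themselves give algebraic independence of the generators, because different products of $\sigma_k$'s can share the same concatenated $\Jdeg$ (e.g.\ $\sigma_{2j}^2$ and $\sigma_{2j-1}\sigma_{2j+1}$ both have $\Jdeg = (j-1,j-1,j,j)$); one must compare the actual leading polynomials in $\Gr$, not just their $\Jdeg$'s. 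In practice this gap is subsumed by the black-boxed invariant-theory identification, which already asserts the ring is polynomial on the $\sigma_k$, so the conclusion is unaffected.
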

\begin{remark}\label{rmk:LnGenerateInDegreeOne}
In particular, it follows from \cite[Theorem 3.4.2]{Frenkel_Langlands_loop_group} that $Z_1^{\leq 1}$ is spanned by $1$ and $Z_1^{\leq 2}$ is spanned by $1$ and the operators $\Sug_k^{(1)}$ for $k \in \mathbb{Z}$.
\end{remark}

\begin{remark}\label{rmk:Zt}
Tensoring with $Q$ and observing that $\frac{\hat{U}_t}{\hJ_t(N)} \cong \frac{\hat{U}_1}{\hJ_1(N)} \otimes_{\mC} Q$ we obtain that a similar statement also holds for the centers $Z_t, Z_s$ (where $\mC$ is replaced by $Q$ everywhere).
\end{remark}

In the following lemma we use the notation introduced in Definition \ref{def:JRJR}.

\begin{lemma}\label{lemma:ExpansionOneVariableSugawara}
	The one-variable Sugawara operators $\Sug^{(1)}_{k}$ satisfy the following congruences:
	\begin{enumerate}
		\item $k=2j$ even:
		\[
		\Sug_k^{(1)} \equiv 2 \sum_{\alpha} J_{\alpha} t^{j-1} \, J^\alpha t^{j} \bmod \hJ_1^{\leq 2}[t^{j - 1}, t^{j}]
		\]
		\item $k=2j+1$ odd:
		\[
		\Sug_k^{(1)} \equiv \sum_{\alpha} J_{\alpha} t^{j} \, J^\alpha t^{j} \bmod \hJ_1^{\leq 2}[t^{j}, t^{j}]
		\]
	\end{enumerate}
Similar congruences hold for $\Sug_k^{(t)}$ and $\Sug_k^{(s)}$.
\end{lemma}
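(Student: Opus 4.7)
The plan is to unpack the definition of $\Sug^{(1)}_k$, identify which terms in the sum have the minimal $\Jdeg$, and show that all other summands lie in the required ideal, since the filtration $\hJ_1^{\leq 2}[\gamma_1, \gamma_2]$ (with $\gamma_1 \leq \gamma_2$) consists precisely of those degree-$\leq 2$ monomials $x$ with $\Jdeg(x) > (\gamma_1, \gamma_2)$ in the reverse-lexicographic order, plus all of $\hU_1^{\leq 1}$. Since the summand $\nop{J^\alpha t^n \, J_\alpha t^{k-n-1}}$ is an ordered monomial whose $\Jdeg$ is the pair $\{n, k-n-1\}$ arranged non-decreasingly, the whole analysis reduces to a combinatorial question about which pairs $(a,b)$ with $a \leq b$ and $a+b = k-1$ are minimal for the reverse-lex order (which compares the largest entries first).

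For $k = 2j$, the sum $a+b = 2j-1$ is odd, so the condition $a \leq b$ forces $b \geq j$; the unique pair with $b = j$ is $(a,b) = (j-1, j)$. All other pairs have $b \geq j+1$, hence strictly exceed $(j-1, j)$ in reverse-lex order and so lie in $\hJ_1^{\leq 2}[t^{j-1}, t^j]$. Thus I only need to retain $n = j-1$ and $n = j$: the first gives $\nop{J^\alpha t^{j-1} J_\alpha t^j} = J^\alpha t^{j-1} J_\alpha t^j$, and the second gives $\nop{J^\alpha t^j J_\alpha t^{j-1}} = J_\alpha t^{j-1} J^\alpha t^j$. Summing over $\alpha$ and applying the Casimir symmetry $\sum_\alpha J^\alpha \otimes J_\alpha = \sum_\alpha J_\alpha \otimes J^\alpha$ (obtained by the change of summation variable $\alpha \leftrightarrow$ dual index) collapses both contributions into $2 \sum_\alpha J_\alpha t^{j-1} J^\alpha t^j$, which is already an ordered monomial since $j-1 < j$. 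This gives the asserted congruence.

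For $k = 2j+1$, we instead have $a + b = 2j$ with $a \leq b$, so $b \geq j$, and the unique minimal pair is $(j, j)$, attained only for $n = j$. Every other $n$ gives a pair with $b \geq j+1$, hence contributes to $\hJ_1^{\leq 2}[t^j, t^j]$. The surviving $n = j$ term is $\nop{J^\alpha t^j J_\alpha t^j} = J^\alpha t^j J_\alpha t^j$ (the normal order is the identity when the exponents coincide), and summing over $\alpha$ and applying the same Casimir symmetry rewrites this as $\sum_\alpha J_\alpha t^j J^\alpha t^j$, proving the second congruence. Note that reordering the factors in the latter expression (when the chosen basis order puts $J^\alpha$ before $J_\alpha$) only introduces commutator terms of PBW degree $\leq 1$, which automatically lie in $\hJ_1^{\leq 2}[t^j, t^j]$ by definition.

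The only moving parts are the bookkeeping for the reverse-lex order and the verification that the normal-ordered product interacts correctly with the $\leq$ on $\calB(\hgog_1)$; neither step is genuinely difficult, so I do not expect a real obstacle here. Finally, for $\Sug^{(t)}_k$ and $\Sug^{(s)}_k$ the argument is literally identical, since the constructions and filtrations for $\hU_t, \hU_s$ are obtained from those for $\hU_1$ by base change from $\mC$ to $Q$.
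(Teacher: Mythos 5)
Your proof is correct. The paper states Lemma 5.7 without proof (it is a routine technical observation), and your argument supplies exactly the natural verification: the normal ordering makes each summand $\nop{J^\alpha t^n \, J_\alpha t^{k-n-1}}$ an ordered monomial with $\Jdeg$ equal to the non-decreasing rearrangement of $(n, k-n-1)$, so the congruence amounts to identifying which pairs $(a,b)$ with $a\leq b$, $a+b=k-1$ are $\leq (j-1,j)$ (for $k=2j$) or $\leq (j,j)$ (for $k=2j+1$) in the paper's reverse-lexicographic order (which, with the convention $\Ja_m\leq\dots\leq\Ja_1$, compares first the larger entry $b$), and this yields precisely $n\in\{j-1,j\}$ or $n=j$ respectively. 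The use of Casimir symmetry $\sum_\alpha J^\alpha\otimes J_\alpha=\sum_\alpha J_\alpha\otimes J^\alpha$ to merge the two surviving terms (even case) or to rewrite the single surviving term (odd case) is legitimate and is the same basis-independence invoked in Section 5.3 of the paper, and the claim about $\Sug^{(t)}_k,\Sug^{(s)}_k$ is indeed an immediate base-change from $\mC$ to $Q$ as in Remark 5.6.
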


\subsection{Normal ordered product for the case of two singularities}

To simplify the notation we parametrise the bases $u_n$, $v_n$ and $x_n$, $y_n$ of Section \ref{sez:base} in a uniform way, as follows:
\begin{equation}\label{eq:defwz}
w_m = \begin{cases}
u_m, \text{ if } m \in \mathbb{Z} \\
v_{m-1/2}, \text{ if } m \in \frac{1}{2} + \mathbb{Z}
\end{cases}
\text{ and }\quad 
z_n = \begin{cases}
x_n, \text{ if } n \in \mathbb{Z} \\
y_{n-1/2}, \text{ if } n \in \frac{1}{2} + \mathbb{Z}
\end{cases}
\end{equation}
With this notation we may now extend the definition of the normal ordered product to the case of two singularities. 

\begin{definition}\label{def:nop2}
For $m_1, m_2 \in \frac{1}{2}\mathbb{Z}$ we define
\[
\nop{ (J^\alpha w_{m_1}) (J^\beta w_{m_2})} \, = \begin{cases}
(J^\alpha w_{m_1}) (J^\beta w_{m_2}) \text{ if } m_1 \leq m_2 \\
(J^\beta w_{m_2} ) (J^\alpha w_{m_1}) \text{ if } m_1 > m_2 \\
\end{cases}
\]
Since the elements $\{J^\alpha w_m\}_{m, \alpha}$ are $A$-free, this definition extends $A$-linearly to the $A$-span of $\{J^\alpha w_m\}_{m, \alpha}$. Every element of the form $ew_m $ for $e \in \gog$ lies in this span, and one checks that
\[
\nop{ (e_1w_{m_1}) (e_2w_{m_2})} \, = \begin{cases}
( e_1w_{m_1}) (e_2w_{m_2}) \text{ if } m_1 \leq m_2 \\
(e_2w_{m_2} ) (e_1w_{m_1}) \text{ if } m_1 > m_2 \\
\end{cases}
\]
for arbitrary $e_1, e_2 \in \gog$.
\end{definition}

\subsection{Properties of the Casimir element}\label{sez:Casimiro}
In this section we collect some identities related to the Casimir element $\sum_\alpha J^\alpha\otimes  J_\gra\in \gog \otimes \gog$ that we will use often in what follows. Since the Casimir element does not depend on the choice of the basis $J^\gra$,
for fixed $f,g \in K_2$ we have that also the sum $\sum_\gra (J^\gra f) (J_\gra g)\in \hU_2$ does not depend on this choice. Moreover, it also follows that $\sum_\gra[J^\gra, J_\gra]=0$, and hence also that
$\sum_{\gra}\kappa([J^\gra,e],J_\gra)=0$ for all $e\in \gog$. It is also easy to check that 
for all $e\in \gog$ one has 
	$$\sum_\gra [J^\gra,[J_\gra,e]]=e\quad \text{ and }\quad
	\sum_\alpha [J^\alpha, e] \otimes J_\alpha + \sum_\alpha J^\alpha \otimes [J_\alpha,e] = 0.$$

The identities in the next lemma are easily proved from the definitions by rewriting $y_n$ as $au_n +v_n$ and applying the properties of the Casimir element just recalled.

\begin{lemma}\label{lem:SplitTheSumsWithY}
	For all $h,k\in\mZ$ and $m,n \in \frac12 \mathbb{Z}$ we have:
	\begin{itemize}
		\item $\nop{(e_1 u_h )(e_2 y_k )} \, = \begin{cases}
		(e_1 u_h )(e_2 y_k ) \text{ if } h \leq k \\
		(e_2 y_k )(e_1 u_h ) \text{ if } h > k \\
		\end{cases} $ for all $e_1, e_2 \in \gog$.
		\item $
		\displaystyle \sum_{\alpha} \nop{(J^\alpha w_n)(J_\alpha z_m)} \, = \begin{cases}
		\sum_{\alpha} (J^\alpha w_n)(J_\alpha z_m) \text{ if } n \leq m \\
		\sum_\alpha (J_\alpha z_m)(J^\alpha w_n) \text{ if } n > m \\
		\end{cases} 
		$
	\end{itemize}
\end{lemma}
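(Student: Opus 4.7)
The plan is to reduce both bullets to the two-term normal ordering of Definition \ref{def:nop2} via the identities $y_k = v_k + au_k$ and, for $m \in \tfrac12 + \mZ$, $z_m = w_m + aw_{m-1/2}$ (with $z_m = w_m$ when $m \in \mZ$). After this expansion every term becomes an $A$-linear combination of expressions $\nop{(J^\gra w_p)(J_\gra w_q)}$, and each identity reduces to comparing the threshold $p \leq q$ of each summand with the original case distinction on $(h,k)$ or $(n,m)$.

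For the first bullet, write $e_2 y_k = e_2 w_{k+1/2} + a\,e_2 w_k$. Since $h, k \in \mZ$, the assumption $h \leq k$ automatically implies $h \leq k + \tfrac12$, and the assumption $h > k$ forces $h \geq k+1 > k + \tfrac12$. Hence both inner normal-ordered products are simultaneously un-swapped or simultaneously swapped, and recombining $w_{k+1/2} + aw_k = y_k$ gives the claim. For the second bullet, the case $m \in \mZ$ is immediate from Definition \ref{def:nop2}; for $m \in \tfrac12 + \mZ$ I will analyse the three subcases $n \leq m-\tfrac12$, $n = m$, and $n > m$. The first and third subcases are direct, since both inner normal-orderings are consistent with the outer case. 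The subtle subcase is $n = m$, in which $\nop{(J^\gra w_m)(J_\gra w_m)}$ is unchanged but $\nop{(J^\gra w_m)(J_\gra w_{m-1/2})}$ gets swapped; the difference between the two sides of the identity then reduces to
\[
a \sum_\gra \bigl[J_\gra w_{m-1/2},\, J^\gra w_m\bigr],
\]
which I claim vanishes.

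The only computational step is this vanishing. Expanding the bracket in $\hgog_2$ and summing over $\gra$, the diagonal contribution is $\bigl(\sum_\gra [J_\gra, J^\gra]\bigr)\, u_{m-1/2}v_{m-1/2} = 0$ by the identity $\sum_\gra [J^\gra,J_\gra]=0$ recalled in Section \ref{sez:Casimiro}, while the central contribution is proportional to $\res_2(\partial u_{m-1/2} \cdot v_{m-1/2}) \cdot C_2$. Setting $k = m-\tfrac12 \in \mZ$, this residue unfolds via $\partial u_k = k(t^{k-1}s^k + t^k s^{k-1})$ into a combination of $\res_2(t^{2k-1}s^{2k+1})$ and $\res_2(t^{2k}s^{2k})$, both of which vanish for every $k \in \mZ$ by the explicit formula of Lemma \ref{lem:residues}(e). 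I expect the main (minor) obstacle to be the careful bookkeeping between integer and half-integer indices in translating between the $w_p, z_m$ notation and the $u_j, v_j$ notation; once this is handled, the rest is routine.
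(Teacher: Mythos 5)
Your proposal is correct and follows essentially the same route the paper indicates for this lemma: the paper explicitly states that the identities are "easily proved from the definitions by rewriting $y_n$ as $au_n+v_n$ and applying the properties of the Casimir element just recalled," and your argument is a careful write-up of exactly this, with the single non-trivial subcase $n=m$ handled via $\sum_\alpha[J_\alpha u_k, J^\alpha v_k]=0$, which indeed follows from $\sum_\alpha[J_\alpha,J^\alpha]=0$ together with $\Res_2(u_k'v_k)=-ak\,\delta_{k,-k}=0$. The bookkeeping and case analysis are all accurate, so this is a complete and faithful proof.
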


Similarly one can prove the following relations in $\hU_2$.

\begin{lemma}\label{lem:casimirouu}
	Let $f,g$ be elements of $K_2$. In $\hU_2$ we have
	$
	\sum_{\alpha} [J^\alpha f, J_\alpha g] =  -\frac{\dim \gog}{2} \cdot \operatorname{Res}_2(f'g).
	$
	In particular, by the formulas in Remark \ref{rem:Res2Derivatives} we have
\begin{enumerate}
	\item $\sum_{\alpha} [J^\alpha u_n, J_\alpha u_m] = -\dim \gog \cdot n\delta_{m,-n}$
	\item $\sum_{\alpha} [J^\alpha u_n, J_\alpha v_m] =  \frac{\dim \gog}{2} na \delta_{m,-n}$
	\item $\sum_{\alpha} [J^\alpha u_n, J_\alpha y_m] =  -\frac{\dim \gog}{2} na \delta_{m,-n}$
	\item $\sum_{\alpha} [J^\alpha v_n, J_\alpha v_m] =  -\frac{\dim \gog}{2} \left( (2n+1)\delta_{m,-1-n} + a^2n\delta_{m,-n} \right)$
\end{enumerate}
\end{lemma}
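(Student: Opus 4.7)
The plan is to read the identity directly off the bracket relation in the affine Lie algebra $\hat{\gog}_2$. For any $x, y \in \gog$ and $f, g \in R_2$ we have by definition
\[
[xf, yg] = [x,y] \, fg + \Res_2(f' g) \, \kappa(x,y) \, C_2.
\]
Specialising to $x = J^\alpha$, $y = J_\alpha$ and summing over $\alpha$, the first summand contributes $\bigl(\sum_\alpha [J^\alpha, J_\alpha]\bigr) fg$, which vanishes by the Casimir identity recalled at the beginning of Section \ref{sez:Casimiro}. The second summand contributes
\[
\Res_2(f'g) \left( \sum_\alpha \kappa(J^\alpha, J_\alpha) \right) C_2 = \dim \gog \cdot \Res_2(f'g) \cdot C_2,
\]
since $\{J^\alpha\}$ and $\{J_\alpha\}$ are $\kappa$-dual bases, so $\sum_\alpha \kappa(J^\alpha, J_\alpha)$ is the trace of the identity on $\gog$, i.e.\ $\dim \gog$. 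Finally, in $\hU_2$ the central element $C_2$ acts as $-\tfrac12$, which yields the main identity.

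Only a small topological remark is needed to handle arbitrary $f, g \in K_2$ rather than $f, g \in R_2$. Both sides of the claimed identity are $A$-bilinear and continuous in $(f,g)$ for the natural topologies on $K_2$ and $\hU_2$: on the left, $f \mapsto J^\alpha f$ factors through a continuous map into $\hU_2^{\leq 1}$, and the commutator in $\hU_2$ is continuous; on the right, $\Res_2$ and $\partial$ are continuous on $K_2$ by Lemma \ref{lem:residues}. Since $R_2$ is dense in $K_2$ by Proposition \ref{prp:base}, the identity on $R_2 \times R_2$ propagates to $K_2 \times K_2$.

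The four specific identities are then just a matter of plugging in and reading off $\Res_2(f'g)$ from Remark \ref{rem:Res2Derivatives}. For (1) and (2) one uses $\Res_2(u_m u_n') = 2n\delta_{m,-n}$ and $\Res_2(v_m u_n') = -an\delta_{m,-n}$. For (3) one uses either $y_m = au_m + v_m$ combined with (1) and (2), or directly the formula $\Res_2(y_m u_n') = an\delta_{m,-n}$ recorded in Remark \ref{rem:Res2Derivatives}. For (4) one uses $\Res_2(v_m v_n') = (2n+1)\delta_{m,-1-n} + a^2 n \delta_{m,-n}$, and multiplies by $-\tfrac{\dim\gog}{2}$ throughout. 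There is no substantive obstacle: the only thing one might worry about is the passage from $R_2$ to $K_2$, but as explained above this is immediate from density and continuity.
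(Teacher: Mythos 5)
Your proof is correct and follows the same route the paper gestures at (it gives no explicit proof, merely noting that the identities follow "from the definitions" and "the properties of the Casimir element just recalled"): expand the bracket by its defining formula, note that $\sum_\alpha [J^\alpha, J_\alpha]=0$ kills the first term, identify $\sum_\alpha \kappa(J^\alpha, J_\alpha)=\dim\gog$, and set $C_2=-\tfrac12$. Your density remark for extending from $R_2$ to $K_2$ is a reasonable way to make that point precise, though one could equally invoke Remark \ref{oss:2completamenti} directly.
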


\subsection{Statement of the result and strategy of proof}

In analogy with the Sugawara operators of Definition \ref{def:1Sugawara} we introduce the following variant for the algebra $\hU_2$:
\begin{definition}\label{def:Sugawara}
For $k \in \frac{1}{2}\mathbb{Z}$ we  denote by $\Sug_{k}^{(2)}$ the element of $\hU_2$ given by
\[
\Sug_{k}^{(2)} := \sum_{n \in \frac{1}{2}\mathbb{Z},\alpha} \nop{J^\alpha w_n \; J_\alpha z_{-n-\frac{1}{2}}w_k }
\]
\end{definition}

We will show that these operators are central:
\begin{proposition}\label{prop:LkAreCentralPartOne}
For every $k \in \frac{1}{2}\mathbb{Z}$ the element $\Sug_k^{(2)}$ lies in the centre of $\hat{U}_2$.
\end{proposition}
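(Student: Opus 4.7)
My plan is to establish centrality by a direct commutator computation. Since $C_2$ is central by construction and the elements $ew_m$ with $e\in\gog$ and $m\in\tfrac12\mZ$ (together with $C_2$) topologically generate $\hat\gog_2$ over $A$, it suffices to show that $[ew_m,\Sug_k^{(2)}]=0$ in $\hU_2$ for every such generator; continuity of the bracket with $\Sug_k^{(2)}$, together with the density of $\hat\gog_2$ in $\hU_2$, will then give centrality in full generality.

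\textbf{Step 1: bracket expansion.} Using the derivation property of the commutator together with the standard Wick-type identity for commutators with normal-ordered monomials of affine generators, one writes
\[
[ew_m,\Sug_k^{(2)}]=\sum_{n,\alpha}\nop{[ew_m,J^\alpha w_n]\cdot J_\alpha(z_{-n-\frac12}w_k)}+\sum_{n,\alpha}\nop{(J^\alpha w_n)\cdot[ew_m,J_\alpha(z_{-n-\frac12}w_k)]},
\]
any reordering corrections being explicitly tracked as central contributions. The basic affine bracket $[af,bg]=[a,b]fg+\kappa(a,b)\Res_2(\dot fg)\,C_2$ then splits each inner commutator into a ``structure constant'' piece involving $[e,J^\alpha]$ (or $[e,J_\alpha]$) and a ``central'' piece proportional to $C_2$.

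\textbf{Step 2: cancellation of structure constant terms, and central terms at the critical level.} After summing over $\alpha$ and performing a suitable change of summation variable $n\to n'$ (which swaps the roles of the two factors in each normal-ordered product), the two families of structure constant contributions pair up and cancel by the Casimir invariance identity $\sum_\alpha[e,J^\alpha]\otimes J_\alpha+\sum_\alpha J^\alpha\otimes[e,J_\alpha]=0$ recalled in Section \ref{sez:Casimiro}. Lemma \ref{lem:SplitTheSumsWithY} is the key tool here, as it controls the normal ordering of $\sum_\alpha\nop{(J^\alpha w_n)(J_\alpha z_m)}$ after summation over $\alpha$ and allows the required rearrangements to be carried out without generating new central corrections. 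The surviving central contributions are sums of $\Res_2$ of products of the $w_n,z_m,w_k$ and their derivatives, multiplied by $C_2$; using Remark \ref{rem:Res2Derivatives} to compute these residues and Lemma \ref{lem:casimirouu} to collect the Casimir contributions, the total central piece assembles into a scalar multiple of $(2C_2+1)$, which vanishes in $\hU_2$ precisely because the critical level is defined by $C_2=-\tfrac12$. Convergence of all the sums involved in the natural topology of $\hU_2$ (defined by the submodules $\hJ_2(n)$) is checked directly: for each fixed generator $ew_m$ and each $N$, only finitely many of the terms above fail to lie in $\hJ_2(N)$.

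\textbf{Main obstacle.} The principal difficulty is combinatorial bookkeeping, which is significantly more involved than in the one-variable case. The product $z_{-n-\frac12}w_k$ is a single basis element of $R_2$ in three of the four cases determined by the parities of $2n$ and $2k$, but in the remaining case ($n,k\in\mathbb Z$) one has $z_{-n-\frac12}w_k=y_{k-n-1}=v_{k-n-1}+au_{k-n-1}$, so the calculation branches, the normal ordering rules pick up $a$-dependent weights, and the residue computations split into subcases. The delicate point is that after summing all four families together, the $a$-dependent coefficients recombine so that every central contribution is a universal scalar multiple of $(2C_2+1)$; the fact that the magic normalisation $\tfrac12$ does the job in the two-variable setting is a nontrivial verification.
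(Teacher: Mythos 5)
Your overall plan — compute $[ew_m,\Sug_k^{(2)}]$ directly, expand via the affine bracket, and exploit Casimir identities so that the result is a multiple of $(2C_2+1)$ — is in the right spirit, but there are two gaps that would derail the argument as written.

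First, the claim in Step 2 that the structure constant contributions ``pair up and cancel by the Casimir invariance identity'' is not correct, and cannot be correct: if they cancelled, the whole commutator would reduce to the central piece alone, which is a nonzero multiple of $C_2$ and therefore does \emph{not} vanish at $C_2=-\tfrac12$. What actually happens is that the structure constant terms telescope (because $u_m w_n = w_{n+m}$, so each ``$A_n$''-type term reappears as a ``$B_{n+m}$''-type term) and the telescoping leaves behind a finite, nonvanishing sum. This finite sum is evaluated using a Casimir identity (Lemma \ref{lemma:APlusD}) and comes out to exactly $-ew_ku_m'$; only when this is added to the genuinely central contribution $-2w_ku_m'C_2e$ from the residue terms does the factor $(2C_2+1)$ emerge. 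The ``$+1$'' in $(2C_2+1)$ comes from the non-cancelling part of the structure constant terms, so you cannot call it a ``central'' contribution. Your bookkeeping would have to keep these surviving terms, not discard them.

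Second, your plan to prove $[ew_m,\Sug_k^{(2)}]=0$ directly for \emph{all} $m\in\tfrac12\mZ$ underestimates the $v_m$ case. The telescoping mechanism you would need relies on $w_n$ and $w_n\cdot u_m$ both being basis elements with predictably shifted indices, which holds because $u_m=t^ms^m$ is a simple shift. By contrast, $v_nv_m=u_{n+m+1}-av_{n+m}$ is a two-term combination in the $\{u,v\}$ basis, so the sums no longer telescope cleanly and the residue computations branch further. The paper avoids this entirely: it proves the commutator vanishes only for $eu_m$, then uses the $\Der_2$-equivariance of the Sugawara family (Lemma \ref{lemma:DerivativesOfL}) together with $u_m'=2mv_{m-1}+mau_{m-1}$ and a Jacobi-identity argument for $v_{-1}$ to deduce commutativity with all $ev_m$. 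This derivative trick is a real idea you are missing, and without it or some substitute your ``continuity + density'' conclusion is not justified, since you would still need to handle the $v_m$ generators.
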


We postpone the proof of this result to Section \ref{sec:2SugawaraAreCentral} below. 
Having produced an abundance of central elements, we will then show that the operators $\Sug^{(2)}_k$ do in fact (topologically) generate the center of $\hat{U}_2$. We will need the following definition:
\begin{definition}
Let $A[X_k]$ be the polynomial algebra in infinitely many variables indexed by $k \in \frac{1}{2}\mathbb{Z}$. We equip $A[X_k]$ with the topology for which a basis of neighbourhoods of $0$ is given by the ideals $\left( X_i \bigm\vert i \geq n \right)$, and we let $\widehat{A[X_k]}$ be the corresponding completion.
\end{definition}

\begin{remark}
The completion $\widehat{A[X_k]}$ can be described as follows: its elements are formal power series $f$ in $A[[X_k]]$ with the property that, for all $n \in \frac{1}{2}\mathbb{Z}$, evaluating $f$ at $X_m=0$ for all $m \geq n$ yields a polynomial.
\end{remark}

Assuming Proposition \ref{prop:LkAreCentralPartOne}, the elements $\Sug^{(2)}_k$ are central in $\hat{U}_2$, so there is a ring morphism $\phi : A[X_k] \to Z_2$ sending $X_k$ to $\Sug^{(2)}_k$. The map $\phi$ is continuous, hence it extends to a map (again denoted $\phi$) from $\widehat{A[X_k]}$ to $Z_2$: this follows from the fact that $\phi(X_k)=\Sug^{(2)}_k$ belongs to $\hJ_2( \lfloor \frac{k}{2} \rfloor )$. 
The following theorem is the main result of this section.

\begin{theorem}\label{thm:CentreU2}
The map $\phi : \widehat{A[X_k]} \to Z_2$ is an isomorphism. In particular, the operators $\Sug_k^{(2)}$ are algebraically independent and topologically generate $Z_2$.
\end{theorem}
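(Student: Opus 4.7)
The plan is to apply Lemma~\ref{lem:isoMN} to $\phi$. Both $\widehat{A[X_k]}$ and $Z_2$ are flat over $A$: the source by direct inspection of its explicit description as a completion of a polynomial algebra, the target as an $A$-submodule of $\hU_2$, which is $A$-flat by Lemma~\ref{lem:piatto}(a). It therefore suffices to check that $\bar\phi \colon \widehat{A[X_k]}/a \to Z_2/aZ_2$ is injective and that $\phi_a \colon \widehat{A[X_k]}[a^{-1}] \to Z_2[a^{-1}]$ is an isomorphism.

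For the statement mod $a$, I would use the specialisation map of Lemma~\ref{lem:hS}. A direct computation from Definitions~\ref{def:Sugawara} and~\ref{def:1Sugawara}, using the equalities $\Sp(w_n) = \Sp(z_n) = t^{2n}$ for every $n \in \tfrac12\mathbb{Z}$ (which follow from Lemma~\ref{lem:hS}(a) after expressing $y_n = v_n + a u_n$), yields
\[
\Sp(\Sug^{(2)}_k) \;=\; \Sug^{(1)}_{2k};
\]
the required compatibility of normal orderings reduces to the elementary equivalence $n \leq k-n-\tfrac12 \Leftrightarrow 2n \leq 2k-2n-1$. The surjection $\Sp \colon \hU_2 \twoheadrightarrow \hU_1$ has kernel $a\hU_2$ and sends $Z_2$ into $Z_1$; together with the flatness-based identity $Z_2 \cap a\hU_2 = aZ_2$, this produces an injection $Z_2/aZ_2 \hookrightarrow Z_1$. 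Composing with $\bar\phi$ gives the morphism $X_k \mapsto \Sug^{(1)}_{2k}$, and since $k \mapsto 2k$ is a bijection $\tfrac12\mathbb{Z} \xrightarrow{\sim} \mathbb{Z}$, this is precisely the topological isomorphism of Theorem~\ref{thm:CenterU1}. In particular $\bar\phi$ is injective, and as a bonus one obtains $Z_2/aZ_2 \cong Z_1$.

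For the statement after inverting $a$, I would use the expansion map $E \colon \hU_2[a^{-1}] \hookrightarrow \hU_{t,s}$ of Lemma~\ref{lem:hE2}, which is injective with dense image. Combining the isomorphism \eqref{eq:UtsAndUtUs} with Theorem~\ref{thm:CenterU1} and Remark~\ref{rmk:Zt}, one has $Z(\hU_{t,s}) \simeq \widehat{Q[\Sug^{(t)}_m, \Sug^{(s)}_n]}_{m,n \in \mathbb{Z}}$. Since $E$ is injective, it is enough to prove that the composition $E \circ \phi_a$ is an isomorphism onto $Z(\hU_{t,s})$: the sandwich $E(\phi_a(\widehat{A[X_k]}[a^{-1}])) \subseteq E(Z_2[a^{-1}]) \subseteq Z(\hU_{t,s})$ will then force $\phi_a$ to be both surjective and injective. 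To analyse $E \circ \phi_a$, one combines the leading-term formulas of Lemma~\ref{lem:hE1} with Lemma~\ref{lemma:ExpansionOneVariableSugawara} to compute the leading $J$-degree term of $E(\Sug^{(2)}_k)$: the expected answer is a nontrivial $Q$-linear combination of $\Sug^{(t)}_{f(k)}$ and $\Sug^{(s)}_{g(k)}$ with $a$-power coefficients, where $k \mapsto (f(k),g(k))$ pairs integer (resp.\ half-integer) values of $k$ with even (resp.\ odd) integer indices for the $t$- and $s$-Sugawaras. An induction on $J$-filtration level, in the spirit of the classical proof of Lemma~\ref{lemma:FreFei1}, then upgrades this asymptotic decoupling into a topological isomorphism.

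The main obstacle is precisely this explicit leading-term computation of $E(\Sug^{(2)}_k)$ and the verification that the resulting change of generators is invertible over $Q$ -- an invertibility which is expected to genuinely require $a$ to be invertible, mirroring the factorisation property \eqref{eq:fattint}. Concretely, one splits $\Sug^{(2)}_k$ into its $n$-integer and $n$-half-integer parts; after applying $E$, each of these decouples into a $t$-contribution and an $s$-contribution (with mixed cross-terms that commute, since $\hU_t$ and $\hU_s$ commute inside $\hU_{t,s}$ modulo the identification of central elements). Reassembling these pieces with the correct $a$-powers and checking that the families $\{\Sug^{(t)}_m\}$ and $\{\Sug^{(s)}_n\}$ can be recovered as convergent combinations of the $E(\Sug^{(2)}_k)$ demands careful bookkeeping but uses only tools already developed in Lemmas~\ref{lem:hE1}, \ref{lemma:ExpansionOneVariableSugawara}, and \ref{lemma:FreFei1}.
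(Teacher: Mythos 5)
Your overall strategy — apply Lemma~\ref{lem:isoMN} after checking flatness, then treat $a=0$ via $\Sp$ and $a$ invertible via $E$ — is exactly the paper's. The flatness observations are correct, and your computation $\Sp(\Sug^{(2)}_k)=\Sug^{(1)}_{2k}$ (using $\Sp(w_n)=\Sp(z_n)=t^{2n}$ and checking compatibility of normal orderings) is correct and matches Lemma~\ref{lemma:IsomorphismAlongTheDiagonal}; the exponent is stated as $t^{2n+1}$ in the proof of that lemma, but this is a slip (the paper itself uses $\Sug^{(1)}_{2k}$ in Section~\ref{sez:FF2}), and either index gives a bijection $\tfrac12\mathbb{Z}\to\mathbb{Z}$ so nothing breaks.

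The genuine gap is in your reduction for the $a\neq 0$ side: you propose to show that $E\circ\phi_a$ is an isomorphism onto all of $Z_{t,s}=Z(\hU_{t,s})$, but this cannot be correct. The issue is that $\widehat{A[X_k]}[a^{-1}]$ is not complete with respect to the $\hJ$-adic topology inherited from $\hU_{t,s}$: for example, the series $\sum_{k\geq 0} a^{-k}X_k$ converges in that topology but its limit is not of the form $g/a^N$ for any fixed $N$ and $g\in \widehat{A[X_k]}$, so it does not lie in $\widehat{A[X_k]}[a^{-1}]$. Since $Z_{t,s}$ \emph{is} complete (it is closed in the complete algebra $\hU_{t,s}$), the image of $\phi_a$ composed with $E$ can at best be a \emph{dense proper} subspace of $Z_{t,s}$, never all of it. In particular, the "induction on $J$-filtration level" you invoke proves surjectivity only on each finite quotient $Z_{t,s}/\hJ_{t,s}(N)\cap Z_{t,s}$; passing from quotient-level surjectivity to actual surjectivity requires completeness of the source, which fails here.

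The correct way around this (which is what the paper does in Lemma~\ref{lem:gozainv} and Proposition~\ref{prp:centrofuoridiagonale}) is to lower the target: prove that $\operatorname{Im}(\phi_a)$ is \emph{closed} in $Z_2[a^{-1}]$ — this uses the injectivity of $\phi$, the explicit $\Jdeg$-filtration, and the flatness statement of Lemma~\ref{lem:goz1} which in turn relies on the $a=0$ analysis — and separately prove that $\operatorname{Im}(\phi_a)$ is \emph{dense} in $Z_{t,s}$ (hence a fortiori in $Z_2[a^{-1}]$, which carries the subspace topology by Lemma~\ref{lem:hE2}). Density is where your triangular change-of-basis computation (essentially the content of Lemmas~\ref{lemma:ExpansionTwoVariablesSugawara} and~\ref{lemma:TriangularChangeOfBasis}) is used and is sound; but density alone is insufficient, and the closedness step is not addressed in your plan. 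Once both are in place, $\operatorname{Im}(\phi_a)=Z_2[a^{-1}]$ follows, which is the surjectivity you need — without ever needing $E(Z_2[a^{-1}])$ to equal $Z_{t,s}$ (and it does not).
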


Our strategy to prove Theorem \ref{thm:CentreU2} may be summarised as follows. We observe that both $\widehat{A[X_k]}$ and $Z_2$ have no $a$-torsion, hence are flat over $A$, so by Lemma \ref{lem:isoMN} the map $\phi$ is an isomorphism if and only if the induced maps 
\[
\overline{\phi} : \frac{\widehat{A[X_k]}}{ a\widehat{A[X_k]}} \to \frac{Z_2}{aZ_2}
\]
and
\[
\phi_a:\widehat{A[X_k]}[a^{-1}] \to Z_2[a^{-1}]
\]
are isomorphisms.
In Lemma \ref{lemma:IsomorphismAlongTheDiagonal} we show 
that $\overline{\phi}$ is an isomorphism; this follows easily from Theorem \ref{thm:CenterU1}.
To prove the second isomorphism we will use the expansion map $E:\hU_2[a^{-1} ]\lra \hU_{t,s}$ and the fact that $\hat{U}_{t,s}$ is a suitable completion of $\hat{U}_t \otimes \hat{U}_s$. We will then be reduced to checking a statement concerning the center of two copies of the algebra $\hU_1$, which we will again be able to reduce to the 1-variable case. This part of the proof will be carried out in Section \ref{sec:centrofuoridiagonale}.

\begin{lemma}\label{lemma:IsomorphismAlongTheDiagonal}
The map $\overline{\phi} : \widehat{A[X_k]}/a\widehat{A[X_k]} \to Z_2/aZ_2$ is an isomorphism.
\end{lemma}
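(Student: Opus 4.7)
The plan is to factor $\overline{\phi}$ through the specialisation morphism $\Sp \colon \hU_2 \to \hU_1$ of Lemma \ref{lem:hS} and reduce to the one-variable result, Theorem \ref{thm:CenterU1}. Since $\Sp$ is a continuous ring homomorphism, it sends $Z_2$ into $Z_1$. Because $\hU_2$ is torsion-free over $A$ by Lemma \ref{lem:piatto} a), one checks immediately that $Z_2 \cap a\hU_2 = aZ_2$: if $z \in Z_2$ and $z = aw$ with $w \in \hU_2$, then for every $x \in \hU_2$ we have $a[w,x] = [z,x] = 0$, so $[w,x] = 0$ and $w \in Z_2$. Thus $\Sp$ descends to an injective ring homomorphism $\tilde{\Sp} \colon Z_2/aZ_2 \hookrightarrow Z_1$.

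Next, identify $\widehat{A[X_k]}/a\widehat{A[X_k]}$ with $\widehat{\mC[X_k]}$ in the obvious way and consider the composition $\Psi := \tilde{\Sp} \circ \overline{\phi}$, which sends $X_k$ to $\Sp(\Sug_k^{(2)})$. I would show $\Sp(\Sug_k^{(2)}) = \Sug_{2k}^{(1)}$ by direct computation, based on two observations. First, Lemma \ref{lem:hS} a) gives $\Sp(w_m) = \Sp(z_m) = t^{2m}$ for every $m \in \tfrac{1}{2}\mZ$. Second, the map $m \mapsto 2m$ is an order-preserving bijection $\tfrac{1}{2}\mZ \to \mZ$, so $\Sp$ converts a two-variable normally ordered product $\nop{J^\alpha w_{m_1} \; J_\alpha w_{m_2}}$ into the one-variable normally ordered product $\nop{J^\alpha t^{2m_1} \; J_\alpha t^{2m_2}}$. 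Applying this summand-by-summand in Definition \ref{def:Sugawara}, splitting according to the parities of $n$ and $k$ and expanding $z_{-n-\frac{1}{2}} w_k$ in the basis $\{w_m\}$, one notes that the terms with an explicit factor of $a$ are annihilated by $\Sp$; a short rearrangement of the surviving summands then produces exactly the one-variable Sugawara operator $\Sug_{2k}^{(1)}$.

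Since $k \mapsto 2k$ is an order-preserving bijection $\tfrac{1}{2}\mZ \to \mZ$, Theorem \ref{thm:CenterU1} identifies $\Psi$ as a topological isomorphism (it sends the chosen topological generators to the chosen topological generators, and the induced correspondence of basic open ideals is clear from the relabeling of indices). Since $\Psi = \tilde{\Sp} \circ \overline{\phi}$ is bijective and $\tilde{\Sp}$ is injective, $\overline{\phi}$ must itself be bijective: injectivity is immediate from injectivity of $\Psi$, and for surjectivity, given any $y \in Z_2/aZ_2$ there exists $\xi \in \widehat{\mC[X_k]}$ with $\Psi(\xi) = \tilde{\Sp}(y)$, whence the injectivity of $\tilde{\Sp}$ forces $\overline{\phi}(\xi) = y$.

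The main technical step is the identity $\Sp(\Sug_k^{(2)}) = \Sug_{2k}^{(1)}$. This is essentially bookkeeping, but it requires distinguishing cases according to the parities of $k$ and of the summation index $n$, expanding each summand in the $w$-basis, tracking which terms are killed by the substitution $a \mapsto 0$, and reassembling the surviving contributions into a single sum indexed by all integer exponents which matches the expression for $\Sug_{2k}^{(1)}$ in Definition \ref{def:1Sugawara}.
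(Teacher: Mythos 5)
Your proposal is correct and follows essentially the same route as the paper's own proof: compute $\Sp(\Sug_k^{(2)})$ to be a one-variable Sugawara operator, then observe that the composition of $\overline{\phi}$ with the injection $Z_2/aZ_2 \hookrightarrow Z(\hU_2/a\hU_2)\cong Z_1$ is an isomorphism by Theorem~\ref{thm:CenterU1}, whence so is $\overline{\phi}$. (Incidentally, your identity $\Sp(\Sug_k^{(2)})=\Sug_{2k}^{(1)}$ is the correct one and is what the paper uses later in Section~\ref{sez:FF2}; the displayed computation inside the paper's proof of this lemma has an off-by-one slip giving $\Sug_{2k+1}^{(1)}$, which does not affect the conclusion since $k\mapsto 2k$ is still a bijection $\tfrac12\mZ\to\mZ$.)
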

\begin{proof}
Observe that upon specialising $a$ to $0$ the bases $w_n, z_n$ of Equation \eqref{eq:defwz} both become $\Sp(w_n)=\Sp(z_n)=t^{2n+1}$. It follows that for each $k \in \frac{1}{2}\mathbb{Z}$ the Sugawara operator $\Sug^{(2)}_{k}$ of Definition \ref{def:Sugawara} specialises to 
\[
\begin{aligned}
\Sp(\Sug^{(2)}_{k}) & = \sum_{n \in \frac{1}{2}\mathbb{Z}, \alpha} \nop{J^{\alpha} \Sp(w_n) J_\alpha \Sp(z_{-n-\frac{1}{2}} w_k) } \\
& = \sum_{m \in \mathbb{Z}, \alpha} \nop{J^{\alpha} t^{m+1} \; J_\alpha t^{-m-1}  t^{2k} }
\end{aligned}
\]
which is the $1$-variable Sugawara operator $\Sug^{(1)}_{2k+1}$ of Definition \ref{def:1Sugawara}. 
Now notice that we have an obvious injection ${Z_2}/{aZ_2} \hookrightarrow Z(\hat{U}_2/a\hat{U}_2)$ and an isomorphism $Z(\hat{U}_2/a\hat{U}_2) \cong Z(\hU_1) = Z_1$.
The composition
\begin{equation}\label{eq:whZ21}
\frac{\widehat{A[X_k]}} { a\widehat{A[X_k]}}  = \widehat{\mathbb{C}[X_k ]} \xrightarrow{\overline{\phi}} \frac{Z_2}{aZ_2} \hookrightarrow Z\bigg(\frac{\hat{U}_2}{a\hat{U_2}}\bigg) \cong Z_1
\end{equation}
is an isomorphism by Theorem \ref{thm:CenterU1} (notice that $X_k$ is sent to $\Sp(\Sug_k^{(2)})=\Sug_{2k+1}^{(1)}$), hence all intermediate maps are as well, and in particular $\overline{\phi}$ is an isomorphism as claimed.
\end{proof}

We denote by $\goz$ the image of $\phi$ and, for $N\in \mZ$, we define $\goz_{< N}$ to be 
the image of $A[X_i:i< 2N]$. With a similar argument we also prove:

\begin{lemma}\label{lem:goz1}
The following hold:
\begin{enumerate}[\indent a)]
\item $\goz\cap a\hU_2=\goz\cap aZ_2=a\goz$;
\item for $N\in \mN$ we have $\big(\goz_{< N} + (\hJ_2(N)\cap Z_2)\big)\cap aZ_2=a\big(\goz_{< N} + (\hJ_2(N)\cap Z_2)\big)$.
\end{enumerate}
\end{lemma}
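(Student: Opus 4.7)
The plan is to reduce both statements to the one-singularity theory via the isomorphism $\overline{\phi}: \widehat{A[X_k]}/(a) \xrightarrow{\sim} Z_1$ of Lemma \ref{lemma:IsomorphismAlongTheDiagonal}, together with the behaviour of $\Sp$ on the filtration (Lemma \ref{lem:hS}), the direct-summand and torsion-freeness properties from Lemma \ref{lem:piatto}, and the Feigin--Frenkel description of $Z_1$ in Lemma \ref{lemma:FreFei1}.

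For part a), only the inclusion $\goz \cap a\hU_2 \subseteq a\goz$ is non-trivial. Given $z = \phi(f) \in a\hU_2$, its image in $\hU_2/a\hU_2 \simeq \hU_1$ vanishes, hence so does its image $\overline\phi(\overline f) \in Z_1$. Since $\overline\phi$ is an isomorphism, $\overline f = 0$, so $f = ag$ for some $g \in \widehat{A[X_k]}$, whence $z = a\phi(g) \in a\goz$.

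For part b) the reverse inclusion is immediate, so take $u = y + j = aw$ with $y \in \goz_{<N}$, $j \in \hJ_2(N) \cap Z_2$, and $w \in Z_2$, and aim to show that $y$ and $j$ each lie in $a$ times the corresponding summand. Reducing modulo $a$ and mapping to $Z_1$, Lemma \ref{lem:hS} places $\overline j$ inside $\hJ_1(2N) \cap Z_1$, while the image of $\goz_{<N}$ lies in the polynomial subalgebra $\mC[\Sug^{(1)}_k : k < 4N]$ that, by Lemma \ref{lemma:FreFei1}, is complementary to $\hJ_1(2N) \cap Z_1$ inside $Z_1$. The direct sum decomposition then forces $\overline y = \overline j = 0$.

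The $j$-part now follows easily: Lemma \ref{lem:piatto} b) gives $\hJ_2(N) \cap a\hU_2 = a\hJ_2(N)$, so $j = a j_0$ with $j_0 \in \hJ_2(N)$, and combining centrality of $j$ with the torsion-freeness of $\hU_2$ (Lemma \ref{lem:piatto} a)) upgrades $j_0$ to an element of $Z_2$. The $y$-part is where I expect the main subtlety: it hinges on showing that the restriction $\phi : A[X_i : i < 2N] \to \goz_{<N}$ is an isomorphism. Injectivity should follow by an iterated $a$-descent — if $\phi(f) = 0$, then $\overline f$ sits in the kernel of $\overline\phi$ restricted to $\mC[X_i : i < 2N]$, which is injective by Lemma \ref{lemma:FreFei1}, so $f = af'$; torsion-freeness yields $\phi(f') = 0$, and iterating shows $f \in \bigcap_n a^n A[X_i : i < 2N] = 0$. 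Given this, writing $y = \phi(f)$ uniquely with $f \in A[X_i : i < 2N]$, the vanishing $\overline y = 0$ forces $f = af'$, whence $y = a\phi(f') \in a\goz_{<N}$, as required.
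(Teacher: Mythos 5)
Your proof is correct and follows the same route as the paper's: part a) by specialising along $a=0$, noting that $\Sp(\phi(f))=\overline\phi(\overline f)$ vanishes in $Z_1$, and using that $\overline\phi$ is an isomorphism (Lemma~\ref{lemma:IsomorphismAlongTheDiagonal}); part b) by the same $a$-descent on $y$ combined with the direct-sum decomposition of $Z_1$ into $\mC[\Sug^{(1)}_k : k<4N]$ and $\hJ_1(2N)\cap Z_1$ from Lemma~\ref{lemma:FreFei1}, and Lemma~\ref{lem:piatto}~b) for $j$. Your extra step — noting that $j_0=j/a$ is central by torsion-freeness — is a detail the paper leaves implicit, so it is a welcome addition.

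The one place where you introduce unnecessary machinery is the claim that the $y$-part ``hinges on'' the full injectivity of $\phi$ on $A[X_i : i<2N]$, which you then prove by iterated $a$-descent. This is not actually needed: you do not need $f$ to be the unique preimage of $y$. It suffices to pick any $f\in A[X_i : i<2N]$ with $\phi(f)=y$, note that $\overline y=0$ gives $\overline\phi(\overline f)=0$, and invoke injectivity of $\overline\phi$ on $\mC[X_i : i<2N]$ (which is immediate from Lemma~\ref{lemma:FreFei1}) to conclude $\overline f=0$ and hence $f=af'$. This is precisely what the paper does by saying ``as in the proof of a)''. The full injectivity of $\phi$ that you prove inline is in fact true (it is the content of Lemma~\ref{lem:gozainv}(1), proved later in the paper by a different argument via $E$), so nothing you write is wrong, but in the logical order of the paper it would be circular to rely on it here, and it is in any case avoidable.
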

\begin{proof}
To prove a) notice that, if $\phi(x)=au$ for some $u \in \hU_2$, then $u$ is central and $\Sp(\phi(x))=0$. Since by Theorem \ref{thm:CenterU1} the composition of the morphisms in \eqref{eq:whZ21} is an isomorphism, we deduce that $x=ay$, and since $\hU_2$ is torsion-free we deduce that $u=\phi(y)$.

For b), let $x=\phi(y)+j=au$ with $y\in A[X_i:i< 2N]$, $j\in \hJ_2(N)\cap Z_2$ and $u\in aZ_2$. Then $\Sp(x)=0$.  
Now notice that $\Sp(\goz_{<N})$ and $\Sp(\hJ_2(N))\subset \hJ_1(2N)$ have trivial intersection, hence $\Sp(\phi(y))=\Sp(j)=0$. As in the proof of a) this implies $y=az$ with $z\in A[X_i:i< 2N]$, and by part b) of Lemma \ref{lem:piatto} it also implies $j=ah$ with $h\in \hJ_2(N)$, proving the claim.
\end{proof}

\subsection{Action of $\Der_2$ on the Sugawara operators}
We now collect some information about the action of the Lie algebra $\Der_2$ on
Sugawara operators. The formulas we now derive will be necessary to prove Theorem
\ref{thm:isomorfismo}, and will also allow us to simplify some computations in the 
proof of the centrality of the operators $\Sug_k^{(2)}$. 
Recall from Section \ref{ssec:Liealgebras} that the Lie algebra $\Der_2$ acts on $U_2$ and $\hU_2$. In the next lemma we explicitly compute the action of certain elements of $\Der_2$ on Sugawara operators.

\begin{lemma}\label{lemma:DerivativesOfL}
The following hold for all $k \in \mathbb{Z}$:
\begin{enumerate}
\item for all integers $m$ we have
\[
(u_m \partial) \Sug^{(2)}_k = 2(k-m)\Sug^{(2)}_{k+m-\frac{1}{2}} + (k-m)a \Sug^{(2)}_{k+m-1} + f_1(k,m) \delta_{m+k,1} + f_2(k,m) \delta_{m+k,2}
\]
for some elements $f_1(k,m)$, $f_2(k,m)$ of $\mathbb{Q}[a]$. In addition, for $m=0$ we have $f_1(k,0)=f_2(k,0)=0$ for all $k$, while for $m=2$ we have $f_1(-1,2)=6$.
\item $\partial \Sug^{(2)}_{k-\frac{1}{2}} = (2k-1)\Sug^{(2)}_{k-1} - a(k-1)\Sug^{(2)}_{k-\frac{3}{2}}$.
\item $s\partial \Sug^{(2)}_k = (2k-1)\Sug^{(2)}_k - ka\Sug^{(2)}_{k-\frac{1}{2}}$.
\item $s\partial \Sug^{(2)}_{k-\frac{1}{2}} =(2k-2)\Sug^{(2)}_{k-\frac{1}{2}} - a(k-1)\Sug^{(2)}_{k-1} + a^2(k-1)\Sug^{(2)}_{k-\frac{3}{2}}$.
\item $ts \partial \Sug^{(2)}_{-1/2}=-3\Sug^{(2)}_{0}+2a\Sug^{(2)}_{-1/2}$.
\item $ts^2\partial \Sug_{-1}^{(2)} = -5 \Sug^{(2)}_0 + 2a \Sug_{-1/2}^{(2)}$.
\end{enumerate}
\end{lemma}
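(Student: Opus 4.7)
The plan is to prove item (1) as the main calculation and to extract items (2)--(6) either as specialisations or as parallel calculations. Item (2) is the $m=0$ case of the half-integer analogue of item (1); item (3) is the analogue of item (1) with $v_0\partial=s\partial$ in place of $u_m\partial$; item (4) combines these two variations; and items (5), (6) are the special cases $(m=1,k-1/2=-1/2)$ and $(m=1,k=-1)$ of, respectively, the half-integer version of item (1) and a general $v_m\partial$ version. All of these siblings are proved by the same method as item (1), so the essential content is the main computation below.

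\textbf{Proof of item (1).} Starting from Definition \ref{def:Sugawara}, I would apply the derivation $u_m\partial$ term by term, using the Leibniz rule together with the action $(f\partial)(xg)=x(fg')$ of $\Der_2$ on $\hgog_2$. The derivatives of the basis elements are given in Remark \ref{rem:Res2Derivatives} by
\[
u_n'=n(v_{n-1}+y_{n-1}),\qquad v_n'=(2n+1)u_n-na\,v_{n-1};
\]
after multiplication by $u_m=t^m s^m$ and using the identity $y_n=v_n+a u_n$ to return to the basis $\{u_i,v_i\}$, each differentiated factor becomes an $A$-linear combination of basis elements with index shifted by $m$ or $m-\tfrac12$. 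A careful reindexing of the resulting double sums rearranges them into $2(k-m)\Sug^{(2)}_{k+m-1/2} + (k-m)a\,\Sug^{(2)}_{k+m-1}$. The remaining correction terms arise from the normal ordering: when the derivation shifts one of the two factors of a normally ordered product past the boundary where the ordering switches, rewriting in normal-ordered form produces a commutator, controlled by Lemma \ref{lem:casimirouu}. Such a commutator is a scalar multiple of $\Res_2$ of a derivative, which by Lemma \ref{lem:residues} and Remark \ref{rem:Res2Derivatives} vanishes unless the total exponent in $t,s$ meets a precise constraint; this selects only finitely many values of the summation variable $n$, producing the $\delta_{m+k,1}$ and $\delta_{m+k,2}$ terms. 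The identity $f_1(k,0)=f_2(k,0)=0$ then reflects the fact that $u_0\partial=\partial$ preserves the order of indices (no factor crosses the normal-ordering boundary), and the value $f_1(-1,2)=6$ follows from an explicit residue computation using the Casimir identities of Section \ref{sez:Casimiro}.

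\textbf{Items (2)--(6) and main obstacle.} Items (2) and (4) are proved by repeating the calculation above starting from the second defining formula of $\Sug^{(2)}_{k+1/2}$; the structure is parallel and, for item (2) with $m=0$, no anomalous terms appear for the same reason as for $f_1(k,0)=f_2(k,0)=0$. Item (3) is obtained by running the same computation with $v_0=s$ in place of $u_m$, again invoking the derivative formulas above. Items (5) and (6) are then the specialisations indicated above; the fact that the shifts $u_1\partial$ and $v_1\partial$ in these items map to Sugawara operators of the specific small indices $\Sug^{(2)}_0,\Sug^{(2)}_{-1/2}$ is precisely what one would extract from the general formulas. The main obstacle throughout is the combinatorial bookkeeping: the two defining sums in each $\Sug^{(2)}_\ell$ must be handled in parallel, reindexed so as to recognise Sugawara operators at shifted levels, and corrected for normal ordering in order to isolate the exact anomalous coefficients $f_1,f_2$. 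All manipulations are continuous in the natural topology of $\hU_2$ (Lemma \ref{lem:defhatU}), so the resulting identities hold in $\hU_2$ as stated.
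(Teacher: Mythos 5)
Your proposal follows essentially the same route as the paper's proof: apply $u_m\partial$ term by term to the defining sums, reindex to recognise Sugawara operators at the shifted levels, and isolate the finitely many normal-ordering corrections, which become sums of commutators controlled by Lemma \ref{lem:casimirouu} and vanish except when the indices meet the conditions encoded by $\delta_{m+k,1}$ and $\delta_{m+k,2}$; items (2)--(6) are then handled, as in the paper, as variations of the same computation. One small slip worth flagging: multiplying the derivatives by $u_m$ shifts indices by $m-\tfrac12$ or $m-1$ (matching the resulting levels $k+m-\tfrac12$ and $k+m-1$), not by $m$ or $m-\tfrac12$ as you wrote, but this does not affect the validity of the strategy.
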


\begin{remark}\label{rmk:ActionOfDerivationsOn2Sugawara}
It is not too hard to show directly that $f_1(k,m)=m(m-1)(2m-1)$ and $f_2(k,m)=\frac{1}{2}a^2 m(m-1)(m-2)$ for all $k,m$ (including $m < 0$). However, this would require some additional computations, and the information provided by this lemma is enough for our purposes. Moreover, one could obtain a complete description of the action of $\Der_2$ on the operators $\Sug_k^{(2)}$ as a consequence of Theorem \ref{thm:isomorfismo} and Lemma \ref{lemma:DerivativesOfAlphaBeta}.
\end{remark}

\begin{proof}
The various cases require very similar computations, so we only give details for (1), assuming in addition $m \geq 1$ (we will point out below what the differences are in the case $m \leq 0$).
For the purposes of this proof we find it easier to let all sums run over integer values of $n$. We therefore begin by rewriting the operators $\Sug_k^{(2)}$ (for $k \in \mathbb{Z}$) so that each sum is indexed by $n \in \mathbb{Z}$ (subject to suitable inequalities): we have
\begin{equation}\label{eq:DecompositionOfSugawara}
\begin{aligned}
\Sug_k^{(2)} = & \underbrace{\sum_{2n \leq k-1, \alpha} J^\alpha u_n \; J_\alpha y_{k-n-1}}_I + \underbrace{\sum_{2n \leq k-2, \alpha} J^\alpha v_n \; J_\alpha u_{k-n-1}}_{II} \\
& + \underbrace{\sum_{2n > k-1, \alpha} J_\alpha y_{k-n-1} \; J^\alpha u_n}_{III} + \underbrace{\sum_{2n > k-2, \alpha} J_\alpha u_{k-n-1} \; J^\alpha v_n}_{IV}.
\end{aligned}
\end{equation}
We start by computing the action of $u_m\partial$ on the sum labelled $I$ in Equation \eqref{eq:DecompositionOfSugawara}: we have
\[
\begin{aligned}
(u_m\partial) & \sum_{2n \leq k-1, \alpha} J^\alpha u_n \; J_\alpha y_{k-n-1} =  \sum_{2n \leq k-1} J^\alpha (2nv_{n+m-1} + na u_{n+m-1}) \; J_\alpha y_{k-n-1} \\
& \quad+ \sum_{2n \leq k-1} J^\alpha u_n \; J_\alpha \left( (2(k-n-1)+1) u_{m+(k-n-1)} + (k-n-1)ay_{m+(k-n-1)-1} \right).
\end{aligned}
\]
Replacing $J^{\alpha} v_{n+m-1} \; J_\alpha y_{k-n-1}$ with $J^{\alpha} v_{n+m-1} \; \left( J_\alpha v_{k-n-1} + J_\alpha au_{k-n-1} \right)$ and relabelling the summation index, we rewrite this as 
\[
\begin{aligned}
\sum_{2(n-m+1) \leq k-1, \alpha} & 2(n-m+1) J^\alpha v_{n}\; J_\alpha v_{k-(n-m+1)-1}
\\ & + a\sum_{2(n-m+1) \leq k-1, \alpha} 2(n-m+1) J^\alpha v_{n}\; J_\alpha u_{k-(n-m+1)-1} \\
& + a\sum_{2(n-m+1) \leq k-1, \alpha} (n-m+1) J^\alpha u_{n} \; J_\alpha y_{k-(n-m+1)-1} \\
& + a \sum_{2n \leq k-1, \alpha} (k-n-1) J^\alpha u_n \; J_\alpha y_{m+k-n-2} \\
& + \sum_{2n \leq k-1, \alpha} (2k-2n-1) J^\alpha u_n \; J_\alpha u_{m+(k-n-1)}.
\end{aligned}
\]
We similarly compute the action of $u_m\partial$ on the sum labelled $II$ in \eqref{eq:DecompositionOfSugawara} and shift indices to find
\[
\begin{aligned}
(u_m\partial) \sum_{2n \leq k-2, \alpha}  J^\alpha v_n \; J_\alpha u_{k-n-1} 
= & \sum_{2(n-m) \leq k-2, \alpha} (2n-2m+1)J^\alpha u_{n} \; J_\alpha u_{k-(n-m)-1} \\
& - a\sum_{2(n-m+1) \leq k-2} (n-m+1) J^\alpha v_{n} \; J_\alpha u_{k-(n-m+1)-1} \\
& + \sum_{2n \leq k-2, \alpha} 2(k-n-1) J^\alpha v_n \; J_\alpha v_{(k-n-1)+m-1} \\
& + a \sum_{2n \leq k-2, \alpha} (k-n-1) J^\alpha v_n \; J_\alpha u_{(k-n-1)+m-1}.
\end{aligned}
\]
We now sum these two contributions, splitting each sum into a ``main" (infinite) sum and a finite ``error" sum. More precisely, we include in the main term \textit{all} products of a given shape that are in a normal order, and define the error term so as to obtain an equality. 
For example, we rewrite $\sum_{2n \leq k+2m-2, \alpha} (2n-2m+1) J^\alpha u_{n} \; J_\alpha u_{k-(n-m)-1}$ as  
\[
\sum_{2n \leq k+m-1, \alpha } (2n-2m+1)J^\alpha u_{n} \; J_\alpha u_{k+m-n-1} + \sum_{\substack{k+m-1 <2n \\ 2n \leq k+2m-2 \\ \alpha}} (2n-2m+1)J^\alpha u_{n} \; J_\alpha u_{k+m-n-1}.
\]
Here the infinite series includes all products of the form $(2n-2m+1) \, J^\alpha u_{n} \; J_\alpha u_{k+m-n-1}$ that are in normal order: these are precisely those for which $n \leq k+m-n-1$, that is, $2n \leq k+m-1$. The same decomposition also holds for $m \leq 0$, with the only difference involving the error term $\sum_{\substack{k+m-1 <2n \\ 2n \leq k+2m-2}} (2n-2m+1)J^\alpha u_{n} \; J_\alpha u_{k+m-n-1}$: when $m \leq 0$, this should be interpreted as the opposite of the sum $\sum_{\substack{k+m-1 \geq 2n \\ 2n > k+2m-2}} (2n-2m+1)J^\alpha u_{n} \; J_\alpha u_{k+m-n-1}$. Similar comments apply to all main and error terms. Taking into account these small differences, the rest of the proof goes through unchanged also for $m \leq 0$.

Notice that some error sums carry a negative sign: for example, we rewrite $\sum_{2n \leq k-1, \alpha} (2k-2n-1) J^\alpha u_n \; J_\alpha u_{m+k-n-1}$ as
\[
\sum_{2n \leq k+m-1, \alpha} (2k-2n-1) J^\alpha u_n \; J_\alpha u_{m+k-n-1}-\sum_{k-1 <2n \leq k+m-1, \alpha} (2k-2n-1) J^\alpha u_n \; J_\alpha u_{m+(k-n-1)},
\]
so that again the first sum includes \textit{all} products of the form $(2k-2n-1) J^\alpha u_n \; J_\alpha u_{m+k-n-1}$ that are in normal order.

Since $(u_m\partial)(I)$ consists of 5 sums, and $(u_m\partial)(II)$ consists of 4 sums, we thus obtain 9 main terms and 9 error terms. The main terms easily combine to yield
\[
\begin{aligned}
2& (k-m) \sum_{2n \leq k+m-2, \alpha} J^\alpha v_{n} \; J_\alpha v_{k-(n-m+1)-1} + 2(k-m) \sum_{2n \leq k+m-1, \alpha} J^\alpha u_n \; J_\alpha u_{m+(k-n-1)}  \\
& + (k-m)a \sum_{2n \leq k+m-3, \alpha} J^\alpha v_{n} \; J_\alpha u_{k-(n-m+1)-1} + (k-m)a \sum_{2n \leq k+m-2, \alpha} J^\alpha u_n \; J_\alpha y_{m+k-n-2}.
\end{aligned}
\]

We now consider $(u_m \partial)(III + IV)$, which involves sums analogous to those appearing in $(u_m \partial)(I + II)$, but with the index running over the complementary set, and with every product ordered in the opposite way. Thus $(u_m \partial)(III + IV)$ also contributes 9 main terms, similar to those above, but indexed by the complementary set of integers $n$, and with the two factors of each summand appearing in the opposite order.
It follows that the main terms in $(u_m \partial)\Sug_k^{(2)} = (u_m \partial)(I + II + III + IV)$ sum to
\[
\begin{aligned}
2& (k-m) \sum_{n, \alpha} \nop{J^\alpha v_{n} \; J_\alpha v_{m+k-n-2}} + 2(k-m) \sum_{n, \alpha} \nop{J^\alpha u_n \; J_\alpha u_{m+k-n-1}}  \\
& + (k-m)a \sum_{n, \alpha} \nop{J^\alpha v_{n} \; J_\alpha u_{m+k-n-2}} + (k-m)a \sum_{n, \alpha} \nop{J^\alpha u_n \; J_\alpha y_{m+k-n-2}} \\
& = 2(k-m)\Sug^{(2)}_{k+m-\frac{1}{2}} + (k-m)a \Sug^{(2)}_{k+m-1}.
\end{aligned}
\]
On the other hand, each error sum in $(u_m \partial)(III + IV)$ has the \textit{opposite} sign with respect to the corresponding error sum in $(u_m \partial)(I + II)$: indeed, the terms missing on one side are over-counted on the other, and vice-versa. Thus the 18 error terms come in pairs, with each pair comprising two sums (over the same finite set of indices) that differ only for a sign and for the ordering of the two factors in each summand. In other words, each pair of error sums combines to give a sum of commutators. Explicitly, one gets
\begin{equation}\label{eq:ExplicitErrorTerms}
\begin{aligned}
\sum_{k+m-2 < 2n \leq k+2m-3, \alpha} & 2(n-m+1) [J^\alpha v_{n}, J_\alpha v_{k-(n-m+1)-1}] \\
& \hspace{-2cm} - \sum_{k-2 < 2n \leq k+m-2, \alpha} 2(k-n-1) [J^\alpha v_n, J_\alpha v_{(k-n-1)+m-1}] \\
& \hspace{-2cm} + \sum_{k+m-1 <2n \leq k+2m-2, \alpha} (2n-2m+1)[J^\alpha u_{n}, J_\alpha u_{k-(n-m)-1}] \\
& \hspace{-2cm} -\sum_{k-1 <2n \leq k+m-1, \alpha} (2k-2n-1) [J^\alpha u_n, J_\alpha u_{m+(k-n-1)}] \\
& \hspace{-2cm} + a \sum_{k+m-3 < 2n \leq k+2m-3, \alpha} 2(n-m+1) [J^\alpha v_{n}, J_\alpha u_{k-(n-m+1)-1}] \\
& \hspace{-2cm} - a\sum_{k+m-3<2n\leq k+2m-4, \alpha} (n-m+1) [J^\alpha v_{n}, J_\alpha u_{k-(n-m+1)-1}] \\
& \hspace{-2cm} - a \sum_{k-2 <2n \leq k+m-3, \alpha} (k-n-1) [J^\alpha v_n, J_\alpha u_{(k-n-1)+m-1}] \\
& \hspace{-2cm} + a\sum_{k+m-2 < 2n \leq k+2m-3, \alpha} (n-m+1) [J^\alpha u_{n}, J_\alpha y_{k-(n-m+1)-1}] \\
& \hspace{-2cm} - a \sum_{k-1 <2n \leq k+m-2, \alpha} (k-n-1) [J^\alpha u_n, J_\alpha y_{m+k-n-2}]
\end{aligned}
\end{equation}
To finish the proof, we notice that by the formulas in Lemma \ref{lem:casimirouu} each of the sums in Equation \eqref{eq:ExplicitErrorTerms} vanishes, unless the indices of the two functions involved sum to either $0$ or $-1$. 
Thus, in particular, we see that all the error terms vanish unless $k+m=1$ or $k+m=2$. In all cases, even if $k=1-m$ or $k=2-m$, each of the sums in \eqref{eq:ExplicitErrorTerms} is a linear combination of rational multiples of $1, a, a^2$. We have thus proven that for $k \in \mathbb{Z}$ and $m \geq 1$ the following equality holds:
\[
\begin{aligned}
(u_m \partial) \Sug_k^{(2)}= 2(k-m)\Sug^{(2)}_{k+m-\frac{1}{2}} + (k-m)a \Sug^{(2)}_{k+m-1} + f_1(m,k) \delta_{m+k,1} + f_2(m,k) \delta_{m+k,2},
\end{aligned}
\]
where $f_1(m,k)$ and $f_2(m,k)$ are polynomials of degree at most $2$ in $a$ with rational coefficients. For $m=0$, one checks without difficulty that the error sums vanish for all values of $k$.

Finally, when $m=2$ and $k=-1$, there is at most one value of $n$ that contributes to each of the error sums, so it is easy to check that in this case \eqref{eq:ExplicitErrorTerms} evaluates to
\[
\begin{aligned}
\sum_{\alpha} & 2(-1) [J^\alpha v_{0}, J_\alpha v_{-1}] - \sum_{\alpha} 2(-1) [J^\alpha v_{-1}, J_\alpha v_{0}] = -4 \sum_{\alpha} [J^\alpha v_{0}, J_\alpha v_{-1}] = 6.
\end{aligned}
\]
\end{proof}

\subsection{The operators $\Sug_{k}^{(2)}$ are central}\label{sec:2SugawaraAreCentral}
In this section we prove Proposition \ref{prop:LkAreCentralPartOne}, i.e., we show
that all the operators $\Sug_{k}^{(2)}$ are central in $\hat{U}_2$. In order to do this we need a series of lemmas.

\begin{lemma}\label{lem:Commutator}
Let $f,g,h \in K_2$, $e \in \gog$. Define
\[
\omega_1(f,g,h) := f \Res_2(gh')  + g \Res_2(fh')
\]
and
\[
\omega_2(f,g,h,e) := \sum_{\alpha}  ( [J^\alpha,e] fh) (J_\alpha g) - \sum_{\alpha}  ([J^\alpha,e]f ) (J_\alpha gh).
\]
Then
\[
\sum_{\alpha} [J^\alpha f \, J_\alpha g, eh]= -\omega_1(f,g,h) C_2 e  + \omega_2(f,g,h,e).
\]
\end{lemma}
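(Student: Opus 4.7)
The plan is to use the Leibniz rule for the commutator followed by the explicit form of the bracket in $\hat{\gog}_2$, and then collect terms. Everything takes place in $U_2$ (no completion is needed, since $f,g,h$ are individual elements of $K_2$ and the expressions are finite linear combinations).

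First I would use the derivation property of $[-,eh]$ to write
\[
\sum_\alpha [J^\alpha f \cdot J_\alpha g,\, eh] \;=\; \sum_\alpha J^\alpha f \cdot [J_\alpha g, eh] + \sum_\alpha [J^\alpha f, eh] \cdot J_\alpha g,
\]
and then expand each inner bracket using the defining formula of the Lie bracket in $\hat\gog_2$ recalled in Section \ref{ssec:Liealgebras}: $[xp, yq] = [x,y]pq + \Res_2(p'q)\kappa(x,y)C_2$. This produces two contributions: a ``Lie'' part involving $[J^\alpha, e]$ and $[J_\alpha, e]$, and a ``central'' part proportional to $C_2$.

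For the central part, the sum becomes
\[
\sum_\alpha \Res_2(g'h)\,\kappa(J_\alpha,e)\, J^\alpha f\, C_2 \;+\; \sum_\alpha \Res_2(f'h)\,\kappa(J^\alpha,e)\, C_2\, J_\alpha g.
\]
Here I would use the duality identities $\sum_\alpha \kappa(J_\alpha,e) J^\alpha = \sum_\alpha \kappa(J^\alpha,e) J_\alpha = e$ (recalled in Section \ref{sez:Casimiro}), together with the fact that $C_2$ is central, to collapse this to $\bigl(f\Res_2(g'h) + g\Res_2(f'h)\bigr)\,C_2\, e$. Finally, integration by parts for $\Res_2$ -- that is, $\Res_2((gh)') = 0$ hence $\Res_2(gh') = -\Res_2(g'h)$, which is a special case of the vanishing of $\Res_2$ on derivatives (property I\,3) -- converts this into $-\omega_1(f,g,h)\,C_2\,e$.

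For the non-central part I obtain $\sum_\alpha J^\alpha f \cdot [J_\alpha,e] gh + \sum_\alpha [J^\alpha,e] fh \cdot J_\alpha g$. The second summand is already the first term of $\omega_2(f,g,h,e)$. To turn the first summand into the remaining piece of $\omega_2$, I would invoke the $\gog$-invariance of the Casimir element, recalled in Section \ref{sez:Casimiro} in the form $\sum_\alpha [J^\alpha,e]\otimes J_\alpha + \sum_\alpha J^\alpha \otimes [J_\alpha, e] = 0$, which allows me to rewrite $\sum_\alpha J^\alpha f \cdot [J_\alpha,e] gh$ as $-\sum_\alpha [J^\alpha,e] f \cdot J_\alpha gh$, i.e., the negative of the second term of $\omega_2$. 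Adding the two contributions gives exactly $\omega_2(f,g,h,e)$.

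The computation is essentially bookkeeping. The only substantive steps are (i) spotting that integration by parts for $\Res_2$ is what produces the symmetric form of $\omega_1$ with $h'$ rather than $g'h$ and $f'h$, and (ii) using the right invariance identity for the Casimir to convert one of the two ``Lie'' contributions into the desired shape; I expect (ii) to be the only place where a sign could easily go wrong.
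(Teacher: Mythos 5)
Your proposal is correct and follows essentially the same line of reasoning as the paper's proof: Leibniz rule, expansion via the defining bracket of $\hat\gog_2$, collapse of the central term via the duality $\sum_\alpha \kappa(J_\alpha,e)J^\alpha = e$ together with $\Res_2(g'h)=-\Res_2(gh')$, and finally the Casimir invariance identity to rewrite the remaining non-central term. The only cosmetic difference is that the paper writes the central contribution directly in terms of $-\Res_2(gh')$ rather than passing through $\Res_2(g'h)$ and integrating by parts as a separate step.
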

\begin{proof}
We have
\[
\begin{aligned}
\sum_{\alpha} [J^\alpha f \, J_\alpha g, eh] & = 
\sum_{\alpha} J^\alpha f \, [ J_\alpha g, eh] + \sum_{\alpha}[J^\alpha f, eh] \, J_\alpha g \\
&= \sum_{\alpha} J^\alpha f \, \left( [J_\alpha,e]gh  - C_2 \, \kappa( J_\alpha,e ) \Res_2(gh') \right) 
\\
&\quad + \sum_{\alpha}\left( [J^\alpha,e]fh  - C_2 \kappa (J^\alpha,e ) \Res_2(fh') \right) \, J_\alpha g \\
& = -C_2 e \left( f \Res_2(gh')  + g \Res_2(fh') \right) \\ 
& \quad + \sum_{\alpha}  (J^\alpha f) (  [J_\alpha,e] gh) + \sum_{\alpha}  ( [J^\alpha,e] fh) (J_\alpha g) \\
& = -C_2 e \left( f \Res_2(gh')  + g \Res_2(fh') \right) \\ 
& \quad - \sum_{\alpha}  ([J^\alpha,e]f ) (J_\alpha gh) + \sum_{\alpha}  ( [J^\alpha,e] fh) (J_\alpha g),
\end{aligned}
\]
where we have used the properties of the Casimir element recalled at the beginning of Section \ref{sez:Casimiro}.
\end{proof}

We now study the function $\omega_2$ from Lemma \ref{lem:Commutator} for the special values of $f, g, h$ that are involved in the operators $\Sug^{(2)}_{k}$. It will be useful to employ the following notation:
\begin{notation}\label{not:ABCD}
Let $n$ be a half-integer. For fixed values of $k\in \frac{1}{2}\mathbb{Z}$ and $m\in\mZ$ we set
\[
A_{n} :=  \sum_{\alpha} \left( [J^\alpha,e] w_n u_m \right) \left( J_\alpha z_{-n-\frac 12}w_k \right) \quad \text{ and } \quad B_{n} := \sum_{\alpha} \left( [J^\alpha,e] w_n\right)\left( J_{\alpha} z_{-n-\frac 12}w_ku_m \right),
\]
so that $\omega_2(w_n,z_{-n-\frac 12}w_k,u_m,e)=A_{n} - B_{n}$. Similarly we set
\[
B'_{n} := \sum_{\alpha} \left([J^\alpha,e]z_{-n-\frac{1}{2}} w_ku_m\right) \left(J_\alpha w_n\right) \quad \text{ and } \quad A'_{n} := \sum_{\alpha} \left([J^\alpha,e] z_{-n-\frac{1}{2}} w_k\right)\left( J_\alpha w_nu_m \right),
\]
so that $\omega_2(z_{-n-\frac{1}{2}}w_k,w_n,u_m,e)=B'_{n}-A'_{n}$.
\end{notation}

We need one last identity satisfied by the quantities $A_n, B_n, A'_n, B'_n$:
\begin{lemma}\label{lemma:APlusD}
For every $n \in \frac{1}{2}\mathbb{Z}$ we have 
\[
A_n + A'_n = B_n+B'_n = -ew_ku_m w_nz_{-n-\frac{1}{2}} = -ew_k u_m \begin{cases}
y_{-1}, \text{ if } n \in \mathbb{Z} \\
v_{-1}, \text{ if } n \in \frac{1}{2}\mathbb{Z} \setminus \mathbb{Z}
\end{cases}
\]
\end{lemma}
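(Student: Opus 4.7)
The plan is to exploit the two fundamental identities for the Casimir element recalled in Section \ref{sez:Casimiro}: the tensorial identity
\[
\sum_\alpha [J^\alpha,e]\otimes J_\alpha + \sum_\alpha J^\alpha \otimes [J_\alpha,e] = 0,
\]
together with the quadratic identity $\sum_\alpha [J^\alpha,[J_\alpha,e]] = e$ and the vanishing $\sum_\alpha \kappa(J^\alpha,[J_\alpha,e]) = 0$. I shall treat the equality $A_n + A'_n = -e\,w_k u_m w_n z_{-n-\frac{1}{2}}$ in detail; the statement for $B_n+B'_n$ follows from identical manipulations, the only difference being that the extra factor $u_m$ migrates between the two slots, which is immaterial because $K_2$ is commutative.

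First I apply the tensorial Casimir identity to $A'_n$ (multiplying each tensor slot by the appropriate function and then multiplying in $\hU_2$) to rewrite
\[
A'_n = -\sum_\alpha (J^\alpha z_{-n-\frac{1}{2}} w_k)([J_\alpha, e] w_n u_m).
\]
Next I swap the order of the two factors in each summand. The resulting commutator, computed from the bracket of $\hgog_2$, equals
\[
\sum_\alpha [J^\alpha z_{-n-\frac{1}{2}} w_k,\ [J_\alpha,e] w_n u_m]
= \Bigl(\sum_\alpha [J^\alpha,[J_\alpha,e]]\Bigr) z_{-n-\frac{1}{2}} w_k w_n u_m
+ \Bigl(\sum_\alpha \kappa(J^\alpha,[J_\alpha,e])\Bigr) \Res_2\!\bigl((z_{-n-\frac{1}{2}} w_k)' w_n u_m\bigr) C_2.
\]
The central contribution vanishes by the second identity above, while the Lie-algebraic term collapses to $e\cdot w_k u_m w_n z_{-n-\frac{1}{2}}$ by the quadratic Casimir identity together with the commutativity of $K_2$.

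The straightened term, after the standard relabelling afforded by the symmetry $\sum_\alpha J^\alpha \otimes J_\alpha = \sum_\alpha J_\alpha \otimes J^\alpha$, is exactly $-A_n$, yielding
\[
A'_n = -A_n - e\,w_k u_m w_n z_{-n-\frac{1}{2}}.
\]
The same sequence of steps applied to $B'_n$ gives $B_n + B'_n = -e\,w_k u_m w_n z_{-n-\frac{1}{2}}$. Finally, a direct computation from \eqref{eq:defuv}, \eqref{eq:defxy} and \eqref{eq:defwz} shows that $w_n z_{-n-\frac{1}{2}}$ equals $s^{-1}=y_{-1}$ when $n\in\mZ$ and $t^{-1}=v_{-1}$ when $n\in\tfrac12+\mZ$. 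There is no real obstacle in this argument beyond careful bookkeeping of signs and factor orderings; the only point that requires a moment's thought is the vanishing of the central term in the commutator, which ultimately comes from the $\gog$-invariance of the Casimir tensor.
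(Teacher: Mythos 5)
Your proof is correct and follows essentially the same route as the paper's: both rewrite $A'_n$ via the tensorial Casimir identity, recognise that the resulting expression is the reversal of $A_n$ (up to relabelling $J^\alpha\leftrightarrow J_\alpha$), express the difference as a commutator in $\hgog_2$, and kill the central term with $\sum_\alpha\kappa([J^\alpha,e],J_\alpha)=0$ and collapse the Lie term with $\sum_\alpha[J^\alpha,[J_\alpha,e]]=e$. The paper writes $A_n+A'_n$ directly as the commutator $\sum_\alpha\bigl[[J^\alpha,e]w_nu_m,\ J_\alpha z_{-n-\frac12}w_k\bigr]$ rather than peeling off $-A_n$ and a commutator from $A'_n$, but after your relabelling these are the identical computation, and the signs check out in both.
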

\begin{proof}
We show the identity for $A_n+A'_n$, the case of $B_n+B'_n$ being virtually identical. Using the properties
of the Casimir element recalled in Section \ref{sez:Casimiro}
and the fact  that we can assume $J^\alpha=J_\alpha$ we may rewrite $A'_{n}$ as follows:
\[
\begin{aligned}
A'_{n} & = \sum_{\alpha} \left([J^\alpha,e] z_{-n-\frac{1}{2}} w_k\right)\left( J_\alpha w_nu_m \right) \\
& = - \sum_{\alpha} \left(J^\alpha z_{-n-\frac{1}{2}} w_k\right)\left( [J_\alpha,e] w_nu_m\right)\\
& = - \sum_{\alpha} \left(J_\alpha z_{-n-\frac{1}{2}} w_k\right)\left( [J^\alpha,e] w_nu_m\right),
\end{aligned}
\]
which is simply the opposite of the expression obtained from $A_{n}$ by exchanging the two factors in the universal enveloping algebra. Hence 
\[
\begin{aligned}
A_n+A'_n & = \sum_{\alpha} \left[ [J^\alpha,e] w_n u_m , J_\alpha z_{-n-\frac 12}w_k \right] \\
& =   \sum_{\alpha} \left( \left[ [J^\alpha,e], J_\alpha \right]w_n u_mz_{-n-\frac 12}w_k - \kappa \left( [J^\alpha,e], J_\alpha \right) \Res_2(w_n u_m (z_{-n-\frac 12}w_k) ' ) C_2 \right).
\end{aligned}
\]
Now recall that for every $e \in \mathfrak{g}$, we have $\sum_{\alpha} \kappa \left( [J^\alpha,e], J_\alpha \right)=0$  and $ \sum_{\alpha} \left[ [J^\alpha,e], J_\alpha \right]=-e$, so that we obtain
\[
A_n + A'_n = -ew_nu_mz_{-n-\frac{1}{2}}w_k;
\]
the lemma now follows by remarking that $w_nz_{-n-\frac{1}{2}}$ equals $y_{-1}$ or $v_{-1}$ according to whether $n$ is an integer of a half-integer.
\end{proof}

\begin{proof}[Proof of Proposition \ref{prop:LkAreCentralPartOne}]
We claim that $L := \Sug_{k}^{(2)}$ commutes with all elements of the form $eu_m$ for $m \in \mathbb{Z}$ and $e \in \gog$.
We begin by showing that this statement implies the proposition. Since $\hU_2$ is topologically generated by $\gog \otimes R_2$, it clearly suffices to show that $L$ also commutes with $ev_m$ for every $m \in \mathbb{Z}$ and every $e \in \gog$. By Lemma \ref{lemma:DerivativesOfL}, the derivative $\partial L$ is a linear combination of 2-Sugawara operators, so by the claim we have $[\partial L, eu_m]=0$, as well as $[L,eu_m]=0 \Rightarrow \partial [L, eu_m]=0$. The relation $u_m'=2mv_{m-1}+mau_{m-1}$ then gives
\[
0=\partial [L, e u_m]=[\partial L,eu_m]+[L,eu_m']=ma[L,eu_{m-1}]+2m[L,ev_{m-1}]=2m[L,ev_{m-1}].
\]
This implies $[L,ev_m]=0$ for all $m\neq -1$. Finally, the Jacobi identity yields
\[
0=[L,[eu_1,e'v_{-2}]]=[L,[e,e']v_{-1}]+[L,\kappa(e,e')\Res_2(u_1'v_{-2})C_2]=[L,[e,e']v_{-1}],
\]
which shows that $L$ also commutes with $\gog \otimes v_{-1}$, hence that it is central.

We now prove the claim. We need to show that the commutator
\[  
 [L,eu_m] = \sum_{n, \alpha} \, [ \, \nop{J^\alpha w_n J_\alpha z_{-n-\frac12} w_k}\ ,\  eu_m ]
\]
vanishes for all $m \in \mathbb{Z}$ and $e \in \gog$.
Here, and in what follows, the index $n$ in the sums is understood to run over $\tfrac 12 \mZ$. 
Notice that, if $n,k$ are not both integers, then $z_{-n-\frac 12}w_k=w_{k-n-\frac12}$, while for $n,k \in \mathbb{Z}$ we have $z_{-n-\frac 12}w_k=y_{k-n-1}$. By Lemma \ref{lem:SplitTheSumsWithY} we then get
\[  
 [L,eu_m] \, = \sum_{n\leq k-n-\frac 12,\  \alpha} [ J^\alpha w_n J_\alpha z_{-n-\frac12} w_k ,\  eu_m ] + \sum_{n> k-n-\frac 12,\  \alpha} [ J_\alpha z_{-n-\frac12}w_k J^\alpha w_n ,  eu_m ].
\]
Using Lemma \ref{lem:Commutator} and the fact that $\omega_1$ is symmetric with respect to its first two arguments we may rewrite this as
\begin{equation}\label{eq:CommutatorOriginal}
\begin{split}
[L,eu_m] = & -\sum_{n} \omega_1(w_n,z_{-n-\frac 12}w_k,u_m)C_2e \\ 
& +  \sum_{n\leq k-n-\frac 12} \omega_2(w_n,z_{-n-\frac 12}w_k,u_m,e) \ + \sum_{n> k-n-\frac 12} \omega_2(z_{-n-\frac12}w_k, w_n, u_m,e) .
\end{split}
\end{equation}
Equation \eqref{eq:resduale}
implies that
$
\sum_n \omega_1(w_n, z_{-n-\frac12}w_k, u_m) = 2w_ku_m',
$
so the first summand in \eqref{eq:CommutatorOriginal} is $-2w_k u_m' C_2e$.
We now compute the rest of the commutator,
 
which using Notation \ref{not:ABCD} can be rewritten as
\begin{equation}\label{eq:RestOfCommutator2}
\sum_{n\leq k-n-\frac 12} (A_n-B_n) + \sum_{n> k-n-\frac 12} (B'_n-A'_n).
\end{equation}

Since $A_n = B_{n+m}$ and $A'_n=B'_{n+m}$ hold for every $n \in \frac 12 \mathbb{Z}$ and $m \in \mathbb{Z}$, the infinite sums appearing in \eqref{eq:RestOfCommutator2} are telescopic, so that they may be written as finite combinations of $A_n, B_n, A'_n, B'_n$. One may easily check that \eqref{eq:RestOfCommutator2} is equal to \begin{equation}\label{eq:CommutatorWhenMIsInteger}
\sum_{ \substack{n > \frac{k}{2}-\frac{1}{4}-m \\ n \leq \frac{k}{2}-\frac{1}{4}}} (A_n + A'_n).
\end{equation}
Notice that here the sum should be interpreted using the same convention that is normally employed for integrals, namely, if $m < 0$ the symbol $\displaystyle \sum_{ \substack{n > \frac{k}{2}-\frac{1}{4}-m \\ n \leq \frac{k}{2}-\frac{1}{4}}}$ denotes the \textit{opposite} of the sum $\displaystyle \sum_{ \substack{n \leq \frac{k}{2}-\frac{1}{4}-m \\ n > \frac{k}{2}-\frac{1}{4}}}$.

By Lemma \ref{lemma:APlusD}, every summand appearing in \eqref{eq:CommutatorWhenMIsInteger} is either $-ew_ku_m y_{-1}$ or $-ew_ku_m v_{-1}$, according to whether the index $n$ is an integer or a half-integer. Hence it suffices to count how many summands of each type there are: taking into account our convention for sums in the case $m < 0$, the result is immediately found to be
\[
-ew_ku_m(my_{-1}+mv_{-1})=-ew_ku_m'
\]
Thus we see that \eqref{eq:RestOfCommutator2} is equal to $-ew_ku_m'$ and  \eqref{eq:CommutatorOriginal} is equal to $[L,eu_m] = e \, (-2C_2-1)w_ku_m'$, which as claimed vanishes at the critical level $C_2=-\frac{1}{2}$.
\end{proof}

\subsection{Expansion of the Sugawara operators}
In this section we determine the first terms in the expansion of the operators $\Sug_k^{(2)}$.

\begin{lemma}\label{lemma:ExpansionTwoVariablesSugawara}
The two-variables Sugawara operators $\Sug_{k}^{(2)}$ have the following expansions (with notation as in Definition \ref{def:JRJR})
\begin{enumerate}
\item for $k=2j$ with $j \in \mathbb{Z}$: 
\[
E(\Sug^{(2)}_{k}) \equiv  2 a^{2j} \sum_{\alpha} J_\alpha  s^{j-1} \, J^{\alpha} s^{j} \bmod \hJ_{t,s}^{\leq 2}[s^{j-1}, s^j] \, ;
\]
\item for $k=2j+1$ with $j \in \mathbb{Z}$:
\[
E(\Sug^{(2)}_{k}) \equiv  a^{2j+1} \sum_\alpha J^\alpha s^j J_\alpha s^j   \bmod \hJ_{t,s}^{\leq 2}[t^{j-1},t^j] \, ;
\]
\item for $k=2j+\frac{1}{2}$ with $j \in \mathbb{Z}$:
\[
E(\Sug^{(2)}_{k})   \equiv \sum_\alpha \Big(  a^{2j} J^\alpha s^j J_\alpha s^j +    2(-a)^{2j+1}J_\alpha t^{j-1} J^\alpha t^j  \Big) \bmod \hJ_{t,s}^{\leq 2}[t^{j-1},t^j] \, ;
\]
\item for $k=2j+\frac{3}{2}$ with $j \in \mathbb{Z}$:
\[
E(\Sug^{(2)}_{k}) \equiv \sum_\alpha  a^{2j+2}  J^\alpha t^j J_\alpha  t^j  \bmod \hJ_{t,s}^{\leq 2}[t^j,t^j] \, .
\]
\end{enumerate}
\end{lemma}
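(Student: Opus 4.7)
My plan is to compute $E(\Sug_k^{(2)})$ directly in each of the four cases by expanding each factor according to Lemma~\ref{lem:hE1}. Using Definition~\ref{def:Sugawara}, I would first rewrite $\Sug_k^{(2)}$ as a sum of two contributions, one indexed by $n \in \mZ$ and one by $n \in \frac{1}{2}+\mZ$; for instance, in case~(1) one obtains
\[
 \Sug^{(2)}_{2j} = \sum_{n\in \mZ,\, \alpha} \nop{J^\alpha u_n \; J_\alpha y_{2j-n-1}} + \sum_{m\in\mZ,\, \alpha} \nop{J^\alpha v_m \; J_\alpha u_{2j-m-1}},
\]
and analogous expressions hold in the other three cases.

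Next, I would approximate each $E(f)$ by the sum of its two leading terms $L(f) = L_t(f) + L_s(f)$, namely
\[
 L(u_n) = (-a)^n t^n + a^n s^n, \quad L(v_n) = (-a)^{n+1} t^n + a^n s^{n+1}, \quad L(y_n) = (-a)^n t^{n+1} + a^{n+1} s^n,
\]
whose errors $\Delta(f) := E(f)-L(f)$ all lie in $\bI_{t,s}(n+1)$ by Lemma~\ref{lem:hE1}. Each product $\nop{J^\alpha L(w_n) \; J_\alpha L(z_{-n-\frac{1}{2}}w_k)}$ then splits into four summands of types $(tt)$, $(ts)$, $(st)$, $(ss)$. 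Summed over $n$, the $(ts)$ and $(st)$ cross terms cancel between the two halves of the Sugawara sum---either directly or after a single shift $m\leftrightarrow n\pm 1$---thanks to the identity $(-1)^n+(-1)^{n+1}=0$, while the $(tt)$ and $(ss)$ pieces reorganise into $\mC[a]$-linear combinations of one-variable Sugawara operators $\Sug^{(t)}_\ast$ and $\Sug^{(s)}_\ast$ viewed inside $\hU_{t,s}$. I would then apply Lemma~\ref{lemma:ExpansionOneVariableSugawara} to replace each such $\Sug^{(t)}_\ast, \Sug^{(s)}_\ast$ by its own leading form and check, using the reverse-lexicographic order on J-degrees (Lemma~\ref{lem:filtrazioneJmn}), which of these leading terms survives modulo the target ideal $\hJ_{t,s}^{\leq 2}[\cdot]$. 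A short commutator computation based on $\sum_\alpha [J_\alpha, J^\alpha]=0$ and on the vanishing of the relevant residues (cf.~Section~\ref{sez:Casimiro}) then shows that $\sum_\alpha J_\alpha f\; J^\alpha g = \sum_\alpha J^\alpha f\; J_\alpha g$ for all the specific basis elements $f,g$ that appear, reconciling my expressions with those in the statement.

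The principal obstacle will be the bookkeeping for the error terms $\nop{J^\alpha L \; J_\alpha \Delta}$, $\nop{J^\alpha \Delta \; J_\alpha L}$, and $\nop{J^\alpha \Delta \; J_\alpha \Delta}$ that arise from replacing $E$ by $L$. By Lemma~\ref{lem:filtrazioneJmn}(b), each such error has J-degree at least the sorted concatenation of the J-degrees of its two factors; combined with the bounds $\Jdeg(J_\alpha \Delta) \geq n+1$ coming from Lemma~\ref{lem:hE1}, a routine case analysis confirms that in each of the four cases these J-degrees strictly exceed the target threshold $(j-1, j)$, $(j-\frac{1}{2}, j+\frac{1}{2})$, $(j-\frac{1}{2}, j+\frac{1}{2})$, or $(j+\frac{1}{2}, j+\frac{1}{2})$ uniformly in $n$, so that all error contributions lie in the target ideal and may be discarded.
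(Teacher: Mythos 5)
Your proposal is correct, but it takes a genuinely different path from the paper. The paper's proof is short and local: it uses the J-degree bounds $E(J^\alpha u_n)\in\hJ_{t,s}^{\leq 1}[t^{n-1}]$, $E(J^\alpha v_n)\in\hJ_{t,s}^{\leq 1}[s^n]$, $E(J^\alpha y_n)\in\hJ_{t,s}^{\leq 1}[t^{n-1}]$ from Lemma~\ref{lem:hE1} together with Lemma~\ref{lem:filtrazioneJmn}(c) to show that \emph{all} summands of $E(\Sug^{(2)}_k)$ outside a window of two or three consecutive indices $n$ already lie in the target ideal, and then computes those surviving terms explicitly from the two-term expansions of Lemma~\ref{lem:hE1}. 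You instead keep the full infinite sum, replace each $E(\cdot)$ by its two-leading-term approximation $L(\cdot)$, argue that the $t$--$s$ cross terms cancel while the pure-$t$ and pure-$s$ pieces reassemble into $Q$-multiples of $\Sug^{(t)}_\ast$ and $\Sug^{(s)}_\ast$, then invoke Lemma~\ref{lemma:ExpansionOneVariableSugawara}. Your route requires more bookkeeping (the shift $m\leftrightarrow n\pm1$ in the $(st)$ cancellation, the $L$/$\Delta$ case analysis) but yields the stronger structural fact that $E(\Sug^{(2)}_k)$ is, up to the target ideal, an explicit $Q$-linear combination of one-variable Sugawara operators; this is essentially the content of the later Remark~\ref{rmk:LeadingTermLStorti} and is what drives Lemma~\ref{lemma:TriangularChangeOfBasis}, so your approach is well aligned with how the paper uses the lemma even though it is not how the paper proves it.

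A few precisions worth making if you write this out. First, the claim that the $(tt)$ and $(ss)$ pieces sum \emph{exactly} to one-variable Sugawara operators is slightly optimistic: the normal-ordering threshold inherited from $\hU_2$ can differ from the threshold in $\hU_t$ or $\hU_s$ by one step (for instance, in case (1) the $(tt)$ piece of the $u_n\,y_{2j-n-1}$ half flips at $n=j-1$ while $\Sug^{(t)}_{2j+1}$ flips at $n=j$), so what you actually get is a Sugawara operator plus a single commutator; since commutators lie in $\hU^{\leq1}_{t,s}\subset\hJ^{\leq2}_{t,s}[\cdot]$ this is harmless, but it should be said. Second, for the cross-term cancellation, the cleanest justification is that $J^\alpha t^n$ and $J_\beta s^m$ commute in $\hgog_{t,s}$ (they live in the two summands of $\hgog_t\oplus\hgog_s$), so the matching is exact term-by-term after the index shift and no commutator appears there at all. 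Third, your ``routine case analysis'' of the error terms does go through, but one should phrase it via Lemma~\ref{lem:filtrazioneJmn}(b) and note that convergence of the resulting infinite sum inside the closed ideal $\hJ^{\leq 2}_{t,s}[\cdot]$ needs the observation that for $|n|$ large one of the two factors has arbitrarily high J-degree.
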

\begin{proof}
Lemma \ref{lem:hE1} gives the following expressions for the expansions of $J^\alpha\, u_n, J^\alpha\, v_n, J^\alpha\, y_n$:
\begin{enumerate}
\item $E(J^\alpha \,  u_n)\coinc  J^\alpha \,  (-a)^n t^n  +  J^\alpha \,  a^n s^n \bmod \hJ^{\leq 1}_{t,s}[t^n]$;
\item $E(J^\alpha \,  v_n) \coinc J^\alpha \,  (-a)^{n+1}t^n +  J^\alpha \,  a^ns^{n+1} \bmod \hJ^{\leq 1}_{t,s}[s^{n+1}]$;
\item $E(J^\alpha \, y_n)\coinc J^\alpha \, a^{n+1}s^n \bmod \hJ_{t,s}^{\leq 1}[t^n]$.
\end{enumerate}
Notice in particular that $E(J^\alpha \,  u_n) \in \hJ_{s,t}^{\leq 1}[t^{n-1}]$, $E(J^\alpha \,  v_n) \in \hJ_{t,s}^{\leq 1}[s^{n}]$ and $E(J^\alpha \,  y_n) \in \hJ_{t,s}^{\leq 1}[t^{n-1}]$.
Consider now the expansion of a 2-variables Sugawara operator
\begin{equation}\label{eq:SugawaraExpansion}
\begin{aligned}
E(\Sug^{(2)}_k) & = \sum_{n \in \frac{1}{2}\mathbb{Z},\alpha} E\left(  \nop{J^\alpha w_n \; J_\alpha z_{-n-\frac{1}{2}}w_k } \right) \\
& = \sum_{2n \leq k - \frac{1}{2}} \sum_\alpha E\left( J^\alpha w_n \; J_\alpha z_{-n-\frac{1}{2}}w_k  \right) +  \sum_{2n > k - \frac{1}{2}} \sum_\alpha E\left( J_\alpha z_{-n-\frac{1}{2}}w_k \; J^\alpha w_n   \right).
\end{aligned}
\end{equation}
The four cases in the statement are treated similarly. We only give details for the third one, as it is slightly more complicated.
Let $k=2j+\frac{1}{2}$ with $j \in \mZ$.
The condition $2n \leq k -\frac 12$ is equivalent to $n \leq j$. We claim that all summands in \eqref{eq:SugawaraExpansion} belong to $\hJ_{t,s}^{\leq 2}[t^{j-1},t^j]$, with the exception of those corresponding to $n \in \{j-\frac 12, j, j+\frac 12\}$. Indeed, if $n \geq j + 1$ we know that $E(J^\alpha w_n) \in \hJ_{t,s}^{\leq 1}[t^j]$, and Lemma \ref{lem:filtrazioneJmn} part c) implies that the corresponding summand in \eqref{eq:SugawaraExpansion}, namely $\sum_{\alpha} E\left( J_\alpha z_{-n-\frac{1}{2}}w_k \; J^\alpha w_n   \right)$, lies in $\hJ_{t,s}^{\leq 2}[t^{j-1},t^j]$. Similarly, if $n \leq j-1$ then $k-n-\frac 12 \geq j+1$, and therefore
\[  
E(J_\alpha z_{-n-\frac 12}w_k) = E( J_\alpha w_{k-n-\frac 12} ) \in \hJ_{t,s}^{\leq 1}[t^j],
\]
and again by Lemma \ref{lem:filtrazioneJmn} part c) the corresponding summand belongs to $\hJ_{t,s}^{\leq 2}[t^{j-1},t^j]$. 
The remaining terms are:
\begin{enumerate}
\item for $n=j-\frac{1}{2}$ we have $\sum_\alpha E\Big( J^\alpha w_{j-\frac 12} \, J_\alpha w_{j+\frac 12}   \Big) = \sum_\alpha E\Big( J^\alpha w_{j-\frac 12}\Big) E\Big( J_\alpha w_{j+\frac 12}   \Big)$, which belongs to $\sum_\alpha \Big(J^\alpha (-a)^j t^{j-1}  + \hJ_{t,s}^{\leq 1}[t^{j-1}] \Big) \Big(  J_\alpha (-a)^{j+1}t^j  + \hJ_{t,s}^{\leq 1}[t^{j}]   \Big) $;
\item for $n=j$ we have $\sum_\alpha E\Big( J^\alpha w_{j} \, J_\alpha w_{j}   \Big) = \sum_\alpha E\Big( J^\alpha w_{j}\Big) E\Big( J_\alpha w_{j}   \Big)$, which belongs to $\sum_\alpha \Big(  J^\alpha a^js^j +  J^\alpha (-a)^j t^{j}  + \hJ_{t,s}^{\leq 1}[t^{j}] \Big) \Big(   J_\alpha a^js^j +  J_\alpha (-a)^j t^{j}  + \hJ_{t,s}^{\leq 1}[t^{j}]   \Big) $;
\item for $j=n+\frac{1}{2}$ we have $\sum_\alpha E\Big( J_\alpha w_{j-\frac 12} \, J^\alpha w_{j+\frac 12}   \Big) = \sum_\alpha E\Big( J_\alpha w_{j-\frac 12}\Big) E\Big( J^\alpha w_{j+\frac 12}   \Big)$, which belongs to $\sum_\alpha \Big(J_\alpha (-a)^j t^{j-1}  + \bJ_{t,s}^{\leq 1}[t^{j-1}] \Big) \Big(  J^\alpha (-a)^{j+1}t^j  + \hJ_{t,s}^{\leq 1}[t^{j}]   \Big)$.
\end{enumerate}
The sum of these terms is congruent modulo $\hJ_{t,s}^{\leq 2}[t^{j-1},t^j]$ to
\begin{align*}
E(\Sug^{(2)}_{k}) &  \equiv \sum_\alpha \Big(  a^{2j} J^\alpha s^j \, J_\alpha s^j +    2(-a)^{2j+1}J_\alpha t^{j-1} \, J^\alpha t^j  \Big)
\end{align*}
as claimed.
\end{proof}

Notice in particular that for every $j \in \mZ$ we get
\[
E\left( \Sug^{(2)}_{2j+1}-a\Sug^{(2)}_{2j+\frac 12}     \right) \equiv  2a^{2j+2}\sum_\alpha  J_\alpha t^{j-1} J^\alpha t^j  \bmod \hJ_{t,s}^{\leq 2}[t^{j-1},t^j].
\]

\begin{definition}
For $k \in \frac 12 \mZ$ we let $\Sstorto_k$ be the following $Q$-linear combination of operators $\Sug_k^{(2)}$:
\begin{enumerate}
\item $\Sstorto_k=a^{-\lceil k \rceil} \Sug_k^{(2)}$, if $k \in \mathbb{Z}$ or $k=2j+\frac{3}{2}$ for $j \in \mathbb{Z}$;
\item $\Sstorto_k=a^{-2j-2} \left( \Sug^{(2)}_{2j+1}-a\Sug^{(2)}_{2j+\frac{1}{2}} \right)$, if $k=2j+\frac{1}{2}$ for $j \in \mathbb{Z}$.
\end{enumerate}
\end{definition}

\begin{remark}\label{rmk:LeadingTermLStorti}
It follows from Lemma \ref{lemma:ExpansionTwoVariablesSugawara} and Lemma \ref{lemma:ExpansionOneVariableSugawara} that 
\[   
\lt(E(\Sstorto_k)) =
\begin{cases}
\lt(\Sug_k^{(s)}) & \mathrm{ if } \ k \in \mZ \\
\lt(\Sug_{k-\frac 12}^{(t)}) & \mathrm{ if } \  k \notin \mZ.
\end{cases}
\]
\end{remark}

\subsection{Proof of Theorem \ref{thm:CentreU2}}\label{sec:centrofuoridiagonale}
In this section we prove that $\phi_a:\widehat  {A[X_i]}[a^{-1}] \lra Z_2[a^{-1}]$
is an isomorphism. Together with Lemma \ref{lemma:IsomorphismAlongTheDiagonal} 
this completes the proof of Theorem \ref{thm:CentreU2}. 
As already announced, to prove that $\phi_a$ is an isomorphism we rely on the fact that we can leverage results on the 1-variable case of our problem to get a good understanding of $Z_{t,s}$.
In particular, from the description of $Z_t, Z_s$ given in Lemma \ref{lemma:FreFei1} and Remarks \ref{rmk:LnGenerateInDegreeOne} and \ref{rmk:Zt} we obtain the following characterisation of $Z_{t,s}$.

\begin{lemma}\label{lemma:centrots}
For all positive integers $N$ we have an isomorphism
\begin{equation}\label{eq:centrots}
 \frac{Z_{t,s}}{\hJ_{t,s}(N)\cap Z_{t,s}}\simeq 
 \frac{Z_{t}}{\hJ_{t}(N)\cap Z_{t}}\otimes _Q \frac{Z_{s}}{\hJ_{s}(N)\cap Z_{s}},
\end{equation}
and the right hand side is a polynomial algebra generated by the images of the elements $\Sug_n^{(t)}$, $\Sug_n^{(s)}$ for $n<2N$. Moreover, the image of $Z_{t,s}^{\leq 2}$ in the quotient $\frac{Z_{t,s}}{\hJ_{t,s}(N)\cap Z_{t,s}}$ corresponds, under the previous isomorphism, to the subspace of polynomials of degree at most 1 in the generators $\Sug_n^{(t)}$, $\Sug_n^{(s)}$.
\end{lemma}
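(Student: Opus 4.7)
The plan is to derive the two-variables statement from the one-variable Feigin--Frenkel theorem by exploiting the tensor product decomposition \eqref{eq:UtsAndUtUs}. Since that isomorphism identifies $\hU_{t,s}/\hJ_{t,s}(N)$ with $\hU_t/\hJ_t(N) \otimes_Q \hU_s/\hJ_s(N)$, the strategy is to rephrase the center as an invariant space and then compute invariants factor-wise.

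First, I would establish the two-variable analogue of the second isomorphism in Lemma \ref{lemma:FreFei1}, namely
$$
\frac{Z_{t,s}}{\hJ_{t,s}(N)\cap Z_{t,s}} \simeq \left(\frac{\hU_{t,s}}{\hJ_{t,s}(N)}\right)^{\hgog_{t,s}^+}.
$$
The proof in \cite[Proposition 4.3.4]{Frenkel_Langlands_loop_group} carries over essentially verbatim: the ingredients used there are the PBW decomposition, the fact that $\hJ_{t,s}(N)$ is a left ideal stable under $\hgog_{t,s}^+$, and some combinatorics of leading terms, all of which are formal properties that generalise without difficulty.

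Next, under \eqref{eq:UtsAndUtUs} the Lie algebra $\hgog_{t,s}^+$ acts via its two projections to $\hgog_t^+$ and $\hgog_s^+$, which act on the respective tensor factors; the central directions are identified via $C_t = C_s$, and at the critical level both act as the scalar $-1/2$. A standard invariants argument over the field $Q$, applied to the commuting actions of $\hgog_t^+$ on the first factor and of $\hgog_s^+$ on the second, then yields
$$
\left(\frac{\hU_{t,s}}{\hJ_{t,s}(N)}\right)^{\hgog_{t,s}^+} \simeq \left(\frac{\hU_t}{\hJ_t(N)}\right)^{\hgog_t^+} \otimes_Q \left(\frac{\hU_s}{\hJ_s(N)}\right)^{\hgog_s^+}.
$$
Applying Lemma \ref{lemma:FreFei1} (with Remark \ref{rmk:Zt}) to each factor produces both the desired isomorphism and the polynomial structure, since the tensor product of two polynomial algebras in disjoint sets of generators over a field is again a polynomial algebra, with generators given by $\Sug_n^{(t)}$ and $\Sug_n^{(s)}$ for $n<2N$.

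For the final claim about $Z_{t,s}^{\leq 2}$, I would argue that the PBW filtration on $\hU_{t,s}$ corresponds under the tensor product decomposition to the total-degree filtration on $\hU_t/\hJ_t(N) \otimes_Q \hU_s/\hJ_s(N)$; this is because the PBW basis of $\hgog_{t,s}$ is, modulo the identification of central elements, the disjoint union of the PBW bases of $\hgog_t$ and $\hgog_s$. Then Remark \ref{rmk:LnGenerateInDegreeOne} (applied separately to $Z_t$ and $Z_s$) shows that $Z_t^{\leq 1} = Z_s^{\leq 1} = Q$ while $Z_t^{\leq 2}$, $Z_s^{\leq 2}$ are spanned by constants and the Sugawara operators; combining these, the image of $Z_{t,s}^{\leq 2}$ is spanned by $1$, $\overline{\Sug_n^{(t)}}\otimes 1$ and $1\otimes\overline{\Sug_n^{(s)}}$, which is exactly the space of polynomials of degree at most $1$ in the generators.

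The main obstacle is the second step, namely the identification of $\hgog_{t,s}^+$-invariants on the tensor product with the tensor product of individual invariants. While this is heuristically clear and essentially standard for commuting Lie algebra actions over a field, writing a clean argument requires some care in handling the quotient by $Q(C_t - C_s)$ and in verifying that invariance automatically holds for the shared central direction at the critical level.
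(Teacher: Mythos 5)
Your proposal is correct in substance, but it departs from the paper in one interesting way that changes where the work is concentrated. You propose to first carry over the full statement $\frac{Z_{t,s}}{\hJ_{t,s}(N)\cap Z_{t,s}} \simeq \bigl(\frac{\hU_{t,s}}{\hJ_{t,s}(N)}\bigr)^{\hgog_{t,s}^+}$ by redoing the argument of Frenkel's Proposition 4.3.4 in the two-variable setting; while this is plausibly routine (the PBW and leading-term combinatorics are formal), it is the hardest step of the whole lemma and is not actually needed. The paper instead uses only the \emph{trivial} injection $\frac{Z_{t,s}}{\hJ_{t,s}(N)\cap Z_{t,s}} \hookrightarrow \bigl(\frac{\hU_{t,s}}{\hJ_{t,s}(N)}\bigr)^{\hgog_{t,s}^+}$ (central elements are in particular $\hgog^+_{t,s}$-invariant), composes it with the factorisation of invariants and the one-variable Lemma \ref{lemma:FreFei1} to land inside the right-hand side of \eqref{eq:centrots}, and closes the loop by observing that the multiplication map $\frac{Z_t}{\hJ_t(N)\cap Z_t}\otimes_Q\frac{Z_s}{\hJ_s(N)\cap Z_s}\to\frac{Z_{t,s}}{\hJ_{t,s}(N)\cap Z_{t,s}}$ is injective via \eqref{eq:UtsAndUtUs}. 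Since the two maps are inverse up to the natural identifications, both are isomorphisms, and the equality of centre and invariants in the $(t,s)$-setting falls out as a byproduct rather than being a prerequisite. You correctly flag that the only genuine care needed in the factorisation of invariants concerns the identification $C_t=C_s$, which is harmless at the critical level where both act by $-\frac12$; this is exactly what the paper also invokes, and your treatment of the PBW-degree claim via Remark \ref{rmk:LnGenerateInDegreeOne} matches the paper's. So: correct, but you can save yourself the re-proof of the Frenkel argument by replacing equality with inclusion and adding the multiplication map.
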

\begin{proof}
We know from Section \ref{sec:examples} that $\hU_t\otimes_Q \hU_s$ is a dense subalgebra of $\hU_{t,s}$, hence $Z_t\otimes Z_s$ is a subalgebra of $Z_{t,s}$. Moreover, the isomorphism of Equation \eqref{eq:UtsAndUtUs} shows that the multiplication map from the right hand side to the left hand side of \eqref{eq:centrots} is injective.
Now observe that $\hgog_{t,s}^+ \cong \hgog_t^+ \oplus \hgog_s^+$, and that the isomorphism \eqref{eq:UtsAndUtUs} is compatible with this decomposition. It follows that
\[
\begin{aligned}
\frac{Z_{t,s}}{\hJ_{t,s}(N)\cap Z_{t,s}} \hookrightarrow \left( \frac{\hat{U}_{t,s}}{\hJ_{t,s}(N)} \right)^{\hgog_{t,s}^+} & \simeq 
\left( \frac{\hat{U}_{t}}{\hJ_{t}(N)}\right)^{\hgog_{t}^+} \otimes_Q \left( \frac{\hat{U}_{s}}{\hJ_{s}(N)} \right)^{\hgog_{s}^+} \\
& \simeq \frac{Z_t}{\hJ_t(N)\cap Z_t} \otimes_Q \frac{Z_s}{\hJ_s(N)\cap Z_s},
\end{aligned}
\]
where the last isomorphism is guaranteed by the remarks following Lemma \ref{lemma:FreFei1}. In addition, all these isomorphisms are compatible with the PBW filtration, so the statement follows from Lemma \ref{lemma:FreFei1} and Remark \ref{rmk:LnGenerateInDegreeOne}.
\end{proof}

Recall from Lemma \ref{lem:hE2} that the expansion map $E:\hU_2[a^{-1}]\lra \hU_{t,s}$ is injective, and that the topology on 
$\hU_2[a^{-1}]$ and $\hU_2$ is the subspace topology induced by the topology on $\hU_{t,s}$ under the embedding given by $E$. To simplify the notation, we will identify $\hU_2$ and $\hU_2[a^{-1}]$ with subspaces of $\hU_{t,s}$, and we will omit the map $E$. 

We also notice that the centre of $\hU_2[a^{-1}]$ is equal to $Z_2[a^{-1}]$, and recall from Lemma \ref{lem:hE2} that 
$\hU_2[a^{-1}]$ is dense in $\hU_{t,s}$, so that $Z_2[a^{-1}]\subset Z_{t,s}$. 
We now use Lemma \ref{lemma:centrots} to get information about $Z_2[a^{-1}]$.

We begin by comparing the elements $\Sstorto_n\in Z_2[a^{-1}]$ with the elements $\Sug^{(t)}_n$ and  $\Sug^{(s)}_n$ in $Z_{t,s}$:
\begin{lemma}\label{lemma:TriangularChangeOfBasis}
For every $N \geq 1$ and every $n \in \frac{1}{2}\mathbb{Z}$ with $n<2N$, there exist $q_i^{(s)}, q_i^{(t)}, q_n \in Q$ such that
\[
\Sstorto_n \equiv \Sug^{(s)}_n + q_n+ \sum_{i=n}^{2N-1} q^{(t)}_i \Sug^{(t)}_i + \sum_{i=n+1}^{2N-1} q^{(s)}_i \Sug^{(s)}_i \; \bmod{\hJ_{t,s}(N)},
\]
if $n \in \mathbb{Z}$, and
\[
\Sstorto_n \equiv \Sug^{(t)}_{k} + q_n+ \sum_{i=k+1}^{2N-1} q^{(t)}_i \Sug^{(t)}_i + \sum_{i=k+1}^{2N-1} q^{(s)}_i \Sug^{(s)}_i \; \bmod{\hJ_{t,s}(N)},
\]
if $n=k+\frac{1}{2}, k \in \mathbb{Z}$.
In particular, for every $\ell \leq 2N-1$ the matrix representing the change of basis from $\{1,\Sug_i^{(t)},\Sug_i^{(s)} \bigm\vert i=\ell,\ldots,2N-1 \}$ to $\{1,  \Sstorto_n  \bigm\vert n= \ell, \ldots, 2N-\frac{1}{2} \}$ is triangular with diagonal coefficients equal to 1.
\end{lemma}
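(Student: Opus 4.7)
The approach is to combine Lemma \ref{lemma:centrots} with a leading-term argument for the $\Jdeg$-filtration, which will simultaneously identify the coefficient of the ``leading" generator as $1$ and force the vanishing of the coefficients of the other ``low" generators.

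First, I would check that $\Sstorto_n \in Z_{t,s}^{\leq 2}$. Centrality follows from Proposition \ref{prop:LkAreCentralPartOne} together with the fact that $E$ maps $Z_2[a^{-1}]$ into $Z_{t,s}$, and the PBW degree bound follows from the definition of $\Sug_k^{(2)}$ as a sum of normal-ordered products of two degree-one elements, using that $E$ preserves the PBW filtration (Lemma \ref{lem:hE1}(d)). Applying Lemma \ref{lemma:centrots} then yields a congruence
\[
\Sstorto_n \equiv q_n + \sum_{i<2N} q^{(t)}_i \Sug^{(t)}_i + \sum_{i<2N} q^{(s)}_i \Sug^{(s)}_i \pmod{\hJ_{t,s}(N)}
\]
with $q_n, q^{(\ast)}_i \in Q$, so the whole problem reduces to controlling the vanishing pattern of these coefficients.

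The crucial observation is that the leading terms of the four families of generators occupy pairwise distinct $\Jdeg$'s: by Lemma \ref{lemma:ExpansionOneVariableSugawara},
\begin{gather*}
\Jdeg(\Sug^{(s)}_{2k})=(k-1,k), \quad \Jdeg(\Sug^{(s)}_{2k+1})=(k,k), \\
\Jdeg(\Sug^{(t)}_{2k})=(k-1/2,k+1/2), \quad \Jdeg(\Sug^{(t)}_{2k+1})=(k+1/2,k+1/2),
\end{gather*}
and distinct generators always yield distinct $\Jdeg$'s, since the indexing of $R_{t,s}$ cleanly separates integer half-integers from non-integer ones. Combining Lemma \ref{lemma:ExpansionTwoVariablesSugawara} with the defining expression for $\Sstorto_n$ and with the analogous congruence from Lemma \ref{lemma:ExpansionOneVariableSugawara} for the corresponding one-variable Sugawara operator, a short case-by-case analysis yields
\[
E(\Sstorto_n) - \Sug^{(\mathrm{lead})}_n \in \hJ^{\leq 2}_{t,s}[\Ja_0]
\]
for an explicit $\Ja_0 \in \Gamma^2$ depending on $n$, where $\Sug^{(\mathrm{lead})}_n = \Sug^{(s)}_n$ if $n \in \mZ$ and $\Sug^{(\mathrm{lead})}_n = \Sug^{(t)}_{n-1/2}$ otherwise. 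The non-trivial input here is that the tail terms of $\Sug^{(s)}_i$ (embedded in $\hU_{t,s}$) have purely integer $\Jdeg$ with largest coordinate bounded below in a controlled way, so they already lie in $\hJ^{\leq 2}_{t,s}[\Ja_0]$; the symmetric statement holds for $\Sug^{(t)}_i$.

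With this congruence in hand the triangularity follows by a standard leading-term argument: if some $q^{(\ast)}_i$ with $\Sug^{(\ast)}_i \neq \Sug^{(\mathrm{lead})}_n$ and $\Jdeg(\Sug^{(\ast)}_i) \leq \Ja_0$ were nonzero, then for $N$ large enough that $\hJ_{t,s}(N)$ contributes only at $\Jdeg$'s well above $\Ja_0$, the minimum $\Jdeg$ on the right-hand side would be attained by a single generator (by the distinctness observation), producing a nonzero leading term at some $\Jdeg \leq \Ja_0$ --- contradicting membership in $\hJ^{\leq 2}_{t,s}[\Ja_0]$. The coefficient of $\Sug^{(\mathrm{lead})}_n$ itself is then forced to be $1$ by matching leading terms, compatibly with Remark \ref{rmk:LeadingTermLStorti}. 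A direct enumeration shows that the remaining generators with $\Jdeg > \Ja_0$ sit in (a subset of) the ranges stated in the lemma, which yields the claimed triangular form with unit diagonal. The main technical obstacle is pinning down the correct $\Ja_0$ in each of the four cases (parity of $n$, and $n \in \mZ$ or not): the underlying idea is uniform, but each case requires its own bookkeeping.
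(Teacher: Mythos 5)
Your proof is correct and follows essentially the same route as the paper: apply Lemma \ref{lemma:centrots} to express $\Sstorto_n$ as an affine combination of the $\Sug_i^{(t)}, \Sug_i^{(s)}$ modulo $\hJ_{t,s}(N)$, observe that the leading-term $\Jdeg$'s of these generators are pairwise distinct so no cancellation can occur, and then pin down the unit-diagonal triangular structure by matching the leading term against Remark \ref{rmk:LeadingTermLStorti}. One small simplification you could make: there is no need to take $N$ ``large enough'' in the final step --- since $n < 2N$, the leading $\Jdeg$ of $\Sstorto_n$ already lies outside $\hJ_{t,s}(N)$, so the leading-term comparison works for the given $N$ directly.
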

As a consequence of this lemma, for every $N \geq 1$ the elements $\Sstorto_n$ for $n<2N$, together with $1$, form a basis of the $Q$-vector subspace of $Z_{t,s}/\hJ_{t,s}(N) \cap Z_{t,s}$ generated by the 
images of $1$ and of the operators $\Sug_i^{(t)}, \Sug_i^{(s)}$ for $i \leq 2N-1$.
\begin{proof}
As already noticed, the element $\Sstorto_n$ is central in $\hat{U}_{t,s}$, 
and it belongs to $\hat{U}_{t,s}^{\leq 2}$. It follows from Lemma \ref{lemma:centrots} that $\Sstorto_n$ is congruent modulo $\hJ_{t,s}(N)$ to a polynomial $P$ in $\Sug_i^{(t)}, \Sug_i^{(s)}$ of degree at most 1. Since the operators $\Sug_i^{(t)}, \Sug_i^{(s)}$ for $i>2N-1$ vanish modulo $\hJ_{t,s}(N)$, we may assume that $P$ is of the form
\begin{equation}\label{eq:PIsASum}
P = q_n + \sum_{i=i_0}^{2N-1} q_i^{(t)} \Sug_i^{(t)} +\sum_{i=i_0}^{2N-1} q_i^{(s)} \Sug_i^{(s)}
\end{equation}
for some $q_n, q_i^{(t)}, q_i^{(s)} \in Q$ and $i_0 \in \mathbb{Z}$.

We now observe that the degrees of the leading terms of the operators $\Sug_i^{(t)}, \Sug_i^{(s)}$ are all distinct by Lemma \ref{lemma:ExpansionOneVariableSugawara}, so they cannot cancel with each other. Thus the leading term of the right hand side of \eqref{eq:PIsASum} is equal to the leading term of one of its summands. On the other hand, $P \equiv \Sstorto_n \bmod \hJ_{t,s}(N)$, so its leading term may be computed modulo $\hJ_{t,s}(N)$ and is equal to either the leading term of $\Sug_n^{(s)}$ (if $n \in \mathbb{Z}$) or to that of $\Sug_{n-\frac{1}{2}}^{(t)}$ (if $n \in \frac{1}{2}\mathbb{Z} \setminus \mathbb{Z}$), see Remark \ref{rmk:LeadingTermLStorti}. It follows that the corresponding operator must appear with coefficient $1$ in the right hand side of \eqref{eq:PIsASum}, and that $\Sug_n^{(s)}$ (respectively $\Sug_{n-\frac{1}{2}}^{(t)}$) is the first operator to appear with nonzero coefficient.
\end{proof}

\begin{lemma}\label{lem:centrocalLdensi}
The closure of the $Q$-subalgebra $Q[\Sstorto_n]$ in $\hat{U}_{t,s}$ is $Z_{t,s}$, hence a fortiori the closure of $Q[\Sstorto_n]$ in $\hat{U}_2[a^{-1}]$ coincides with $Z_2[a^{-1}]$.
\end{lemma}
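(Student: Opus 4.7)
The plan is to first establish the stronger claim that the closure of $Q[\Sstorto_n]$ in $\hU_{t,s}$ equals $Z_{t,s}$, and then deduce the second statement as a consequence. Since $Z_{t,s}$ is the intersection of the kernels of the continuous maps $[x,\cdot]$ (for $x$ ranging over a topological generating set of $\hU_{t,s}$), it is closed in $\hU_{t,s}$ and hence complete with respect to the subspace topology, which coincides with the topology generated by the submodules $Z_{t,s}\cap\hJ_{t,s}(N)$. Therefore $Z_{t,s}\simeq\varprojlim_N Z_{t,s}/(Z_{t,s}\cap\hJ_{t,s}(N))$, so to prove density it suffices to show that for every $N\geq 1$ the composition
\[
Q[\Sstorto_n] \;\hookrightarrow\; Z_{t,s}\; \twoheadrightarrow\; \frac{Z_{t,s}}{Z_{t,s}\cap \hJ_{t,s}(N)}
\]
is surjective.

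For the surjectivity step, by Lemma \ref{lemma:centrots} the quotient on the right is the polynomial $Q$-algebra generated by the images of $\Sug_i^{(t)}$ and $\Sug_i^{(s)}$ for $i < 2N$. By Lemma \ref{lemma:TriangularChangeOfBasis}, modulo $\hJ_{t,s}(N)\cap Z_{t,s}$ the system $\{1,\Sstorto_n : n<2N\}$ is related to $\{1,\Sug_i^{(t)},\Sug_i^{(s)} : i < 2N\}$ by a triangular change of basis with $1$'s on the diagonal, which can be inverted over $Q$. Hence each generator $\Sug_i^{(t)}$, $\Sug_i^{(s)}$ is a $Q$-linear combination of the $\Sstorto_n$ modulo the ideal $\hJ_{t,s}(N)\cap Z_{t,s}$, and the map above is surjective as required. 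Passing to the projective limit yields the density of $Q[\Sstorto_n]$ in $Z_{t,s}$.

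For the second statement, by Lemma \ref{lem:hE2}(d) the topology on $\hU_2[a^{-1}]$ coincides with the subspace topology induced from $\hU_{t,s}$. The closure of $Q[\Sstorto_n]$ inside $\hU_2[a^{-1}]$ is therefore $Z_{t,s} \cap \hU_2[a^{-1}]$, and it remains to identify this intersection with $Z_2[a^{-1}]$. One inclusion is automatic, since any element of $\hU_2[a^{-1}]$ that is central in $\hU_{t,s}$ is in particular central in $\hU_2[a^{-1}]$. For the converse, let $x\in Z_2[a^{-1}]$ and $y\in\hU_{t,s}$. By Lemma \ref{lem:hE2}(c), $\hU_2[a^{-1}]$ is dense in $\hU_{t,s}$, so $y$ is a limit of elements $y_n\in \hU_2[a^{-1}]$. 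Since multiplication in $\hU_{t,s}$ is continuous, $[x,y_n]\to [x,y]$; but $[x,y_n]=0$ for every $n$, so $[x,y]=0$, which shows $x\in Z_{t,s}$. The main (modest) subtlety in the whole argument is keeping careful track of the two topologies and the use of continuity of multiplication to transfer centrality from a dense subalgebra to the whole of $\hU_{t,s}$; the algebraic core is the triangular inversion made available by Lemma \ref{lemma:TriangularChangeOfBasis}.
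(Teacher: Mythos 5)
Your proof is correct and follows essentially the same route as the paper: for the first claim you reduce, exactly as the paper does, to the surjectivity of $Q[\Sstorto_n]\to Z_{t,s}/(\hJ_{t,s}(N)\cap Z_{t,s})$ for each $N$, which you obtain from Lemmas \ref{lemma:centrots} and \ref{lemma:TriangularChangeOfBasis}. The only real difference is that you spell out the details that the paper labels ``a fortiori'': the paper leaves implicit the fact that $Z_{t,s}$ is closed (which you correctly justify via continuity of commutators, using that right multiplication is continuous by Lemma \ref{lem:defhatU}) and the identification $Z_{t,s}\cap\hU_2[a^{-1}]=Z_2[a^{-1}]$ (most of which is already established in the discussion preceding the lemma). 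Both parts of your argument are sound; the second one is a welcome unpacking of a step the paper compresses into a single phrase.
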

\begin{proof}

It suffices to show that, for all integers $N \geq 1$, the natural inclusion $Q[\Sstorto_n] \subseteq Z_{t,s}$ induces an isomorphism
\[
\frac{Q[\Sstorto_n]}{\hJ_{t,s}(N) \cap Q[\Sstorto_n]} \cong \frac{Z_{t,s}}{\hJ_{t,s}(N) \cap Z_{t,s}}.
\]
This follows from Lemmas \ref{lemma:centrots} and \ref{lemma:TriangularChangeOfBasis}: the quotient $\frac{Z_{t,s}}{\hJ_{t,s}(N) \cap Z_{t,s}}$ is a polynomial algebra over the generators $\Sug^{(s)}_n, \Sug^{(t)}_n$ for $n<2N$, which belong to $\frac{Q[\Sstorto_n]}{\hJ_{t,s}(N) \cap Q[\Sstorto_n]}$ by the last statement in Lemma  \ref{lemma:TriangularChangeOfBasis}.
\end{proof}

Recall that we denote by $\goz$ the image of the map $\phi$ of Theorem \ref{thm:CentreU2}
and, for $N\in\mZ$, we write $\goz_{< N}$ for the image via $\phi$ of $A[X_i:i<2N]$. 

\begin{lemma}\label{lem:gozainv}
The following hold:
\begin{enumerate}[\indent 1)]
 \item $\phi$ is injective and $\goz$ is closed;
 \item $\goz[a^{-1}]$ is a closed subset of $Z_2[a^{-1}]$.
\end{enumerate}
\end{lemma}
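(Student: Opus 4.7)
The whole argument hinges on an intermediate injectivity statement for the finite-level maps
\[
\phi_N \colon A[X_i : i < 2N] \longrightarrow \hU_2/\hJ_2(N), \qquad X_k \mapsto \Sug_k^{(2)} \bmod \hJ_2(N),
\]
induced by $\phi$, so I would first establish that each $\phi_N$ is injective. Since $\hU_2/\hJ_2(N)$ is $A$-flat (hence $a$-torsion-free) by Lemma~\ref{lem:piatto}(b), and $A[X_i : i < 2N]$ is $a$-adically separated, it suffices to check injectivity of the reduction modulo $a$, namely
\[
\bar\phi_N \colon \mC[X_i : i<2N] \longrightarrow \hU_1/\hJ_1(2N).
\]
By the computation underlying Lemma~\ref{lemma:IsomorphismAlongTheDiagonal}, this map sends $X_k$ to a one-variable Sugawara operator, and as $k$ runs over $\frac12\mZ \cap (-\infty, 2N)$ the resulting Sugawara indices exhaust exactly the set on which Lemma~\ref{lemma:FreFei1} guarantees algebraic independence in $Z_1/(\hJ_1(2N) \cap Z_1)$.

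Part~(1) then follows quickly. For injectivity of $\phi$, if $\phi(y)=0$ then for each $N$ the projection $g_N \in A[X_i : i<2N]$ of $y$ (obtained by setting $X_k=0$ for all $k \geq 2N$) satisfies $\phi(g_N) \equiv 0 \bmod \hJ_2(N)$, so $g_N = 0$ by injectivity of $\phi_N$; as $\widehat{A[X_k]} = \varprojlim_N A[X_i : i<2N]$, this forces $y=0$. For closedness of $\goz$, take $y \in \overline{\goz}$: for each $N$ choose $f_N \in \widehat{A[X_k]}$ with $\phi(f_N) \equiv y \bmod \hJ_2(N)$ and let $g_N$ be its projection to $A[X_i : i<2N]$. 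Since $\Sug_k^{(2)} \in \hJ_2(N)$ for $k \geq 2N$, we still have $\phi(g_N) \equiv y \bmod \hJ_2(N)$; applying the injectivity of $\phi_N$ to $g_N - \pi(g_{N+1})$ (with $\pi$ the natural projection) shows that the family $(g_N)$ is compatible, and therefore defines an element $g \in \widehat{A[X_k]}$ with $\phi(g) = y$.

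For part~(2), take $y$ in the closure of $\goz[a^{-1}]$ inside $Z_2[a^{-1}]$ and write $y = x/a^M$ with $x \in Z_2$ and $M \geq 0$. For each $N$ choose $p_N = z_N/a^{n_N} \in \goz[a^{-1}]$ with $y-p_N \in \hJ_2(N)[a^{-1}]$; clearing denominators and applying Lemma~\ref{lem:piatto}(c) gives $a^{n_N} x - a^M z_N \in \hJ_2(N)$. If $n_N \leq M$, cancel $a^{n_N}$ via Lemma~\ref{lem:piatto}(b) to get $x \equiv a^{M-n_N} z_N \bmod \hJ_2(N)$, with $a^{M-n_N} z_N \in \goz$. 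If instead $n_N > M$, decompose $z_N = z_N^{<} + z_N^{\geq}$ with $z_N^{<} \in \goz_{<N}$ and $z_N^{\geq} \in \hJ_2(N) \cap Z_2$ (by splitting any preimage of $z_N$ in $\widehat{A[X_k]}$ into its parts in $A[X_i : i<2N]$ and in the ideal $(X_k : k \geq 2N)$), so that $a^{n_N-M} x \in \goz_{<N} + (\hJ_2(N) \cap Z_2)$; then apply Lemma~\ref{lem:goz1}(b) inductively $n_N - M$ times to descend to $x \in \goz_{<N} + (\hJ_2(N) \cap Z_2) \subseteq \goz + \hJ_2(N)$. In either case $x$ lies in $\goz + \hJ_2(N)$ for every $N$, so by part~(1) $x \in \overline{\goz} = \goz$, and hence $y = x/a^M \in \goz[a^{-1}]$.

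The main obstacle will be the bookkeeping behind the injectivity of $\phi_N$: one has to verify that the correspondence $k \mapsto \text{(index of } \Sp(\Sug_k^{(2)}))$ sets up a bijection between $\{k \in \frac12\mZ : k < 2N\}$ and precisely the set of Sugawara indices on which Lemma~\ref{lemma:FreFei1} provides algebraic independence modulo $\hJ_1(2N) \cap Z_1$. A secondary subtle point in part~(2) is the case $n_N > M$, where the refined closure information of Lemma~\ref{lem:goz1}(b) (rather than (a)) is indispensable to allow the induction on the number of factors of $a$.
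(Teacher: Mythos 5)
Your reduction to ``$\bar\phi_N$ is injective'' breaks down, and the failure is precisely the bookkeeping issue you flag at the end: the indices do not line up. The specialisation sends $\Sug_k^{(2)}$ to $\Sug_{2k+1}^{(1)}$, so as $k$ ranges over $\tfrac12\mZ$ with $k<2N$, the index $2k+1$ ranges over all integers $\leq 4N$; in particular $k=2N-\tfrac12$ hits $\Sug_{4N}^{(1)}$. But $\Sug_{4N}^{(1)}$ already lies in $\hJ_1(2N)$: every normally ordered summand $\nop{J^\alpha t^n\, J_\alpha t^{4N-n-1}}$ has its rightmost factor of the form $J^\beta t^j$ with $j\geq 2N$. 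Thus $\bar\phi_N(X_{2N-\frac12})=0$, so $\bar\phi_N$ is \emph{not} injective, and the injectivity of $\phi_N$ cannot be read off from the special fiber. (The map $\phi_N$ over $A$ is in fact injective --- $\Sug_{2N-\frac12}^{(2)}$ is not in $\hJ_2(N)$, as its expansion in Lemma~\ref{lemma:ExpansionTwoVariablesSugawara} shows --- but this is invisible modulo $a$: the operator $\Sug_{2N-\frac12}^{(2)}$ lies in $a\hU_2+\hJ_2(N)$ without lying in $\hJ_2(N)$, so its class in $\hU_2/\hJ_2(N)$ is a nonzero $a$-multiple.)

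The paper avoids this by arguing on the generic fiber $a\neq 0$ instead of the closed fiber $a=0$. It considers $\pi_N\circ\phi$ on $Q[X_i: i<2N]$ with values in $Z_{t,s}/(\hJ_{t,s}(N)\cap Z_{t,s})$ and applies the triangular change of basis of Lemma~\ref{lemma:TriangularChangeOfBasis} between the half-integer-indexed family $\{\Sstorto_n : n<2N\}$ and the two integer-indexed families $\{\Sug_i^{(t)}\}_{i<2N}\cup\{\Sug_i^{(s)}\}_{i<2N}$, together with the polynomial-algebra description of $Z_{t,s}/(\hJ_{t,s}(N)\cap Z_{t,s})$ from Lemma~\ref{lemma:centrots}. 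There the cardinalities match exactly and no degeneration occurs, which identifies $\phi^{-1}(\hJ_{t,s}(N))$ with $\calI(2N)$; injectivity of $\phi$, the topological compatibility, and closedness of $\goz$ then follow at once. Granting the injectivity of $\phi_N$, the remaining deductions in your parts (1) and (2) are essentially sound; your part (2) re-derives, with some extra case analysis, the criterion $\big(\goz+\hJ_2(N)\cap Z_2\big)\cap aZ_2 = a\big(\goz+\hJ_2(N)\cap Z_2\big)$ that the paper obtains directly from Lemma~\ref{lem:goz1}.
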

\begin{proof}Let $N\in \mZ$ and let $\pi_N$ be the projection from $\goz[a^{-1}]$ to $Z_{t,s}/\hJ_{t,s}(N)\cap Z_{t,s}$. We prove that the restriction of $\pi_N\circ \phi$ to the algebra $Q[X_i:i<2N]$ is injective. 
This is equivalent to showing that the images of the operators $\Sug^{(2)}_i$ for $i<2N$ under the map $\pi_N$ are algebraically independent. Given that the operators $\Sug^{(2)}_i$ and $\Sstorto_i$ only differ by a linear change of variables, this is also equivalent to proving that the images of the operators $\Sstorto_i$ for $i<2N$ under the map $\pi_N$ are algebraically independent. 
This last fact holds since, by Lemma \ref{lemma:TriangularChangeOfBasis}, 
a nontrivial polynomial relation among the $\Sstorto_{n}$ with $n<2N$ induces a corresponding nontrivial polynomial relation among the $\Sug_i^{(t)}, \Sug_i^{(s)}$,
and, by Lemma \ref{lemma:centrots}, the quotient $Z_{t,s} / \hJ_{t,s}(N) \cap Z_{t,s}$ is a polynomial algebra with generators the images of $\Sug_{n}^{(t)}, \Sug_n^{(s)}$ for $n < 2N$. 

Since 
$\widehat {A [X_i]}=A[X_i:i<2N]\oplus \calI(2N)$, where $\calI(2N)$ is the ideal generated by $X_i$ for $i\geq 2N$,
we also deduce that the inverse image of $\hJ_{t,s}(N)$ through the map $\phi$ is the ideal 
$\calI(2N)$. This implies that $\phi$ is injective and that the topology
on $\widehat {A[X_i]}$ is the same as that induced by $\hU_2$ through the map $\phi$. 
In particular, as $\widehat {A[X_i]}$ is complete, its image $\goz$ is closed. This proves 1). 

In order to deduce from 1) that $\goz[a^{-1}]$ is closed in $Z_2[a^{-1}]$ it is enough to check that
$$
\big(\goz+\hJ_2(N)\cap Z_2\big)\cap a Z_2=a\big(\goz+\hJ_2(N)\cap Z_2\big).
$$
By the previous discussion we have $\goz+\hJ_2(N)\cap Z_2=
\goz_{< N}\oplus \hJ_2(N)\cap Z_2$, hence the claim follows from Lemma \ref{lem:goz1}.
\end{proof} 

The following proposition completes the proof of Theorem \ref{thm:CentreU2}.

\begin{proposition}\label{prp:centrofuoridiagonale}
The map $\phi_a:\widehat {A[X_i]}[a^{-1} ]\lra Z_2[a^{-1}]$ is an isomorphism.
\end{proposition}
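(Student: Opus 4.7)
The proof will combine the three lemmas immediately preceding the proposition. The key observation is that Lemma \ref{lem:centrocalLdensi} already tells us that $Q[\Sstorto_n]$ is dense in $Z_2[a^{-1}]$, while Lemma \ref{lem:gozainv} tells us that the image $\goz[a^{-1}]$ of $\phi_a$ is closed in $Z_2[a^{-1}]$. Since each $\Sstorto_n$ is by definition a $Q$-linear combination of Sugawara operators $\Sug^{(2)}_k$ (with denominators involving only powers of $a$), the subalgebra $Q[\Sstorto_n]$ is contained in $\goz[a^{-1}]$, and the surjectivity of $\phi_a$ will follow immediately.

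More concretely, my plan is as follows. First, I would establish injectivity of $\phi_a$. We know from Lemma \ref{lem:gozainv}(1) that $\phi$ itself is injective. Since $\widehat{A[X_i]}$ is $a$-torsion-free (it embeds into $A[[\ldots,X_n,\ldots]]$, which is flat over $A=\mC[[a]]$) and since $Z_2\subset\hU_2$ is $a$-torsion-free by Lemma \ref{lem:piatto}(a), localising the injection $\phi$ at $a$ preserves injectivity, giving that $\phi_a$ is injective.

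Next, I would prove surjectivity. By Lemma \ref{lem:centrocalLdensi}, the closure of $Q[\Sstorto_n]$ inside $\hU_2[a^{-1}]$ equals $Z_2[a^{-1}]$. By construction the operators $\Sstorto_n$ lie in $\goz[a^{-1}]$, hence $Q[\Sstorto_n]\subseteq\goz[a^{-1}]$. Since $\goz[a^{-1}]$ is closed in $Z_2[a^{-1}]$ by Lemma \ref{lem:gozainv}(2), we conclude that $\goz[a^{-1}]$ contains the closure of $Q[\Sstorto_n]$, which is all of $Z_2[a^{-1}]$. Combining this with the injectivity above yields that $\phi_a$ is the desired isomorphism.

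The main technical obstacle has in fact already been overcome in the preceding lemmas: the crucial point was Lemma \ref{lem:gozainv}(2), which asserts that $\goz[a^{-1}]$ is closed. This closedness is what allows us to pass from the density of $Q[\Sstorto_n]$ in $Z_2[a^{-1}]$ to the actual equality $\goz[a^{-1}]=Z_2[a^{-1}]$, since merely knowing $Q[\Sstorto_n]\subseteq\goz[a^{-1}]$ is not enough without some topological control on $\goz[a^{-1}]$. Given those inputs, the present proposition reduces to a short formal argument, with no further computational work required.
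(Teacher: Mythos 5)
Your proof is correct and follows essentially the same route as the paper: injectivity via Lemma \ref{lem:gozainv}(1) plus $a$-torsion-freeness of $Z_2$, and surjectivity by combining the density from Lemma \ref{lem:centrocalLdensi} with the closedness of $\goz[a^{-1}]$ from Lemma \ref{lem:gozainv}(2). The only cosmetic difference is that the paper makes the density explicit by checking $\goz[a^{-1}]+\hJ_{t,s}(N)\cap Z_{t,s}=Z_{t,s}$ for each $N$, whereas you invoke the closure statement of Lemma \ref{lem:centrocalLdensi} directly; these are equivalent formulations.
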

\begin{proof}
We have already proved that $\phi$ is injective. As $Z_2$ has no $a$-torsion, it immediately follows that $\phi_a$ is injective. 

We now prove surjectivity. The image of $\phi_a$ is the space $\goz[a^{-1}]$, which is closed in $Z_2[a^{-1}]$ by the previous lemma. It remains to prove that $\goz[a^{-1}]$ is dense in $Z_2[a^{-1}]$; we do this by showing that it is dense in $Z_{t,s}$. Equivalently, we need to prove that for all natural $N$ we have
$\goz[a^{-1}]+\hJ_{t,s}(N)\cap Z_{t,s}=Z_{t,s}$. As in the previous lemma we have
$$\goz[a^{-1}]+\hJ_{t,s}(N)\cap Z_{t,s} = \goz_{< N}[a^{-1}]+\hJ_{t,s}(N)\cap Z_{t,s}=
Q[\Sstorto_i:i<N]+\hJ_{t,s}(N)\cap Z_{t,s}:$$
the desired equality of this space with $Z_{t,s}$ now follows from Lemma 
\ref{lem:centrocalLdensi}.
\end{proof}

 \section{A Feigin-Frenkel theorem with two singularities}\label{sez:FF2}
In this section we give a version in our context of a well-known theorem of Feigin and Frenkel on the center of the enveloping algebra at the critical level \cite{FF92}. We begin by recalling their result.

\begin{theorem}[Feigin and Frenkel, \cite{FF92}]\label{thm:isomorfismo1sing}
There exists a unique continuous isomorphism $$\FF_1 :\Funct(\Op_1^*)\lra Z_1$$ of topological $\mC$-algebras that is compatible with the action of derivations on both sides and satisfies $\FF_1(\zz_n)=2\,\Sug^{(1)}_{-n-1}$.
\end{theorem}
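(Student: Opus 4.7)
The plan is to leverage Theorem \ref{thm:CenterU1} to build $\FF_1$ explicitly on generators and then verify all required properties.

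First I would recall the structure of both sides: $\Funct(\Op_1^*) = \varprojlim_N \mC[\zz_i : i \geq -N]$ is the completed polynomial algebra on the $\zz_n$, with topology where a basis of neighbourhoods of $0$ is given by the ideals generated by the $\zz_n$ for $n$ large; and by Theorem \ref{thm:CenterU1}, $Z_1$ is the analogous completed polynomial algebra on the generators $\Sug^{(1)}_k$, with topology generated by the ideals $(\Sug^{(1)}_k \st k \geq N)$. I would define $\FF_1$ on the dense polynomial subring $\mC[\zz_n : n \in \mZ]$ by $\zz_n \mapsto 2\Sug^{(1)}_{-n-1}$ and extend multiplicatively. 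Since $\Sug^{(1)}_k \in \hJ_1(\lfloor k/2 \rfloor)$, this assignment sends very negative-indexed generators into deep ideals of the filtration, so the map is continuous and extends uniquely to the completion. Theorem \ref{thm:CenterU1} immediately yields that this extension is an isomorphism, since it bijectively identifies topological polynomial generators on both sides.

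Uniqueness of $\FF_1$ among continuous $\mC$-algebra homomorphisms with the prescribed values on the generators $\zz_n$ is immediate, because $\mC[\zz_n]$ is dense in $\Funct(\Op_1^*)$ and continuous maps are determined by their values on a dense subset. Surjectivity and injectivity could alternatively be checked using the PBW-type filtration on $\hU_1$: the leading terms of the operators $\Sug^{(1)}_k$ (computed as in Lemma \ref{lemma:ExpansionOneVariableSugawara}) are algebraically independent in the associated graded, which rules out nontrivial polynomial relations, while their topological span exhausts $Z_1$ by Lemma \ref{lemma:FreFei1}.

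The main obstacle is verifying compatibility with the action of derivations. By continuity and multiplicativity it suffices to check, for each $u\partial \in \Der_1$ and each $n \in \mZ$, the identity
\[
\FF_1\!\left(u\partial \cdot \zz_n\right) = u\partial \cdot \FF_1(\zz_n) \quad \text{in } \hU_1,
\]
where the left-hand side is computed via Equation \eqref{eq:azioneDer2} and involves the Schwarzian correction $-\tfrac12 \dddot u$ intrinsic to the oper coordinate change, and the right-hand side is the natural action of $\Der_1$ on $\hU_1$ described in Section \ref{ssec:Liealgebras} applied to $2\Sug^{(1)}_{-n-1}$. The crux is that a direct commutator computation in $\hU_1$, entirely analogous to the one-variable version of Lemma \ref{lemma:DerivativesOfL}, expresses $u\partial \cdot \Sug^{(1)}_k$ as a $\mC$-linear combination of other Sugawara operators \emph{plus} a cubic correction in the coefficients of $u$; at the critical level $C_1 = -\tfrac12$ this correction matches the Schwarzian term in \eqref{eq:azioneDer2} with exactly the normalisation $2\Sug^{(1)}_{-n-1}$. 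This reconciliation of the normal-ordering ambiguity with the classical transformation law of a projective connection is the content of the Feigin--Frenkel theorem \cite{FF92}; see also the explicit computation carried out in \cite[§3.5]{Frenkel_Langlands_loop_group}. Once this matching is established on generators, continuity and multiplicativity propagate it to all of $\Funct(\Op_1^*)$.
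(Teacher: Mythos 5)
This theorem is stated in the paper as a result recalled from Feigin and Frenkel \cite{FF92}; no proof is given there, so there is no argument in the paper to compare your attempt against. That said, your sketch is a sensible reconstruction given the surrounding material: it correctly splits the claim into (i) the existence and uniqueness of a continuous topological-algebra isomorphism extending $\zz_n \mapsto 2\Sug^{(1)}_{-n-1}$, which is essentially a reformulation of Theorem \ref{thm:CenterU1} once the topologies are matched, and (ii) the $\Der_1$-equivariance.

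Two comments. Your description of the topology on $\Funct(\Op_1^*)$ is reversed: a fundamental system of neighbourhoods of zero is given by the (closed) ideals generated by the $\zz_n$ with $n\leq -N$, i.e.\ by the kernels of the projections to $\mC[\zz_i : i \geq -N]$, not by ``the $\zz_n$ for $n$ large.'' Your subsequent sentence about sending ``very negative-indexed generators into deep ideals'' uses the correct convention, so this is a slip rather than a conceptual error; but it is worth getting right, because continuity of $\FF_1$ hinges on the asymmetry between $n\to-\infty$ on the oper side and $k\to+\infty$ on the centre side, which the index change $n\mapsto -n-1$ is precisely designed to reconcile. More substantively, your part (ii) is not proved but deferred: the matching of the Schwarzian term $-\tfrac12\dddot u$ in \eqref{eq:azioneDer2} against the normal-ordering anomaly at $C_1=-\tfrac12$ is attributed to \cite{FF92} and to \cite[\S 3.5]{Frenkel_Langlands_loop_group}. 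That is acceptable for a recalled result, but one should be clear that this matching \emph{is} the Feigin--Frenkel theorem; describing it as ``a direct commutator computation entirely analogous to the one-variable version of Lemma \ref{lemma:DerivativesOfL}'' is accurate in spirit, but it does not make the argument self-contained, since the one-variable analogue of that lemma — with its cubic error terms coinciding with the Schwarzian — is exactly the computation being cited, not reproduced.
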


The theorem generalizes to the case of $Z_t$ and $Z_s$ in the obvious way. 
The morphisms $\FF_t $ and $\FF_s$, together with their inverses, induce a continuous isomorphism 
$\FF_t\otimes \FF_s:\Funct(\Op_t^*)\otimes_Q\Funct(\Op^*_s)\lra Z_t\otimes_Q  Z_s$, where $\Funct(\Op_t^*)\otimes_Q\Funct(\Op^*_s)$ is given the tensor product topology (see Section \ref{sez:funzionioper}).
By Lemma \ref{lemma:centrots} $Z_t\otimes_Q Z_s$ is dense in $Z_{t,s}$, and similarly, as noticed in Section \ref{sez:funzionioper}, the space 
$\Funct(\Op^*_t)\otimes_Q\Funct(\Op^*_s)$ is dense in $\Funct(\Op^*_{t,s})$. Moreover, in both cases the subspace topology  coincides with the tensor product topology.
It follows that the isomorphism $\FF_t\otimes\FF_s$ and its inverse extend by continuity to all of $\Funct(\Op^*_t\times_{\Spec Q} \Op^*_s)$ and $Z_{t,s}$ respectively. We denote
by $$
\FF_{t,s}:\Funct(\Op^*_t\times_{\Spec Q} \Op^*_s)\lra Z_{t,s} 
$$
the isomorphism obtained in this way.
We are now in a position to describe the center $Z_2$ of $\hU_2$ in terms of functions on $\Op_2^*$. Recall from section \ref{ssez:campivettorifunzioni}
that $\gra_n$, $\grb_n$ are the coordinates of an oper with respect to the topological basis $u_n$, $y_n$. 

\begin{theorem}\label{thm:isomorfismo}
 There exists a unique isomorphism $\FF_2:\Funct(\Op_2^*)\lra Z_2$ of topological $A$-algebras such that the following two diagrams commute:
\begin{equation}\label{eq:commutazioni}
\begin{gathered}
\xymatrix{
  \Funct(\Op_2^*)\ar_{\calE}[d] \ar^-{\FF_2}[rr] & & Z_2 \ar^{E}[d] & & \Funct(\Op_2^*)\ar_{\calSp }[d] \ar^-{\FF_2}[rr] & &Z_2 \ar^{Sp}[d]\\
  \Funct(\Op_t^*\times_{\Spec Q}\Op_s^*) \ar^-{\FF_{t,s}}[rr] & &  Z_{t,s} & & \Funct(\Op_1^*) \ar^-{\FF_1}[rr] & & Z_1. 
 }
 \end{gathered}
 \end{equation}
Moreover, $\FF_2$ is $\Der_2$-equivariant, and for all $n\in \mZ$ we have 
 \begin{equation}\label{eq:FFLab}
 \FF_2(\grb_{n})=2 \Sug^{(2)}_{-1-n}\quad \text{ and }\quad\FF_2(\gra_{n})=2 \Sug^{(2)}_{-n-1/2}.
\end{equation}
 \end{theorem}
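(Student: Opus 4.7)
The plan is to define $\FF_2$ directly by the formulas \eqref{eq:FFLab} on the topological generators $\alpha_n, \beta_n$ of $\Funct(\Op_2^*)$, show it is an isomorphism via Theorem \ref{thm:CentreU2}, and verify the two diagrams and $\Der_2$-equivariance on these generators.

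By Theorem \ref{thm:CentreU2}, the ring $Z_2$ is isomorphic, as a topological $A$-algebra, to the completion $\widehat{A[X_k]}$ of a polynomial algebra with generators $X_k$ ($k \in \frac{1}{2}\mathbb{Z}$), under $X_k \mapsto \Sug^{(2)}_k$. The indexing sets $\{\alpha_n, \beta_n\}_{n\in\mathbb{Z}}$ and $\{X_k\}_{k \in \frac{1}{2}\mathbb{Z}}$ are in bijection via $\alpha_n \leftrightarrow X_{-n-\frac{1}{2}}$ and $\beta_n \leftrightarrow X_{-n-1}$, and one checks that this bijection intertwines the natural topologies on the two sides (both declare the generators to go to zero as $n \to -\infty$, equivalently $k \to +\infty$). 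Hence the assignment $\alpha_n \mapsto 2\Sug^{(2)}_{-n-\frac{1}{2}}$, $\beta_n \mapsto 2\Sug^{(2)}_{-n-1}$ extends to a continuous $A$-algebra isomorphism $\FF_2$. Uniqueness follows once the diagrams are established: if $\FF_2'$ is another such isomorphism, then $\psi := \FF_2^{-1} \circ \FF_2'$ is an $A$-algebra automorphism with $\psi - \mathrm{id}$ killed by both $\calSp$ and $\calE$, and the flatness of $\Funct(\Op_2^*)$ over $A$ together with Lemma \ref{lem:isoMN} force $\psi = \mathrm{id}$.

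For the specialisation diagram I use three inputs on the generators: $\calSp(\alpha_n) = \zz_{2n}$ and $\calSp(\beta_n) = \zz_{2n+1}$ from equation \eqref{eq:azbz}; $\FF_1(\zz_m) = 2\Sug^{(1)}_{-m-1}$ from Theorem \ref{thm:isomorfismo1sing}; and the direct computation $\Sp(\Sug^{(2)}_k) = \Sug^{(1)}_{2k}$, which follows from $\Sp(w_n) = \Sp(z_n) = t^{2n}$ and an index substitution parallel to the proof of Lemma \ref{lemma:IsomorphismAlongTheDiagonal}. Combining these gives $\Sp \circ \FF_2(\alpha_n) = 2\Sug^{(1)}_{-2n-1} = \FF_1 \circ \calSp(\alpha_n)$ and analogously for $\beta_n$. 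For the expansion diagram, the duality formula \eqref{eq:resduale} yields $\alpha_n = \Res_2(f v_{-n-1})$ and $\beta_n = \Res_2(f u_{-n-1})$ as functions of $f \in \Op_2^*$; decomposing $\Res_2 = \Res_t \circ E_t + \Res_s \circ E_s$ expresses $\calE(\alpha_n), \calE(\beta_n)$ as explicit $Q$-linear combinations of the coordinates $\zzt_m, \zzs_m$, so that applying $\FF_{t,s}$ turns these into specific linear combinations of $\Sug^{(t)}_m, \Sug^{(s)}_m$. On the other side, $E(\Sug^{(2)}_k)$ is computed by expanding each factor of the normal-ordered product via Lemma \ref{lem:hE1}; the result is again a combination of one-variable Sugawaras (since central elements map to central elements), and the verification reduces to matching coefficients via binomial identities for the expansions of $(t-a)^{-j}$ and $(s+a)^{-j}$.

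Finally, $\Der_2$-equivariance is verified on $\alpha_n, \beta_n$: the action on functions is given by Lemma \ref{lemma:DerivativesOfAlphaBeta}, and the corresponding action on Sugawara operators by Lemma \ref{lemma:DerivativesOfL}; the two formulas correspond under $\FF_2$, and equivariance then propagates to all of $\Funct(\Op_2^*)$ by continuity. The main obstacle is the combinatorial identity in the expansion diagram, which involves non-trivial bookkeeping of the binomial expansions mentioned above. A potentially cleaner alternative is to establish the expansion diagram on a single base generator directly and then propagate it using $\Der_2$-equivariance together with the explicit derivations in Lemmas \ref{lemma:DerivativesOfAlphaBeta} and \ref{lemma:DerivativesOfL}, which among them generate all of the required identifications.
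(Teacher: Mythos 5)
The construction of a continuous $A$-algebra homomorphism $\FF_2$ via the formulas \eqref{eq:FFLab}, the topology argument using Theorem \ref{thm:CentreU2}, and the verification of the specialisation square are all correct (you even compute $\Sp(\Sug^{(2)}_k)=\Sug^{(1)}_{2k}$ and $\calSp(\gra_n)=\zz_{2n}$, $\calSp(\grb_n)=\zz_{2n+1}$ correctly, which is more than the paper's own text manages in a few places). The uniqueness argument also goes through, though it is simpler to note that commutativity of the expansion square alone pins $\FF_2$ down because $E$ is injective.

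The gap is the expansion square, which is essentially the entire content of the theorem, and you do not prove it. Your first route -- expand each factor of $\Sug^{(2)}_k$ via Lemma \ref{lem:hE1}, express $E(\Sug^{(2)}_k)$ as a linear combination of the $\Sug^{(t)}_m,\Sug^{(s)}_m$ (which exists by Lemma \ref{lemma:centrots}), and match coefficients with $\FF_{t,s}(\calE(\gra_n))$ computed from \eqref{eq:resduale} -- is conceptually valid but would require an exact, fully resummed expansion of $E(\Sug^{(2)}_k)$. The paper never computes this; Lemma \ref{lemma:ExpansionTwoVariablesSugawara} only identifies leading terms modulo $\hJ^{\leq 2}_{t,s}[\,\cdot\,]$. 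You acknowledge this is ``the main obstacle,'' and as presented it remains an obstacle, not an argument.

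Your ``cleaner alternative'' is closer to what the paper actually does but misstates it in an important way. The paper does \emph{not} anchor the argument on a direct evaluation of the expansion of any single generator and then propagate by $\Der_2$-equivariance. Instead, it sets $\tilde\gra_{-1}=2\Phi(\Sug^{(2)}_{1/2})$, observes that $s\partial\tilde\gra_{-1}=0$ (from Lemma \ref{lemma:DerivativesOfL}(3)), and then uses a structural result, Lemma \ref{lemma:f}, characterizing the kernel of $s\partial$ inside the closure of $\langle 1,\gra_i,\grb_i\rangle_Q$, to conclude only that $\tilde\gra_{-1}=p\gra_{-1}+qf+r$ with $p,q,r\in Q$ unknown and $f$ a specific ``extraneous'' solution. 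It then takes several additional steps (Lemmas \ref{lemma:bbba} and \ref{lemma:tildeugualenontilde}) combining the derivative formulas from Lemmas \ref{lemma:DerivativesOfAlphaBeta} and \ref{lemma:DerivativesOfL} to force $q=0$, propagate the identity $\tilde\gra_i=p\gra_i$, $\tilde\grb_i=p\grb_i$ to all $i$, and finally pin down $p=1$ by comparing the inhomogeneous constant term appearing in $u_2\partial\grb_0$. So the bootstrap is entirely from the $\Der_2$-module structure of the two sides, and there is a genuine ambiguity ($p$, $q$, $r$) that has to be eliminated by nontrivial additional computation; ``compute one generator and propagate'' suppresses the hardest part of the argument. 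Until you either (a) actually carry out the exact coefficient matching you describe, or (b) reproduce the structure theory of Lemma \ref{lemma:f} and the elimination of $p,q,r$, the proof is incomplete precisely at its center of gravity.
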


\subsection{Proof of Theorem \ref{thm:isomorfismo}}
Write $\Phi:Z_2\lra \Funct(\Op_t^*\times_{\Spec Q}\Op_s^*)$ for the composition $\FF^{-1}_{t,s}\circ E$ and $\Psi:\Funct({\Op_2^*})\lra Z_{t,s}$ for the composition $\FF_{t,s}\circ\calE$. 
We begin by showing that the existence and continuity of a morphism of $A$-algebras $\FF_2$ 
with the required properties follows from 
\begin{equation}\label{eq:phiE}
\Phi(\Sug^{(2)}_{n})=\frac 12 \calE(\grb_{-1-n})\quad \text{ and }\quad\Phi(\Sug^{(2)}_{n-1/2})=\frac 12 \calE(\gra_{-n}).
\end{equation}
Indeed, if these formulas hold, then $\Phi(A[\Sug^{(2)}_k:k\in \tfrac 12\mZ])$ is contained in the image of $\calE$. Since the topology of $\Funct(\Op^*_2)$ is induced by the topology of $\Funct(\Op_t^*\times_{\Spec Q}\Op_s^*)$ through the map $\calE$, and $\Funct(\Op^*_2)$ is complete, the image of $\calE$ is closed. It then follows from the continuity of $\Phi$ that its image is contained in that of $\calE$. In particular, as $\calE$ is injective (see Section \ref{sez:funzionioper}), there exists a continuous map $H:Z_2\lra\Funct(\Op_2^*)$ such that $\calE\circ H=\Phi$.  Similar considerations apply to the map $\Psi$. Indeed,
if equations \eqref{eq:phiE} hold, then we also have $\Psi(\grb_{-1-n}) =2 \, E(\Sug^{(2)}_{n})$
and 
$\Psi(\gra_{-n})= 2\,E(\Sug^{(2)}_{n-1/2})$. Arguing as above, we can construct a continuous map $\FF_2$ of $A$-algebras such that $E\circ \FF_2=\Psi$, and by continuity we obtain that $\FF_2$ and $H$ are inverse to each other. 

The commutativity of the second diagram in \eqref{eq:commutazioni} 
follows from equations \eqref{eq:FFLab}, from 
$\Sp(\Sug^{(2)}_k)=\Sug_{2k}^{(1)}$ for all $k\in \tfrac 12 \mZ$, from 
$\calSp(\gra_n)= \zz_{2n+1}$ and $\calSp(\grb_n)=\zz_{2n}$, and from 
$\FF_1(\zz_n)=2 \Sug^{(2)}_{-1-n}$. Indeed, from these formulas we obtain that
the maps $\Sp\circ\FF_2$ and $\FF_1\circ \calSp$ coincide on the elements $\gra_n$, $\grb_n$, and by continuity we get
$\Sp\circ\FF_2=\FF_1\circ \calSp$.
Finally, the uniqueness of $\FF_2$ and its $\Der_2$-equivariance follow
from the injectivity of $E$, from the  commutativity of the first diagram in \eqref{eq:commutazioni}, and 
from the $\Der_2$-equivariance of $\Psi$ and $E$.

\medskip

To prove that the equalities in \eqref{eq:phiE} hold we use the $\Der_2$-equivariance of $\calE$ and $\Phi$. For the sake of simplicity, we consider the map $\calE$ as an inclusion and omit it from the notation. For every integer $n$ we set
$$
\tilde \gra_n= 2 \Phi(\Sug^{(2)}_{-n-1/2})\quad \text{ and }\quad \tilde \grb_n= 2 \Phi(\Sug^{(2)}_{-n-1}).
$$
Since $\Phi$ is $\Der_2$-equivariant, Lemma \ref{lemma:DerivativesOfL} gives significant information on the action of $\Der_2$ on the functions $\tilde{\alpha}_n, \tilde{\beta}_n$. Our strategy is now to show that this action is sufficiently similar to the action of $\Der_2$ on $\alpha_n, \beta_n$ (described by Lemma \ref{lemma:DerivativesOfAlphaBeta}) to force $\alpha_n=\tilde{\alpha}_n$ and $\beta_n=\tilde{\beta}_n$.

By Lemma \ref{lemma:DerivativesOfL}, we have in particular
	\begin{equation}\label{eq:equivfg}
	s\partial \tilde \gra_{-1}= 2\Phi (s\partial \Sug^{(2)}_{1/2} ) =  0, 
	\end{equation}
	Moreover, $E(\Sug_{1/2}^{(2)})$ is in $Z_{t,s}^{\leq 2}$, so by Lemma \ref{lemma:centrots} it must be an (in general infinite) linear combination of the operators $\Sug_{n}^{(t)}$ and $\Sug_{n}^{(s)}$, possibly with a constant term.
	Since the maps $\FF^{-1}_t$ and $\FF^{-1}_s$ and the relations between the coordinates $\zz^{(t)}_i,\zz^{(s)}_i$ and $\gra_i,\grb_i$ are linear, we get that  $\tilde{\gra}_{-1}=\mathcal{F}_{t,s}^{-1}(E(\Sug_{1/2}^{(2)}))$ may be expressed as
	\begin{equation}\label{eq:bpqr}
	\tilde \gra_{-1}=\sum_{i \leq M}p_i \gra_i+\sum_{i\leq N} q_i\grb_i+r 
	\end{equation}
	for some $M,N \in \mathbb{Z}$ and $p_i,q_i,r\in Q$. 
	We start by showing that conditions \eqref{eq:equivfg} and \eqref{eq:bpqr} 
almost completely determine the element $\tilde{\gra}_{-1}$.

\begin{lemma}\label{lemma:f}
There exists a unique element of the form 
$$
f=\sum_{n\leq -2}p_n\gra_n+\sum_{n\leq -2}q_n\grb_n
$$ 
with $p_n,q_n\in Q$, $p_{-2}=1$ and $s\partial f=0$. Moreover, the solutions of the equation
$s\partial g=0$ with $g\in \overline {\langle 1,\gra_i,\grb_i\rangle_Q }\subset \Funct(\Op^*_t\times\Op^*_s)$ form a $Q$-vector space with basis $1, \alpha_{-1}, f$.
\end{lemma}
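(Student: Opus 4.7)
The plan is to reduce the problem to an explicit linear recursion on the coefficients $p_n, q_n$ by exploiting the formulas of Lemma \ref{lemma:DerivativesOfAlphaBeta}. First, since $s = v_0$, I would specialise those formulas to the case $m = 0$. A crucial simplification is that every delta-type correction in the lemma vanishes, because each one carries a coefficient divisible by $m(m-1)$ or $m(m-1)(m-2)$. This gives the clean expressions
\[
s\partial \gra_i = -2(i+1)\gra_i + (i+1)a\grb_i - (i+1)a^2\gra_{i+1}, \qquad s\partial \grb_i = -(2i+3)\grb_i + (i+1)a\gra_{i+1},
\]
which already show $s\partial(1) = 0$ and $s\partial \gra_{-1} = 0$, so $1$ and $\gra_{-1}$ automatically lie in the kernel.

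Next, for a general element $g = c + \sum_{i\leq N}(p_i\gra_i + q_i\grb_i)$ in the closure $\overline{\langle 1, \gra_i, \grb_i \rangle_Q}$, I would expand $s\partial g$ and collect coefficients. The vanishing of the $\grb_n$-coefficient yields $(n+1)ap_n = (2n+3)q_n$, while the vanishing of the $\gra_n$-coefficient yields $2(n+1)p_n + na^2 p_{n-1} - naq_{n-1} = 0$. The heart of the proof is then the analysis of this system index by index:
\begin{itemize}
\item For $n \geq 0$, the $\grb_n$-relation gives $q_n = \tfrac{(n+1)a}{2n+3}p_n$, and the $\gra_n$-relations, solved inductively starting from $p_0 = 0$ (forced by the $\gra_0$-equation), force $p_n = q_n = 0$ for all $n \geq 0$.
\item For $n = -1$, the $\grb_{-1}$-coefficient gives $q_{-1} = 0$, while the $\gra_{-1}$-coefficient and the $\grb_{-2}$-coefficient are both equivalent to the single relation $q_{-2} = a p_{-2}$; no constraint appears on $p_{-1}$, which remains free.
\item For $n \leq -2$, combining $q_n = \tfrac{(n+1)a}{2n+3}p_n$ with the $\gra_n$-equation produces the first-order recursion $p_n = -\tfrac{na^2}{2(2n+1)}p_{n-1}$, which determines $p_{-3}, p_{-4}, \dots$ uniquely once $p_{-2}$ is given.
\end{itemize}

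This reduces the kernel of $s\partial$ on the closure to a triangular system with three free parameters $(c, p_{-1}, p_{-2}) \in Q^3$, corresponding exactly to the basis vectors $1$, $\gra_{-1}$ and $f$; the normalisation $p_{-2} = 1$ picks out the element $f$ of the statement. Convergence of the resulting infinite series is automatic in the topology of the closure, since the indices tend to $-\infty$.

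The main things to watch are the careful bookkeeping of index shifts (the $\gra_{i+1}$-terms in the derivative formulas couple consecutive indices) and checking that the denominators $2(2n+1)$ and $2n+3$ appearing in the recursion never vanish; both are twice-odd or odd integers for the range of $n$ at hand, so this is immediate. Beyond this, no real obstacle arises: the whole argument is a triangular system of linear equations over $Q$.
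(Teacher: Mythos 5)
Your proof is correct and follows essentially the same route as the paper: both use Lemma \ref{lemma:DerivativesOfAlphaBeta} with $m=0$ to extract the coefficient equations $(n+1)ap_n=(2n+3)q_n$ and $2(n+1)p_n+na^2p_{n-1}-naq_{n-1}=0$, and both solve them to get a triangular recursion whose only free parameters are $c$, $p_{-1}$, $p_{-2}$. The paper reaches the recursion by first normalising $p_{-1}=0$ and arguing via the highest nonzero indices $M,N$ that $M=N\leq -2$, whereas you solve the full system directly index by index, but the two bookkeeping schemes are equivalent.
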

\begin{proof}
Since $s\partial$ is $Q$-linear and satisfies $s\partial \gra_{-1}=s\partial 1=0$, it suffices to show that the solutions $g=\sum_{i\leq M}p_n\gra_n+\sum_{n\leq N}q_n\grb_n$ to the equations
\begin{equation}\label{eq:spartialgVanishes}
\begin{cases}
s\partial g = 0 \\
p_{-1}=0
\end{cases}
\end{equation}
form a 1-dimensional vector space (over $Q$). To avoid ambiguity in the series representation of $g$, we may assume that $M$ is either $- \infty$ (if no term $\alpha_i$ has a nonzero coefficient) or is such that $p_M \neq 0$ (respectively, $N$ is either $-\infty$ or such that $q_N \neq 0$). Notice in particular that $M \neq -1$ since $p_{-1}=0$ by assumption.
By Lemma \ref{lemma:DerivativesOfAlphaBeta}, the coefficients of $\grb_{n}$ and $\gra_{n+1}$ in $s\partial g$ are given respectively by
\begin{equation}\label{eq:CoefficientsOfspartialg}
(n+1)ap_n -(2n+3)q_n \quad \text{ and}\quad -(n+1)a^2p_n+(n+1)aq_n-2(n+2)p_{n+1}.
\end{equation}
Let now $g$ be a nonzero solution of \eqref{eq:spartialgVanishes}, so that these two quantities vanish for every $n$.
We now prove that we have $M=N \leq -2$.
Suppose by contradiction that $N>M$ holds: then the coefficient of $\grb_{N}$ in $s\partial g$ is $-(2N+3)q_N$, which cannot vanish since $q_N \neq 0$.
Conversely, suppose that $M>N$. By the formulas above, the coefficient of $\gra_{M+1}$ in $s\partial g$ is then $-(M+1)a^2p_{M}$, which again cannot vanish (recall that $M \neq -1$ by our convention), contradiction. Hence we must have $N=M \neq -1$. Finally, suppose that $N=M$ is strictly greater than $-1$: then the vanishing of the expressions \eqref{eq:CoefficientsOfspartialg} for $n=M$ yields
$p_M=q_M=0$, again a contradiction.

Hence as claimed we have $M, N\leq -2$, and from \eqref{eq:CoefficientsOfspartialg} we get
\begin{equation}\label{eq:Recursionpnqn}
q_n=\frac{n+1}{2n+3}ap_n\qquad p_n=-2\frac{2n+3}{(n+1)a^2}p_{n+1}
\end{equation}
for all integers $n$. In particular, the value of $p_{-2}$ determines $p_n$ and $q_n$ for all $n \leq -2$, which shows that the space of solutions to \eqref{eq:spartialgVanishes} has dimension at most 1. On the other hand, setting $p_{-2}=1$ and $p_n=q_n=0$ for $n>-2$, and solving \eqref{eq:Recursionpnqn} recursively, we get the coefficients of a series $f$ that lies in $\overline{ \langle \alpha_i, \beta_i \rangle_Q}$ and solves \eqref{eq:spartialgVanishes}, hence the space of solutions to this system has dimension exactly 1. Combined with our previous remarks, this concludes the proof of the lemma.
\end{proof}

As a consequence of the above lemma, we get that $\tilde{\gra}_{-1} = p \gra_{-1} + qf + r$ for some $p,q,r \in Q$. Our next goal is to show that the relations of Lemma \ref{lemma:DerivativesOfL} force $q=r=0$.
We will need to know explicitly the first few terms in the series expansion of the element $f$. Using the recursion obtained in the proof of Lemma \ref{lemma:f} we get
\begin{equation}\label{eq:primiterminif}
f=a\grb_{-2}+\gra_{-2}-\frac{2}{a}\grb_{-3}
-\frac{3}{a^2}\gra_{-3} +\frac{6}{a^3}\grb_{-4} +\frac{10}{a^4}\gra_{-4}
-\frac{20}{a^5}\grb_{-5} -\frac{35}{a^6}\gra_{-5}+
\cdots
\end{equation}

\begin{lemma}\label{lemma:bbba}
Writing $\tilde \gra_{-1}=p\gra_{-1}+qf+r$ as above, we have 
$\tilde \grb_{i}=p\grb_{i}$ for $i=-2,-1,0$ and
$\tilde \gra_{0}=p\gra_0$.
\end{lemma}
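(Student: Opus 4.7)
The plan is to combine the $\Der_2$-equivariance of $\Phi$ with the explicit formulas of Lemma \ref{lemma:DerivativesOfL} and Lemma \ref{lemma:DerivativesOfAlphaBeta}: applying several derivations to $\tilde\alpha_{-1}=p\alpha_{-1}+qf+r$ produces explicit expressions for $\tilde\beta_{-2},\tilde\beta_{-1},\tilde\beta_0,\tilde\alpha_0$, which I then match to the expected targets $p\beta_i$ and $p\alpha_0$.

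First I would apply $\partial$ to $\tilde\alpha_{-1}=2\Phi(\Sug^{(2)}_{1/2})$: by Lemma \ref{lemma:DerivativesOfL}(2) with $k=1$ we have $\partial \Sug^{(2)}_{1/2}=\Sug^{(2)}_0$, hence $\tilde\beta_{-1}=\partial\tilde\alpha_{-1}=p\beta_{-1}+q\,\partial f$. The delicate step is to show that $q=0$ (and, at the same time, $r=0$). To this end, apply $u_1\partial$ to both sides of $\tilde\beta_{-1}=2\Phi(\Sug^{(2)}_0)$: Lemma \ref{lemma:DerivativesOfL}(1) with $k=0,\,m=1$ (noting $f_1(0,1)=0$) gives $(u_1\partial+a)\tilde\beta_{-1}=-2\tilde\alpha_{-1}$. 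On the other hand Lemma \ref{lemma:DerivativesOfAlphaBeta} gives $(u_1\partial+a)\beta_{-1}=-2\alpha_{-1}$, so the identity reduces to $q\,(u_1\partial+a)\partial f=-2qf-2r$. Expanding both members term by term via the explicit series \eqref{eq:primiterminif} for $f$, the coefficients of $\beta_{-2}$ and $\alpha_{-2}$ match automatically; however, the coefficient of $\beta_{-3}$ on the left computes to $-32q/a$ while on the right it is $4q/a$, forcing $q=0$, and the residual equation $0=-2r$ then gives $r=0$ as well.

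Once $q=r=0$ we have $\tilde\alpha_{-1}=p\alpha_{-1}$ and $\tilde\beta_{-1}=p\beta_{-1}$. Lemma \ref{lemma:DerivativesOfL}(1) with $k=1,\,m=0$ gives $\partial\tilde\beta_{-2}=2\tilde\alpha_{-1}+a\tilde\beta_{-1}=p\,\partial\beta_{-2}$, so $\tilde\beta_{-2}-p\beta_{-2}$ lies in the kernel of $\partial$; inspection of Lemma \ref{lemma:DerivativesOfAlphaBeta}(1)--(2) shows that this kernel consists only of constants, so the difference equals some $c\in Q$. Finally, Lemma \ref{lemma:DerivativesOfL}(3) with $k=1$ gives $(s\partial-1)\tilde\beta_{-2}=-a\tilde\alpha_{-1}=(s\partial-1)(p\beta_{-2})$, hence $(s\partial-1)c=-c=0$ and $\tilde\beta_{-2}=p\beta_{-2}$.

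For $\tilde\alpha_0$ and $\tilde\beta_0$, observe that $ts=u_1$ and $ts^2=v_1$; parts (5) and (6) of Lemma \ref{lemma:DerivativesOfL} then give $(u_1\partial-2a)\tilde\alpha_0=-3p\beta_{-1}$ and $v_1\partial\tilde\beta_0-2a\tilde\alpha_0=-5p\beta_{-1}$. A direct application of Lemma \ref{lemma:DerivativesOfAlphaBeta}(1),(4) with $m=1,\,i=0$ shows that $p\alpha_0$ and $p\beta_0$ satisfy the same pair of equations, so $X_0:=\tilde\alpha_0-p\alpha_0$ and $Y_0:=\tilde\beta_0-p\beta_0$ satisfy $(u_1\partial-2a)X_0=0$ and $v_1\partial Y_0=2aX_0$; inspection of Lemma \ref{lemma:DerivativesOfAlphaBeta} shows that $u_1\partial-2a$ is injective on the relevant closed $Q$-subspace of $\Funct(\Op^*_t\times_{\Spec Q}\Op^*_s)$, so $X_0=0$, and then $v_1\partial Y_0=0$ yields $Y_0=0$ by a similar argument. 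The main obstacle throughout is the very first step --- disposing of the $q\,\partial f$ contribution --- which hinges on the concrete series comparison sketched above.
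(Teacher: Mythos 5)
Your strategy — propagating derivations through $\tilde\alpha_{-1}$ using Lemmas \ref{lemma:DerivativesOfL} and \ref{lemma:DerivativesOfAlphaBeta} — is the same one the paper uses, but your key step, the elimination of the $qf$ contribution via the relation $(u_1\partial+a)\tilde\beta_{-1}=-2\tilde\alpha_{-1}$, contains a computational error that makes the argument collapse. Recomputing with the recursion \eqref{eq:Recursionpnqn} and Lemma \ref{lemma:DerivativesOfAlphaBeta}, one finds
\[
\partial f = a\alpha_{-1}+a^2\beta_{-1}-\beta_{-2}-\tfrac2a\alpha_{-2}+\tfrac{3}{a^2}\beta_{-3}+\tfrac{6}{a^3}\alpha_{-3}+\cdots,
\]
and then the coefficient of $\beta_{-3}$ in $(u_1\partial+a)\partial f$ is $\tfrac1a+\tfrac3a=\tfrac4a$, \emph{not} $-32/a$ — which is exactly the coefficient $-2\cdot(-\tfrac2a)=\tfrac4a$ appearing in $-2f$. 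This is not a coincidence: $s\partial$ annihilates $f$, $\alpha_{-1}$, and $1$, and a short bracket computation shows that the operator $(u_1\partial+a)\partial$ maps $\ker(s\partial)$ to itself; comparing the leading coefficient then yields the identity $(u_1\partial+a)\partial f=-2f$ on the nose, just as $(u_1\partial+a)\partial\alpha_{-1}=-2\alpha_{-1}$. In other words, $\alpha_{-1}$ and $f$ are \emph{both} $(-2)$-eigenvectors of $(u_1\partial+a)\partial$ on $\ker(s\partial)$, so this single relation cannot distinguish them: your equation reduces to $0=-2r$, giving $r=0$ but leaving $q$ completely unconstrained. Everything after this point assumes $q=0$ and therefore does not go through.

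The paper's proof takes a longer route precisely to escape this degeneracy. It propagates the relations further down the chain — $\tilde\beta_{-1}\to\tilde\beta_{-2}\to(2\tilde\alpha_0+a\tilde\beta_0)\to\tilde\beta_0$ — using the second-order identity $\partial(u_2\partial)\tilde\beta_{-1}=-12\tilde\beta_{-2}-2a^2\tilde\beta_{-1}$ and the $u_{\pm1}\partial$ relations, carefully tracking the $q$-contribution through the series \eqref{eq:primiterminif}. The result is that the coefficient of $\alpha_0$ in $\tilde\beta_0$ equals $aq$. The decisive observation is then that among all the formulas in Lemma \ref{lemma:DerivativesOfAlphaBeta}, $v_1\partial\alpha_i$ has a nonzero constant term precisely for $i=0$ (equal to $3$), so $v_1\partial\tilde\beta_0$ acquires a constant term $3aq$; comparing with $v_1\partial\tilde\beta_0=2a\tilde\alpha_0-5\tilde\beta_{-1}$, whose right-hand side has no constant term, finally forces $q=0$. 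This genuinely requires going out to $\tilde\beta_0$ and using the inhomogeneous (constant) part of the $v_1\partial$ action, an ingredient your argument never invokes. A secondary, lesser issue: you appeal to $f_1(0,1)=0$, but Lemma \ref{lemma:DerivativesOfL} as stated only gives $f_1(k,0)=f_2(k,0)=0$ and $f_1(-1,2)=6$; the formula $f_1(k,m)=m(m-1)(2m-1)$ is asserted in Remark \ref{rmk:ActionOfDerivationsOn2Sugawara} but explicitly left unproved, so strictly speaking this value is not available.
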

\begin{proof}
We compute $\tilde \grb_{-1}, \tilde \grb_{-2}$, $\tilde \grb_{0}$ and $\tilde \gra_0$ in terms of $\tilde{\alpha}_{-1}$ using the 
following relations, which follow from Lemma \eqref{lemma:DerivativesOfL} arguing as for formula
\eqref{eq:equivfg}:
\begin{equation}\label{eq:bbba}
\begin{aligned}
\tilde \grb_{-1}&=\partial \tilde \gra_{-1},
&
\partial (u_{2}\partial) \tilde \grb_{-1}&= -12\tilde \grb_{-2}-2 a^2\tilde \grb_{-1},
\\
(u_{-1}\partial) \tilde \grb_{-2}  &= 2(2\tilde \gra_0+a\tilde \grb_0),
& 
u_1\partial (2\tilde \gra_0+a\tilde \grb_0)&= -6\tilde \grb_{-1}-2a^2\tilde \grb_{0}.
\end{aligned}
\end{equation}
Using Lemma \ref{lemma:DerivativesOfAlphaBeta} and the explicit expression of $f$ given by \eqref{eq:primiterminif} we deduce first that $\tilde \grb_{-1}$ is a series in $\grb_{-1},\gra_{-1},\grb_{-2},\dots$ without constant term, and then that $\tilde \grb_{-2}$ is a series in $\grb_{-2},\gra_{-2},\grb_{-3},\dots$ whose first terms are as follows:
$$\tilde \grb_{-2}=
(p+qa^2)\grb_{-2}+qa\gra_{-2}-q\grb_{-3}-\frac {2q}{a}  \gra_{-3}+\frac{3q}{a^2}\grb_{-4}+\frac{6q}{a^3}\gra_{-4}+\cdots
$$
From this we easily obtain that 
$Y := (u_{-1}\partial) \tilde \grb_{-2}$ is a series without constant term whose first terms are
\[
Y=(2ap+2a^3q)\grb_0+(4p+2a^2q)\gra_0+\cdots
\]
Finally, from this we deduce that $Z:=\tfrac 12(u_1\partial) Y$ is a series in $\grb_0,\gra_0,\grb_{-1},\gra_{-1},\grb_{-2},\dots$ without constant term and with coefficient of 
$\gra_0$ equal to $-2a^3q$. In particular, since by \eqref{eq:bbba} we have
\[
\tilde{\beta}_0 = \frac{1}{2a^2}\left(-6\tilde{\beta}_1 - Z \right) \quad\text{ and }\quad \tilde{\alpha}_0 = \frac{1}{4}Y - \frac{1}{2}a \tilde{\beta}_0,
\]
it follows that $\tilde \gra_0$ and $\tilde \grb_0$ have no constant term and that the coefficient of $\gra_0$ in $\tilde \grb_0$ is equal to $aq$.
Finally, from Lemma \ref{lemma:DerivativesOfL} we also have
\begin{equation}\label{eq:qIsZero}
v_1\partial \tilde \beta_0=2a\tilde\gra_0-5\tilde\grb_{-1}.
\end{equation}
From the explicit expressions of Lemma \ref{lemma:DerivativesOfAlphaBeta} we see that $v_1\partial(\beta_i)$ never involves any constant terms, while $v_1\partial(\alpha_i)$ has a nonzero constant term (equal to $3$) only for $i=0$. Since the right hand side of \eqref{eq:qIsZero} has no constant term, this equation implies $3aq=0$, that is, $q=0$.
From Lemma \ref{lemma:DerivativesOfAlphaBeta} and equations \eqref{eq:bbba} it is now easy to obtain $\tilde \grb_i=p\grb_i$
for $i=-2,-1,0$ and $\tilde \gra_0=p\gra_0$ as claimed. 
\end{proof}

\begin{lemma}\label{lemma:tildeugualenontilde}
We have $\tilde \grb_i=\grb_i$ and $\tilde \gra_i=\gra_i$ for all integers $i$.
\end{lemma}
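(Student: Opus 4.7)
The plan is to determine the two unknowns $p$ and $r$ in the decomposition $\tilde\gra_{-1}=p\gra_{-1}+r$ provided by Lemma \ref{lemma:bbba} by applying the $\Der_2$-equivariance of $\Phi$ to two carefully chosen Sugawara operators, and then to propagate the resulting equalities $\tilde\gra_i=\gra_i$, $\tilde\grb_i=\grb_i$ from those base cases to all integer indices by iterating the $\Der_2$-action.

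To determine $r$, I would apply $\partial=u_0\partial$ to the identity $\tilde\grb_{-2}=2\Phi(\Sug^{(2)}_1)$. Lemma \ref{lemma:DerivativesOfL}(1) with $k=1,\,m=0$ has vanishing error terms, so $\partial \Sug^{(2)}_1=2\Sug^{(2)}_{1/2}+a\Sug^{(2)}_0$ and, after translation, $\partial\tilde\grb_{-2}=2\tilde\gra_{-1}+a\tilde\grb_{-1}$. Substituting the base values $\tilde\grb_i=p\grb_i$ for $i=-2,-1$ and $\tilde\gra_{-1}=p\gra_{-1}+r$, and comparing with $\partial\grb_{-2}=2\gra_{-1}+a\grb_{-1}$ from Lemma \ref{lemma:DerivativesOfAlphaBeta}(2), the variable contributions cancel and leave $2r=0$. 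Next, to determine $p$, I would apply $u_2\partial$ to $\tilde\grb_0=2\Phi(\Sug^{(2)}_{-1})$. This instance is decisive because Lemma \ref{lemma:DerivativesOfL}(1) with $k=-1,\,m=2$ carries the explicit nonzero constant $f_1(-1,2)=6$, yielding $u_2\partial\tilde\grb_0=-6\tilde\gra_{-1}-3a\tilde\grb_{-1}+12$. Matching with $u_2\partial\grb_0=-6\gra_{-1}-3a\grb_{-1}+12$ from Lemma \ref{lemma:DerivativesOfAlphaBeta}(2) and using $r=0$, the constant terms give $12p=12$, so $p=1$.

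With $p=1$ and $r=0$ established, Lemma \ref{lemma:bbba} provides the base cases $\tilde\gra_i=\gra_i$ for $i\in\{-1,0\}$ and $\tilde\grb_i=\grb_i$ for $i\in\{-2,-1,0\}$. Setting $\gamma_n:=\tilde\gra_n-\gra_n$ and $\delta_n:=\tilde\grb_n-\grb_n$, I would show by induction that both vanish for every integer $n$. Translating Lemma \ref{lemma:DerivativesOfL}(1) (together with parts 2--6, which handle Sugawara operators with half-integer index) via the $\Der_2$-equivariance of $\Phi$ yields, for each integer $n$ and each chosen derivation in $\Der_2$, a relation of the form $u_m\partial\tilde\grb_n=-2(n+m+1)\tilde\gra_{n-m+1}-(n+m+1)a\tilde\grb_{n-m+1}+c(n,m)$ for some $c(n,m)\in Q$, together with analogues for $u_m\partial\tilde\gra_n$ and for the $v_m\partial$ action. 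Comparison with the explicit formulas of Lemma \ref{lemma:DerivativesOfAlphaBeta} yields a homogeneous linear equation in $\gamma_{n-m+1},\delta_{n-m+1}$ once the $c(n,m)$ are identified at a base-case index. Choosing two suitable derivations at each inductive step gives two linearly independent equations, forcing $\gamma_{n'}=\delta_{n'}=0$ at each new target index $n'$, and the induction sweeps out all of $\mZ$.

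The main obstacle will be the bookkeeping in the inductive step: one must select, at each stage, a pair of derivations in $\Der_2$ that produces two linearly independent homogeneous relations in $\gamma_{n'},\delta_{n'}$, and verify that the constants $c(n,m)$ (arising from the polynomial error terms $f_1,f_2$ of Lemma \ref{lemma:DerivativesOfL}(1), which are not spelled out explicitly for general $k,m$) coincide with the corresponding constants of Lemma \ref{lemma:DerivativesOfAlphaBeta}. Both requirements are ultimately enforced by the $\Der_2$-equivariance of $\Phi$ together with the base-case equalities, but a clean write-up requires careful arithmetic.
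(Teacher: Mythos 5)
Your determination of the two constants is sound and in fact reproduces the paper's key computations: the relation $\partial\tilde\grb_{-2}=2\tilde\gra_{-1}+a\tilde\grb_{-1}$ forces $r=0$ (the paper obtains the same conclusion implicitly from the relation $\partial\tilde\grb_i=-(i+1)(2\tilde\gra_{i+1}+a\tilde\grb_{i+1})$ at $i=-2$), and the $u_2\partial\tilde\grb_0$ computation with $f_1(-1,2)=6$ is exactly how the paper pins down $p=1$. The paper carries out these two steps in the opposite order (propagate first, normalise $p$ last), but that is cosmetic.

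The propagation step, however, has a genuine gap. You propose to show $\gamma_{n'}=\delta_{n'}=0$ by producing, at each index $n'$, ``two linearly independent homogeneous equations'' from derivations applied to $\tilde\grb_n$, $\tilde\gra_n$. This scheme does not go through as stated, for a structural reason visible already in Lemma \ref{lemma:DerivativesOfAlphaBeta}: comparing $u_m\partial\tilde\grb_n$ with $u_m\partial\grb_n$ always produces the \emph{same} one-dimensional constraint $2\gamma_{n'}+a\delta_{n'}=0$ (up to the scalar $n+m+1$), no matter which $m$ you choose with $n-m+1=n'$. The other derivations in Lemma \ref{lemma:DerivativesOfAlphaBeta} ($u_m\partial\gra_i$, $v_m\partial\grb_i$, $v_m\partial\gra_i$) couple \emph{several different} target indices ($i-m$ and $i-m+1$), so they do not directly yield a second equation in $(\gamma_{n'},\delta_{n'})$ alone; one has to control those extra indices first, and you do not explain how. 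You also defer the verification that the unknown constants $c(n,m)$ match those of Lemma \ref{lemma:DerivativesOfAlphaBeta} to ``careful arithmetic,'' but this is precisely the point that must be resolved, not deferred. The paper sidesteps both difficulties by applying a \emph{second} derivative: the relation $\partial(u_{m+1}\partial)\tilde\grb_{-1}=-(m+1)\big(2(2m+1)\tilde\grb_{-m-1}+a^2 m\tilde\grb_{-m}\big)$ kills all constant error terms (since $\partial$ annihilates $Q$) and involves only $\tilde\grb$'s, giving a clean two-step recursion for the $\tilde\grb_n$ from the base cases; only afterwards are the $\tilde\gra_i$ recovered from the already-established $\tilde\grb$'s via $\partial\tilde\grb_i=-(i+1)(2\tilde\gra_{i+1}+a\tilde\grb_{i+1})$ for $i\neq -1$. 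You would need this (or an equivalent) idea to complete the induction.
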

\begin{proof}
Let $p$ be as in the statement of Lemma \ref{lemma:bbba}.
We begin by proving that the equalities $\tilde{\beta}_i=p\beta_i$ and $\tilde{\alpha}_i=p\alpha_i$ hold for all integers $i$.
From Lemma \ref{lemma:DerivativesOfL} we have
\begin{equation}\label{eq:induzionebeta}
\partial (u_{m+1}\partial) \tilde \grb_{-1}= -(m+1)\; \big( 2(2m+1)\tilde \grb_{-m-1}+a^2m \tilde \grb_{-m}\big)
\end{equation}
for all integers $m$, and analogous equations hold for the functions $\grb_n$ by Lemma \ref{lemma:DerivativesOfAlphaBeta}.
Using these relations for $m>0$ and 
arguing by decreasing induction, starting with $n=-1$ (notice that $\tilde{\beta}_{-1}=p\beta_{-1}$ holds by the previous lemma), we get 
$\tilde \grb_n =p\grb_n$ for all $n<0$. 
Consider now the following relation, which follows again from Lemma \ref{lemma:DerivativesOfL}:
\begin{align*}
\partial (2\tilde \gra_0+a\tilde \grb_0)&=-2\tilde \grb_0 -a^2\tilde \grb_1\ .
\end{align*}
Lemma \ref{lemma:DerivativesOfAlphaBeta} implies that the analogous relation holds for the functions $\gra_0, \grb_0, \grb_1$. Using Lemma \ref{lemma:bbba}, from the comparison of these formulas we obtain 
 $\tilde \grb_1=p\grb_1$.
Using equation \eqref{eq:induzionebeta} again, and arguing by induction (starting with $n=1$), 
we now get $\tilde \grb_n=p\grb_n$ for all $n$. 
Finally, from $\partial \tilde \grb_i=-(i+1)(2\tilde \gra_{i+1}+a\tilde \grb_{i+1})$ and the analogous relations for the functions $\gra_i$, $\grb_i$ we obtain $\tilde \gra_i=p\gra_i$ for all $i\neq 0$. The only remaining equality $\tilde \gra_0=p\gra_0$ is part of the previous lemma.
It remains only to show that $p=1$.
In order to do this, we apply the operator $u_2\partial$ to the functions $\tilde \grb_{0}$ and
$\grb_{0}$. By Lemmas \ref{lemma:DerivativesOfL} and \ref{lemma:DerivativesOfAlphaBeta}
we have
$$
u_2\partial \grb_0=-6\gra_{-1}-3a\grb_{-1}+12
\quad\text{ and }\quad
u_2\partial \tilde \grb_0=-6\tilde \gra_{-1}-3a\tilde \grb_{-1}+12.
$$
As we have $\tilde{\beta}_0=p\beta_0$, $\tilde{\alpha}_{-1}=p\alpha_{-1}$, and $\tilde{\beta}_{-1}=p\beta_{-1}$, these equations immediately imply $p=1$ as desired.
\end{proof}

This completes the proof of the relations \eqref{eq:phiE}, hence of Theorem \ref{thm:isomorfismo}.

\section{Weyl modules and their endomorphism rings}\label{sez:Weylmodules}
We start by recalling the definition of the Weyl modules for the affine Lie algebra 
$\hgog_1$. Let $\lambda$ be a dominant integral weight and let $V^\lambda$ be the 
corresponding irreducible representation of $\gog$. The \textbf{Weyl module} of 
weight $\lambda$ is
\[ \V_1^\lambda = \operatorname{Ind}_{\hgog_1^+}^{\hgog_1} V^\lambda, \]
where $\hgog_1^+$ acts on $V^\lambda$ as follows:
\begin{equation}\label{eq:azionegp}
  f(t)x \cdot u = f(0) \, xu, \quad C_1 \cdot u = -\frac{1}{2} u. 
 \end{equation}
Note that $\mV_1^\lambda$ has a natural structure of $\hU_1$-module. Frenkel 
and Gaitsgory described the endomorphism ring of this module in \cite{FG6}.
Let now $V_Q^{\grl}=Q\otimes _\mC V^\grl$. We can define a $Q$-linear 
action of $\hgog^+_t$ on $V_Q^{\grl}$ as in Equation \eqref{eq:azionegp}, and the induction of this representation from $\hgog^+_t$ to $\hgog_t$ yields a $\hgog_t$-module that we denote by $\mV_t^\grl$. We similarly construct the $\hgog_s$-module
$\mV_s^\grl$. We remark that these representations are obtained from 
$\mV_1^\grl$ by tensoring with $Q$, and that the action of $U_t$ (respectively $U_s$) induces an 
action of $\hU_t$ (respectively $\hU_s$). 

\begin{theorem}
[{Frenkel-Gaitsgory \cite[Theorem 1]{FG6}}]\label{thm:EndoVlambda}
There is a commutative diagram
\[ \xymatrix{ \Funct(\Op_1^*) \ar[d]_{\rho}
		 \ar[r]^-{\simeq}_-{\FF_1}  & Z_1 \ar[d]^{\omega_1} \\ \Funct(\Op_1^\lambda)\ar[r]^-{\simeq}_-{\calG_1}&    \End_{\hgog_1}(\V_1^\lambda),} \]
where the isomorphism $\FF_1 : \Funct(\Op_1^*) \to Z_1$ is that of Theorem 
\ref{thm:isomorfismo1sing}, $\rho$ is the natural restriction of functions from
$\Op_1^*$ to its subscheme $\Op_1^\lambda$, and $\omega_1$ is the natural map 
induced by the action of $\hU_1$ on $\V_1^\lambda$.
An analogous statement holds for $\mV^\grl_t$ and for $\mV^\grl_s$.
\end{theorem}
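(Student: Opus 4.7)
The plan is to realise $\calG_1$ as a quotient of $\FF_1$, exploiting the action of the centre $Z_1$ on the Weyl module, and then to check bijectivity by a combination of PBW filtration arguments and geometric input on the size of $\End_{\hgog_1}(\V_1^\lambda)$.

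First, since $Z_1$ is central in $\hU_1$, its action on $\V_1^\lambda$ factors through $\hgog_1$-endomorphisms, giving $\omega_1 : Z_1 \to \End_{\hgog_1}(\V_1^\lambda)$. Composing with $\FF_1$ yields a map $\Funct(\Op_1^*)\to \End_{\hgog_1}(\V_1^\lambda)$, and to obtain $\calG_1$ it suffices to show that this composition kills the ideal $I_\lambda\subset\Funct(\Op_1^*)$ cutting out $\Op_1^\lambda$. By \eqref{eq:EquationsForOpLambda}, $I_\lambda$ is generated (together with the coordinates $\zz_i$, $i<-2$) by $\zz_{-2}-A_\lambda$ and $\zz_{\lambda-1}-P_\lambda(\zz_{-1},\dots,\zz_{\lambda-2})$. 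Under $\FF_1$ these correspond (up to constants) to $\Sug^{(1)}_k$ for $k\geq 2$, to $2\Sug^{(1)}_1-A_\lambda$, and to a specific relation involving $\Sug^{(1)}_{-\lambda}$. I would verify each acts by zero on the cyclic highest-weight generator $v_\lambda\in V^\lambda$ by direct inspection of mode expansions: for $k\geq 2$ every summand of $\Sug^{(1)}_k$ has a right factor $J_\alpha t^m$ with $m\geq 1$, which annihilates $v_\lambda$; for $k=1$ only the Casimir-like term $\sum_\alpha J^\alpha J_\alpha$ survives, and this acts by $A_\lambda$ for an appropriate normalisation of the Killing form; the relation involving $\Sug^{(1)}_{-\lambda}$ is where the finite-dimensionality of $V^\lambda$ enters, and its precise shape is forced by the oper-connection characterisation of Lemma \ref{lem:opur1} b). Since $v_\lambda$ generates $\V_1^\lambda$ and the operators in question are central, their vanishing on $v_\lambda$ propagates to the entire module.

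Once $\calG_1$ is defined, I would prove it is an isomorphism by treating the two directions separately. For injectivity, use the PBW filtration on $Z_1$ and a compatible filtration on $\End_{\hgog_1}(\V_1^\lambda)\simeq \Hom_{\hgog_1^+}(V^\lambda,\V_1^\lambda)$ induced by Frobenius reciprocity; passing to associated gradeds and applying Lemma \ref{lemma:FreFei1} (which identifies $\Gr(Z_1/(\hJ_1(N)\cap Z_1))$ with the $\hgog_1^+$-invariants in $\Gr(\hU_1/\hJ_1(N))$) reduces the question to a dimension count in the classical limit, where the relevant invariant subspaces are described via jet schemes of opers. For surjectivity, the key input is an \textit{a priori} bound on the size of $\End_{\hgog_1}(\V_1^\lambda)$; the standard route, as in \cite{FG6}, is to identify $\V_1^\lambda$ with the global sections of a tautological line bundle on an ind-scheme closely related to $\Op_1^\lambda$, thereby realising every $\hgog_1$-endomorphism as multiplication by a regular function on that ind-scheme. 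The main obstacle is precisely this last step: without a geometric model of the Weyl module in terms of opers, one cannot readily exclude ``exotic" endomorphisms of $\V_1^\lambda$ not arising from the centre, and constructing this geometric identification is the technical heart of the Frenkel–Gaitsgory argument.
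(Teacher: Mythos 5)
The paper does not prove this statement: Theorem \ref{thm:EndoVlambda} is labelled as \emph{Frenkel--Gaitsgory, [FG6, Theorem 1]} and is invoked as a black box, with no proof (or even proof sketch) given. There is therefore no in-paper argument to compare your proposal against; the paper's ``proof'' is a citation.

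Judged on its own merits, your outline reproduces the broad architecture of the Frenkel--Gaitsgory argument but leaves the two substantive steps unproven. For well-definedness, your verification that the generators of the ideal $I_\lambda$ are killed is carried out convincingly for the ``easy'' generators ($\zz_i$ with $i<-2$ and $\zz_{-2}-A_\lambda$, via normal-ordering and Casimir considerations), but the crucial relation $\zz_{\lambda-1}-P_\lambda(\zz_{-1},\dots,\zz_{\lambda-2})$ is dispatched with ``its precise shape is forced by the oper-connection characterisation of Lemma \ref{lem:opur1} b)'' -- this is precisely the point where the finite-dimensionality of $V^\lambda$ and the nontrivial structure of the polynomial $P_\lambda$ must interact, and no actual computation or argument is given. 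For surjectivity, you yourself flag that without a geometric realisation of $\V_1^\lambda$ in terms of opers one ``cannot readily exclude exotic endomorphisms,'' and that constructing this is ``the technical heart of the Frenkel--Gaitsgory argument.'' That assessment is accurate, but it means the proposal is an honest outline rather than a proof: the hard step is identified and deferred, not resolved. Since the paper deliberately avoids reproving [FG6, Theorem 1], deferring to the citation (as the paper does) is the correct move here; attempting to reprove it from scratch in this generality would require importing a substantial portion of the Frenkel--Gaitsgory machinery (in particular, the identification of Weyl modules with functions on the scheme of opers with regular singularities and $\lambda$-twisted monodromy, and the semi-infinite cohomology / Drinfeld--Sokolov reduction comparison).
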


We now construct the Weyl modules for the algebra $\hgog_2$.

\begin{definition}\label{def:weyl2}
Let $\lambda, \mu$ be dominant integral weights and write $V_A^\lambda=V^\lambda 
\otimes_\mC A$, $V_A^\mu=V^\mu \otimes_\mC A$. The \textbf{Weyl module} of 
weights $\lambda,\mu$ is
\[
\V_2^{\lambda, \mu} = \operatorname{Ind}_{\hgog_2^+}^{\hgog_2} \left( 
V_A^\lambda \otimes_A V_A^\mu \right),
\]
where $\hgog_2^+$ acts on $V_A^\lambda \otimes_A V_A^\mu$ as follows:
\[
f(t,s)x \cdot (u \otimes v) = E_t(f)|_{t=0} \, xu \otimes v + E_s(f)|_{s=0} \, u 
\otimes xv, \quad C_2 \cdot (u \otimes v) = -\frac{1}{2} \, u \otimes v.
\]
Notice that $I_2(1) \otimes \gog$ acts trivially on $V_A^\lambda \otimes_A 
V_A^\mu$. This implies that for every $u \otimes v \in \V_2^{\lambda, \mu}$ 
there exists $m \in \mathbb{N}$ such that $I_2(m)\otimes \gog$ acts trivially on $u \otimes 
v$, hence $\V_2^{\lambda,\mu}$ has a natural structure of $\bU_2$-module. As 
$C_2$ acts as $-\frac{1}{2}$, this also induces a structure of $\hU_2$-module.
\end{definition}

To state the next lemma, we notice that both $C_t$ and $C_s$ act as $-1/2$ on $\mV^\grl_t\otimes_Q \mV^\mu_s$. Moreover, for each $w\in 
\mV^\grl_t\otimes _Q \mV^\mu_s$ there exists $n \in \mathbb{N}$ such that $J_t(n)\otimes U_s$ and $U_t\otimes J_s(n)$ act
trivially on $w$. 
In particular this shows that 
the action of $U_t\otimes U_s\subset U_{t,s}$ is continuous, hence it determines an action of  
$\hU_{t,s}$ (and of its subalgebra $\hU_2[a^{-1}]$) on $\mV^\grl_t\otimes _Q \mV^\mu_s$.
Similarly, the action of $\hU_2$ on 
$\mV_2^{\grl,\mu}$ determines by continuity an action of $\hU_{t,s}$ on 
$\mV_2^{\grl,\mu}[a^{-1}]$. In particular, we can define an action of 
$\hU_t\otimes \hU_s$ on $\mV^{\grl,\mu}_2[a^{-1}]$
through the natural map $\hU_t\otimes \hU_s\lra \hU_{t,s}$.

\begin{lemma}\label{lemma:VlambdamuOutsideAndAlongTheDiagonal}
For all integral dominant weights $\lambda,\mu$ we have the following 
isomorphisms of $\hU_2$ modules:
\begin{enumerate}
\item $\V_2^{\lambda, \mu}/a \V_2^{\lambda, \mu} \cong 
\operatorname{Ind}_{\hgog_1^+}^{\hgog_1} (V^\lambda \otimes_\mC V^\mu)$;
\item $\V_2^{\lambda, \mu}[a^{-1}] \cong \V_t^\lambda \otimes_Q \V_s^\mu$.
\end{enumerate}
\end{lemma}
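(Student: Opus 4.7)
The plan is to combine the PBW theorem with the Lie-algebra isomorphism $\hgog_2/a\hgog_2 \simeq \hgog_1$ afforded by specialisation (cf.~the discussion before Lemma \ref{lem:hS}). Let $\hgog_2^- := \gog \otimes R_2^-$, where $R_2^-$ is the $A$-span of $\{u_n,v_n : n\leq -1\}$. Using the multiplication rules $u_nu_m=u_{n+m}$, $u_nv_m=v_{n+m}$, $v_nv_m=u_{n+m+1}-a\,v_{n+m}$ together with the vanishing of $\Res_2$ on $R_2^-\cdot R_2^-$ (which follows from Lemma \ref{lem:residues}(e), since all exponents sum to at most $-2$), I would check that $\hgog_2^-$ is a Lie subalgebra of $\hgog_2$ complementary to $\hgog_2^+$. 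By PBW, $\V_2^{\lambda,\mu} \simeq U(\hgog_2^-) \otimes_A (V_A^\lambda\otimes_A V_A^\mu)$ as $A$-modules, so modding out by $a$ yields $U(\hgog_1^-)\otimes_\mC (V^\lambda\otimes V^\mu)$ since $\hgog_2^-/a\hgog_2^- \simeq \hgog_1^-$ under the specialisation. It remains to verify that the induced $\hgog_1^+$-action on $V^\lambda\otimes V^\mu$ is the tensor-product one: for $f\in I_2(0)$, both $E_t(f)|_{t=0}$ and $E_s(f)|_{s=0}$ reduce modulo $a$ to $f(0,0)$, and $C_2$ acts as $-\tfrac12$ throughout; this is exactly the action defining $\operatorname{Ind}_{\hgog_1^+}^{\hgog_1}(V^\lambda\otimes V^\mu)$.

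\textbf{Part (2): construction of $\Phi$.} I would produce a $\hgog_2$-equivariant morphism $\Phi:\V_2^{\lambda,\mu}\to \V_t^\lambda\otimes_Q \V_s^\mu$ via universality of induction. The target is naturally a $\hU_{t,s}$-module, hence a $\hU_2[a^{-1}]$-module via the embedding $\hU_2[a^{-1}]\hookrightarrow \hU_{t,s}$ of Lemma \ref{lem:hE2}. One checks directly that the inclusion $V_A^\lambda\otimes V_A^\mu\hookrightarrow V_Q^\lambda\otimes V_Q^\mu\subset \V_t^\lambda\otimes\V_s^\mu$ is $\hgog_2^+$-equivariant: for $f\in I_2(0)$, the images $E_t(f)\in Q[[t]]$ and $E_s(f)\in Q[[s]]$ act on $V_Q^\lambda\otimes V_Q^\mu$ by their constant terms, which are precisely $E_t(f)|_{t=0}$ and $E_s(f)|_{s=0}$, matching the action defining $\V_2^{\lambda,\mu}$. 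Since the target is a $Q$-vector space, $\Phi$ factors through $\Phi[a^{-1}]:\V_2^{\lambda,\mu}[a^{-1}]\to \V_t^\lambda\otimes_Q\V_s^\mu$.

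\textbf{Part (2): isomorphism, and the main obstacle.} To show that $\Phi[a^{-1}]$ is an isomorphism, I would compare PBW filtrations. Using that $\hgog_t^-$ and $\hgog_s^-$ commute in $\hgog_{t,s}$, PBW gives $\V_t^\lambda\otimes_Q\V_s^\mu \simeq U(\hgog_t^-\oplus\hgog_s^-)\otimes_Q(V_Q^\lambda\otimes V_Q^\mu)$. The morphism $\Phi[a^{-1}]$ respects the PBW filtrations, and on the $d$-th graded piece it becomes $S^d(\psi)\otimes\operatorname{id}$, where $\psi$ is the $Q$-linear composition
\[
\psi:\hgog_2^-[a^{-1}] \xrightarrow{\ E\ } \hgog_{t,s} \twoheadrightarrow \hgog_{t,s}/\hgog_{t,s}^+ \simeq \hgog_t^-\oplus\hgog_s^-.
\]
The main technical obstacle is to show that $\psi$ is a $Q$-linear isomorphism. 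I would resolve this by introducing the finite-dimensional filtrations $F_k^{\mathrm{dom}} := \gog\otimes Q\langle u_n,v_n : -k\leq n\leq -1\rangle$ and $F_k^{\mathrm{cod}} := \gog\otimes Q\langle t^{-j},s^{-j} : 1\leq j\leq k\rangle$, both of $Q$-dimension $2k\dim\gog$. An explicit series expansion, e.g.\ $E_t(u_{-k}) = (-a)^{-k}\sum_{j\geq 0}\binom{k+j-1}{j}t^{j-k}/a^j$ and its analogue for $E_s(u_{-k})$, $E_t(v_{-k})$, $E_s(v_{-k})$, shows that $\psi$ sends $F_k^{\mathrm{dom}}$ into $F_k^{\mathrm{cod}}$ and that the induced map on the $2$-dimensional graded piece $F_k/F_{k-1}$, in the bases $(u_{-k},v_{-k})$ and $(t^{-k},s^{-k})$, is given by a matrix with determinant $(-1)^k a^{1-2k}$, which is a unit in $Q$. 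Induction on $k$ then yields that $\psi$ is an isomorphism, hence so is $S^d(\psi)$, and a standard filtered argument (both filtrations being exhaustive and separated) promotes this to the conclusion that $\Phi[a^{-1}]$ itself is an isomorphism.
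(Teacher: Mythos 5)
Your proposal is correct, but it takes a genuinely different route from the paper on both parts.

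For part (1), the paper simply performs a base change on the induced module: it rewrites $\V_2^{\lambda,\mu}/a\V_2^{\lambda,\mu}$ as $\V_2^{\lambda,\mu}\otimes_A\mC$ and uses the compatibility of $\operatorname{Ind}$ with $\otimes_A\mC$ (since $U(\hgog_2^+)\otimes_A\mC\cong U(\hgog_1^+)$ and $U_2\otimes_A\mC\cong U_1$). Your PBW approach through a complementary subalgebra $\hgog_2^-$ also works, but requires the extra verification that $\hgog_2^-$ is closed under the bracket and that the cocycle $\Res_2(f'g)$ vanishes on it --- which you correctly reduce to Lemma~\ref{lem:residues}(e) (strictly what is needed is $\Res_2(\partial R_2^-\cdot R_2^-)=0$; since $\partial$ preserves $R_2^-$, your argument applies). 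The paper's argument is shorter precisely because it avoids the PBW bookkeeping.

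For part (2), the divergence is more substantial. The paper constructs \emph{two} morphisms by Frobenius reciprocity --- one in each direction, using the action of $\hgog_2[a^{-1}]$ on $\V_t^\lambda\otimes_Q\V_s^\mu$ and the action of $\hgog_t\times\hgog_s$ on $\V_2^{\lambda,\mu}[a^{-1}]$ --- and observes that both compositions restrict to the identity on $V_Q^\lambda\otimes_Q V_Q^\mu$, hence (since that subspace generates) are identities. You construct only one morphism $\Phi$ and prove it is an isomorphism by passing to the PBW associated graded and explicitly computing that the $Q$-linear map $\psi$ on negative parts has a filtered matrix with unit determinant $(-1)^ka^{1-2k}$. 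Your computation is correct (I checked: the leading coefficients are $(-a)^{-k}$, $(-a)^{1-k}$, $a^{-k}$, $0$), and the filtered argument is sound since both PBW filtrations are exhaustive and separated. The trade-off: the paper's two-sided adjunction argument is shorter and avoids any expansion computation, while your approach makes visible exactly why inverting $a$ is necessary --- the determinant $(-1)^ka^{1-2k}$ is a non-unit in $A$ but a unit in $Q$ --- which is more informative, at the cost of more work.
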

\begin{proof}
For 1), one has $V_A^\lambda / 
aV_A^\lambda \cong V_A^\lambda \otimes_A \mC \cong V^\lambda$, and similarly 
for $V_A^\mu$. Moreover, we have isomorphisms $\hgog_2 / a \hgog_2 \cong \hgog_1$ and $\hgog_2^+ / a 
\hgog_2^+ \cong \hgog_1^+$. It follows that
\[
\begin{aligned}
\V_2^{\lambda, \mu}/a \V_2^{\lambda, \mu} & \cong \V_2^{\lambda, \mu} \otimes_A 
\mC \cong \left( U_2 \otimes_{U_A(\hgog_2^+)} (V_A^\lambda \otimes_A V_A^\mu)  
\right) \otimes_A \mC \\ 
& \cong (U_2 \otimes_A \mC) \otimes_{U_A(\hgog_2^+) \otimes \mC} \left( 
(V_A^\lambda \otimes_A V_A^\mu) \otimes_A \mC \right) \\
& \cong U_1  \otimes_{U_\mC(\hgog_1^+) } (V^\lambda \otimes_\mC V^\mu) = 
\operatorname{Ind}_{\hgog_1^+}^{\hgog_1} (V^\lambda \otimes_{\mC} V^\mu).
\end{aligned}
\]

To prove 2) notice that, by the discussion before the lemma, $\hgog_2[a^{-1}]$ 
acts on 
$\mV^\grl_t\otimes _Q \mV^\mu_s$ and $\hgog_t\times \hgog_s$ acts
on $\mV_2^{\grl,\mu}[a^{-1}]$. Hence, by the definition of Weyl modules as 
induced modules, to define a morphism between these modules $\V_2^{\lambda, \mu}[a^{-1}]$ and $\V_t^\lambda \otimes_Q \V_s^\mu$, and conversely, it is enough to 
define a $\hgog_2^+[a^{-1}]$-equivariant morphism from 
$V_A^\grl\otimes_A V^\mu_A[a^{-1}]$ to $\mV_t^\grl\otimes_Q \mV_s^\mu$
and a $(\hgog_{t}^+\times \hgog_{s}^+)$-equivariant morphisms from
$V_Q^\grl\otimes_Q V^\mu_Q$ to $\mV_2^{\grl,\mu}[a^{-1}]$. Since 
$V_A^\grl\otimes_A V^\mu_A[a^{-1}]\simeq V_Q^\grl\otimes_Q V^\mu_Q$ these morphisms 
are easily constructed, and their composition is the identity.
\end{proof}

Generalising Theorem \ref{thm:EndoVlambda} we now prove:
\begin{theorem}\label{teo:endomorfismi}
There is a commutative diagram
\[
\xymatrix{
\Funct(\Op_2^*) \ar[d]_{\rho} \ar[r]^-{\simeq}_-{\FF_2}  & Z_2\ar[d]^{\omega_2} \\
\Funct(\Op_2^{\lambda, \mu}) \ar[r]_-{\calG_2}^-{\simeq}  & \End_{\hgog_2}(\V_2^{\lambda,\mu})
}
\]
where the isomorphism $\mathcal{F}_2 : \Funct(\Op_2^*) \to Z_2$ is that of 
Theorem \ref{thm:isomorfismo}, $\rho$ is the natural restriction of functions 
from $\Op_2^*$ to its subscheme $\Op_2^{\lambda, \mu}$, and $\omega_2$ is the 
natural map induced by the action of $\hU_2$ on $\V_2^{\lambda, \mu}$.
\end{theorem}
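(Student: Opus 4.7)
The strategy is to build $\calG_2$ by showing that the composition $\omega_2 \circ \FF_2$ kills the ideal $I = \ker(\rho)$ defining $\Op_2^{\lambda, \mu}$, and then to verify bijectivity of the induced map by specialisation to $a=0$ and localisation at $a \neq 0$, applying Lemma \ref{lem:isoMN}. The two factorisation inputs are Theorem \ref{thm:restrizionediagonale} (which identifies $\Op_2^{\lambda,\mu}|_{a=0}$ with $\coprod_\nu \Op_1^\nu$) and Lemma \ref{lemma:VlambdamuOutsideAndAlongTheDiagonal} (which identifies $\V_2^{\lambda,\mu}|_{a\neq 0}$ with $\V_t^\lambda \otimes_Q \V_s^\mu$ and $\V_2^{\lambda,\mu}/a\V_2^{\lambda,\mu}$ with $\operatorname{Ind}_{\hgog_1^+}^{\hgog_1}(V^\lambda \otimes V^\mu)$); the one-singularity input is Theorem \ref{thm:EndoVlambda}, applied both to $\hgog_1$ and, after base change, to $\hgog_t, \hgog_s$.

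First I would construct $\calG_2$. By Lemma \ref{lemma:VlambdamuOutsideAndAlongTheDiagonal}(2), the action of $\hU_2[a^{-1}]$ on $\V_2^{\lambda,\mu}[a^{-1}]$ factors through $\hU_{t,s}$ and identifies with the action of $\hU_t \otimes_Q \hU_s$ on $\V_t^\lambda \otimes_Q \V_s^\mu$. Under the identification $\FF_{t,s} = \FF_t \otimes \FF_s$ of Section \ref{sez:FF2} and the Frenkel--Gaitsgory theorem in one singularity (Theorem \ref{thm:EndoVlambda} applied over $Q$), the composite $\omega_2 \circ \FF_2$ localised at $a \neq 0$ factors, via the commutative diagram in Theorem \ref{thm:isomorfismo}, through the restriction $\Funct(\Op_2^*)[a^{-1}] \to \Funct(\Op_t^\lambda \times_{\Spec Q} \Op_s^\mu)$. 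Hence $\omega_2(\FF_2(f)) = 0$ for every $f \in I \otimes_A Q$. Since $\End_{\hgog_2}(\V_2^{\lambda,\mu})$ embeds into $\End_{\hgog_2[a^{-1}]}(\V_2^{\lambda,\mu}[a^{-1}])$ modulo $a$-torsion considerations (which can be handled using the analogues of Lemma \ref{lem:piatto}), and $I$ is saturated ($I = I\otimes_A Q \cap \Funct(\Op_2^*)$, by Lemma \ref{lem:definitosuCa} or a direct argument), one obtains a well-defined $A$-algebra map $\calG_2 : \Funct(\Op_2^{\lambda,\mu}) \to \End_{\hgog_2}(\V_2^{\lambda,\mu})$ making the diagram commute.

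Next I would prove that $\calG_2$ is an isomorphism by applying Lemma \ref{lem:isoMN}. The source $\Funct(\Op_2^{\lambda,\mu})$ is torsion-free over $A$ because it is a quotient of the flat module $\Funct(\Op_2^*)$ by a saturated ideal. For the target, one argues that $\End_{\hgog_2}(\V_2^{\lambda,\mu})$ embeds into its localisation (because $\V_2^{\lambda,\mu}$ is free over $A$ by the PBW theorem), so it is also $a$-torsion-free. For the localised map $\calG_2[a^{-1}]$, we use the identifications
\[
\Funct(\Op_2^{\lambda,\mu})[a^{-1}] \isocan \Funct(\Op_t^\lambda) \otimes_Q \Funct(\Op_s^\mu), \qquad \End(\V_2^{\lambda,\mu})[a^{-1}] \isocan \End(\V_t^\lambda) \otimes_Q \End(\V_s^\mu),
\]
(using suitable completions as needed) under which $\calG_2[a^{-1}]$ becomes $\calG_1^{(t)} \otimes \calG_1^{(s)}$; this is an isomorphism by Theorem \ref{thm:EndoVlambda}. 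For the map $\overline{\calG_2}$ at $a = 0$, Theorem \ref{thm:restrizionediagonale} gives
\[
\Funct(\Op_2^{\lambda,\mu})/a\Funct(\Op_2^{\lambda,\mu}) \hookrightarrow \Funct(\Op_2^{\lambda,\mu}|_{a=0}) = \bigoplus_\nu \Funct(\Op_1^\nu),
\]
while Lemma \ref{lemma:VlambdamuOutsideAndAlongTheDiagonal}(1) and the Clebsch--Gordan decomposition $V^\lambda \otimes V^\mu = \bigoplus_\nu V^\nu$ give
\[
\End_{\hgog_2}(\V_2^{\lambda,\mu})/a\End_{\hgog_2}(\V_2^{\lambda,\mu}) \hookrightarrow \End_{\hgog_1}\bigl(\V_2^{\lambda,\mu}/a\V_2^{\lambda,\mu}\bigr) = \bigoplus_\nu \End_{\hgog_1}(\V_1^\nu),
\]
and under these inclusions $\overline{\calG_2}$ becomes the direct sum of the one-variable isomorphisms $\calG_1^{\nu}$, so it is injective, which is all that Lemma \ref{lem:isoMN}(b) requires.

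The main obstacle is the bookkeeping around saturation and topological completions: one must carefully check that the natural maps $\Funct(\Op_2^{\lambda,\mu})/a \to \Funct(\Op_2^{\lambda,\mu}|_{a=0})$ and the analogue on the endomorphism side are indeed injective (rather than surjective), and that $\calG_2[a^{-1}]$ is surjective onto the full endomorphism ring after localisation. The latter reduces, via the factorisation of Weyl modules, to showing that every endomorphism of $\V_t^\lambda \otimes_Q \V_s^\mu$ as an $\hgog_t \oplus \hgog_s$-module decomposes as a tensor product, which follows from the fact (implicit in Theorem \ref{thm:EndoVlambda}) that the endomorphism algebras $\End(\V_t^\lambda)$ and $\End(\V_s^\mu)$ are commutative and act faithfully on cyclic vectors. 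Once these verifications are in place, Lemma \ref{lem:isoMN} delivers the result.
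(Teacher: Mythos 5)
Your proposal follows essentially the same strategy as the paper: construct $\calG_2$ by showing $\omega_2 \circ \FF_2$ factors through $\Funct(\Op_2^{\lambda,\mu})$ (using the $a\neq 0$ localisation and Theorem \ref{thm:EndoVlambda} over $Q$), then apply Lemma \ref{lem:isoMN} with the tensor-product description at $a\neq 0$ and the injectivity at $a=0$ deduced from Theorem \ref{thm:restrizionediagonale}, the Clebsch--Gordan decomposition, and the one-variable Theorem \ref{thm:EndoVlambda}. The bookkeeping points you flag (saturation of the ideal, $a$-torsion-freeness of both sides, $\End(\V_t^\lambda\otimes\V_s^\mu)\cong\End(\V_t^\lambda)\otimes\End(\V_s^\mu)$) match the technical care the paper takes at the same stages.
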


\begin{proof} 
We begin by showing that the composition 
$\Psi=\omega_2\circ\FF_2:\Funct(\Op_2^*)\lra \End_{\hgog_2}(\V_2^{\lambda,\mu})$ 
factors through the quotient 
$\Funct(\Op_2^{\lambda, \mu})$ of $\Funct(\Op_2^*)$. Since every module 
we consider has no $a$-torsion, it suffices to prove this after inverting $a$. 
By Theorem \ref{thm:isomorfismo} and Lemma \ref{lemma:VlambdamuOutsideAndAlongTheDiagonal} 
we have the following commutative diagram:
\[
\xymatrix{
\Funct(\Op_t^*) \otimes_Q \Funct(\Op_s^*) \ar@{^{(}->}[r] \ar[d]_{\simeq}^-{\FF_{t}\otimes \FF_{s}}& 
\Funct(\Op_t^*\times \Op_s^*) \ar[d]_{\simeq}^-{\FF_{t,s}}  & 
\Funct(\Op_2^*)[\frac{1}{a}] \ar[d]_{\simeq}^-{\FF_{2,a}}\ar@{_{(}->}[l]_-{\calE}\\
Z_t \otimes_Q Z_s \ar[d]^{\omega_t \otimes \omega_s} \ar@{^{(}->}[r] & Z_{t,s} & 
Z_2[\frac{1}{a}]  \ar@{_{(}->}[l]_{E} \ar[d]^{\omega_{2,a}} \\
\End_{\hgog_t}(\V^\lambda_t) \otimes_Q \End_{\hgog_s}(\V^\mu_s) \ar[rr]^-{\simeq} & & 
\End_{\hgog_2}(\V_2^{\lambda,\mu})[\frac{1}{a}].
}
\]
To justify the existence of the isomorphism in the bottom row, we notice that the image of $\hgog_2$ in 
$\hgog_{t,s}$ is dense. Thus, if a continuous endomorphism commutes with $\hgog_2$, then it also commutes with $\hgog_{t,s}$, and hence with $\hgog_t\times \hgog_s$. Moreover, we may identify 
$\End_{\hgog_2^+}(\V_2^{\lambda,\mu})[a^{-1}]$ with 
$\End_{\hgog_2^+}(\V_2^{\lambda,\mu}[a^{-1}])$ 
and 
$\End_{\hgog_t}(\V^\lambda_t) \otimes_Q \End_{\hgog_s}(\V^\mu_s)$
with
$\End_{\hgog_t\times \hgog_s}(\V^\lambda_t \otimes_Q \V^\mu_s)$,
because the modules $\V_2^{\lambda,\mu}, \V^\lambda_t$, and $\V^\lambda_s$ are all finitely generated. The existence of the desired isomorphism then follows from the second part of Lemma \ref{lemma:VlambdamuOutsideAndAlongTheDiagonal}.

Now notice that, by Theorem \ref{thm:EndoVlambda}, the composition of the morphisms appearing in the first column of the diagram factors through $\Funct(\Op^\lambda_t)\otimes_Q\Funct(\Op^\mu_s)$. In particular, it factors through 
$\Funct\big((\Op_t^*)_{\geq -2}\big)\otimes_Q \Funct\big((\Op_s^*)_{\geq -2}\big)$. 
By Lemma \ref{prop:FactorisationProperties} 2), this implies that the composition of the morphisms in the rightmost column factors through $\Funct\big((\Op_2^*)_{\geq -2}|_{a\neq 0}\big)$. 
We claim that the diagram above then induces the following commutative diagram, where the morphisms $\mathcal{G}_t, \mathcal{G}_s$ are provided by Theorem \ref{thm:EndoVlambda}:
\[
\xymatrix{
\Funct \big((\Op_t^*)_{\geq -2})\big) \otimes_Q \Funct \big((\Op_s^*)_{\geq -2})\big)
 \ar^-{\simeq}_-{E^\sharp}[r] \ar[d]^{\rho}  & 
\Funct\big((\Op_2^*)_{\geq -2}|_{a\neq 0}\big) \ar[d]^{\rho} 
\\
\Funct(\Op_t^\grl) \otimes_Q \Funct(\Op_s^\mu) \ar^-{\simeq}_-{E^\sharp}[r] 
\ar[d]_-{\simeq}^-{\calG_t\otimes \calG_s}& 
\Funct(\Op_2^{\grl,\mu}|_{a\neq 0})  \ar[d]^-{\calG_{2,a}}
\\
\End_{\hgog_t}(\V^\lambda_t) \otimes_Q \End_{\hgog_s}(\V^\mu_s) \ar[r]^-{\simeq} & 
\End_{\hgog_2}(\V_2^{\lambda,\mu}[a^{-1}]).
}
\]
Indeed, we have just discussed the existence of the morphisms in the first column; the existence of those in the second column follows 
from this and the definition of $\Op_2^{\grl,\mu}$. 
By the remarks at the beginning of the proof, the previous diagram implies that there exists a map 
$\calG_2:\Funct(\Op_2^{\grl,\mu})\lra \End_{\hgog_2}(\mV_2^{\grl,\mu})$ such that 
$\Psi=\calG_2\circ \rho$ and whose localisation $\calG_{2,a}$ is an isomorphism. By Lemma \ref{lem:isoMN}, to finish the proof of the theorem it is enough to show that 
the specialization $\overline{\calG_2}$ is injective. We now consider the composition 
\[
\phi:\Funct\left(\Op^{\grl,\mu}_2|_{a=0}\right) \stackrel{\overline{\calG_2}}{\lra} 
\frac{\End_{\hgog_2}(\mV_2^{\grl,\mu})}{a\End_{\hgog_2}(\mV_2^{\grl,\mu})} \lra
\End_{\hgog_2}\bigg(\frac{\mV_2^{\grl,\mu}}{a\mV_2^{\grl,\mu}}\bigg)
\]
and prove that $\phi$ is injective, which implies the desired injectivity of $\overline{\mathcal{G}_2}$. 
The 
action of $\hgog_2$ on the quotient $\overline{\mV}=\mV^{\grl,\mu}_2/a\mV^{\grl,\mu}_2$
factors through the specialisation map $\Sp : \hgog_2 \to \hgog_1$, and $\End_{\hgog_2}(\overline{\mV})=\End_{\hgog_1}(\overline{\mV})$. By the commutativity of the second diagram in Theorem \ref{thm:isomorfismo}, this is compatible with the specialisation map among coordinate rings induced by the 
isomorphism $\Op^*_2|_{a=0}\simeq \Op^*_1$. 
This shows in particular that $\omega_1\circ \FF_1$ factors through $\phi$, where we consider $\Op^{\grl,\mu}_2|_{a=0}$ as a subscheme of $\Op^*_1$,
as in the following commutative diagram: 
\[
\xymatrix{
 &  & \Funct\left( \Op_2^{\lambda,\mu} \right) \ar[d]^{\calG_2} \\
\Funct(\Op_2^*) \ar@{->>}@/^1pc/[rru]^{\rho} \ar[r]^-{\calF_2}_-{\simeq} \ar[d] & Z_2 \ar[d] \ar[r]^-{\omega_2} & \End_{\hgog_2}\left(\mV_2^{\lambda, \mu} \right) \ar[d] \\
\Funct(\Op_1^*) \ar@{->>}@/_1pc/[rrd]_{\rho \circ \calSp} \ar[r]_-{\calF_1}^-{\simeq} & Z_1 \ar[r]_-{\omega_1} & \End_{\hgog_1}\left(\overline{\mV} \right) \\
&& \Funct\left( \Op_2^{\lambda,\mu}|_{a=0} \right) \ar[u]_{\phi}
}
\]
By Theorem \ref{thm:restrizionediagonale} and Lemma 
\ref{lemma:VlambdamuOutsideAndAlongTheDiagonal}
we are thus reduced to proving 
that 
\[
\phi: \Funct \left( \coprod_{\substack{|\mu-\lambda| \leq \nu \leq \lambda+\mu \\ \nu 
\equiv \lambda + \mu \bmod{2}}} \Op_1^\nu \right) \to 
\End_{\hgog_1}\left(\operatorname{Ind}_{\hgog_1^+}^{\hgog_1} (V^\lambda_1 \otimes_\mC 
V_1^\mu)\right)=
\End_{\hgog_1}\left(
\bigoplus
_{\substack{|\mu-\lambda| \leq \nu \leq \lambda+\mu \\ \nu \equiv \lambda + \mu \bmod{2}}} 
\mV^\nu_1 \right)
\]
is injective, where the last equality follows from the well-known decomposition of the tensor product of two $\mathfrak{sl}_2$-modules. It is enough to prove that the composition of this morphism with the projection $\End_{\hgog_1}\left(\bigoplus
_{\nu} 
\mV^\nu_1 \right) \to \bigoplus_\nu \End_{\hgog_1}\left( \mV^\nu_1 \right)$
is injective. Hence, it is enough to show that the resulting map 
$$
\psi:\Funct \left( \coprod_{\substack{|\mu-\lambda| \leq \nu \leq \lambda+\mu \\ \nu 
\equiv \lambda + \mu \bmod{2}}} \Op_1^\nu \right)\lra 
\bigoplus_{\substack{|\mu-\lambda| \leq \nu \leq \lambda+\mu \\ \nu 
\equiv \lambda + \mu \bmod{2}}} \End_{\hgog_1}\left( \mV^\nu_1 \right)
$$
is injective. For each weight $\xi$ appearing in the direct sum on the right, let $\pi^{\xi}$ be the projection $\bigoplus_{\substack{|\mu-\lambda| \leq \nu \leq \lambda+\mu \\ \nu 
\equiv \lambda + \mu \bmod{2}}} \End_{\hgog_1}\left( \mV^\nu_1 \right) \to \End_{\hgog_1}\left( \mV^\xi_1 \right)$, and let $\psi^\xi := \pi^{\xi} \circ \psi$.
By construction and Theorem \ref{thm:EndoVlambda}, for each $\xi$ we have that $\psi^\xi$
agrees with the projection $\Funct \left( \coprod_{\substack{|\mu-\lambda| \leq \nu \leq \lambda+\mu \\ \nu 
\equiv \lambda + \mu \bmod{2}}} \Op_1^\nu \right) \to \Funct \left( \Op_1^\xi \right) $ followed by the isomorphism $\calG_1:\Funct \left( \Op_1^\xi \right)\xrightarrow\sim \End_{\hgog_1}\left( \mV^\xi_1 \right)$.
The claim follows. 
\end{proof}

\bibliographystyle{acm}
\bibliography{biblio}

\Addresses

\end{document}